\newtheorem{theorem}{Theorem}[section]
\newtheorem*{theorem*}{Theorem}
\newtheorem{lemma}[theorem]{Lemma}
\newtheorem{proposition}[theorem]{Proposition}
\newtheorem{corollary}[theorem]{Corollary}
\newtheorem{definition}[theorem]{Definition}
\newtheorem{problem}[theorem]{Problem}
\theoremstyle{definition}
\newtheorem{example}[theorem]{Example}
\newtheorem{remark}[theorem]{Remark}
\def\r{{\mathfrak row}}
\def\QSym{{\rm QSym}}
\def\mQSym{{\mathfrak{m} \rm QSym}}
\DeclareMathAlphabet{\mathpzc}{OT1}{pzc}{m}{it}
\title{
Dual filtered graphs
}
\numberwithin{equation}{section}
\begin{document}

\author{Rebecca Patrias}
\address{\hspace{-.3in} Department of Mathematics, University of Minnesota,
127 Vincent Hall, 206 Church Street, Minneapolis, MN 55455, USA}
\email{patri080@umn.edu}\thanks{R.P. was supported by NSF grant DMS-1148634.}

\author{Pavlo Pylyavskyy}
\address{Department of Mathematics, University of Minnesota,
127 Vincent Hall, 206 Church Street, Minneapolis, MN 55455, USA}
\email{ppylyavs@umn.edu}\thanks{P.P. was supported by NSF grants DMS-1068169, DMS-1148634, DMS-1351590, and Sloan Fellowship.}

\date{\today
}

\thanks{
}

\keywords{}

\begin{abstract}
We define a $K$-theoretic analogue of Fomin's dual graded graphs, which we call dual filtered graphs. The key formula in the definition is $DU-UD= D + I$. 
Our major examples are $K$-theoretic analogues of Young's lattice, of shifted Young's lattice, and of the Young-Fibonacci lattice. 
We suggest notions of tableaux, insertion algorithms, and growth rules whenever such objects are not already present in the literature. (See the table below.) 
We also provide a large number of other examples. 
Most of our examples arise via two constructions, which we call the Pieri construction and the M\"obius construction. 
The Pieri construction is closely related to the construction of dual graded graphs 
from a graded Hopf algebra, as described in \cite{BLL,HN,LS}. The M\"obius construction is more mysterious 
but also potentially more important, as it corresponds to natural insertion algorithms. 
\end{abstract}

\ \vspace{-.1in}

\maketitle

\begin{center}
\begin{tabular}{|l| p{4.8cm}| p{5cm}| c|}
\hline
$ $ & Tableaux & Insertion & Growth\\[.5ex]
\hline
Young & Standard Young tableaux \cite{Young} & RSK insertion \cite{Robinson,Schensted,Knuth}&\cite{Fomin} \\[.5ex]
\hline
Shifted Young & Standard shifted Young tableaux with and without circles \cite{Sagan}
& Shifted Robinson-Schensted insertion \cite{Sagan,W}
& \cite{FSchen}\\[.5ex]
\hline
Young-Fibonacci & Young-Fibonacci tableaux \cite{Fomin,F} & Young-Fibonacci insertion \cite{Fomin} & \cite{FSchen} \\[.5ex]
\hline
M\"obius Young & Increasing and set-valued tableaux \cite{ThY,B}& Hecke insertion \cite{BKSTY} & Section \ref{sec:HeckeGrowth}\\[.5ex]
\hline 
M\"obius shifted Young & Definitions \ref{def:increasingshifted} and \ref{def:shiftedset} & Section \ref{sec:shiftedinsertion} & Section \ref{sec:shiftedgrowth}\\[.5ex]
\hline
M\"obius Young-Fibonacci & Definitions \ref{def:KYFtableau} and \ref{def:KYFsetvalued} & Section \ref{sec:KYFinsertion} & Section \ref{sec:KYFgrowth} \\[.5ex]
\hline
\end{tabular}
\end{center}

\

\tableofcontents

\newpage

\section{Introduction}

Fomin's {\it {dual graded graphs}} \cite{F}, as well as their predecessors - Stanley's {\it {differential posets}} \cite{St},
 were invented as a tool to better understand the Robinson-Schensted insertion algorithm. Dual graded graphs are significant in the areas where the
Robinson-Schensted correspondence appears, for example in Schubert calculus or in the study of representations of towers of algebras \cite{BLL, BS}. 
A recent work of Lam and Shimozono \cite{LSh} associates a dual graded graph to any Kac-Moody algebra, bringing their study to a new level of generality.

\subsection{Weyl algebra and its deformations}
One way to view the theory is 
as a study of certain {\it {combinatorial representations}} of the {\it {first Weyl algebra}}. Let us briefly recall the definitions. The
{\it {Weyl algebra}}, or the first Weyl algebra, is an algebra over some field $K$ (usually $K = {\mathbb {R}}$) generated by two generators $U$ and $D$ with a 
single relation $DU-UD=1$. It was originally introduced by Hermann Weyl in his study of quantum mechanics. We refer the reader to \cite{Bjo}, for example,
for more background on the Weyl algebra.

A {\it {graded graph}} is a triple $G=(P,\rho,E)$, where $P$ is a discrete set of vertices, $\rho:P\to \mathbb{Z}$ is 
a rank function, and $E$ is a multiset of edges/arcs $(x,y)$, where $\rho(y)=\rho(x)+1$. In other words, each vertex 
is assigned a rank, and edges can only join vertices in successive ranks. For the set of vertices $P$, let 
$P_n$ denote the subset of vertices of rank $n$. For any field $K$ of characteristic zero, the formal linear combinations of vertices of $G$ form the vector space 
$KP$. 

Let $G_1=(P,\rho,E_1)$ and $G_2=(P,\rho,E_2)$ be a pair of graded graphs with a common vertex set and rank 
function. From this pair, define an \textit{oriented graded graph} $G=(P,\rho, E_1,E_2)$ by orienting the 
edges of $G_1$ in the direction of increasing rank and the edges of $G_2$ in the direction of decreasing rank. 
Let $a_i(x,y)$ denote the number of edges of $E_i$ joining $x$ and $y$ or the multiplicity of edge $(x,y)$ in $E_i$.
We define the \textit{up} and \textit{down operators} $U,D\in End(KP)$ associated with graph $G$ by 
$$Ux=\sum_{y} a_1(x,y)y$$ and $$Dy=\sum_x a_2(x,y) x.$$

Graded graphs $G_1$ and $G_2$ with a common vertex set and rank function are said to be \textit{dual} if 
$$DU-UD=I,$$ where $I$ is an identity operator acting on $KP$. Thus, we see that $KP$ is a representation of the Weyl algebra.
Furthermore, it is a representation of a very special kind, where $U$ and $D$ act in a particularly nice combinatorial way on a fixed basis.  

One would then expect that variations of the Weyl algebra would correspond to some variations of the theory of dual graded graphs. One such variation is the 
{\it {$q$-Weyl algebra}} defined by the relation $$DU-qUD=1.$$ The corresponding theory of {\it {quantum dual graded graphs}} was pursued by Lam in \cite{L}.

In this paper, we shall study the theory arising from an analogue $W$ of the Weyl algebra with defining relation $$DU-UD=D+1.$$ 
We shall refer to it as the Ore algebra \cite{O}, and we shall see that it is a very natural object. 
In particular, the corresponding theory of {\it {dual filtered graphs}} fits naturally with the existing body of work
on the $K$-theory of Grassmannians.

\begin{remark}\label{rem:rescale}
It is easy to see that by rescaling $U$ and $D$, we can pass from arbitrary $DU-UD= \alpha D + \beta$ to $DU-UD= D + I$. We thus suffer no loss in generality 
by writing down the relation in the latter form.
\end{remark}

\subsection{Differential vs difference operators}\label{sec:differential}

The following provides some intuitive explanation for the relation between representations of Weyl and Ore algebras.
The simplest representation of the Weyl algebra is that acting on a polynomial ring. Indeed, consider polynomial ring $R = \mathbb R[x]$, and for $f \in R$ let 
$$U(f) = xf, \;\;\; D(f) = \frac{\partial f}{\partial x}.$$
It is an easy exercise to verify that this indeed produces a representation of the Weyl algebra. In fact, this is how it is often defined.

Let us now redefine the down operator to be the {\it {difference operator}}:
$$U(f) = xf, \;\;\; D(f) = f(x+1) - f(x).$$
\begin{lemma}
 This gives a representation of the Ore algebra $W$. In other words, those operators satisfy 
$$DU-UD = D+I,$$
where $I$ is the identity map on $R$.
\end{lemma}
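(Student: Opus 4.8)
The plan is to verify the operator identity $DU - UD = D + I$ directly by evaluating both sides on an arbitrary polynomial $f \in R = \mathbb{R}[x]$, using the explicit definitions $U(f) = xf$ and $D(f) = f(x+1) - f(x)$. Since both sides are linear operators, it suffices to check equality as functions; there is no need to restrict to a basis, though one could alternatively check on the monomials $x^n$ if desired.

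First I would compute $DU(f)$. We have $U(f) = xf(x)$, so $DU(f) = (x+1)f(x+1) - xf(x)$. Next I would compute $UD(f)$: since $D(f) = f(x+1) - f(x)$, multiplying by $x$ gives $UD(f) = x f(x+1) - x f(x)$. Subtracting, the $xf(x)$ terms cancel and we are left with $DU(f) - UD(f) = (x+1)f(x+1) - x f(x+1) = f(x+1)$. Finally, observe that $f(x+1) = \bigl(f(x+1) - f(x)\bigr) + f(x) = D(f) + I(f)$, which is exactly $(D+I)(f)$. Since $f$ was arbitrary, this establishes $DU - UD = D + I$ as operators on $R$.

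I do not anticipate any real obstacle here: the computation is a short and routine manipulation, essentially the discrete analogue of the product rule $(xf)' = f + xf'$ that underlies the classical Weyl algebra representation. The one minor point worth stating carefully is that the key cancellation is the telescoping of the $xf(x)$ terms between $DU(f)$ and $UD(f)$, leaving only the boundary-like term $f(x+1)$, which is then split into its ``difference part'' $D(f)$ and its ``identity part'' $f(x)$. This last splitting is precisely where the extra summand $D$ (compared to the Weyl relation) comes from, and it is worth highlighting as the conceptual reason the difference operator produces the Ore algebra rather than the Weyl algebra.
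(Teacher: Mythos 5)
Your computation is correct and is essentially identical to the paper's own proof: both expand $DU(f)=(x+1)f(x+1)-xf(x)$ and $UD(f)=x\bigl(f(x+1)-f(x)\bigr)$, cancel to obtain $f(x+1)$, and then split it as $f(x)+\bigl(f(x+1)-f(x)\bigr)=(I+D)(f)$. No gaps.
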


\begin{proof}
 We have 
$$(DU-UD)(f) = ((x+1)f(x+1)-xf(x)) - x(f(x+1)-f(x))$$ $$= f(x+1) = f(x) + (f(x+1)-f(x)) = (I+D)(f).$$
\end{proof}

It can also be noted that differential and difference operators can be related as follows: $$D = e^{\frac{\partial}{\partial x}}-1.$$ We omit the easy proof for brevity.   
As we shall see in Example \ref{ex:polys}, this representation of $W$ corresponds to a very basic dual filtered graph. 

\subsection{Pieri and M\"obius constructions}

We make use of two conceptual ways to build examples of dual filtered graphs. The first construction starts with an algebra $A$, a derivation $D$ on $A$, and an element $f \in A$ such that $D(f)=f+1$. We often build the desired derivation $D$ using a
{\it {bialgebra}} structure on $A$. This construction is very close to an existing one in the literature \cite{BLL,HN,LS}, where dual graded graphs are constructed 
from graded Hopf algebras. In fact, if we were to require $D(f)=1$, we would get dual graded graphs instead of dual filtered graphs in our construction. We refer to this 
method as the {\it {Pieri construction}} or sometimes as the {\it{Pieri deformation}}.

Instead, we can also start with an existing dual graded graph $G=(P, \rho, E_1, E_2)$ (see definition below), composed of graphs $G_1=(P,\rho, E_1)$ and $G_2=(P,\rho,e_2)$, and adjust $E_1$ and $E_2$ in the following manner. 
To obtain $G_1'$, we add $\#\{x|y\text{ covers }x\text{ in }G_1\}$ loops at each vertex $y \in P$ to $E_1$. As for $G_2'$, we create a new edge set $E'_2$ by forming 
$$a'_2(x,y) = |\mu(x,y)|$$ edges between vertices $x$ and $y$, where $\mu$ denotes the M\"obius function in $G_2=(P, \rho, E_2)$.
We refer to this construction as the {\it {M\"obius construction}} or {\it{M\"obius deformation}}. Note that this does not always produce a pair of dual filtered graphs, and it is mysterious to determine 
when it does. In some major examples, however, it is the result of this construction that relates to Robinson-Schensted-like algorithms, for example to {\it {Hecke insertion}}
of \cite{BKSTY}.

The following observation seems remarkable, and we call it the {\it {M\"obius via Pieri phenomenon}}. Let $A$ be a graded Hopf algebra, and let bialgebra $\tilde A$ be 
its {\it {$K$-theoretic deformation}} in some appropriate sense which we do not know how to formalize. Let $G$ be a natural dual graded graph associated with $A$.
What we observe is that applying the  M\"obius construction to $G$ yields {\it {the same result}} as a natural Pieri construction applied to $\tilde A$. In other words,
the following diagram commutes.  

\begin{center}
\begin{tikzpicture}
\node (A) at (-2,2) {$A$};
\node (tildeA) at (2,2) {$\tilde A$};
\node (G) at (-2,0) {$G$};
\node (tildeG) at (2,0) {$\tilde G$};
\path[->] (G) edge node [above] {\tiny{M\"obius construction}} (tildeG);
\path[->] (A) edge node [left] {$\substack{\text{Pieri}\\\text{construction}}$} (G);
\path[->] (tildeA) edge node [right] {$\substack{\text{Pieri}\\\text{construction}}$} (tildeG);
\path[->] (A) edge node [above] {\tiny{$K$ deformation}} (tildeA);
\end{tikzpicture}
\end{center}

This happens for Young's lattice, see Section \ref{ex:G}, and for the binary tree dual graded graph, see Section \ref{sec:bin}. We expect this phenomenon to also 
occur for the shifted Young's lattice. 

The crucial condition necessary for this phenomenon to be observed is for $A$ to be the associated graded algebra of $\tilde A$, but this is not always the case. For example, this is not 
the case for $K$-theoretic analogues of the Poirier-Reutenauer and Malvenuto-Reutenauer Hopf algebras, as described in Sections \ref{sec:PR}, \ref{sec:MR}. To put it simply,
the numbers of basis elements for the filtered components of $A$ and $\tilde A$ are distinct in those cases, thus there is no hope to obtain corresponding graphs one from the other via
M\"obius construction. Interestingly, those are exactly the examples we found where the M\"obius construction fails to produce a dual filtered graph.

\subsection{Synopsis of the paper}
In Section \ref{sec:dgg}, we recall the definition of dual graded graphs from \cite{F} as well as three major examples: Young's lattice, Young-Fibonacci lattice, and 
shifted Young's lattice. We then remind the reader of the definition of the Robinson-Schensted algorithm and how one can obtain it locally via the machinary of 
{\it {growth diagrams}}, also introduced by Fomin in \cite{F, FSchen}.

In Section \ref{sec:dfg}, we formulate our definition of dual filtered graphs. We then introduce the trivial, Pieri and M\"obius constructions. 

In Section \ref{sec:Y}, we build the Pieri and M\"obius deformations of Young's lattice. We recall the definition of Hecke insertion and observe it is a map into a pair of paths
in the M\"obius deformation of Young's lattice. We provide growth rules that realize Hecke insertion. We also show how the Pieri construction applied to the ring generated by 
{\it {Grothendieck polynomials}} yields the same result as the M\"obius construction applied to Young's lattice. Thus we demonstrate an instance of the M\"obius via Pieri 
phenomenon.  

In Section \ref{sec:sY},  we build Pieri and M\"obius deformations of the shifted Young's lattice. We introduce {\it {shifted Hecke insertion}} and remark that 
its result always coincides with that of $K$-theoretic jeu de taquin rectification as defined in \cite{ThYshift}. We also provide the corresponding growth rules. 

In Section \ref{sec:YF}, we build Pieri and M\"obius deformations of the Young-Fibonacci lattice. We define $K$-Young-Fibonacci tableaux and suggest the corresponding 
insertion algorithm and growth rules.

In Section \ref{sec:other}, we consider some other examples of dual filtered graphs. Of special interest are the Pieri constructions associated to the quasisymmetric 
functions, to the Poirier-Reutenauer and Malvenuto-Reutenauer Hopf algebras, as well as to their $K$-theoretic analogues, which we draw from \cite{LP, PP}. 
The Hopf algebra of quasisymmetric functions and its $K$-theoretic analogue provide another instance of the M\"obius via Pieri phenomenon. 

In Section \ref{sec:enum}, we use the calculus of up and down operators to prove some identities similar to the ones known for dual graded graphs. In particular, we 
formulate and prove a $K$-theoretic analogue of the Frobenius-Young identity.  

\subsection{Acknowledgements} We thank Vic Reiner, Alexander Garver, and Thomas Lam for helpful discussions.

\section{Dual graded graphs} \label{sec:dgg}
\subsection{Dual graded graphs}
This section follows \cite{F}, and we refer the reader to this source for further reading on dual graded graphs.

A graded graph is a triple $G=(P,\rho,E)$, where $P$ is a discrete set of vertices, $\rho:P\to \mathbb{Z}$ is 
a rank function, and $E$ is a multiset of edges/arcs $(x,y)$, where $\rho(y)=\rho(x)+1$. In other words, each vertex 
is assigned a rank, and edges can only join vertices in successive ranks. For the set of vertices $P$, let 
$P_n$ denote the subset of vertices of rank $n$. For any field $K$ of characteristic zero, the formal linear combinations of vertices of $G$ form the vector space 
$KP$. 

In future examples, the idea of \textit{inner corners} and \textit{outer corners} of certain configurations of boxes or cells will be necessary. We will call a cell an
inner corner if it can be removed from the configuration and the resulting configuration is still valid. A cell is an outer corner of a configuration if it can be 
added to the configuration, and the resulting configuration is still valid.

\begin{example} Young's lattice is an example of a graded graph where for any partition $\lambda$, $\rho(\lambda)=|\lambda|$,
 and there is an edge from $\lambda$ to $\mu$ if $\mu$ can be obtained from $\lambda$ by adding one box. Ranks zero through five are 
shown below. 
We see that $(3,31)\in E$ and $\rho(3)+1=3+1=\rho(31)=4$.

 \begin{center}
\begin{tikzpicture}[scale=1]
\node (empty) at (0,0) {$\varnothing$};
\node (1) at (0,1) {\ytableausetup{boxsize=.15cm}\begin{ytableau}$ $ \end{ytableau}};
\node (2) at (1,2) {\begin{ytableau}$ $ & $ $  \end{ytableau}};
\node (11) at (-1,2) {\begin{ytableau}$ $ \\ $ $  \end{ytableau}};
\node (111) at (-2,3) {\begin{ytableau}$ $ \\ $ $ \\ $ $ \end{ytableau}};
\node (21) at (0,3) {\begin{ytableau}$ $ & $ $ \\ $ $ \end{ytableau}};
\node (3) at (2,3) {\begin{ytableau}$ $ & $ $ & $ $ \end{ytableau}};
\node (1111) at (-3,4) {\begin{ytableau}$ $ \\ $ $ \\ $ $ \\ $ $\end{ytableau}};
\node (211) at (-1,4) {\begin{ytableau}$ $ & $ $ \\ $ $ \\ $ $\end{ytableau}};
\node (22) at (0,4) {\begin{ytableau}$ $ & $ $ \\ $ $ & $ $ \end{ytableau}};
\node (31) at (1,4) {\begin{ytableau}$ $ & $ $ & $ $ \\ $ $\end{ytableau}};
\node (4) at (3,4) {\begin{ytableau}$ $ & $ $ & $ $ & $ $\end{ytableau}};
\node (5) at (4,5) {\begin{ytableau}$ $ & $ $ & & & \end{ytableau}};
\node (41) at (2,5) {\begin{ytableau}$ $ & $ $ & & \\ $ $ \end{ytableau}};
\node (32) at (1,5) {\begin{ytableau}$ $ & $ $ & \\ $ $ & $ $ \end{ytableau}};
\node (311) at (0,5) {\begin{ytableau}$ $ & $ $ & \\ $ $ \\ $ $ \end{ytableau}};
\node (221) at (-1,5) {\begin{ytableau}$ $ & $ $ \\ $ $ & $ $ \\ $ $ \end{ytableau}};
\node (2111) at (-2,5) {\begin{ytableau}$ $ & $ $ \\ $ $ \\ $ $ \\ $ $ \end{ytableau}};
\node (11111) at (-4,5) {\begin{ytableau}$ $ \\ $ $ \\ $ $ \\ $ $ \\ $ $ \end{ytableau}};
\draw (empty) -- (1)
(1) -- (2)--(3)--(4)--(5)
(1)--(11)--(111)--(1111)--(11111)
(11)--(21)--(31)--(41)
(111)--(211)--(2111)
(1111)--(2111)
(4)--(41)
(22)--(221)
(211)--(221)
(31)--(311)
(3)--(31)--(32)
(21)--(22)--(32)
(2)--(21)--(211)--(311);
\end{tikzpicture}
\end{center}
\ytableausetup{boxsize=.3cm}

If we look at the partition (4,4,2,1), visually represented below, we see that the inner corners are in positions (2,4), (3,2), and (4,1), and the outer corners are in positions (1,5), (3,3), (4,2), and (5,1).

\begin{center}\begin{ytableau} $ $ & $ $ & $ $ & $ $ \\ $ $ & $ $ & $ $ & $ $ \\ $ $ & $ $ \\ $ $ \end{ytableau}\end{center}
\end{example}

Let $G_1=(P,\rho,E_1)$ and $G_2=(P,\rho,E_2)$ be a pair of graded graphs with a common vertex set and rank 
function. From this pair, define an \textit{oriented graded graph} $G=(P,\rho, E_1,E_2)$ by orienting the 
edges of $G_1$ in the direction of increasing rank and the edges of $G_2$ in the direction of decreasing rank. 
Let $a_i(x,y)$ denote the number of edges of $E_i$ joining $x$ and $y$ or the multiplicity of edge $(x,y)$ in $E_i$.
We define the \textit{up} and \textit{down operators} $U,D\in End(KP)$ associated with graph $G$ by 
$$Ux=\sum_{y} a_1(x,y)y$$ and $$Dy=\sum_x a_2(x,y) x.$$

For example, in Young's lattice shown above, $U(21)=31+22+211$ and $D(21)=2+11$.

The restrictions of $U$ and $D$ to ``homogeneous'' subspaces $KP_n$ are denoted by $U_n$ and $D_n$, respectively. 
Graded graphs $G_1$ and $G_2$ with a common vertex set and rank function are said to be \textit{r-dual} if 
$$D_{n+1}U_n=U_{n-1}D_n+rI_n$$
for some $r\in\mathbb{R}$ and simply \textit{dual} if 
$$D_{n+1}U_n=U_{n-1}D_n+I_n.$$ We focus on the latter.

\begin{example}
 Young's lattice is an example of a self-dual graded graph.
\end{example}

\begin{example} Another well-known example of a self-dual graded graph is the \textit{Young-Fibonacci lattice}, $\mathbb{YF}$. The first 
six ranks are shown below. The 
vertices of the graph are all finite words in the alphabet $\{1,2\}$, where the rank of a word is the sum of its entries. The covering relations 
are as follows:
a word $w'$ covers $w$ if and only if either $w'=1w$ or $w'=2v$ for some $v$ covered by $w$. For example, 
the word $121$ covers the word $21$ since $121$ is obtained by concatentating $1$ and $21$, and $121$ is covered by $221$ because 
$121$ covers $21$. The words in the alphabet $\{1,2\}$ are sometimes called snakes and can be represented as a collection of boxes whose heights
correspond to the entries of the word called snakeshapes. For example, the word $122112$ can be pictured as 
\begin{center}
 \ytableausetup{boxsize=.3cm}
 \begin{ytableau}
  \none & $ $ & $ $ & \none & \none & $ $ \\
  $ $ &   $ $ &  $ $ &  $ $ &  $ $ &  $ $ 
 \end{ytableau}\hspace{.05in}.
\end{center}
Thought of this way, the rank of a word is the number of boxes in its corresponding snakeshape.

  \begin{center}
  \ytableausetup{boxsize=.15cm}
\begin{tikzpicture}[scale=1]
\node (empty) at (0,0) {$\varnothing$};
\node (1) at (0,1) {\begin{ytableau} $ $ \end{ytableau}};
\node (11) at (1,2) {\begin{ytableau} $ $ & $ $ \end{ytableau}};
\node (2) at (-1,2) {\begin{ytableau} $ $ \\ $ $ \end{ytableau}};
\node (12) at (-2,3) {\begin{ytableau} \none & $ $ \\ $ $ & \end{ytableau}};
\node (21) at (0,3) {\begin{ytableau} $ $ & \none \\ $ $ & $ $ \end{ytableau}};
\node (111) at (2,3) {\begin{ytableau} $ $ & & \end{ytableau}};
\node (112) at (-3,4) {\begin{ytableau} \none & \none & $ $ \\ $ $ & & \end{ytableau}};
\node (22) at (-1,4) {\begin{ytableau} $ $ & \\ $ $ & \end{ytableau}};
\node (121) at (0,4) {\begin{ytableau} \none & $ $ & \none \\ $ $ & & \end{ytableau}};
\node (211) at (1,4) {\begin{ytableau} $ $ & \none & \none \\ $ $ & & \end{ytableau}};
\node (1111) at (3,4) {\begin{ytableau} $ $ & & & \end{ytableau}};
\node (11111) at (4,5) {\begin{ytableau} $ $ & & & & \end{ytableau}};
\node (2111) at (2.92,5) {\begin{ytableau} $ $ & \none & \none & \none \\ $ $ & & & \end{ytableau}};
\node (1211) at (1.78,5) {\begin{ytableau} \none & $ $ & \none & \none \\ $ $ & & &  \end{ytableau}};
\node (221) at (.64,5) {\begin{ytableau} $ $ & & \none \\ $ $ & & \end{ytableau}};
\node (1121) at (-.64,5) {\begin{ytableau} \none & \none & $ $ & \none \\ $ $ & & & \end{ytableau}};
\node (212) at (-2.92,5) {\begin{ytableau} $ $ & \none & $ $ \\ $ $ & & \end{ytableau}};
\node(122) at(-1.78,5) {\begin{ytableau} \none & $ $ &  \\ $ $ & & \end{ytableau}};
\node (1112) at (-4,5) {\begin{ytableau} \none & \none & \none & $ $ \\ $ $ & & &  \end{ytableau}};
\draw (empty) -- (1) --(11)--(111)--(1111)--(11111)
(1)--(2)--(12)--(112)--(1112)
(11)--(21)--(22)--(212)
(2)--(21)--(211)--(2111)
(111)--(211)--(221)
(21)--(121)--(1121)
(12)--(22)--(212)
(112)--(212)
(22)--(122)
(22)--(221)
(121)--(221)
(211)--(1211)
(1111)--(2111);
\end{tikzpicture}
\end{center}
\end{example}
\ytableausetup{boxsize=.3cm}

Young's lattice and the Young-Fibonacci lattice discussed above are the most interesting examples of self-dual graded graphs. We close this section 
by describing the graph of shifted shapes, $\mathbb{SY}$, and its dual, which together form a dual graded graph.

\begin{example}\label{ex:shifted} Given a strict partition $\lambda$ (i.e. $\lambda = (\lambda_1>\lambda_2>\ldots>\lambda_k)$), the corresponding shifted shape 
$\lambda$ is an arrangement of cells in $k$ rows where row $i$ contains $\lambda_i$ cells and is indented $i-1$ spaces. For example, the 
strict partition $(5,4,3,1)$ corresponds to the shifted shape shown below.
\begin{center}
 \begin{ytableau}
  $ $ &   $ $ &   $ $ &   $ $ &   $ $ \\
  \none &   $ $ &  $ $ &  $ $ &  $ $ \\
  \none & \none &   $ $ &  $ $ &  $ $ \\
  \none & \none & \none &   $ $ 
 \end{ytableau}
\end{center}
We say that cells in row $i$ and column $i$ are \textit{diagonal} cells and all other cells are \textit{off-diagonal}.

The graph of shifted shapes, $\mathbb{SY}$, has shifted shapes as vertices, the rank function counts the number of cells in a shape, and shifted shape $\lambda$ 
covers $\mu$ if $\lambda$ is obtained from $\mu$ by adding one cell. The first six ranks of the graph $\mathbb{SY}$ are shown below on the left. 
The graph shown on the right is its dual. In the dual graph, there is one edge between $\lambda$ and $\mu$ if $\lambda$ is obtained 
from $\mu$ by adding a diagonal cell, and there are two edges between $\lambda$ and $\mu$ if $\lambda$ is obtained from $\mu$ by adding 
an off-diagonal cell. To form the dual graded graph, the edges of $\mathbb{SY}$ are oriented upward and those of its dual 
are oriented downward. 

\begin{minipage}{.5\textwidth}
\begin{center}
\begin{tikzpicture}
\node (empty) at (0,0) {$\varnothing$};
\node (1) at (0,1) {\ytableausetup{boxsize=.15cm}\begin{ytableau}$ $ \end{ytableau}};
\node (2) at (0,2) {\begin{ytableau}$ $ & $ $  \end{ytableau}};
\node (3) at (1,3) {\begin{ytableau}$ $ & $ $ & $ $ \end{ytableau}};
\node (21) at (-1,3) {\begin{ytableau}$ $ & $ $ \\ \none & $ $ \end{ytableau}};
\node (31) at (0,4) {\begin{ytableau}$ $ & $ $ & $ $\\ \none & $ $ \end{ytableau}};
\node (4) at (2,4) {\begin{ytableau}$ $ & $ $ & $ $ & $ $ \end{ytableau}};
\node (32) at (-1,5) {\begin{ytableau}$ $ & $ $ & $ $\\ \none & $ $ & $ $\end{ytableau}};
\node (41) at (1,5) {\begin{ytableau}$ $ & $ $ & $ $ & $ $  \\ \none & $ $ \end{ytableau}};
\node (5) at (3,5) {\begin{ytableau}$ $ & $ $ & $ $ & $ $ & $ $ \end{ytableau}};
  \draw (empty)--(1)--(2)--(3)--(4)--(5)
(2)--(21)--(31)--(32)
(3)--(31)--(41)
(4)--(41);
\end{tikzpicture}
\end{center}
\end{minipage}
\begin{minipage}{.5\textwidth}
\begin{center}
\begin{tikzpicture}
\node (empty) at (0,0) {$\varnothing$};
\node (1) at (0,1) {\ytableausetup{boxsize=.15cm}\begin{ytableau}$ $ \end{ytableau}};
\node (2) at (0,2) {\begin{ytableau}$ $ & $ $  \end{ytableau}};
\node (3) at (1,3) {\begin{ytableau}$ $ & $ $ & $ $ \end{ytableau}};
\node (21) at (-1,3) {\begin{ytableau}$ $ & $ $ \\ \none & $ $ \end{ytableau}};
\node (31) at (0,4) {\begin{ytableau}$ $ & $ $ & $ $\\ \none & $ $ \end{ytableau}};
\node (4) at (2,4) {\begin{ytableau}$ $ & $ $ & $ $ & $ $ \end{ytableau}};
\node (32) at (-1,5) {\begin{ytableau}$ $ & $ $ & $ $\\ \none & $ $ & $ $\end{ytableau}};
\node (41) at (1,5) {\begin{ytableau}$ $ & $ $ & $ $ & $ $  \\ \none & $ $ \end{ytableau}};
\node (5) at (3,5) {\begin{ytableau}$ $ & $ $ & $ $ & $ $ & $ $ \end{ytableau}};
  \draw (empty) -- (1) (2) -- (21) (3) -- (31) (4) -- (41);
\draw[double, thick] (1)--(2) (2)--(3) (3)--(4) (4)--(5)
(21)--(31) (31)--(32) (31)--(41);
\end{tikzpicture}
\end{center}
\end{minipage}
\ytableausetup{boxsize=.4cm}
 
\end{example}

\subsection{Growth rules as generalized RSK}

There is a well-known combinatorial correspondence between words in the alphabet of positive integers and pairs consisting of a 
semistandard Young tableau and a standard Young tableau of the same shape called the Robinson-Schensted-Knuth (RSK) correspondence. 
We briefly review the correspondence here and refer the reader to \cite{EC2} for more information.

Given a word $w$, the RSK correspondence maps $w$ to a pair of tableaux via a row insertion algorithm consisting of inserting a positive 
integer into a tableau. The algorithm for inserting positive integer $k$ into a row of a semistandard tableau is as follows. If $k$ is 
greater than or equal to all entries in the row, add a box labeled $k$ to the end of the row. Otherwise, find the first $y$ in the row 
with $y>k$. Replace $y$ with $k$ in this box, and proceed to insert $y$ into the next row. To insert $k$ into semistandard tableau $P$, 
we start by inserting $k$ into the first row of $P$. To create the \textit{insertion tableau} of a word $w=w_1w_2\cdots w_r$, we 
first insert $w_1$ into the empty tableau, insert $w_2$ into the result of the previous insertion, insert $w_3$ into the result of the 
previous insertion, and so on until we have inserted all letters of $w$. We denote the resulting insertion tableau by $P(w)$. The insertion 
tableau will always be a semistandard tableau.

To obtain a standard Young tableau from $w$, we define the \textit{recording tableau}, $Q(w)$, of $w$ by labeling the box of 
$P(w_1\cdots w_s)/P(w_1\cdots w_{s-1})$ by $s$. For example, $w=14252$ has insertion and recording tableau

\begin{center}
$P(w)=$
 \begin{ytableau}
  1 & 2 & 2 \\
  4 & 5
 \end{ytableau}\hspace{1in}
 $Q(w)=$
 \begin{ytableau}
  1 & 2 & 4 \\
  3 & 5
 \end{ytableau}.
\end{center}

The RSK correspondence described above is a bijection between words consisting of positive integers and pairs $(P,Q)$, where $P$ is a 
semistandard Young tableau and $Q$ is a standard Young tableau of the same shape.

If $w=w_1w_2\cdots w_k$ is a permutation, we can also obtain $(P(w),Q(w))$ from $w$ by using growth diagrams. First, we 
create a $k \times k$ array with an $X$ in the $w_i^{th}$
square from the bottom of column $i$. For example, if $w=14253$, we have 

\begin{center}
\begin{ytableau}
 $ $ & $ $ & $ $ & X & $ $ \\
 $ $ & X & $ $ & $ $ & $ $ \\
 $ $ & $ $ & $ $ & $ $ & X \\
 $ $ & $ $ & X & $ $ & $ $ \\
 X & $ $ & $ $ & $ $ & $ $ 
\end{ytableau}\hspace{.05in}.
\end{center}

We will label the corners of each square with a partition. We begin by labeling all corners
along the bottom row and left side of the diagram with the empty shape, $\varnothing$.

To complete the labeling of the corners, suppose the corners $\mu$, $\lambda$, and $\nu$ are labeled, 
where $\mu$, $\lambda$, and $\nu$ are as in the picture below. We label $\gamma$ according to the following rules.

 \begin{center}
\begin{tikzpicture}[scale=1]
\node (A) at (-1,-1) {$\lambda$};
\node (B) at (1,-1) {$\nu$};
\node (C) at (-1,1) {$\mu$};
\node (D) at (1,1) {$\gamma$};
\draw (A) -- (B)
(C) -- (D)
(B) -- (D)
(A) -- (C);
\end{tikzpicture}
\end{center}

\begin{itemize}
 \item[(L1)] If the square does not contain an $X$ and $\lambda=\mu=\nu$, then $\gamma=\lambda$.
 \item[(L2)] If the square does not contain an $X$ and $\lambda\subset\mu=\nu$, then $\mu/\lambda$ is one box in some row $i$. In this 
 case, $\gamma$ is obtained from $\mu$ by adding one box in row $i=1$.
 \item[(L3)] If the square does not contain an $X$ and $\mu\neq\nu$, define $\gamma=\mu\cup\nu$.
 \item[(L4)] If the square contains an $X$, then $\lambda=\nu=\mu$ and $\gamma$ is obtained from $\mu$ by adding one box in the first row.
\end{itemize}

Following these rules, there is a unique way to label the corners of the diagram. The resulting array is called the \textit{growth diagram}
of $w$, and rules (L1)-(L4) are called \textit{growth rules}. For the remainder of this paper, we will use the word ``square'' when referring to a square in the 
growth diagram and the word ``box'' or ``cell'' when referring to a box in a partition, shifted partition, or snakeshape.

The chains of partitions read across the top of the diagram and up the right side of the diagram determine $Q(w)$ and $P(w)$, respectively.
To obtain a tableau of shape $\lambda$ from a chain of partitions $\varnothing \subset \lambda^1 \subset \lambda^2 \subset \ldots \subset \lambda^k=\lambda$, 
label the box of $\lambda^i/\lambda^{i-1}$ by $i$.

\begin{example}
The growth diagram for the word $14253$ is shown below.

\begin{center}
\begin{tikzpicture}[scale=1]
\node (00) at (0,0) {$\varnothing$};
\node (10) at (1,0) {$\varnothing$};
\node (20) at (2,0) {$\varnothing$};
\node (30) at (3,0) {$\varnothing$};
\node (40) at (4,0) {$\varnothing$};
\node (50) at (5,0) {$\varnothing$};
\node (01) at (0,1) {$\varnothing$};
\node (.5.5) at (.5,.5) {$X$};
\node (11) at (1,1) {$1$};
\node (21) at (2,1) {$1$};
\node (31) at (3,1) {$1$};
\node (41) at (4,1) {$1$};
\node (51) at (5,1) {$1$};
\node (02) at (0,2) {$\varnothing$};
\node (12) at (1,2) {$1$};
\node (22) at (2,2) {$1$};
\node (32) at (3,2) {$2$};
\node (2.51.5) at (2.5,1.5) {$X$};
\node (42) at (4,2) {$2$};
\node (52) at (5,2) {$2$};
\node (03) at (0,3) {$\varnothing$};
\node (13) at (1,3) {$1$};
\node (23) at (2,3) {$1$};
\node (33) at (3,3) {$2$};
\node (43) at (4,3) {$2$};
\node (53) at (5,3) {$3$};
\node (4.52.5) at (4.5,2.5) {$X$};
\node (04) at (0,4) {$\varnothing$};
\node (14) at (1,4) {$1$};
\node (24) at (2,4) {$2$};
\node (34) at (3,4) {$21$};
\node (44) at (4,4) {$21$};
\node (54) at (5,4) {$31$};
\node (1.53.5) at (1.5,3.5) {$X$};
\node (05) at (0,5) {$\varnothing$};
\node (15) at (1,5) {$1$};
\node (25) at (2,5) {$2$};
\node (35) at (3,5) {$21$};
\node (45) at (4,5) {$31$};
\node (55) at (5,5) {$32$};
\node (3.54.5) at (3.5,4.5) {$X$};

\draw (00)--(10)--(20)--(30)--(40)--(50)
(01)--(11)--(21)--(31)--(41)--(51)
(02)--(12)--(22)--(32)--(42)--(52)
(03)--(13)--(23)--(33)--(43)--(53)
(04)--(14)--(24)--(34)--(44)--(54)
(05)--(15)--(25)--(35)--(45)--(55)
(00)--(01)--(02)--(03)--(04)--(05)
(10)--(11)--(12)--(13)--(14)--(15)
(20)--(21)--(22)--(23)--(24)--(25)
(30)--(31)--(32)--(33)--(34)--(35)
(40)--(41)--(42)--(43)--(44)--(45)
(50)--(51)--(52)--(53)--(54)--(55);
\end{tikzpicture}
\end{center}

From the chains of partitions on the rightmost edge and uppermost edge of the diagram, we can read the insertion tableau and recording tableau, respectively.

\begin{center}
$P(14253)=$
 \begin{ytableau}
 1 & 2 & 3 \\
 4 & 5
 \end{ytableau}\hspace{1in}
 $Q(14253)=$
 \begin{ytableau}
 1 & 2 & 4\\
 3 & 5
 \end{ytableau}
\end{center}
\end{example}

We next draw a connection between the growth rules described and an explicit cancellation in Young's lattice that shows that $DU-UD=I$.
We shall start by giving explicit cancellation rules to pair all down-up paths from $\mu$ to $\nu$ with up-down paths from $\mu$ to $\nu$, 
which will leave one up-down path unpaired in $\mu=\nu$. (Note that if 
$\rho(\mu)\neq \rho (\nu)$, then there are no up-down or down-up paths from $\mu$ to $\nu$.)

\begin{itemize}
 \item[(C1)] If $\mu\neq \nu$, there is only one down-up path from $\mu$ to $\nu$, which passes through $\mu\cap\nu$. Pair this 
 with the only up-down path from $\mu$ to $\nu$, which passes through $\mu\cup\nu$.
 \item[(C2)] If $\mu=\nu$, then any down-up path can be represented by the inner corner of $\mu$ that is deleted in the downward step, and 
 any up-down path can be represented by the outer corner of $\mu$ that is added in the upward step. 
 Pair each down-up path with the up-down path that adds the first outer corner of $\mu$ strictly below the inner corner of the down-up
 path. 
\end{itemize}

Now, the only up-down path that has not been paired with a down-up path is the path corresponding to adding the outer corner of 
$\mu$ in the first row. This gives $(DU-UD)\mu=\mu$.

It is easy to see that this cancellation yields exactly the growth rules described above. Cancellation rule (C1) determines 
growth rule (L3), cancellation rule (C2) determines growth rule (L2), and the up-down path left unmatched determines growth rule
(L4). If we started with a different explicit cancellation, we could modify (L2), (L3), and (L4) accordingly 
to obtain new growth rules. Note that growth rule (L1) is simply a way to transfer information and will remain the 
same for any explicit cancellation. 

\begin{example}
 Starting with $\mu=(3,1)$, cancellation rule (C2) says that the down-up path from $\mu$ to itself passing through $(2,1)$ is paired 
 with the up-down path passing through $(3,2)$. The corresponding growth rule in this situation, (L4), is illustrated on the right.

\begin{center}
\begin{tikzpicture}[scale=1]
\node (A) at (0,-1) {21};
\node (B) at (0,0) {31};
\node (D) at (0,1) {32};
\draw (A) -- (B) -- (D);
\end{tikzpicture}\hspace{1.5in}
\begin{tikzpicture}[scale=1]
\node (A) at (-1,-1) {21};
\node (B) at (1,-1) {31};
\node (C) at (-1,1) {31};
\node (D) at (1,1) {32};
\draw (A) -- (B)
(C) -- (D)
(B) -- (D)
(A) -- (C);
\end{tikzpicture}
\end{center}
\end{example}

\section{Dual filtered graphs} \label{sec:dfg}

\subsection{The definition}
A weak filtered graph is a triple $G=(P,\rho,E)$, where $P$ is a discrete set of vertices, $\rho:P\to \mathbb{Z}$ is 
a rank function, and $E$ is a multiset of edges/arcs $(x,y)$, where $\rho(y) \geq \rho(x)$. 
A strict filtered graph is a triple $G=(P,\rho,E)$, where $P$ is a discrete set of vertices, $\rho:P\to \mathbb{Z}$ is 
a rank function, and $E$ is a multiset of edges/arcs $(x,y)$, where $\rho(y) > \rho(x)$. 
Let $P_n$ as before denote the subset of vertices of rank $n$. 

Let $G_1=(P,\rho,E_1)$ and $G_2=(P,\rho,E_2)$ be a pair of filtered graphs with a common vertex set, $G_1$ weak while $G_2$ strict. 
From this pair, define an \textit{oriented filtered graph} $G=(P, E_1,E_2)$ by orienting the 
edges of $G_1$ in the positive filtration direction and the edges of $G_2$ in the negative filtration direction. 
Recall that $a_i(x,y)$ denotes the number of edges of $E_i$ joining $x$ and $y$ or the multiplicity of edge $(x,y)$ in $E_i$.
Using the \textit{up} and \textit{down operators} associated with graph $G$ as previously defined, we say that $G_1$ and $G_2$ 
are \textit{dual filtered graphs} if for any $x\in G$, $$(DU-UD)x=\alpha x+\beta Dx$$ for some $\alpha,\beta\in\mathbb{R}$. 
We focus on examples where $DU-UD=D+I$, see Remark \ref{rem:rescale}.

In the next sections, we describe three constructions of dual filtered graphs.

\subsection{Trivial construction}
Every dual graded graph has an easy deformation that makes it a dual filtered graph. To construct it, simply add 
$\rho(x)$ ``upward'' loops to each element $x$ of the dual graded graph, where $\rho$ is the rank function. We will call this the 
\textit{trivial construction}.

\begin{theorem}
 For any dual graded graph $G$, the trivial construction gives a dual filtered graph $G'$.
\end{theorem}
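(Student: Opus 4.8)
The plan is to compute the new up and down operators $U'$, $D'$ of the deformed graph $G'$ in terms of the original operators $U$, $D$ of the dual graded graph $G$, and then verify the relation $D'U' - U'D' = D' + I$ by a direct algebraic manipulation. Let $\rho$ denote the rank function, and let $R \in \mathrm{End}(KP)$ be the diagonal operator defined by $Rx = \rho(x)\,x$ for each vertex $x$. Since the trivial construction adds exactly $\rho(x)$ loops at $x$ in the upward direction and does not touch the downward edges, we have $U' = U + R$ and $D' = D$. (A loop at $x$ contributes a summand $x$ to $Ux$, so $\rho(x)$ loops contribute $\rho(x)\,x = Rx$.) Note that $G_1'$ is now a weak filtered graph because of the loops, while $G_2' = G_2$ remains strict, so $G'$ has the right shape to be a dual filtered graph.

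First I would record the two elementary commutation facts I need. Because $G_1, G_2$ form a dual graded graph, $DU - UD = I$ on all of $KP$. Because $U$ raises rank by one and $D$ lowers rank by one, I have the operator identities $RU - UR = U$ and $RD - DR = -D$; equivalently $DR = RD + D$ and $UR = RU - U$. These follow immediately from $\rho(y) = \rho(x) + 1$ whenever $(x,y)$ is an edge of $G_1$, and similarly for $G_2$. I would state these as a one-line lemma or just inline them.

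Next I would carry out the computation. We have
$$
D'U' - U'D' = D(U+R) - (U+R)D = (DU - UD) + (DR - RD) = I + D = D' + I,
$$
using $DU - UD = I$ and $DR - RD = D$. This is exactly the defining relation $D'U' - U'D' = D' + I$, so $G_1'$ and $G_2'$ are dual filtered graphs.

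There is no serious obstacle here; the only thing to be careful about is the bookkeeping for the loops, namely confirming that adding $\rho(x)$ loops at $x$ really does change $U$ to $U + R$ and leaves $D$ unchanged, and that this is legitimate in the sense that $G_1'$ is a bona fide weak filtered graph (loops are allowed since $\rho(x) \geq \rho(x)$) while $G_2'$ stays strict. Once the identification $U' = U+R$, $D' = D$ is in place, the verification is the two-line display above. I would also remark that the rescaling freedom of Remark \ref{rem:rescale} is not even needed, since the construction lands exactly on the normalized relation $DU - UD = D + I$.
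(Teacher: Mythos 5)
Your proof is correct. The identifications $U'=U+R$, $D'=D$ are right (each upward loop at $x$ contributes a summand $x$ to $U'x$, and the downward edge set is untouched), and the commutation identity $DR-RD=D$ holds exactly as you state, since $D$ maps rank-$n$ vertices into the span of rank-$(n-1)$ vertices; the two-line computation then lands precisely on $D'U'-U'D'=D'+I$.

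Your route differs from the paper's in packaging rather than in substance. The paper argues coefficient by coefficient: it fixes a covering pair $p \lessdot q$, notes that loop-free paths contribute $0$ to $[p](DU-UD)(q)$ by duality of the original graphs, and then counts loop-containing paths directly, getting $\rho(q)e(q\to p)$ up-down paths versus $e(q\to p)\rho(p)$ down-up paths, whose difference is $e(q\to p)=[p]D(q)$. You instead introduce the diagonal rank operator $R$ and absorb that entire count into the single operator identity $DR-RD=D$, combined with $DU-UD=I$. The underlying cancellation $(\rho(q)-\rho(p))e(q\to p)=e(q\to p)$ is the same, but your formulation avoids the restriction to covering pairs and the case split into loop and non-loop paths, and it makes transparent how the construction would behave under variants (e.g.\ replacing $R$ by any diagonal operator whose commutator with $D$ is a multiple of $D$), whereas the paper's version keeps the combinatorial pairing of paths in view, which is in the spirit of the explicit cancellations used elsewhere in the paper for growth rules.
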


\begin{proof}
 It suffices to show that if $q$ covers $p$ in  $G$, then $[p](DU-UD)(q)=[p](D)(q)$. If we restrict to all paths that do not contain a loop, 
the coefficient of 
$p$ in $(DU-UD)(q)$ is $0$ since without loops, we have a pair of dual graded graphs. Hence we may restrict our attention to 
up-down paths and down-up paths that contain a loop. 
The up-down paths beginning at $q$ and ending at $p$ are exactly the paths consisting of a loop at $q$ followed by an 
edge from $q$ to $p$. There are $\rho(q)e(q\to p)$ such paths.
The down-up paths containing a loop are the paths consisting of an edge from $q$ to $p$ followed by a loop at $p$. There are $e(q\to p)\rho(p)$ such paths. Thus 
$$[p](DU-UD)(q)=e(q \to p)(\rho(q)-\rho(p))=e(q \to p) = [p]D(q).$$
\end{proof}

\begin{example} The trivial construction for the first four ranks of Young's lattice is shown below. The edges in the graph on the left are oriented upward, and 
the edges in the graph on the right are oriented downward.

\begin{minipage}{.5\textwidth}
 \begin{center}
\begin{tikzpicture}[scale=.9]
\node (empty) at (0,0) {$\varnothing$};
\node (1) at (0,1) {\ytableausetup{boxsize=.15cm}\begin{ytableau}$ $ \end{ytableau}};
\node (2) at (1,2) {\begin{ytableau}$ $ & $ $  \end{ytableau}};
\node (11) at (-1,2) {\begin{ytableau}$ $ \\ $ $  \end{ytableau}};
\node (111) at (-2,3) {\begin{ytableau}$ $ \\ $ $ \\ $ $ \end{ytableau}};
\node (21) at (0,3) {\begin{ytableau}$ $ & $ $ \\ $ $ \end{ytableau}};
\node (3) at (2,3) {\begin{ytableau}$ $ & $ $ & $ $ \end{ytableau}};
\draw (empty) -- (1)
(1) -- (2)--(3)
(1)--(11)--(111)
(11)--(21)
(2)--(21);
\draw(1) to [out=60,in=100,distance=.5cm] (1);
\draw(2) to [out=60,in=100,distance=.5cm] (2);
\draw(2) to [out=60,in=100,distance=.7cm] (2);
\draw(11) to [out=60,in=100,distance=.5cm] (11);
\draw(11) to [out=60,in=100,distance=.7cm] (11);
\draw(111) to [out=60,in=100,distance=.5cm] (111);
\draw(111) to [out=60,in=100,distance=.7cm] (111);
\draw(111) to [out=60,in=100,distance=.9cm] (111);
\draw(21) to [out=60,in=100,distance=.5cm] (21);
\draw(21) to [out=60,in=100,distance=.7cm] (21);
\draw(21) to [out=60,in=100,distance=.9cm] (21);
\draw(3) to [out=60,in=100,distance=.5cm] (3);
\draw(3) to [out=60,in=100,distance=.7cm] (3);
\draw(3) to [out=60,in=100,distance=.9cm] (3);
\end{tikzpicture}
\end{center}
\end{minipage}
\begin{minipage}{.5\textwidth}
 \begin{center}
\begin{tikzpicture}[scale=.9]
\node (empty) at (0,0) {$\varnothing$};
\node (1) at (0,1) {\ytableausetup{boxsize=.15cm}\begin{ytableau}$ $ \end{ytableau}};
\node (2) at (1,2) {\begin{ytableau}$ $ & $ $  \end{ytableau}};
\node (11) at (-1,2) {\begin{ytableau}$ $ \\ $ $  \end{ytableau}};
\node (111) at (-2,3) {\begin{ytableau}$ $ \\ $ $ \\ $ $ \end{ytableau}};
\node (21) at (0,3) {\begin{ytableau}$ $ & $ $ \\ $ $ \end{ytableau}};
\node (3) at (2,3) {\begin{ytableau}$ $ & $ $ & $ $ \end{ytableau}};
\draw (empty) -- (1)
(1) -- (2)--(3)
(1)--(11)--(111)
(11)--(21)
(2)--(21);
\end{tikzpicture}
\end{center}
\end{minipage}
\end{example}
\ytableausetup{boxsize=.4cm}

\subsection{Pieri construction}

The following construction is close to the one for dual graded graphs described in the works of Bergeron-Lam-Li \cite{BLL}, Nzeutchap \cite{HN},
 and Lam-Shimozono \cite{LS}. 

Let $$A = \oplus_{i \geq 0} A_i$$ be a filtered, commutative algebra  over a field $\mathbb F$ with $A_0 = \mathbb F$ and a linear basis 
$a_{\lambda} \in A_{|\lambda|}$. Here, the $\lambda$'s belong to some infinite indexing set $P$, each graded component of which $$P_i = \{\lambda \mid |\lambda|=i\}$$
is finite. The property of being filtered means that $a_\lambda a_\mu=\sum c_\nu a_\nu$, where $|\lambda|+|\mu|\leq|\nu|.$ 

Let $f = f_1 +f_2 + \ldots \in \hat A$ be an element of the completion $\hat A$ of 
$A$ such that $f_i \in A_i$. Assume $D \in End(A)$ is a derivation on $A$ satisfying $D(f) = f+1$. 

To form a graph, we define $E_1$ by defining $a_1(\lambda,\mu)$ to be the coefficient of $a_{\lambda} \text{ in } D (a_{\mu}).$
We also define $E_2$ by defining $a_2(\lambda,\mu)$ to be the coefficient of $a_{\mu} \text{ in } a_{\lambda} f.$

\begin{theorem} \label{thm:pieri}
 The resulting graph is a dual filtered graph.
\end{theorem}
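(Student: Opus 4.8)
The plan is to compute the commutator $DU-UD$ directly as an operator on $A$, using the two defining properties we have: $U$ is multiplication by $f \in \hat A$, and $D$ is a derivation with $D(f) = f+1$. First I would unwind the definitions of the up and down operators in terms of the structure constants: by construction, for $a_\mu \in A$ we have $U(a_\mu) = a_\mu f$ (reading off $E_2$, whose edges are oriented in the negative filtration direction, so they contribute to $U$), and $D(a_\mu) = \sum_\lambda a_1(\lambda,\mu) a_\lambda$, which is precisely the derivation $D$ on $A$ applied to $a_\mu$. (One should double-check the orientation bookkeeping here — which of $E_1,E_2$ becomes $U$ and which becomes $D$ in the oriented filtered graph — but this is just matching conventions, not a real difficulty.) The point is that once these identifications are made, $U$ and $D$ are honest operators on $A$ (or its completion) with a clean algebraic description.

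Next I would carry out the commutator computation on an arbitrary element $a \in A$. Since $U$ is multiplication by $f$ and $D$ is a derivation,
\[
D(U(a)) = D(a f) = D(a) f + a \, D(f),
\]
and
\[
U(D(a)) = D(a) f.
\]
Subtracting gives $(DU - UD)(a) = a\, D(f)$. Now I invoke the hypothesis $D(f) = f + 1$ to conclude $(DU-UD)(a) = a(f+1) = af + a = U(a) + a$, i.e. $DU - UD = U \cdot(\text{nothing})$ — wait, more precisely $DU-UD = D + I$ once we recognize that multiplication by $f$ is exactly $U$, not $D$. Let me restate: the Leibniz computation yields $(DU-UD)(a) = a\,D(f) = af + a$. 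But $af = U(a)$, whereas the relation we want is $DU - UD = D + I$. So the correct reading must be that multiplication by $f$ realizes the \emph{down} operator $D$ and the derivation realizes the \emph{up} operator $U$; then the Leibniz rule gives, for the derivation $U$ and multiplication-by-$f$ operator $D$, the identity $UD - DU = U(f)\cdot(-)$... The main thing to get right is therefore the assignment of roles, and I would pin it down by checking against the difference-operator example in Section~\ref{sec:differential}, where $U(g) = xg$ is multiplication and $D$ is the difference operator (the derivation-like object), and the relation $DU-UD = D+I$ holds with $D(f)=f+1$ for $f$ the "shift" element. Matching that example forces the convention, after which the one-line Leibniz computation closes the proof.

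The only genuine subtlety — and the step I expect to require the most care — is the passage to the completion $\hat A$: $f$ is an infinite sum $f_1 + f_2 + \cdots$, so $U$ (or $D$) as "multiplication by $f$" a priori lands in $\hat A$, not $A$. I would argue that because $A$ is filtered with each graded piece finite-dimensional and $a_\lambda a_\mu = \sum c_\nu a_\nu$ with $|\nu| \ge |\lambda|+|\mu|$, the coefficient $a_2(\lambda,\mu)$ of $a_\mu$ in $a_\lambda f$ is a finite sum (only finitely many $f_i$ can contribute a term of any fixed degree $|\mu|$), so each operator is well-defined on $A$ with values in a suitable completion, and the commutator identity, being degree-by-degree an identity among finite sums, holds genuinely. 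After handling this, everything else is the elementary Leibniz manipulation above.
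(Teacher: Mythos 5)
There is a genuine gap, and it sits exactly at the step you flag and then wave away: the identification of the graph's up and down operators with operators on $A$. Write $D_A$ for the derivation on $A$ and $M_f$ for multiplication by $f$. Under the identification of the vertex $\lambda$ with $a_\lambda$, neither graph operator is literally $D_A$ or $M_f$: from $Ux=\sum_y a_1(x,y)y$, $Dy=\sum_x a_2(x,y)x$, $a_1(\lambda,\mu)=[a_\lambda]D_A(a_\mu)$ and $a_2(\lambda,\mu)=[a_\mu](a_\lambda f)$, the up operator is the \emph{transpose} of $D_A$ in the basis $\{a_\lambda\}$ and the down operator is the \emph{transpose} of $M_f$. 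Both of your candidate ``literal'' assignments fail: taking $U=M_f$ and $D=D_A$ (your first reading, with $E_1,E_2$ swapped) gives $DU-UD=D_AM_f-M_fD_A=M_f+I=U+I$, which is not of the allowed form $\alpha I+\beta D$; taking $U=D_A$ and $D=M_f$ (your proposed correction) gives $DU-UD=M_fD_A-D_AM_f=-(M_f+I)=-(D+I)$, the wrong sign. So the closing claim ``after which the one-line Leibniz computation closes the proof'' is not justified as written, and calibrating the convention against Section~\ref{sec:differential} (Example~\ref{ex:polys}) would not rescue it: in that stand-alone example $U$ and $D$ really are the literal operators on $\mathbb{R}[x]$, whereas in the Pieri construction they are the transposed ones.

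The missing idea, which is what the paper's terse proof implicitly uses, is that transposition reverses composition, so the graph relation is the transpose of the algebra relation. Leibniz plus $D_A(f)=f+1$ give $D_A(fa_\lambda)-fD_A(a_\lambda)=(f+1)a_\lambda$, i.e.\ $D_AM_f-M_fD_A=M_f+I$ on $A$; hence, with $U$ and $D$ now denoting the graph operators, $[a_\lambda](DU-UD)(a_\mu)=[a_\mu]\bigl(D_A(fa_\lambda)-fD_A(a_\lambda)\bigr)=[a_\mu]\bigl((f+1)a_\lambda\bigr)=a_2(\lambda,\mu)+\delta_{\lambda\mu}=[a_\lambda](D+I)(a_\mu)$. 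It is precisely this order-reversal under transposition that makes the commutator come out as $D+I$, with the correct sign and with the \emph{down} operator rather than $U$ appearing. Your final remark on the completion is fine and worth keeping: since $a_\lambda f_i$ lies in filtration degree at least $|\lambda|+i$ and each $P_i$ is finite, only finitely many $f_i$ contribute to $[a_\mu](a_\lambda f)$, so the edge multiplicities and the coefficientwise identity above are well defined.
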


\begin{proof}
 It follows from $$D(f a_{\lambda}) = D(f) a_{\lambda} + f D(a_{\lambda}),$$ or 
$$D(f a_{\lambda}) - f D(a_{\lambda}) = a_{\lambda} + D(a_{\lambda}).$$
\end{proof}

Assume $A_1$ is one-dimensional, and let $g$ be its generator. 
We shall often find a derivation $D$ from a {\it {bialgebra}} structure on $A$. Indeed, assume $\Delta$
is a coproduct on $A$ such that $\Delta(a) = 1 \otimes a + a \otimes 1 + \dotsc$, where the rest of the terms lie in 
$(A_1 \oplus A_2 \oplus \ldots ) \otimes (A_1 \oplus A_2 \oplus \ldots )$. Assume also 
$\xi$ is a map $A \otimes A \longrightarrow A$ such that $\xi(p \otimes g) = p$ and for any element $q \in A_i$, $i \not = 1$, 
we have $\xi(p \otimes q) = 0$.

\begin{lemma}\label{lem:derivation}
 The map $D(a) = \xi(\Delta(a))$ is a derivation.
\end{lemma}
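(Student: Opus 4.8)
The plan is to verify directly that $D(a) = \xi(\Delta(a))$ satisfies the Leibniz rule $D(ab) = D(a)\, b + a\, D(b)$. The essential ingredients are: the coproduct $\Delta$ is an algebra homomorphism (this is part of the bialgebra structure on $A$), the comultiplication has the prescribed ``primitive plus higher'' form $\Delta(a) = 1\otimes a + a\otimes 1 + \cdots$, and the map $\xi$ annihilates $p\otimes q$ whenever $q$ lies in $A_i$ with $i\neq 1$ while $\xi(p\otimes g) = p$.

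First I would write $\Delta(ab) = \Delta(a)\Delta(b)$ using the fact that $\Delta$ is multiplicative, and expand the product in $A\otimes A$. Applying $\xi$ to $\Delta(a)\Delta(b)$, I would argue that $\xi$ is a ``localization'' at the second tensor factor in degree $1$: to compute $\xi$ of a product of two simple tensors $(x_1\otimes x_2)(y_1\otimes y_2) = x_1y_1 \otimes x_2 y_2$, we need $x_2 y_2$ to have a degree-$1$ component, and after projecting to that component $\xi$ reads it off as the coefficient of $g$ (times $x_1 y_1$). The key point is that since $A_1$ is one-dimensional with generator $g$, and since all ``non-primitive'' tail terms of $\Delta(a)$ and $\Delta(b)$ live in $(A_1\oplus A_2\oplus\cdots)\otimes(A_1\oplus A_2\oplus\cdots)$, the only way a product of a term from $\Delta(a)$ and a term from $\Delta(b)$ contributes a nonzero value under $\xi$ is when one of the two terms is the primitive piece $1\otimes(\text{something})$ or $(\text{something})\otimes 1$ and the partner supplies a degree-$1$ second factor. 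Concretely, writing $\Delta(a) = 1\otimes a + a\otimes 1 + \sum a'\otimes a''$ and similarly for $b$, the cross terms in $\Delta(a)\Delta(b)$ that survive $\xi$ are exactly $(a\otimes 1)(1\otimes b) = a\otimes b$, contributing $\xi(a\otimes b)$ which is $a\cdot[\text{coeff of }g\text{ in }b]$, together with $(1\otimes a)(b\otimes 1) = b\otimes a$ contributing $b\cdot[\text{coeff of }g\text{ in }a]$, plus the terms $(1\otimes a)(1\otimes b)$, $(a\otimes 1)(b\otimes 1)$ and all products involving a tail term, all of which I would show are killed by $\xi$ for degree reasons (their second tensor factor is either in $A_0$, or is a product landing in degree $\geq 2$, or vanishes because it contains a degree-$0$ factor times a tail).

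Then I would reassemble: $\xi(\Delta(ab)) = \xi\big((1\otimes ab) + (ab\otimes 1) + \text{tail}(ab)\big)$ where on the left we get $D(ab)$ directly, and on the right the multiplicativity computation yields $D(a)\,b + a\,D(b)$ once we match $\xi(a\otimes b) + (\text{the }\xi\text{ applied to the primitive part of }\Delta(b)\text{ multiplied by }a) = a\,D(b)$ and symmetrically. More cleanly: apply $\xi$ to $\Delta(a)\Delta(b)$ by splitting $\Delta(b) = 1\otimes b + (\text{rest})$ and using that $\xi\big(\Delta(a)\cdot(1\otimes b)\big)$ — expanding $\Delta(a) = \sum a_{(1)}\otimes a_{(2)}$ in Sweedler notation — equals $\sum a_{(1)}\,\xi(a_{(2)}\otimes b)$; since $\xi(x\otimes b) = x\cdot\pi_1(b)$ where $\pi_1$ extracts the $g$-coefficient, this collapses via the counit-like property $\sum a_{(1)}\,\varepsilon(a_{(2)}) = a$... — actually the cleanest route is the brute expansion above, and I would present that.

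The main obstacle is bookkeeping: making the degree argument airtight so that every term of $\Delta(a)\Delta(b)$ not contributing to $D(a)b + aD(b)$ is provably annihilated by $\xi$. The subtlety is that $\xi$ is only constrained on simple tensors $p\otimes q$ with $q$ homogeneous, so I must be careful to first decompose every second tensor factor into homogeneous components before invoking the vanishing/evaluation rules, and to use one-dimensionality of $A_1$ to know that the degree-$1$ component of any second factor is a scalar multiple of $g$. Once that decomposition discipline is in place, the rest is a short linear-algebra check.
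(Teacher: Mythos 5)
Your overall strategy --- expand $\Delta(ab)=\Delta(a)\Delta(b)$ and decide by degree which simple tensors survive $\xi$ --- is the same as the paper's, but your identification of the surviving terms is wrong, and this is not a cosmetic bookkeeping issue. You assert that the only contributions come from $(a\otimes 1)(1\otimes b)$ and $(1\otimes a)(b\otimes 1)$, and that \emph{every} product involving a tail term of $\Delta(a)$ or $\Delta(b)$ is killed by $\xi$. That is false: if $a'\otimes a''$ is a tail term (both factors of positive degree), then $(a'\otimes a'')\cdot(b\otimes 1)=a'b\otimes a''$ has second factor $a''$, whose $A_1$-component need not vanish, so $\xi$ sends it to $a'b$ times the coefficient of $g$ in $a''$; likewise $(a\otimes 1)\cdot(b'\otimes b'')=ab'\otimes b''$ survives. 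These terms are essential, not junk: since $D(a)=\xi(\Delta(a))$ equals (coefficient of $g$ in $a$)$\cdot 1$ plus $\sum a'\cdot(\text{coefficient of }g\text{ in }a'')$, the tail of $\Delta$ is exactly what makes $D$ nontrivial in the motivating examples (e.g.\ the loop terms for $\tilde G_\lambda$). If your claimed cancellation held, your computation would produce only $a\cdot(\text{coeff.\ of }g\text{ in }b)+b\cdot(\text{coeff.\ of }g\text{ in }a)$, which is not $D(a)b+aD(b)$ in general, so the Leibniz rule would not come out. Your Sweedler detour also misfires at the step ``$\xi\bigl(\Delta(a)\cdot(1\otimes b)\bigr)=\sum a_{(1)}\,\xi(a_{(2)}\otimes b)$'': the product is $\sum a_{(1)}\otimes a_{(2)}b$, so $\xi$ sees $a_{(2)}b$, not $b$, in the second slot.

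The correct bookkeeping (the paper's) is: by linearity reduce to $a,b$ with no constant term (the case $a=1$ or $b=1$ is immediate since $D(1)=\xi(1\otimes 1)=0$), and expand both coproducts so that all second tensor factors are homogeneous, in a basis containing $1$ and $g$. Because $A$ is filtered, a product of two homogeneous second factors has a nonzero $A_1$-component only when one of them is $1$ and the other is $g$; hence the surviving products are exactly those of the form $(p_1\otimes 1)\cdot(p_2\otimes g)$ or $(p_1\otimes g)\cdot(p_2\otimes 1)$. The only term of $\Delta(a)$ with second factor $1$ is $a\otimes 1$, while the terms of $\Delta(b)$ with second factor $g$ --- coming both from $1\otimes b$ \emph{and} from the tail --- are precisely what $\xi$ collects into $D(b)$. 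Summing the two orders gives $a\,D(b)+D(a)\,b$, which is the identity you need.
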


\begin{proof}
 We have $\xi(\Delta(ab)) = \xi(\Delta(a) \Delta(b)))$. Pick a linear basis for $A$ that contains $1$ and $g$. Express $\Delta(a)$ and $\Delta(b)$ so that the right 
side of tensors consists of the basis elements. The only terms that are not killed by $\xi$ are the ones of the form $p \otimes g$. The only way we can get such terms is from
$(p_1 \otimes 1) \cdot (p_2 \otimes g)$ or  $(p_1 \otimes g) \cdot (p_2 \otimes 1)$. Those are exactly the terms that contribute to
$$\xi(\Delta(a))\cdot b + a \cdot \xi(\Delta(b)).$$ 
\end{proof}

\subsection{M\"obius construction}

Let $G=(P, \rho, E_1, E_2)$ be a dual graded graph. From $G$, form a pair of filtered graphs as follows. Let $G_1' = (P, \rho, E_1')$ have the set of edges $E_1'$
consisting of the same edges as $E_1$ plus $\#\{x|y\text{ covers }x\text{ in }G_1\}$ loops at each vertex $y \in P$. Let $G_2' = (P, \rho, E_2')$ 
have the set of edges $E_2'$ consisting 
of $$a'_2(x,y) = |\mu(x,y)|$$ edges between vertices $x$ and $y$, where $\mu$ denotes the M\"obius function in $(P, \rho, E_2)$. Compose $G_1'$ and $G_2'$
into an oriented filtered graph $\tilde G$.

As we shall see, in all three of our major examples, the M\"obius deformation forms a dual filtered graph. 
Unlike with the Pieri construction, we do not have the algebraic machinary explaining why 
the construction yields dual filtered graphs. Instead, we provide the proofs on a case-by-case basis. 
Nevertheless, the M\"obius construction is the most important one, as it is this construction that relates to insertion algorithms and thus to $K$-theory of Grassmannians and 
Lagrangian Grassmannians.

It is not the case that every dual graded graph's M\"obius deformation makes it a dual filtered graph. For example,
it is easy to see that the M\"obius deformation of the graphs in Example \ref{ex:mMR} and Example \ref{ex:SYTtree} are not dual filtered graphs.

\begin{problem}
 Determine for which dual graded graphs $G$ the M\"obius construction $\tilde G$ yields a dual filtered graph.
\end{problem}

\section{Major examples: Young's lattice} \label{sec:Y}

\subsection{Pieri deformations of Young's lattice}

Let $A = \Lambda$ be the ring of symmetric functions. Let $s_{\lambda}$, $p_{\lambda}$, and $h_{\lambda}$ be its bases of Schur functions, power sum symmetric functions, and 
complete homogeneous symmetric functions. Let $$f = h_1 + h_2 + \ldots. $$ Define up and down edges of a filtered graph $G = (P, \rho, E_1, E_2)$ by letting
$a_1(\mu, \nu)$ be the coefficient of $s_{\nu} \text{ in } p_1 s_{\mu}, \text{  and  } a_2(\mu, \nu)$ be the coefficient of $s_{\nu} \text{ in } f s_{\mu}.$

Recall that given two partitions, $\mu$ and $\nu$, $\mu/\nu$ forms a \textit{horizontal strip} if no two boxes of $\mu/\nu$ lie in the same column. For example, 
$(4,2,1)/(2,1)$ forms a horizontal strip while $(4,3,1)/(2,1)$ does not.
\begin{center}
 \begin{ytableau}
  \none & \none & $ $ & $ $ \\
  \none & $ $ \\
  $ $
 \end{ytableau}\hspace{1in}
 \begin{ytableau}
  \none & \none & $ $ & $ $ \\
  \none & $ $ & $ $\\
  $ $
 \end{ytableau}
\end{center}

\begin{lemma}
 The up edges $E_1$ coincide with those of Young's lattice, while the down edges $E_2$ connect shapes that differ by a horizontal strip.
\end{lemma}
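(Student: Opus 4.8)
The plan is to identify the two edge sets separately using classical Pieri-type rules for Schur functions. Recall we have set $f = h_1 + h_2 + \ldots$ and defined $a_1(\mu,\nu)$ as the coefficient of $s_\nu$ in $p_1 s_\mu$, and $a_2(\mu,\nu)$ as the coefficient of $s_\nu$ in $f s_\mu$.

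\begin{proof}
First consider the up edges $E_1$. Since $p_1 = h_1 = s_{(1)}$, the Pieri rule gives
$$p_1 s_\mu = s_{(1)} s_\mu = \sum_{\nu} s_\nu,$$
where the sum runs over all partitions $\nu$ obtained from $\mu$ by adding a single box (a horizontal strip of size $1$). Hence $a_1(\mu,\nu) = 1$ precisely when $\nu$ covers $\mu$ in Young's lattice, and $a_1(\mu,\nu)=0$ otherwise; so $E_1$ is exactly the edge set of Young's lattice.

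Now consider the down edges $E_2$. We have $f s_\mu = \sum_{k \geq 1} h_k s_\mu$, and the classical Pieri rule states that
$$h_k s_\mu = \sum_{\nu} s_\nu,$$
where the sum is over all $\nu \supseteq \mu$ with $\nu/\mu$ a horizontal strip of size $k$ (no two boxes in the same column). Summing over all $k \geq 1$, the coefficient of $s_\nu$ in $f s_\mu$ is $1$ if $\nu/\mu$ is a nonempty horizontal strip and $0$ otherwise (the decomposition by size $k = |\nu|-|\mu|$ shows no multiplicity can arise). Therefore $a_2(\mu,\nu) = 1$ exactly when $\nu \supsetneq \mu$ and $\nu/\mu$ is a horizontal strip, which is the asserted description of $E_2$.
\end{proof}

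There is really no obstacle here: the statement is an immediate consequence of the Pieri rules for multiplication of a Schur function by $h_1$ and by $h_k$, applied basis element by basis element. The only point requiring a word of care is that $f$ is an element of the completion $\hat\Lambda$ rather than of $\Lambda$ itself, so that $f s_\mu$ is an infinite sum; but since $A$ is filtered and each $h_k s_\mu$ lands in degree $|\mu|+k$, the coefficient of any fixed $s_\nu$ receives a contribution from exactly one term $h_k s_\mu$ (namely $k = |\nu| - |\mu|$), so the expansion is well-defined and the multiplicity count above is valid.
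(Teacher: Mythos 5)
Your proof is correct and follows the same route as the paper, which simply cites the Pieri rule together with the identity $p_1=h_1$. You have merely written out the details (including the harmless point about $f$ living in the completion), so there is nothing to add.
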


\begin{proof}
 The statement follows from the Pieri rule and the fact that $p_1=h_1$, see \cite{EC2}.
\end{proof}

In the figure below, the first six ranks are shown. The edges in the graph on the left are upward-oriented, and the edges on the graph on the right are downward-oriented.

\begin{minipage}{.5\textwidth}
 \begin{center}
\begin{tikzpicture}[scale=1]
\node (empty) at (0,0) {$\varnothing$};
\node (1) at (0,1) {\ytableausetup{boxsize=.15cm}\begin{ytableau}$ $ \end{ytableau}};
\node (2) at (1,2) {\begin{ytableau}$ $ & $ $  \end{ytableau}};
\node (11) at (-1,2) {\begin{ytableau}$ $ \\ $ $  \end{ytableau}};
\node (111) at (-2,3) {\begin{ytableau}$ $ \\ $ $ \\ $ $ \end{ytableau}};
\node (21) at (0,3) {\begin{ytableau}$ $ & $ $ \\ $ $ \end{ytableau}};
\node (3) at (2,3) {\begin{ytableau}$ $ & $ $ & $ $ \end{ytableau}};
\node (1111) at (-3,4) {\begin{ytableau}$ $ \\ $ $ \\ $ $ \\ $ $\end{ytableau}};
\node (211) at (-1,4) {\begin{ytableau}$ $ & $ $ \\ $ $ \\ $ $\end{ytableau}};
\node (22) at (0,4) {\begin{ytableau}$ $ & $ $ \\ $ $ & $ $ \end{ytableau}};
\node (31) at (1,4) {\begin{ytableau}$ $ & $ $ & $ $ \\ $ $\end{ytableau}};
\node (4) at (3,4) {\begin{ytableau}$ $ & $ $ & $ $ & $ $\end{ytableau}};
\draw (empty) -- (1)
(1) -- (2)--(3)--(4)
(1)--(11)--(111)--(1111)
(11)--(21)--(31)
(111)--(211)
(3)--(31)
(21)--(22)
(2)--(21)--(211);
\end{tikzpicture}
\end{center}
\end{minipage}
\begin{minipage}{.5\textwidth}
 \begin{center}
\begin{tikzpicture}[scale=1]
\node (empty) at (0,0) {$\varnothing$};
\node (1) at (0,1) {\ytableausetup{boxsize=.15cm}\begin{ytableau}$ $ \end{ytableau}};
\node (2) at (1,2) {\begin{ytableau}$ $ & $ $  \end{ytableau}};
\node (11) at (-1,2) {\begin{ytableau}$ $ \\ $ $  \end{ytableau}};
\node (111) at (-2,3) {\begin{ytableau}$ $ \\ $ $ \\ $ $ \end{ytableau}};
\node (21) at (0,3) {\begin{ytableau}$ $ & $ $ \\ $ $ \end{ytableau}};
\node (3) at (2,3) {\begin{ytableau}$ $ & $ $ & $ $ \end{ytableau}};
\node (1111) at (-3,4) {\begin{ytableau}$ $ \\ $ $ \\ $ $ \\ $ $\end{ytableau}};
\node (211) at (-1,4) {\begin{ytableau}$ $ & $ $ \\ $ $ \\ $ $\end{ytableau}};
\node (22) at (0,4) {\begin{ytableau}$ $ & $ $ \\ $ $ & $ $ \end{ytableau}};
\node (31) at (1,4) {\begin{ytableau}$ $ & $ $ & $ $ \\ $ $\end{ytableau}};
\node (4) at (3,4) {\begin{ytableau}$ $ & $ $ & $ $ & $ $\end{ytableau}};
\draw (empty) -- (1)
(1) -- (2)--(3)--(4)
(1)--(11)--(111)--(1111)
(11)--(21)--(31)
(111)--(211)
(3)--(31)
(21)--(22)
(2)--(21)--(211)
(3) to [bend left=20] (empty)
(4) to [bend left=25] (empty)
(2) to [bend left=15] (empty)
(3) to [bend left =15] (1)
(4) to [bend left =15](2)
(4) to [bend left =20] (1)
(21)--(1)
(211)--(11)
(22) to [bend left =10] (2)
(31) to [bend left =15] (11)
(31) to [bend left =10] (1)
(31)--(2);
\end{tikzpicture}
\end{center}
\end{minipage}

\begin{theorem} \label{thm:young}
The Pieri deformation of Young's lattice is a dual filtered graph with $$DU-UD=D+I.$$ 
\end{theorem}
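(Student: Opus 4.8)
The plan is to recognize the statement as a direct instance of the Pieri construction and to apply Theorem~\ref{thm:pieri}. Concretely, I would take $A=\Lambda$ with the Schur basis $\{s_\lambda\}$, the element $f=h_1+h_2+\cdots\in\hat\Lambda$ from the statement, and the derivation $D=p_1^{\perp}$, the Hall-adjoint of multiplication by $p_1$. Writing $\Lambda=\mathbb{Q}[p_1,p_2,\dots]$, this $D$ is just $\partial/\partial p_1$, which is manifestly a derivation; alternatively it is $\xi\circ\Delta$ for the usual coproduct on $\Lambda$ with $g=h_1$ the generator of $A_1$, so Lemma~\ref{lem:derivation} applies as well. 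The hypothesis that $A_1$ be one-dimensional holds since $\Lambda_1=\mathbb{Q}h_1$.

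Next I would check that the two edge sets produced by this data are the advertised ones. For the up edges, $a_1(\lambda,\mu)=[s_\lambda]\,D(s_\mu)=[s_\lambda]\,p_1^{\perp}s_\mu$, which by the adjoint Pieri rule equals $1$ if $\mu/\lambda$ is a single box and $0$ otherwise, so $E_1$ is exactly Young's lattice (equivalently, the graph's up operator is multiplication by $p_1$ on $\Lambda\cong\mathbb{Q}P$). For the down edges, $a_2(\lambda,\mu)=[s_\mu]\,s_\lambda f=\sum_{k\ge 1}[s_\mu]\,s_\lambda h_k$, and the Pieri rule shows this is $1$ precisely when $\mu/\lambda$ is a nonempty horizontal strip; this is the preceding Lemma.

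The one genuinely computational step, and the only place anything special about $\Lambda$ enters, is the hypothesis $D(f)=f+1$. I would derive it from $\sum_{n\ge 0}h_nt^n=\exp\bigl(\sum_{k\ge 1}p_kt^k/k\bigr)$: applying $\partial/\partial p_1$ multiplies the right-hand side by $t$, so $\sum_{n\ge 0}(\partial h_n/\partial p_1)\,t^n=t\sum_{n\ge 0}h_nt^n$, giving $\partial h_n/\partial p_1=h_{n-1}$ for every $n\ge 1$. Summing over $n\ge 1$ and using $h_0=1$ yields $D(f)=\sum_{n\ge 1}h_{n-1}=1+f$. (The same conclusion drops out of $\Delta h_n=\sum_{i+j=n}h_i\otimes h_j$: applying $\xi$ retains only the $h_{n-1}\otimes h_1$ summand.)

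With $D(f)=f+1$ established, Theorem~\ref{thm:pieri} immediately gives that the graph is dual filtered, and tracing its proof — the Leibniz identity $D(fs_\lambda)-fD(s_\lambda)=D(f)s_\lambda=(f+1)s_\lambda$, read on the transpose side where the graph's $U$ is $D$ and the graph's $D$ is the $f^{\perp}$ operator adjoint to multiplication by $f$ — pins the structure constants to $DU-UD=D+I$ exactly, since both coefficients in $D(f)=1\cdot f+1$ equal $1$. I do not anticipate a substantive obstacle; the only thing requiring care is the bookkeeping of the adjoint/transpose conventions, so that the derivation corresponds to the \emph{up} operator and multiplication by $f$ to the \emph{down} operator, and confirming the normalization so that one lands on $\alpha=\beta=1$ rather than on rescaled constants.
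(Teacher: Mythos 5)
Your proposal is correct and follows essentially the same route as the paper: it invokes Theorem~\ref{thm:pieri} with $A=\Lambda$, the Schur basis, $D=\partial/\partial p_1=p_1^{\perp}$, and $f=h_1+h_2+\cdots$, using exactly the two facts the paper isolates in Lemma~\ref{lem:young} (adjointness of $\partial/\partial p_1$ to multiplication by $p_1$, so the up edges are Young's lattice, and $D(f)=f+1$ from the exponential generating-function identity). Your verification of the down edges via the Pieri rule and your transpose bookkeeping pinning $\alpha=\beta=1$ match the paper's argument, so there is nothing to add.
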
 

Recall the following facts regarding the ring of symmetric functions. 

\begin{theorem} \cite{EC2, Mac}
\begin{itemize}
 \item[(a)] $\Lambda$ is a free polynomial ring in $p_1, p_2, \ldots$.
 \item[(b)] $\Lambda$ can be endowed with a standard bilinear inner product determined by $\langle s_{\lambda}, s_{\mu} \rangle = \delta_{\lambda, \mu}$.
\end{itemize}
\end{theorem}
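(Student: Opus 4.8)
The plan is to establish the two classical facts separately, both over a field of characteristic zero. For part (a), I would begin from the fundamental theorem of symmetric functions, which asserts that $\Lambda = \mathbb{Z}[e_1, e_2, \ldots]$ is a polynomial ring in the algebraically independent elementary symmetric functions $e_i$. (This itself follows from the standard leading-term argument: ordering the monomial symmetric functions $m_\lambda$ by dominance, the product $e_{\lambda'}$ has leading term $m_\lambda$, so the transition matrix to the $m$-basis is unitriangular and hence invertible.) To pass from the $e_i$ to the $p_i$, I would invoke Newton's identities
$$p_k - e_1 p_{k-1} + e_2 p_{k-2} - \cdots + (-1)^{k-1} e_{k-1} p_1 + (-1)^k k\, e_k = 0,$$
obtained by logarithmic differentiation of $\sum_{k \geq 0} e_k t^k = \prod_i (1 + x_i t)$. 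Solving recursively gives $e_k = (-1)^{k-1} k^{-1} p_k + (\text{a polynomial in } p_1, \ldots, p_{k-1})$, so the change of generators $(e_1, e_2, \ldots) \leftrightarrow (p_1, p_2, \ldots)$ is triangular with diagonal entries invertible over $\mathbb{Q}$. Hence $\mathbb{Q}[p_1, p_2, \ldots] = \Lambda_{\mathbb{Q}}$ and the $p_i$ are algebraically independent.

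For part (b), I would define the bilinear form directly on the power-sum basis, which is legitimate by part (a): set $\langle p_\lambda, p_\mu \rangle = z_\lambda \delta_{\lambda\mu}$, where $z_\lambda = \prod_{i \geq 1} i^{m_i} m_i!$ and $m_i$ is the number of parts of $\lambda$ equal to $i$. This form is manifestly symmetric and nondegenerate. The content of the statement is that the Schur functions are orthonormal for it, and here I would use the dual-bases criterion: two homogeneous bases $\{u_\lambda\}$ and $\{v_\lambda\}$ satisfy $\langle u_\lambda, v_\mu \rangle = \delta_{\lambda\mu}$ if and only if $\sum_\lambda u_\lambda(x) v_\lambda(y) = \sum_\lambda z_\lambda^{-1} p_\lambda(x) p_\lambda(y)$, the right-hand side being the Cauchy kernel $\prod_{i,j}(1 - x_i y_j)^{-1}$. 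Applying this with $u_\lambda = v_\lambda = s_\lambda$ reduces $\langle s_\lambda, s_\mu \rangle = \delta_{\lambda\mu}$ to the Cauchy identity $\sum_\lambda s_\lambda(x) s_\lambda(y) = \prod_{i,j}(1 - x_i y_j)^{-1}$.

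The main obstacle is thus the Cauchy identity, which I would prove bijectively via the RSK correspondence reviewed in Section~\ref{sec:dgg}: expanding $\prod_{i,j}(1 - x_i y_j)^{-1} = \sum_A \prod_{i,j}(x_i y_j)^{A_{ij}}$ as a sum over nonnegative integer matrices $A$, RSK supplies a weight-preserving bijection between such matrices and pairs $(P,Q)$ of semistandard tableaux of common shape, whose generating function is precisely $\sum_\lambda s_\lambda(x) s_\lambda(y)$. Verifying the dual-bases criterion itself is a short linear-algebra computation — expand each basis in the $p_\lambda$ and equate Cauchy kernels — so the RSK bijection is the only genuinely nontrivial input. (An alternative, entirely algebraic route proves the Cauchy identity from the Jacobi--Trudi determinant, but the bijective argument fits the combinatorial spirit of the paper.)
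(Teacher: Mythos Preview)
Your argument is correct and is essentially the standard textbook proof (in fact the one in \cite{EC2}). Note, however, that the paper does not supply its own proof of this theorem: it is stated with a citation to \cite{EC2, Mac} and treated as background. So there is no proof in the paper to compare against; your sketch simply fills in what those references contain.
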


Because of the first property, we can differentiate elements of $\Lambda$ with respect to $p_1$ by expressing them first as a polynomial in the $p_i$'s. 
We shall need the following two properties of $f$ and $p_1$.

\begin{lemma} \label{lem:young}
 \begin{itemize}
  \item[(a)] For any $h,g \in \Lambda$ we have $$\langle g, p_1 h \rangle = \langle \frac{d}{d p_1} g, h \rangle.$$
  \item[(b)]  We have $$\frac{d}{d p_1} f = f+1.$$
 \end{itemize}
\end{lemma}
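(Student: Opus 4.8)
The plan is to prove the two parts separately, each by reducing to standard structural facts about the Hall inner product on $\Lambda$ and the power-sum basis. For part (a) I would exploit that $\Lambda$ is a polynomial ring in $p_1, p_2, \ldots$ and that the $p_\lambda$ form an orthogonal basis with $\langle p_\lambda, p_\mu \rangle = z_\lambda \delta_{\lambda\mu}$, where $z_\lambda = \prod_i i^{m_i(\lambda)} m_i(\lambda)!$ and $m_i(\lambda)$ is the number of parts of $\lambda$ equal to $i$. Since both sides of the claimed identity are bilinear, it suffices to verify it when $g = p_\lambda$ and $h = p_\mu$ range over this basis. For part (b) I would use the exponential generating function for the complete homogeneous symmetric functions.

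For part (a), take $g = p_\lambda$ and $h = p_\mu$. On the left, $p_1 p_\mu = p_{\mu \cup (1)}$, the partition obtained by adjoining a part equal to $1$, so $\langle p_\lambda, p_1 p_\mu \rangle = z_{\mu \cup (1)} \delta_{\lambda, \mu \cup (1)}$. On the right, writing $m = m_1(\lambda)$ and factoring $p_\lambda = p_1^{m} \cdot (\text{terms free of } p_1)$, differentiation gives $\frac{d}{dp_1} p_\lambda = m\, p_{\lambda \setminus (1)}$, whence $\langle \frac{d}{dp_1} p_\lambda, p_\mu \rangle = m\, z_{\lambda \setminus (1)} \delta_{\lambda \setminus (1), \mu}$. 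Both Kronecker deltas are nonzero exactly when $\lambda = \mu \cup (1)$, and in that case the identity $z_\lambda = m \cdot z_{\lambda \setminus (1)}$, which follows from $m_1(\lambda)! = m \cdot (m-1)!$ with all other factors of $z$ unchanged, shows the two coefficients agree. This is precisely the classical statement that multiplication by $p_n$ is adjoint to $n\, \partial/\partial p_n$ specialized to $n = 1$, so I would cite \cite{Mac} if a shorter treatment is preferred.

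For part (b), recall the generating-function identity $\sum_{n \geq 0} h_n t^n = \exp\left(\sum_{k \geq 1} \frac{p_k}{k} t^k\right)$. Setting $t = 1$, legitimate in the completion $\hat A$ because each homogeneous component is a finite sum, yields $1 + f = \exp\left(\sum_{k \geq 1} \frac{p_k}{k}\right)$. Differentiating with respect to $p_1$ via the chain rule, only the $k = 1$ summand contributes to $\frac{d}{dp_1}\left(\sum_k p_k/k\right) = 1$, so $\frac{d}{dp_1}(1 + f) = \exp\left(\sum_k p_k/k\right) = 1 + f$; since $\frac{d}{dp_1}(1) = 0$ this gives $\frac{d}{dp_1} f = f + 1$. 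An equivalent route avoiding the completion-level exponential is the term-by-term identity $\frac{d}{dp_1} h_n = h_{n-1}$ for $n \geq 1$, read off from the same generating function or checked directly in low degrees, after which $\frac{d}{dp_1} f = \sum_{n \geq 1} h_{n-1} = \sum_{m \geq 0} h_m = 1 + f$.

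Neither part presents a serious obstacle, as both amount to recalling standard symmetric-function identities; the only points demanding care are the bookkeeping in part (a), where one must track the factor $m_1(\lambda)$ correctly when comparing $z_\lambda$ with $z_{\lambda \setminus (1)}$, and in part (b) the routine justification that setting $t = 1$ and differentiating term-by-term are valid operations in the completion $\hat A$.
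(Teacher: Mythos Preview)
Your proposal is correct and follows essentially the same approach as the paper: for (a) the paper also reduces by bilinearity to the power-sum basis and invokes orthogonality (citing \cite[Proposition 7.9.3]{EC2} where you spell out the $z_\lambda$ computation), and for (b) the paper uses the same exponential identity $1+f = \exp(\sum_{k\geq 1} p_k/k)$ and differentiates. Your treatment is simply more detailed than the paper's, which defers the verification to standard references.
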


\begin{proof}
By bilinearity, it suffices to prove the first claim for $g,h$ in some basis of $\Lambda$. Let us choose 
the power sum basis. Then we want to argue that $$\langle p_{\lambda}, p_1 p_{\mu} \rangle = \langle \frac{d}{d p_1} p_{\lambda}, p_{\mu} \rangle.$$
This follows from the fact that $p_{\lambda}$'s form an orthogonal basis and \cite[Proposition 7.9.3]{EC2}.

 For the second claim, using \cite[Proposition 7.7.4]{EC2} we have 
$$1 + f = e^{p_1 + \frac{p_2}{2} + \ldots},$$ which of course implies $\frac{d}{d p_1} (1+f) = 1+f$.
\end{proof}

Now we are ready for the proof of Theorem \ref{thm:young}.

\begin{proof}
 Applying Lemma \ref{lem:young} we see that $A = \Lambda$, $a_{\lambda} = s_{\lambda}$, $D = \frac{d}{d p_1}$ and $f = h_1 + h_2 + \ldots$ satisfy the conditions of 
Theorem \ref{thm:pieri}. The claim follows.
\end{proof}

\begin{remark}
We can make a similar argument for the analogous construction where the edges in $E_1$ are again those of Young's lattice and the edges in $E_2$
connect shapes that differ by a vertical strip. In this case we use $f = e_1 + e_2 + \ldots,$ where the $e_i$ are the elementary symmetric functions. 
\end{remark}

\subsection{M\"obius deformation of Young's lattice}\label{sec:MobiusYoung}

Given two partitions, $\lambda$ and $\nu$, $\lambda/\nu$ forms a \textit{rook strip} if no two boxes of $\lambda/\nu$ lie 
in the same row and no two boxes of $\lambda/\nu$ lie in the same column. In other words, $\lambda/\nu$ is a rook strip if it is a 
disconnected union of boxes. We state the following well-known result about the M\"obius function 
on Young's lattice.

\begin{proposition}
We have 
$$
|\mu(\lambda, \nu)| = 
\begin{cases}
1 & \text{if $\lambda/\nu$ is a rook strip;}\\
0 & \text{otherwise.}
\end{cases}
$$
\end{proposition}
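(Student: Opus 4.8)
The plan is to recognize the interval $[\nu,\lambda]$ in Young's lattice as a lattice of order ideals and then apply the standard evaluation of the Möbius function of such a lattice. Let $P_{\lambda/\nu}$ be the poset whose elements are the cells of the skew shape $\lambda/\nu$, ordered by $(i,j)\le(i',j')$ if and only if $i\le i'$ and $j\le j'$. The first step is to check that $I\mapsto\nu\cup I$ is an order isomorphism from the lattice $J(P_{\lambda/\nu})$ of order ideals of $P_{\lambda/\nu}$ onto the interval $[\nu,\lambda]$: a subset $I$ of the cells of $\lambda/\nu$ is an order ideal exactly when $\nu\cup I$ is the Young diagram of a partition (here one uses that the cells of $\lambda/\nu$ in each fixed row, and in each fixed column, are contiguous), and this bijection preserves containment in both directions. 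Hence $\mu(\lambda,\nu)=\mu_{J(P_{\lambda/\nu})}(\hat{0},\hat{1})$.

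The second step is the well-known fact that for any finite poset $P$ one has $\mu_{J(P)}(\hat{0},\hat{1})=(-1)^{|P|}$ if $P$ is an antichain and $\mu_{J(P)}(\hat{0},\hat{1})=0$ otherwise; this may simply be cited from \cite{EC2}, or proved by induction on $|P|$. For a self-contained argument, since $\hat{0}\ne\hat{1}$ we have $\mu_{J(P)}(\hat{0},\hat{1})=-\sum_{I\subsetneq P}\mu_{J(I)}(\hat{0},\hat{1})$, the sum over proper order ideals $I$, where the interval $[\varnothing,I]$ of $J(P)$ is itself $J(I)$. By the inductive hypothesis only antichain ideals contribute; an order ideal of $P$ is an antichain if and only if it is a subset of the set $\min(P)$ of minimal elements of $P$; and $\sum_{S\subseteq T}(-1)^{|S|}=0$ for any nonempty finite set $T$. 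Summing over $S\subseteq\min(P)$ therefore gives $0$ when $\min(P)\subsetneq P$ (i.e. $P$ is not an antichain, so every $S\subseteq\min(P)$ is admissible) and $(-1)^{|P|}$ when $\min(P)=P$ (where one must omit $S=P$).

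The third step translates the antichain condition back into shape language: $P_{\lambda/\nu}$ is an antichain if and only if $\lambda/\nu$ is a rook strip. Indeed, two cells in a common row or a common column are automatically comparable, so an antichain forces $\lambda/\nu$ to be a rook strip; conversely, if $(i,j)<(i',j')$ are distinct comparable cells of $\lambda/\nu$, then a short case analysis produces two cells in one row or in one column, using that $(i',j)\in\lambda/\nu$ as well whenever $i<i'$ (since then $\nu_{i'}\le\nu_i<j\le j'\le\lambda_{i'}$). Combining the three steps, $|\mu(\lambda,\nu)|=\bigl|\mu_{J(P_{\lambda/\nu})}(\hat{0},\hat{1})\bigr|$ equals $1$ if $\lambda/\nu$ is a rook strip and $0$ otherwise.

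I expect the main obstacle to be nothing deep, but rather the care needed in the first and third steps — verifying the order-ideal description of the interval and carrying out the case analysis identifying rook strips with antichains — while the Möbius evaluation in the second step is entirely routine and can be quoted from the literature.
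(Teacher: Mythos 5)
Your proposal is correct and follows essentially the same route as the paper: the paper's proof simply cites that Young's lattice is distributive and that the interval is Boolean exactly when $\lambda/\nu$ is a rook strip (Stanley's Example 3.9.6), which is precisely your identification of $[\nu,\lambda]$ with $J(P_{\lambda/\nu})$ together with the standard M\"obius evaluation for lattices of order ideals. The only difference is that you prove the cited fact self-containedly (the induction over proper ideals and the antichain--rook-strip equivalence), which is fine and adds detail rather than a new idea.
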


\begin{proof}
The statement follows from the fact that Young's lattice is a distributive lattice, and the interval $[\lambda,\mu]$ is isomorphic to a Boolean algebra if and only if $\mu/\lambda$ is a rook strip.
See \cite[Example 3.9.6]{EC1}.
\end{proof}

The M\"obius deformation of Young's lattice is shown below with upward-oriented edges shown on the left and downward-oriented edges shown on the right.
Notice that loops at a partition $\lambda$ may be indexed by inner corners of $\lambda$.

\begin{minipage}{.5\textwidth}
 \begin{center}
\begin{tikzpicture}[scale=1]
\node (empty) at (0,0) {$\varnothing$};
\node (1) at (0,1) {\ytableausetup{boxsize=.15cm}\begin{ytableau}$ $ \end{ytableau}};
\node (2) at (1,2) {\begin{ytableau}$ $ & $ $  \end{ytableau}};
\node (11) at (-1,2) {\begin{ytableau}$ $ \\ $ $  \end{ytableau}};
\node (111) at (-2,3) {\begin{ytableau}$ $ \\ $ $ \\ $ $ \end{ytableau}};
\node (21) at (0,3) {\begin{ytableau}$ $ & $ $ \\ $ $ \end{ytableau}};
\node (3) at (2,3) {\begin{ytableau}$ $ & $ $ & $ $ \end{ytableau}};
\node (1111) at (-3,4) {\begin{ytableau}$ $ \\ $ $ \\ $ $ \\ $ $\end{ytableau}};
\node (211) at (-1,4) {\begin{ytableau}$ $ & $ $ \\ $ $ \\ $ $\end{ytableau}};
\node (22) at (0,4) {\begin{ytableau}$ $ & $ $ \\ $ $ & $ $ \end{ytableau}};
\node (31) at (1,4) {\begin{ytableau}$ $ & $ $ & $ $ \\ $ $\end{ytableau}};
\node (4) at (3,4) {\begin{ytableau}$ $ & $ $ & $ $ & $ $\end{ytableau}};
\draw (empty) -- (1)
(1) -- (2)--(3)--(4)
(1)--(11)--(111)--(1111)
(11)--(21)--(31)
(111)--(211)
(3)--(31)
(21)--(22)
(2)--(21)--(211)
(1) to [out=60,in=100,distance=.5cm] (1)
(2) to [out=60,in=100,distance=.5cm] (2) 
(11) to [out=60,in=100,distance=.5cm] (11)
(3) to [out=60,in=100,distance=.5cm] (3) 
(4) to [out=60,in=100,distance=.5cm] (4) 
(111) to [out=60,in=100,distance=.5cm] (111) 
(1111) to [out=60,in=100,distance=.5cm] (1111)
(21) to [out=-60,in=-100,distance=.5cm] (21) 
(21) to [out=-60,in=-100,distance=.7cm] (21) 
(211) to [out=60,in=100,distance=.5cm] (211) 
(211) to [out=60,in=100,distance=.7cm] (211) 
(31) to [out=60,in=100,distance=.5cm] (31) 
(31) to [out=60,in=100,distance=.7cm] (31) 
(22) to [out=60,in=100,distance=.5cm] (22) ;
\end{tikzpicture}
\end{center}
\end{minipage}
\begin{minipage}{.5\textwidth}
 \begin{center}
\begin{tikzpicture}[scale=1]
\node (empty) at (0,0) {$\varnothing$};
\node (1) at (0,1) {\ytableausetup{boxsize=.15cm}\begin{ytableau}$ $ \end{ytableau}};
\node (2) at (1,2) {\begin{ytableau}$ $ & $ $  \end{ytableau}};
\node (11) at (-1,2) {\begin{ytableau}$ $ \\ $ $  \end{ytableau}};
\node (111) at (-2,3) {\begin{ytableau}$ $ \\ $ $ \\ $ $ \end{ytableau}};
\node (21) at (0,3) {\begin{ytableau}$ $ & $ $ \\ $ $ \end{ytableau}};
\node (3) at (2,3) {\begin{ytableau}$ $ & $ $ & $ $ \end{ytableau}};
\node (1111) at (-3,4) {\begin{ytableau}$ $ \\ $ $ \\ $ $ \\ $ $\end{ytableau}};
\node (211) at (-1,4) {\begin{ytableau}$ $ & $ $ \\ $ $ \\ $ $\end{ytableau}};
\node (22) at (0,4) {\begin{ytableau}$ $ & $ $ \\ $ $ & $ $ \end{ytableau}};
\node (31) at (1,4) {\begin{ytableau}$ $ & $ $ & $ $ \\ $ $\end{ytableau}};
\node (4) at (3,4) {\begin{ytableau}$ $ & $ $ & $ $ & $ $\end{ytableau}};
\draw (empty) -- (1)
(1) -- (2)--(3)--(4)
(1)--(11)--(111)--(1111)
(11)--(21)--(31)
(111)--(211)
(3)--(31)
(21)--(22)
(2)--(21)--(211)
(21)--(1)
(31)--(2)
(211)--(11);
\end{tikzpicture}
\end{center}
\end{minipage}
\ytableausetup{boxsize=.5cm}

\begin{theorem}
 The M\"obius deformation of Young's lattice forms a dual filtered graph with $$DU-UD=D+I.$$
\end{theorem}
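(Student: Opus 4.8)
The plan is to verify the operator relation $DU-UD=D+I$ directly, by comparing, for each partition $\tau$, the coefficient of $\tau$ on the two sides. First I would record the explicit shape of the two operators for the M\"obius deformation $\tilde G$. By the Proposition above, the down operator is $D\lambda=\sum\nu$, the sum over all partitions $\nu\subsetneq\lambda$ with $\lambda/\nu$ a rook strip (a nonempty, possibly disconnected, set of cells no two of which share a row or a column); note that the single-cell rook strips recover the covering edges of Young's lattice. The up operator is $U\lambda=c(\lambda)\lambda+\sum_{\mu\,\gtrdot\,\lambda}\mu$, where $c(\lambda)$ is the number of inner corners of $\lambda$ --- equivalently the number of cells that can be removed from $\lambda$, equivalently one less than the number $c(\lambda)+1$ of outer corners of $\lambda$.

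Next I would expand $DU\lambda$ and $UD\lambda$ and extract the coefficient of a partition $\tau$. Expanding and collecting terms gives that this coefficient equals, in $DU\lambda$,
\[
c(\lambda)\cdot\mathbf 1[\lambda/\tau\text{ a rook strip}]+\#\{\mu\gtrdot\lambda:\ \mu/\tau\text{ a rook strip}\},
\]
and in $UD\lambda$,
\[
c(\tau)\cdot\mathbf 1[\lambda/\tau\text{ a rook strip}]+\#\{\nu\lessdot\tau:\ \lambda/\nu\text{ a rook strip}\},
\]
while in $(D+I)\lambda$ it is $\mathbf 1[\tau=\lambda]+\mathbf 1[\lambda/\tau\text{ a rook strip}]$. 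So it remains to prove the resulting identity of integers for every pair $\lambda,\tau$, which I would organize by cases. If $\tau\not\subseteq\lambda$, one checks that the two ``count'' terms above coincide --- each equals $1$ exactly when $\tau\setminus\lambda$ is a single cell and $\lambda/(\tau\cap\lambda)$ is a rook strip --- so both sides are $0$; if $\tau\subsetneq\lambda$ but $\lambda/\tau$ is not a rook strip, then both count terms, and hence both sides, are $0$. This leaves the one substantive case: $\tau\subseteq\lambda$ with $R:=\lambda/\tau$ a rook strip, possibly empty (the empty case being $\tau=\lambda$).

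In this case ``$\mu/\tau$ is a rook strip'' for $\mu=\lambda+\gamma$ simply says that the outer corner $\gamma$ lies in no row and no column met by $R$, and similarly for $\nu=\tau-\beta$; so the required identity collapses to the following combinatorial statement about corners, which I expect to be the main obstacle:
\[
\begin{aligned}
&\#\{\text{inner corners of }\lambda\}+\#\{\text{outer corners of }\lambda\text{ meeting no row or column of }R\}\\
&\qquad=\#\{\text{inner corners of }\tau\}+\#\{\text{inner corners of }\tau\text{ meeting no row or column of }R\}+1.
\end{aligned}
\]
For $R=\varnothing$ this is just the elementary fact that a partition has one more outer corner than inner corner. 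For general $R$ I would argue by induction on $|R|$: peel off the cell of $R$ in its lowest occupied row, which is necessarily a removable cell of $\lambda$, and track how each of the four counts changes under this single-cell move; alternatively, compare the boundary lattice paths of $\lambda$ and of $\tau$ and observe that the convex and concave corners in which they differ are precisely those sitting in a row or column of $R$, then do a local tally. Reassembling the three cases then yields $DU-UD=D+I$. (This statement can also be deduced, once the relevant identification is in place, from Theorem~\ref{thm:pieri} applied to a Pieri presentation of $\tilde G$ on the $K$-theoretic deformation of the ring of symmetric functions, as anticipated by the M\"obius via Pieri phenomenon.)
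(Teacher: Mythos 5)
Your proposal is correct and follows essentially the same route as the paper: a direct comparison of coefficients, counting the loop and non-loop up-down and down-up paths in terms of inner and outer corners (the paper states the reduction to the rook-strip case more briskly than your explicit case analysis), and finishing with the fact that a partition has exactly one more outer corner than inner corner. The identity you single out as the main obstacle is handled in the paper without induction, by observing that the outer corners of $\lambda$ meeting no row or column of $R$ are precisely the outer corners of $\tau$ other than the $|R|$ cells of $R$, while the inner corners of $\tau$ meeting no row or column of $R$ are precisely the inner corners of $\lambda$ other than the cells of $R$, so your displayed identity telescopes to the outer-versus-inner corner count for $\tau$.
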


\begin{proof}

It suffices to show that $[\lambda](DU-UD)\mu=[\lambda]D\mu$ when $\mu$ covers $\lambda$ since we started with a 
dual graded graph.

Suppose shape $\lambda$ is covered by shape $\mu$. Each inner corner of $\mu$ contributes one to the coefficient
of $\lambda$ in $DU\mu$ via taking the loop corresponding to that inner corner of $\mu$ as an upward move
and then going down to $\lambda$.

Next, consider shape $\mu$ and mark the outer corners of $\lambda$ contained in $\mu$. The marked boxes will be inner corners 
of $\mu$. The shapes with an upward arrow from $\mu$ and a downward arrow to $\lambda$ are exactly those obtained by
adding one box to an outer corner of $\mu$ which is not adjacent to one of the marked boxes. There are 
$\#\{\text{outer corners of $\lambda$}\}-|\mu/\lambda|$ possible places to add such a box. Thus the coefficient of $\lambda$
in $DU\mu$ is $$\#\{\text{inner corners of $\mu$}\}+\#\{\text{outer corners of $\lambda$}\}-|\mu/\lambda|.$$

For down-up paths that involve three shapes, consider the inner corners of $\mu$ that are also inner corners of $\lambda$.
Every shape obtained by removing one of these inner corners determines one down-up path from $\mu$ to $\lambda$, and there
are $\#\{\text{inner corners of $\mu$}\}-|\mu/\lambda|$ such shapes. 
The remaining down-up paths are given by going from $\mu$ to $\lambda$ and then taking a loop at $\lambda$. Hence
the coefficient of $\lambda$ in $UD\mu$ is $$\#\{\text{inner corners of $\mu$}\}-|\mu/\lambda|+\#\{\text{inner corners of }\lambda\}.$$
Since all shapes have one more outer corner than inner corner, the coefficient of $\lambda$ in $(DU-UD)\mu$
is 1.

\end{proof}

\subsection{Hecke insertion}
We next describe an insertion and reverse insertion procedure that correspond to the M\"obius deformation of Young's lattice in the same way that RSK corresponds 
to Young's lattice, which is called \textit{Hecke insertion}. This procedure was introduced in \cite{BKSTY}. In other words, if we insert a word $w$ of length $n$ using Hecke insertion, 
the construction of the insertion tableau will be represented as a path of length $n$ downward that ends at $\varnothing$ in the 
M\"obius deformation of Young's lattice, and the construction of the recording tableau is represented as a path of length $n$ upward starting at $\varnothing$.

An \textit{increasing tableau} is a filling of a Young diagram with positive integers such that the entries in rows are strictly
increasing from left to right and the entries in columns are strictly increasing
from top to bottom. 

\begin{example}
 The tableau shown on the left is an increasing tableau. The tableau on the right is not an increasing tableau
because the entries in the first row are not strictly increasing.
\begin{center}
\begin{ytableau}
 1 & 2 & 4 & 5 \\
2 & 3 & 5 & 7 \\
6 & 7 \\
8
\end{ytableau}\hspace{1in}
\begin{ytableau}
 1 & 2 & 2 & 4 \\
3 & 4 \\
5
\end{ytableau}
\end{center}
\end{example}

We follow \cite{BKSTY} to give a description of Hecke (row) insertion of a positive integer $x$ 
into an increasing tableau $Y$ resulting in an increasing tableau $Z$.
The shape of $Z$ is obtained from the shape of $Y$ by adding at most one box. If a box is added in position $(i,j)$, then
we set $c=(i,j)$. 
In the case where no box is added, then $c=(i,j)$, where $(i,j)$ is a special corner indicating where the insertion
process terminated. We will use a parameter $\alpha\in\{0,1\}$ to keep track
of whether or not a box is added to $Y$ after inserting $x$ by setting
$\alpha=0$ if $c\in Y$ and $\alpha=1$ if $c\notin Y$. 
We use the notation $Z=(Y {\overset{H}{\longleftarrow}} x)$ to denote the resulting tableau, and we denote the 
outcome of the insertion by $(Z,c,\alpha)$.

We now describe how to insert $x$ into increasing tableau $Y$ by describing
how to insert $x$ into $R$, a row of $Y$. This insertion may 
modify the row and may produce an output integer, 
which we will insert into the next row. To begin the insertion process, insert $x$ into the first row of $Y$.  
The process stops when there is no output integer. 
The rules for insertion of $x$ into $R$ are as follows:
\begin{itemize}
\item[(H1)] If $x$ is weakly larger than all integers in $R$ and adjoining $x$ to the end of row $R$ 
results in an increasing tableau, then $Z$ is the resulting tableau and $c$ is the new
corner where $x$ was added.

\item[(H2)] If $x$ is weakly larger than all integers in $R$ and adjoining $x$ to the end of row $R$ 
does not result in an increasing tableau, then $Z=Y$, and $c$ is the 
box at the bottom of the column of $Z$ containing the rightmost box of the row $R$. 
\end{itemize}
For the next two rules, assume $R$ contains boxes strictly larger than $x$, and let $y$ be the smallest such box.
\begin{itemize}
\item[(H3)] If replacing $y$ with $x$ results in an increasing tableau, then replace $y$ with $x$. 
In this case, $y$ is the output integer to be inserted into the next row
\item[(H4)] If replacing $y$ with $x$ does not result in an increasing tableau, then do not change row $R$. 
In this case, $y$ is the output integer to be inserted into the next row.
\end{itemize}

\begin{example}\text{ }\\
\begin{center}
\begin{ytableau}
  $1$ & $2$ & $3$ & $5$ \\
$2$ & $3$ & $4$ & $6$ \\
$6$ \\
$7$ 
 \end{ytableau}\text{ }${\overset{H}{\longleftarrow}} 3$\text{ } $=$ \text{ }
\begin{ytableau}
  $1$ & $2$ & $3$ & $5$ \\
$2$ & $3$ & $4$ & $6$ \\
$6$ \\
$7$
 \end{ytableau} \end{center}
We use rule (H4) in the first row to obtain output integer $5$. Notice that the $5$ cannot replace the $6$ 
in the second row since it would be directly below the $5$ in the first row. 
Thus we use (H4) again and get output integer $6$. Since we cannot add this $6$ to the end of the third row, 
we use (H2) and get $c=(4,1)$. Notice that the shape did not
change in this insertion, so $\alpha=0$.
\end{example}
\begin{example}  \label{ex:ins2} \text{ }\\
\begin{center}
\begin{ytableau} 
 $2$ & $4$ & $6$ \\
$3$ & $6$ & $8$ \\
$7$
\end{ytableau}\text{ }${\overset{H}{\longleftarrow}} 5$\text{ } $=$ \text{ }
\begin{ytableau} 
 $2$ & $4$ & $5$ \\
$3$ & $6$ & $8$ \\
$7$ & $8$
\end{ytableau}
\end{center}
The integer $5$ bumps the $6$ from the first row using (H3). The $6$ is inserted into the second row, 
which already contains a $6$. Using (H4), the second row remains unchanged
and we insert $8$ into the third row. Since $8$ is larger than everything in the third row, we use (H1) 
to adjoin it to the end of the row. Thus $\alpha=1$.
\end{example}

Using this insertion algorithm, we define the \textit{Hecke insertion tableau} of a word $w=w_1w_2\cdots w_n$ to be 
$$P_H(w)=(\ldots((\varnothing\overset{H}{\longleftarrow} w_1)\overset{H}{\longleftarrow} w_2)\ldots) \overset{H}{\longleftarrow} w_n.$$

In \cite{BKSTY}, Buch, Kresch, Shimozono, Tamvakis, and Yong give the following algorithm for reverse Hecke 
insertion starting with the triple $(Z,c,\alpha)$ as described above and ending with a pair $(Y,x)$ consisting of an 
increasing tableau and a postive integer. 

\begin{itemize}
\item[(rH1)] If $y$ is the cell in square $c$ of $Z$ and $\alpha=1$, 
then remove $y$ and reverse insert $y$ into the row above.

\item[(rH2)] If $\alpha=0$, do not remove $y$, 
but still reverse insert it into the row above.
\end{itemize}
In the row above, let $x$ be the largest integer 
such that $x<y$. 

\begin{itemize}
\item[(rH3)] If replacing $x$ with $y$ results in an 
increasing tableau, then we replace $x$ with $y$ and reverse insert $x$ into the row above.

\item[(rH4)] If replacing
$x$ with $y$ does not result in an increasing tableau, leave the row unchanged and reverse insert $x$ into the row
above.

\item[(rH5)] If $R$ is the first row of the tableau, the final output consists of $x$ and the modified tableau.
\end{itemize}

\begin{theorem} \cite[Theorem 4]{BKSTY}\label{thm:insbijection}
 Hecke insertion $(Y,x)\mapsto (Z,c,\alpha)$ and reverse Hecke insertion $(Z,c,\alpha)\mapsto (Y,x)$ define
mutually inverse bijections between the set of pairs consisting of an increasing tableau and a positive integer and the
set of triples consisting of an increasing tableau, a corner cell of the increasing tableau, and $\alpha\in\{0,1\}$.
\end{theorem}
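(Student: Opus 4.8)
The plan is to establish that Hecke insertion and reverse Hecke insertion are mutually inverse, and that each is well-defined with the stated codomain. I would organize the argument around three claims. First, that Hecke insertion of $x$ into an increasing tableau $Y$ always terminates and produces a genuine increasing tableau $Z$ whose shape differs from that of $Y$ by at most one box, together with a well-defined corner $c$ and parameter $\alpha$. Second, that reverse Hecke insertion applied to any valid triple $(Z,c,\alpha)$ terminates and produces an increasing tableau $Y$ and a positive integer $x$. Third — the heart of the matter — that the two procedures undo each other. Since all three of Young's lattice, $K$-theoretic insertion, and the codomain bookkeeping are spelled out in \cite{BKSTY}, the honest thing to do here is to cite \cite[Theorem 4]{BKSTY}; but a self-contained sketch runs as follows.

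For well-definedness of forward insertion, I would track a single row-insertion step and show it preserves the increasing property of all rows above and including $R$, and that the output integer (when it exists) is strictly smaller than every entry weakly below it in its column, which is exactly the condition needed for the next row-insertion step to make sense. Rules (H1)--(H4) are exhaustive and mutually exclusive given the input, so the process is deterministic; termination is immediate since each step moves strictly downward through finitely many rows. The shape changes only in case (H1), and then by exactly one box in a corner position, so $Z$ differs from $Y$ by at most one box and $c,\alpha$ are well-defined. The symmetric analysis handles reverse insertion: I would check that (rH1)--(rH5) are exhaustive, that removing or not removing the cell in square $c$ according to $\alpha$ yields an increasing shape, and that the reverse-inserted integer at each stage is strictly larger than every entry weakly above it in its column, so the step into the row above is legal.

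The main obstacle is the mutual-inverse claim, and I would prove it by a ``reversibility of a single step'' argument propagated through the rows. The key observation is that each of the four forward rules (H1)--(H4) has a designated inverse among (rH1)--(rH5): case (H1) (box added, $\alpha=1$) is inverted by (rH1); case (H2) (no change, $\alpha=0$, $c$ at the bottom of a column) by (rH2) starting at that special corner; the bumping cases (H3)/(H4) by (rH3)/(rH4). The crucial point to verify is that when we are at row $R$ carrying an integer $y$ that was output by the forward insertion into the row below, the value $x$ selected by reverse insertion — the largest entry of $R$ with $x<y$ — is precisely the entry that forward insertion acted on, \emph{and} the case distinction (did replacing it yield an increasing tableau or not?) comes out the same on the way back up. This requires knowing that the entries of $R$ immediately below the affected cells in columns to the left and right constrain $x$ and $y$ exactly symmetrically in the forward and backward directions; this is the place where one must be careful with the ``does the replacement stay increasing'' tests in (H3)/(H4) versus (rH3)/(rH4), since these depend on the row below (forward) or the row above (backward). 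Once the single-step correspondence is pinned down, a downward induction shows $(Z,c,\alpha)\mapsto(Y,x)$ followed by $(Y,x)\mapsto(Z,c,\alpha)$ is the identity, and an upward induction gives the other composition. Finally, surjectivity onto the set of triples follows because reverse insertion is defined on \emph{every} triple consisting of an increasing tableau, one of its corner cells, and $\alpha\in\{0,1\}$, and it lands in the domain of forward insertion; combined with injectivity this gives the bijection. I would present the single-step lemma carefully and then state that the inductions are routine, or simply invoke \cite[Theorem 4]{BKSTY} for the full verification.
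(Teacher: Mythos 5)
The paper offers no proof of this statement at all: it is quoted directly from \cite{BKSTY} with the citation \cite[Theorem 4]{BKSTY} serving as the justification, so your decision to ultimately invoke that reference is exactly what the paper does. Your accompanying sketch (well-definedness and termination of both procedures, plus the single-step correspondence (H1)--(H4) versus (rH1)--(rH5) propagated by induction through the rows) is a reasonable outline of the argument in \cite{BKSTY} and raises no concerns beyond the details you explicitly defer.
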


We next describe the tableaux that will act as recording tableau for Hecke insertion.

A \textit{set-valued tableau} $T$ of shape $\lambda$ is a filling of the boxes
with finite, non-empty subsets of positive integers so that
\begin{enumerate}
 \item the smallest number in each box is greater than or equal to the largest
number in the box directly to the left of it (if that box is present), and
\item the smallest number in each box is strictly greater than the largest number 
in the box directly above it (if that box is present).
\end{enumerate}

We call of set-valued tableau \textit{standard} if it contains exactly the integers $[n]$ for some $n$.

Given a word $w=w_1w_2\ldots w_l$, we can associate a pair of tableaux $(P_H(w), Q_H(w))$, where $P_H(w)$ is the Hecke insertion tableau described previously 
and $Q_H(w)$ is a standard set-valued tableau called the \textit{Hecke recording tableau} obtained as follows. Start with $Q_H(\varnothing)=\varnothing$. At each step of the insertion of $w$, let $Q_H(w_1\ldots w_k)$ be 
obtained from $Q_H(w_1\ldots w_{k-1})$ by labeling the special corner, $c$, in the insertion of $w_k$ into $P_H(w_1\ldots w_{k-1})$ with the positive integer $k$. Then 
$Q_H(w)=Q_H(w_1 w_2\ldots w_l)$ is the resulting standard set-valued tableau.

\begin{example}
 Let $w$ be $15133$. We obtain $(P_H(w),Q_H(w))$ with the following sequence, where in column $k$, $Q_H(w_1 \ldots w_k)$ is shown below $P_H(w_1 \ldots w_k)$. 
\begin{center}
\begin{ytableau}
 1
\end{ytableau}\hspace{.3in}
\begin{ytableau}
 1 & 5
\end{ytableau}\hspace{.3in}
\begin{ytableau}
 1 & 5 \\
5
\end{ytableau}\hspace{.3in}
\begin{ytableau}
 1 & 3 \\
5
\end{ytableau}\hspace{.3in}
\begin{ytableau}
 1 & 3 \\
5
\end{ytableau} 
$=P_H(w)$
\end{center}
\vspace{.2in}
\begin{center}
\begin{ytableau}
 1
\end{ytableau}\hspace{.3in}
\begin{ytableau}
 1 & 2
\end{ytableau}\hspace{.3in}
\begin{ytableau}
 1 & 2 \\
3
\end{ytableau} \hspace{.3in}
\begin{ytableau}
 1 & 2 \\
34
\end{ytableau}\hspace{.3in}
\begin{ytableau}
 1 & 25 \\
34
\end{ytableau}
$=Q_H(w)$
\end{center}
\end{example}

Call a word $w$ {\it {initial}} if the letters appearing in it are exactly the numbers in $[k]$ for some positive integer $k$. 

\begin{example}
 The word $13422$ is initial since the letters appearing in it are the numbers from $1$ to $4$. On the other hand, the word $1422$ is not initial 
because the letters appearing in it are $1$, $2$, and $4$ and do not form a set $[k]$ for any $k$. 
\end{example}

As with RSK, Hecke insertion gives a useful bijection.

\begin{theorem} \cite{PP}
The map sending $w=w_1w_2\cdots w_n$ to $(P_H(w),Q_H(w))$ is a bijection between words and ordered pairs of tableaux of the same shape $(P,Q)$, where $P$ is an increasing tableau 
and $Q$ is a set-valued tableau with entries $\{1,2,\ldots,n\}$. It is also a bijection if there is an extra condition of being initial imposed both on $w$ and $P$.
\end{theorem}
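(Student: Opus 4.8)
The plan is to prove bijectivity by explicitly constructing the inverse map, namely by running reverse Hecke insertion. The strategy mirrors the classical proof that RSK is a bijection: given a pair $(P,Q)$ of the same shape with $P$ an increasing tableau and $Q$ a standard set-valued tableau on $\{1,2,\ldots,n\}$, I would recover $w$ letter by letter, reading off $w_n$ first, then $w_{n-1}$, and so on, at each stage peeling off one letter of the word and reducing $Q$ to a standard set-valued tableau on $\{1,\ldots,n-1\}$. The key observation is that the largest entry $n$ of $Q$ occupies exactly one cell $c$ of the shape (by the column/row strictness of set-valued tableaux), and whether $n$ appears alone in that cell or together with smaller entries tells us the value of $\alpha$: if $n$ is the only entry of $c$ then the last insertion step added the box $c$, so $\alpha=1$; if $c$ contains entries other than $n$, then $\alpha=0$ and $c$ is the special corner where the insertion terminated without changing the shape. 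This gives a valid input triple $(P,c,\alpha)$ for reverse Hecke insertion as in Theorem~\ref{thm:insbijection}.

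First I would check that the triple $(P,c,\alpha)$ produced this way is a legitimate input for reverse Hecke insertion: $c$ must be a corner cell of $\lambda=\mathrm{sh}(P)$, which holds because $c$ is the cell of $Q$ containing $n$ and $Q$ is a standard set-valued tableau, so $n$ cannot lie strictly inside the shape. Applying reverse Hecke insertion to $(P,c,\alpha)$ yields, by Theorem~\ref{thm:insbijection}, a pair $(P',w_n)$ where $P'$ is an increasing tableau. I would then delete $n$ from $Q$; if $c$ still contains entries after removing $n$, the shape of the resulting $Q'$ is unchanged, matching $\alpha=0$; if $c$ becomes empty, it is removed, matching $\alpha=1$ and the shape change in $P\to P'$. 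So $(P',Q')$ is again a valid pair of the same shape with $P'$ increasing and $Q'$ standard set-valued on $\{1,\ldots,n-1\}$, and we may iterate. After $n$ steps we have extracted $w=w_1\cdots w_n$ and reduced to the empty pair.

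The two things that need to be verified to close the argument are (i) that this reverse procedure is genuinely inverse to forward Hecke insertion composed with recording-tableau construction, and (ii) that the ``initial'' refinement holds. For (i), the single-step statement is exactly Theorem~\ref{thm:insbijection}: forward Hecke insertion and reverse Hecke insertion are mutually inverse bijections between pairs $(Y,x)$ and triples $(Z,c,\alpha)$. Thus I need only confirm that the bookkeeping of the recording tableau matches up, i.e. that labeling the special corner $c$ of the $k$-th insertion step with $k$ produces a tableau satisfying the set-valued conditions, and conversely that stripping off the maximal label of a standard set-valued tableau recovers the correct $(c,\alpha)$ — both of which follow from the description of how $c$ and $\alpha$ are determined in rules (H1)--(H4) together with the column-/row-strictness of set-valued tableaux. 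For (ii), I would observe that the forward map restricted to initial words lands in pairs with $P$ initial (an easy induction: the set of entries of $P_H(w_1\cdots w_k)$ is an interval $[m]$ when the letters of $w$ form an interval, since Hecke insertion of $x$ into a tableau with entry-set $[m]$ produces a tableau with entry-set $[m]$ or $[m]\cup\{x\}$), and conversely reverse insertion applied to an initial $P$ returns an initial word; hence the bijection restricts. The main obstacle I anticipate is purely organizational rather than conceptual: carefully matching the $\alpha$-value and the shape change against the structure of $Q$ at each step, and making sure the induction hypothesis (pair of equal shape, $P$ increasing, $Q$ standard set-valued) is preserved exactly — the combinatorial heavy lifting has already been done in \cite{BKSTY} via Theorem~\ref{thm:insbijection}, so the proof is essentially an induction on word length gluing together these single-step bijections.
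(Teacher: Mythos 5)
Your proposal is correct and follows what is essentially the intended argument: the paper gives no proof of this statement (it is cited from \cite{PP}), and the proof there — like the paper's own proof of the analogous shifted statement — proceeds exactly as you describe, iterating the single-step bijection of Theorem \ref{thm:insbijection}, with the largest label of $Q$, and whether it sits alone in its cell, encoding the corner $c$ and the value of $\alpha$ at each step. Your handling of the initial refinement (the entry set of $P_H(w)$ coincides with the letter set of $w$, since a letter is either placed by (H1)/(H3) in the first row or is already present by (H2)/(H4), and no entry is ever lost) is also the right observation.
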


\begin{remark}
Hecke insertion is closely related to the $K$-theoretic jeu de taquin algorithm introduced in \cite{ThY}. This relationship is explored in \cite{BuchSam,ThY2}
\end{remark}

\subsection{Hecke growth and decay}\label{sec:HeckeGrowth}
Given any word, $h=h_1h_2\ldots h_k$ containing
$n \leq k$ distinct numbers, we can create an $n \times k$ array with an $X$ in the $h_i^{th}$
square from the bottom of column $i$. Note that there can be multiple $X$'s in the same row but is at most
one $X$ per column. For example, if $h=121331$, we have 

\begin{center}
\begin{ytableau}
 $ $ & $ $ & $ $ & X & X & $ $ \\
 $ $ & X & $ $ & $ $ & $ $ & $ $ \\
 X & $ $ & X & $ $ & $ $ & X 
\end{ytableau}
\end{center}

We will label each of the four corners of each square with a partition and some of the horizontal edges of the squares with
a specific inner corner of the partition on the left vertex of the edge, which will indicate the box where the insertion terminates. We do this by recording the row 
of the inner corner at which the insertion terminates. We begin by labeling all corners
along the bottom row and left side of the diagram with the partition $\varnothing$.

\begin{center}
\begin{tikzpicture}[scale=1]
\node (00) at (0,0) {$\varnothing$};
\node (10) at (1,0) {$\varnothing$};
\node (20) at (2,0) {$\varnothing$};
\node (30) at (3,0) {$\varnothing$};
\node (40) at (4,0) {$\varnothing$};
\node (50) at (5,0) {$\varnothing$};
\node (01) at (0,1) {$\varnothing$};
\node (.5.5) at (.5,.5) {$X$};
\node (11) at (1,1) {$ $};
\node (21) at (2,1) {$ $};
\node (31) at (3,1) {$ $};
\node (41) at (4,1) {$ $};
\node (51) at (5,1) {$ $};
\node (02) at (0,2) {$\varnothing$};
\node (12) at (1,2) {$ $};
\node (22) at (2,2) {$ $};
\node (32) at (3,2) {$ $};
\node (1.51.5) at (1.5,1.5) {$X$};
\node (42) at (4,2) {$ $};
\node (52) at (5,2) {$ $};
\node (03) at (0,3) {$\varnothing$};
\node (13) at (1,3) {$ $};
\node (23) at (2,3) {$ $};
\node (33) at (3,3) {$ $};
\node (43) at (4,3) {$ $};
\node (53) at (5,3) {$ $};
\node (2.5.5) at (2.5,.5) {$X$};
\node (3.52.5) at (3.5,2.5) {$X$};
\node (4.53.5) at (4.5,2.5) {$X$};
\node (5,51.5) at (5.5,.5) {$X$};
\node (60) at (6,0) {$\varnothing$};
\node (61) at (6,1) {$ $};
\node (62) at (6,2) {$ $};
\node (63) at (6,3) {$ $};

\draw (00)--(10)--(20)--(30)--(40)--(50)--(60)
(01)--(11)--(21)--(31)--(41)--(51)--(61)
(02)--(12)--(22)--(32)--(42)--(52)--(62)
(03)--(13)--(23)--(33)--(43)--(53)--(63)

(00)--(01)--(02)--(03)
(10)--(11)--(12)--(13)
(20)--(21)--(22)--(23)
(30)--(31)--(32)--(33)
(40)--(41)--(42)--(43)
(50)--(51)--(52)--(53)
(60)--(61)--(62)--(63);
\end{tikzpicture}
\end{center}

To complete the labeling of the corners, suppose the corners $\mu$, $\lambda$, and $\nu$ are labeled, 
where $\mu$, $\lambda$, and $\nu$ are as in the picture below. We label $\gamma$ according to the following rules.

 \begin{center}
\begin{tikzpicture}[scale=1]
\node (A) at (-1,-1) {$\lambda$};
\node (B) at (1,-1) {$\nu$};
\node (C) at (-1,1) {$\mu$};
\node (D) at (1,1) {$\gamma$};
\draw (A) -- (B)
(C) -- (D)
(B) -- (D)
(A) -- (C);
\end{tikzpicture}
\end{center}

\textbf{If the square contains an X:}
\begin{itemize}
 \item[(1)] If $\mu_1=\nu_1$, then $\gamma/\mu$ consists of one box in row 1.
\item[(2)] If $\mu_1 \neq \nu_1$, then $\gamma=\mu$ and the edge between then is labeled by 
the row of the highest inner corner of $\mu$. 
\end{itemize}

\textbf{If the square does not contain an X and if either $\mu=\lambda$ or $\nu=\lambda$ with no label between $\lambda$ and $\nu$}:
\begin{itemize}
 \item[(3)] If $\mu=\lambda$, then set $\gamma = \nu$. If $\nu=\lambda$, then $\gamma = \mu$.  
\end{itemize}

\textbf{If $\nu \nsubseteq \mu$ and the square does not contain an X:}
\begin{itemize}
 \item[(4)] In the case where $\nu \nsubseteq \mu$, $\gamma = \nu \cup \mu$.  
\end{itemize}

\textbf{If $\nu \subseteq \mu$ and the square does not contain an X:}
\begin{itemize}
 \item[(5)] If $\nu/\lambda$ is one box in row $i$ and $\mu/\nu$ has no boxes in row $i+1$, 
then $\gamma/\mu$ is one box in row $i+1$.

\item[(6)] If $\nu/\lambda$ is one box in row $i$ and $\mu/\nu$ has a box in row $i+1$, then $\gamma=\mu$ and the edge between then is labeled $i+1$.

\item[(7)] If $\nu=\lambda$, the edge between them is labeld $i$, and there are no boxes of $\mu/\nu$
immediately to the right or immediately below this inner corner of $\nu$ in row $i$, then $\gamma=\mu$ with the edge between them labeled $i$.

\item[(8)] If $\nu=\lambda$, the edge between them is labeled $i$, and there is a box of $\mu/\nu$ directly below
this inner corner of $\nu$ in row $i$, then $\gamma=\mu$ with the edge between them labeled $i+1$.

\item[(9)] If $\nu=\lambda$, the edge between them is labeled $i$, and there is a box of $\mu/\nu$ immediately 
to the right of this inner corner of $\nu$ in row $i$ but no box of $\mu/\nu$ in row $i+1$, then $\gamma/\mu$ is one box
in row $i+1$.

\item[(10)] If $\nu=\lambda$, the edge between them is labeled $i$, and there is a box of $\mu/\nu$ immediately 
to the right of this inner corner of $\nu$ in row $i$ and a box of $\mu/\nu$ in row $i+1$, then $\gamma=\mu$ with the edge between them 
labeled $i+1$. 

\end{itemize}

We call the resulting array the \textit{Hecke growth diagram} of $h$. In our previous example with
$h=121342$, we would have the diagram below.

\begin{center}
\begin{tikzpicture}[scale=1]
\node (00) at (0,0) {$\varnothing$};
\node (10) at (1,0) {$\varnothing$};
\node (20) at (2,0) {$\varnothing$};
\node (30) at (3,0) {$\varnothing$};
\node (40) at (4,0) {$\varnothing$};
\node (50) at (5,0) {$\varnothing$};
\node (01) at (0,1) {$\varnothing$};
\node (.5.5) at (.5,.5) {$X$};
\node (11) at (1,1) {$1$};
\node (21) at (2,1) {$1$};
\node (31) at (3,1) {$1$};
\node (41) at (4,1) {$1$};
\node (51) at (5,1) {$1$};
\node (02) at (0,2) {$\varnothing$};
\node (12) at (1,2) {$1$};
\node (22) at (2,2) {$2$};
\node (32) at (3,2) {$21$};
\node (1.51.5) at (1.5,1.5) {$X$};
\node (42) at (4,2) {$21$};
\node (52) at (5,2) {$21$};
\node (03) at (0,3) {$\varnothing$};
\node (13) at (1,3) {$1$};
\node (23) at (2,3) {$2$};
\node (33) at (3,3) {$21$};
\node (43) at (4,3) {$31$};
\node (53) at (5,3) {$31$};
\node (2.5.5) at (2.5,.5) {$X$};
\node (3.52.5) at (3.5,2.5) {$X$};
\node (4.53.5) at (4.5,2.5) {$X$};
\node (5,51.5) at (5.5,.5) {$X$};
\node (60) at (6,0) {$\varnothing$};
\node (61) at (6,1) {$1$};
\node (62) at (6,2) {$21$};
\node (63) at (6,3) {$31$};

\node (2.51.1) at (2.5,1.15) {\tiny 1};
\node (5.51.1) at (5.5,1.15) {\tiny 1};
\node (5.52.1) at (5.5,2.15) {\tiny 2};
\node (4.53.1) at (4.5,3.15) {\tiny 1};
\node (5.53.1) at (5.5,3.15) {\tiny 2};

\draw (00)--(10)--(20)--(30)--(40)--(50)--(60)
(01)--(11)--(21)--(31)--(41)--(51)--(61)
(02)--(12)--(22)--(32)--(42)--(52)--(62)
(03)--(13)--(23)--(33)--(43)--(53)--(63)

(00)--(01)--(02)--(03)
(10)--(11)--(12)--(13)
(20)--(21)--(22)--(23)
(30)--(31)--(32)--(33)
(40)--(41)--(42)--(43)
(50)--(51)--(52)--(53)
(60)--(61)--(62)--(63);
\end{tikzpicture}
\end{center}

Let $\mu_0 = \varnothing \subseteq \mu_1 \subseteq \ldots \mu_k$ be the seqence of partitions across the top of the Hecke
growth diagram, and let $\nu_0=\varnothing \subseteq \nu_1 \subseteq \ldots \nu_n$ be the sequence of partitions 
on the right side of the Hecke growth diagram. These sequences correspond to increasing tableaux
$Q(h)$ and $P(h)$, respectively. If the edge between $\mu_i$ and $\mu_{i+1}$ is labeled $j$, then $\mu_i=\mu_{i+1}$, and the label $i+1$ of $Q(h_1\cdots h_{i+1})$ 
is placed in the box at the end of row $j$ of $Q(h_1\cdots h_i)$. In the example above, we have 

\begin{center}
 $P(h)=$
 \begin{ytableau}
  1 & 2 & 3  \\
  2 
 \end{ytableau}\hspace{1in}
 $Q(h)=$
 \begin{ytableau}
  1 & 2 & 45 \\
  36
 \end{ytableau}
\end{center}

\begin{theorem}
For any word $h$, the increasing tableau $P(h)$ and set-valued tableau $Q(h)$ obtained from the sequence of partitions across the right side of the Hecke growth diagram for $h$
and across the top of the Hecke growth diagram for $h$, respectively, are $P_H(h)$ and $Q_H(h)$, the Hecke insertion and recording tableau for $h$.
\end{theorem}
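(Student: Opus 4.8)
The plan is to establish the theorem by a local analysis that matches the growth rules (1)--(10) against the step-by-step behavior of Hecke insertion, by induction on the length of the word $h$. The key structural fact that makes such a local argument possible is the same one underlying Fomin's original growth-diagram correspondence: reading the right side of the diagram records the chain of insertion tableaux $P_H(h_1\cdots h_i)$, and reading the top records the chain that produces $Q_H$. So it suffices to show that a single square of the Hecke growth diagram faithfully simulates a single letter insertion in a single row.

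First I would set up the induction precisely. Fix a column $i$ (with letter $h_i$, equivalently an $X$ in row $h_i$ of that column) and read the square diagram bottom-to-top within that column. The partition labels going up the $i$-th vertical edge will be claimed to be the shapes of the partial insertion tableaux obtained as the letter $h_i$ (and the successive bumped letters) pass through rows $1, 2, 3, \ldots$; the edge labels will be claimed to record the row in which that passage currently sits (or has terminated). The induction hypothesis, for a fixed column, is: after processing the first $j$ rows of squares in that column, the top-left corner $\mu$ is the shape of $P_H(h_1\cdots h_{i-1})$, the bottom-left corner $\lambda$ and the edge data between them encode "we are about to insert value $x$ into row $j+1$" (with $x$ recoverable from the Hecke insertion state, and the "no label" case meaning no value is being inserted/the process has terminated), and $\nu$ together with its edge data to $\lambda$ likewise encodes the state of inserting $h_i$ into row $j$ of $P_H(h_1\cdots h_{i-1})$. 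I would then check that rules (1)--(10) compute exactly the correct next state $\gamma$ (new shape after the letter passes through row $j+1$) together with the correct terminating-row label — case (1)--(2) handling the initial entry into row 1 via the $X$, cases (3)--(4) being the ``transfer'' rules that carry information sideways when this column's $X$ is not yet relevant, and cases (5)--(10) being an exhaustive split of the genuine bump cases according to whether the previous row's growth was a genuine box addition (rules H1/H3) or a terminated/blocked insertion (rules H2/H4), and whether the new box would violate column-strictness directly below or row-strictness immediately to the right of the relevant corner. Each of these ten cases is a direct translation of exactly one of (H1)--(H4), (rH1)--(rH5), or the bookkeeping of where a box was added or where insertion stalled.

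The main obstacle — and the step I would spend the most care on — is verifying that rules (5)--(10) are \emph{exhaustive} and \emph{mutually exclusive}, i.e., that the encoded ``insertion state'' is rich enough to determine $\gamma$ uniquely and that every state that can arise in a legal Hecke insertion falls under exactly one rule. This requires showing that the only data one needs to propagate through the column is (a) one partition, and (b) one ``row'' label (or its absence), and that, given the previous square's output $(\mu \text{ vs. } \lambda$, with possible label$)$ and the current square's output $\nu$, the relative position of the at-most-one box of $\mu/\nu$ (if any) with respect to the at-most-one box of $\nu/\lambda$ (or the labeled corner) is always one of: directly below, immediately to the right, in the same row, in row $i{+}1$, or absent — and never anything else. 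This is where the increasing-tableau constraints (strict rows and strict columns), together with the observation already noted in the paper that loops correspond to inner corners and that shapes change by at most one box (Theorem \ref{thm:insbijection}), do the real work. Once this exhaustiveness/well-definedness is in hand, the matching with Hecke insertion is rule-by-rule bookkeeping.

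Finally I would close the loop on the $Q$-side: since each letter $h_i$ either adds a box (recorded as a genuine growth $\mu_i \subsetneq \mu_{i+1}$ along the top) or terminates at an inner corner of row $j$ (recorded as an edge label $j$ with $\mu_i = \mu_{i+1}$), the prescription ``place $i{+}1$ in the box at the end of row $j$'' builds precisely the set-valued recording tableau $Q_H$ of Section on Hecke insertion, because by Theorem \ref{thm:insbijection} the corner cell $c$ and the bit $\alpha$ determined by the insertion of $h_i$ are exactly the (row of the) terminal corner and the presence/absence of a new box. Combining the $P$-side induction and this $Q$-side identification yields $P(h) = P_H(h)$ and $Q(h) = Q_H(h)$, completing the proof.
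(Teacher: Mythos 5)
Your high-level plan (verify the theorem square by square, matching the growth rules against the steps of Hecke insertion, by induction) is indeed the paper's strategy, but the induction invariant you set up is not the one under which the rules (1)--(10) can be checked, and as stated it is false. In the Hecke growth diagram the vertical coordinate indexes letter \emph{values}, not rows of the insertion tableau: the partition at column $s$ and height $t$ is the shape cut out by the letters of value at most $t$ during the Hecke insertion of $h_1\cdots h_s$, so that (this is the paper's induction hypothesis) $\nu/\lambda$ has at most one box and $\mu/\lambda$ is the rook strip of all cells that at some point held the letter $t$. Your claim that the chain up column $i$ consists of ``the shapes of the partial insertion tableaux obtained as $h_i$ and the successive bumped letters pass through rows $1,2,3,\ldots$'' cannot be right: during a single insertion the intermediate shapes all equal the old shape until at most one box is added at the very end, whereas the column chain climbs from $\varnothing$ to the full shape in $n$ steps. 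Similarly, the diagram carries labels only on horizontal edges, recording the row in which an insertion \emph{terminated} (the data needed to build the set-valued $Q$), not a running ``current row'' of an ongoing bump on vertical edges; and your reading of the boundary is reversed --- the top of the diagram records the shapes of $P_H(h_1\cdots h_i)$ (hence $Q_H$), while the right side records the value-restriction chain of the single tableau $P_H(h)$ (hence $P_H$), not the chain of tableaux $P_H(h_1\cdots h_i)$.

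Because of this mismatch the crucial case analysis cannot be carried out as you describe. For instance, you reduce the verification of rules (5)--(10) to locating ``the at-most-one box of $\mu/\nu$'' relative to the box of $\nu/\lambda$, but $\mu/\nu$ sits inside the rook strip $\mu/\lambda$ and typically has several boxes (one for each earlier occurrence of the value $t$ in the word); the rules genuinely depend on whether that strip has a box in row $i+1$, or immediately to the right of the relevant corner, etc. The correct argument fixes the square at column $s$ and height $t$, uses the restriction invariant together with the two lemmas ($|\nu/\lambda|\leq 1$ and $\mu/\lambda$ a rook strip), and then checks each of (1)--(10) by analyzing how the insertion of the letter in column $s$ interacts with the cells containing $t$ --- including the subtle cases where a $t$ is bumped and merges, which is exactly where the edge labels (termination rows) are needed. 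Your proposal never identifies this invariant, so the ``rule-by-rule bookkeeping'' you defer to cannot be completed from what you have written.
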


For the following proof, again note that the loops at shape $\lambda$ may be indexed by inner corners of $\lambda$.

\begin{proof}
 Suppose that the square described in rules 1 through 10 is in row $t$ and column $s$. We will argue the result by induction. 

\begin{lemma} Oriented as in the square above, $|\nu/\lambda|\leq 1$.\end{lemma}
\begin{proof}
By the induction hypothesis, $\lambda$ is the recording tableau
after inserting the numbers in columns $1$ through $s-1$, and $\nu$ is the insertion tableau after inserting the 
number in column $s$. Since we are only inserting one number, $|\nu/\lambda|$ is at most 1.
\end{proof}

\begin{lemma} Oriented as in the square above, $\mu/\lambda$ is a rook strip, that is, no two boxes 
in $\mu/\lambda$ are in the same row or column.\end{lemma}
\begin{proof} By induction, $\mu/\lambda$ represents the positions of boxes filled with 
$t$ at some point in the insertion. Since insertion results in an increasing tableau, $\mu/\lambda$ must be a rook
strip.
\end{proof}

\begin{itemize}
 \item[(1)] First note that in any square with an X, $\lambda=\nu$ since there is exactly one square in each column
 and that an X in the square we are considering corresponds to inserting a $t$ into the tableau of shape $\lambda$.
Since, by the induction hypothesis, shapes along columns represent the insertion tableau, $\mu_1=\nu_1=\lambda_1$ 
means that before adding the $t$, there are no $t$'s in the first row of the insertion tableau.
Since $t$ is weakly the largest number inserted to this point, inserting the $t$ will result in a 
$t$ being added to the first row of the insertion tableau. Thus $\gamma=(\mu_1+1,\mu_2,\mu_3,\ldots)$.

\item[(2)] If $\mu_1\neq\nu_1=\lambda_1$, then after inserting the numbers in columns $1$ through $s-1$, there is
a $t$ at the end of the first row of the insertion tableau. It follows that inserting another $t$ will result in 
this $t$ merging with the $t$ at the end of the first row, and the special corner in this case becomes 
the bottom box of the last column, or in other words, the highest inner corner of $\mu$.

\item[(3)] If $\mu=\lambda$, then there is no X in row $t$ in columns $1$ to $s$. Thus there is no $t$ to add, and so
the insertion tableau $\nu$ should not change. Similarly for if $\nu=\lambda$.

\item[(4)] Suppose that $\nu/\lambda$ is one box in row i and $\mu/\lambda$ is a rook strip containing boxes in rows
$j_1,j_2,\ldots ,j_k$. Then $\nu \nsubseteq \mu$ implies that $i\notin \{j_1,j_2,\ldots ,j_k\}$. Since $\nu/\lambda$ 
is one box, the last action in the insertion sequence of $r$ into the insertion tableau of shape $\lambda$ 
is a box being added in row $i$. Since there is not a $t$ in row $i$, the bumping sequence when inserting $r$ 
into the insertion tableau of shape $\mu$ will not disturb the $t$'s.

\item[(5)] Suppose $\nu$ is $\lambda$ plus one box in position $(i,j)$. Then since $\nu \subseteq \mu$, $\mu/\lambda$ contains the box at $(i,j)$.
It follows that inserting $r$ into the insertion tableau of shape $\mu$ will result in bumping the $t$ in position $(i,j)$ since the bumping path 
of $r$ inserting in the insertion tableau of shape $\lambda$ ends by adding a box at $(i,j)$. Since there is no box in row $i+1$ of $\mu/\lambda$,
everything in row $i+1$ of the insertion tableau of shape $\mu$ is strictly less than $t$. It follows that the $t$ bumped from $(i,j)$ can be added 
to the end of row $i+1$.

\item[(6)] This is almost identical to the proof of Rule 4. Since there is now a box in row $i+1$ of $\mu/\lambda$, there is a $t$ at the end of row
$i+1$ of the insertion tableau of shape $\mu$. Inserting $r$ will bump the $t$ in row $i$ as above, but now the $t$ will merge with the $t$ in row 
$i+1$. Thus the special corner is now at the inner corner of $\mu$ in row $i+1$.

\item[(7)] There are two ways that inserting $r$ into the insertion tableau of shape $\mu$ will involve the $t$'s.
Assume $\nu$ is obtained from $\lambda$ by taking a loop in row $i$ and column $j$. 
Assume rows $k$ through $i$ all have length $j$.

Case 1: A box containing $y$ was bumped from row $k-1$, merged with itself in row $k$, and there was a $t$ to the right
of the $y$. Note that $k\neq i$. Then the $y$ duplicates and bumps the $t$ in row $k$, but this $t$ cannot be added to row
$k+1$ as it would be directly below the original $t$. Thus, by the rules of Hecke insertion, the special
corner is the box at the bottom of column $j$, box $(i,j)$. See the left-hand figure below where the dot indicates
the box $(i,j)$.

Case 2: Consider the situation in the right-hand figure below where the dot indicates
the box $(i,j)$. Assume that during the insertion of $r$, some
number merged with itself to the left of $z$ in row $k-1$ and $z$ is duplicated bumped to the next row.
Following the rules of Hecke insertion, $z$ bumps the $t$ in row $k$, but as it cannot replace the $t$, the $t$
remains at the end of row $k$. The $t$ that was bumped cannot be added to end of row $k+1$, so no new boxes
are created in the insertion and the special corner is the box at the bottom of column $j$, box $(i,j)$.

\begin{center}
\includegraphics[scale=0.7]{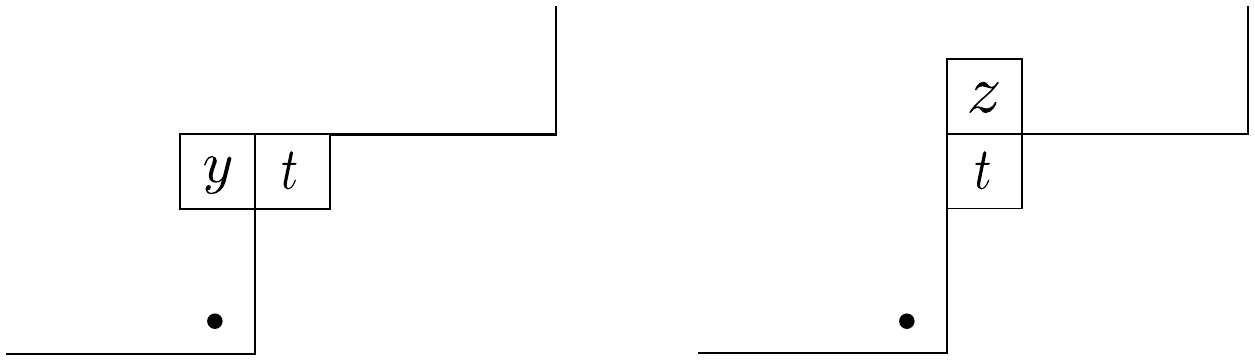}
\end{center}

\item[(8)] There are three ways that inserting $r$ into the insertion tableau of shape $\mu$ will involve the $t$'s.
Assume $\nu$ is obtained from $\lambda$ by taking a loop in row $i$ and column $j$. 
Assume rows $k$ through $i$ all have length $j$.

Case 1: If there is a $t$ in square $(i,j+1)$, we are in the situation described by Rule 9.

Cases 2 and 3 are described in the proof of Rule 7. The difference is that the bottom of column $j$ after
we insert $r$ is now the box $(i+1,j)$, so the special corner in each case will be $(i+1,j)$, as desired.

\begin{center}
\includegraphics[scale=0.7]{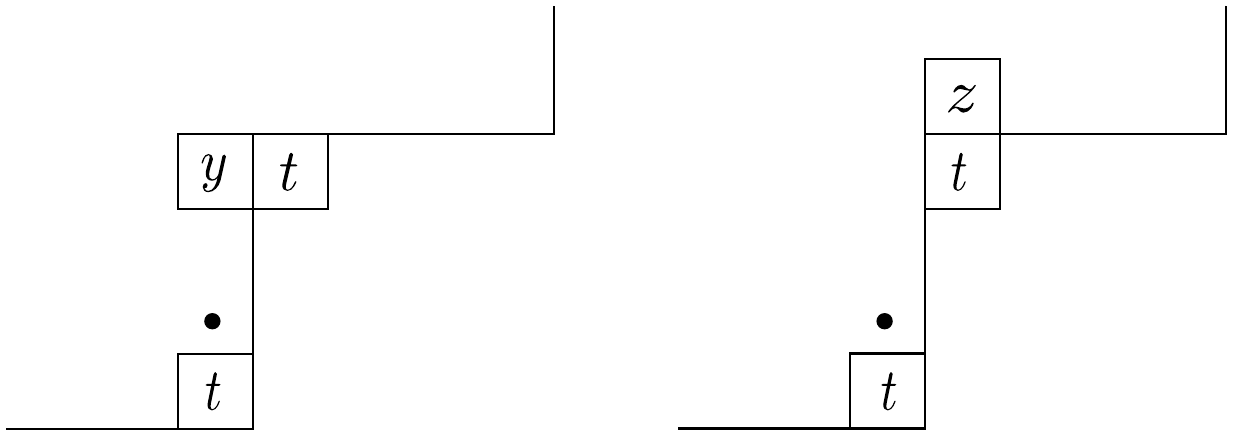}
\end{center}

\item[(9)] Note that a loop in row $i$ with a box of $\mu/\nu$ to its right implies that $\nu_{i-1}>\nu_i>\nu_{i+1}$.
There are two ways for the special corner of the insertion of $r$ into the insertion tableau of shape $\lambda$ could
have been $(i,j)$.

Case 1: Some $y$ was bumped from row $i-1$ and merged with itself in box $(i,j)$. Since there is a $t$ to the right
of box $(i,j)$ in $\mu$, this will result in duplicating and bumping $t$. Since there is nothing in row $i+1$
of $\mu/\nu$, everything in row $i+1$ of $\mu$ is strictly less than $t$. Hence the $t$ bumped from row $i$
can be added to the end of row $i+1$.

Case 2: As in the figure below, $z$ was duplicated and bumped from box $(i-1,j+1)$. This $z$ cannot replace the $t$ in 
box $(i,j+1)$ since it would be directly below the orginal $z$, but it bumps the $t$ to row $i+1$. 
Since there is nothing in row $i+1$
of $\mu/\nu$, everything in row $i+1$ of $\mu$ is strictly less than $t$. Hence the $t$ bumped from row $i$
can be added to the end of row $i+1$.

\begin{center}
\includegraphics[scale=0.7]{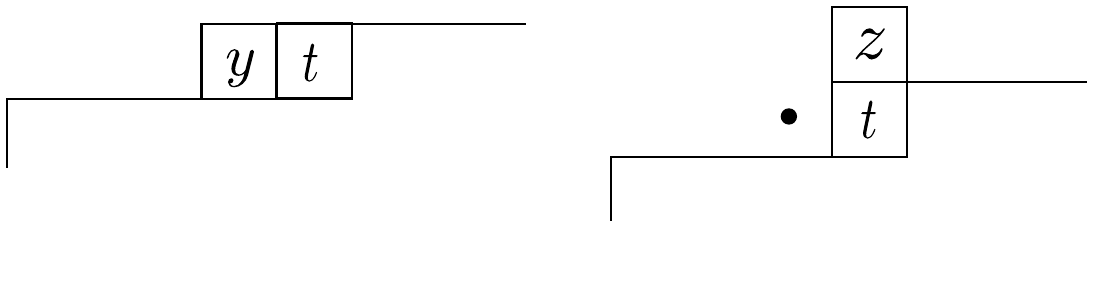}
\end{center}

\item[(10)] This is similar to the situation described in the proof of Rule 9. The difference is that 
a box in row $i+1$ of $\mu/\nu$ means there is already a $t$ at the end of row $i+1$ of $\mu$, so each insertion
will end with the bumped $t$ merging with itself at the end of row $i+1$.

\begin{center}
\includegraphics[scale=0.7]{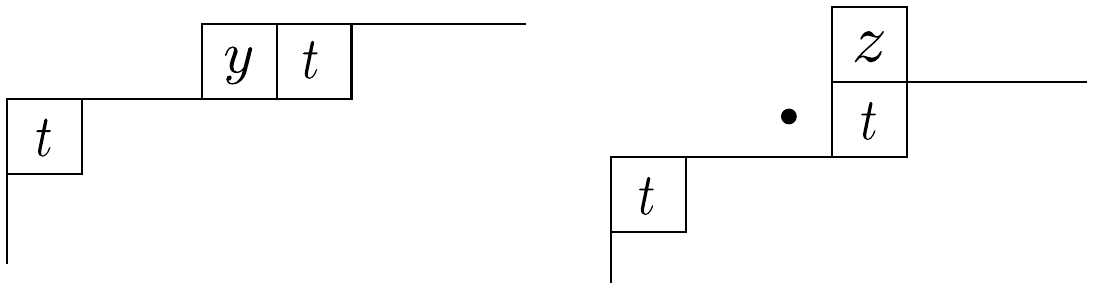}
\end{center}
\end{itemize}
\end{proof}

\subsection{M\"obius via Pieri} \label{ex:G}
The following shows that we have an instance of the M\"obius via Pieri phenomenon in the case of Young's lattice. Namely, the M\"obius deformation of Young's lattice is 
obtained by the Pieri construction from an appropriate $K$-theoretic deformation of the underlying Hopf algebra. 

Let $A$ be the subring of the completion $\hat\Lambda$ of the ring of symmetric functions with basis $\{G_\lambda\}$, the stable 
Grothendieck polynomials. For details, see \cite{B, LP}. Define the \textit{signless stable Grothendieck polynomials} $\tilde G_\lambda$ by omitting
the sign. Note that these coincide with the $\tilde K_\lambda$ defined in \cite{LP}. The structure constants will coincide with those of the 
stable Grothendieck polynomials up to sign. We recall these structure constants here.

Let $S_\mu$ be the superstandard tableau of shape $\mu$, that is, 
the first row of $S$ is filled with $1,2,\ldots,\mu_1$, the second row with $\mu_1+1,m_1+2,\ldots,\mu_1+\mu_2$, etc. 
This space has a bialgebra product structure given by 
$$\tilde G_\lambda \tilde G_\mu=\displaystyle\sum c_{\lambda,\mu}^\nu \tilde G_\nu,$$ where $c_{\lambda,\mu}^\nu$ is the number of 
increasing tableaux $R$ of skew shape $\nu/\lambda$ such that $P_H(\r(R))=S_\mu$. The coproduct structure is given by
$$\Delta(\tilde G_\nu)=\displaystyle\sum_{\lambda,\mu} d_{\lambda,\mu}^\nu \tilde G_\lambda \otimes \tilde G_\mu,$$ 
where $d_{\lambda,\mu}^\nu$ is the number of increasing tableaux $R$ of skew shape $\lambda \oplus \mu$ such that $P_H(\r(R))=S_\nu$, 
see \cite{BuchSam, PP,ThY3}. Here $\lambda\oplus\mu$ denotes the skew shape obtained by joining the rightmost top corner of $\lambda$ to the leftmost
bottom corner of $\mu$. For example, $(2,1)\oplus (2,2)=(4,4,2,1)/(2,2)$ is shown below.

\begin{center}\begin{ytableau} \none & \none & $ $ & $ $ \\ \none & \none & $ $ & $ $ \\ $ $ & $ $ \\ $ $ \end{ytableau}\end{center}

We consider the following Pieri construction in $A$. 
Let $g=\tilde G_1$, and define an operator $D$ by $D(\tilde G_\nu)=\xi(\Delta(\tilde G_\nu))$. We define a graph $G$, where elements are partitions, 
$a_1(\mu,\lambda)$ is the 
coefficient of $\tilde G_\mu$ in $D(\tilde G_\lambda)$, and
$a_2(\mu,\lambda)$ is the coefficient of $\tilde G_\lambda$ in $\tilde G_\mu \tilde G_1$.

\begin{proposition} \label{prop:mobpieriyoung}
The resulting graph coincides with the one obtained via M\"obius construction in Section \ref{sec:MobiusYoung}.
\end{proposition}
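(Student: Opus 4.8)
The plan is to show that the two graphs have the same up-edges and the same down-edges by identifying both $a_1$ and $a_2$ with the combinatorial quantities (inner-corner count / rook strips) computed in Section \ref{sec:MobiusYoung}. I would treat the down-edges and the up-edges separately, as they come from the product and the coproduct respectively.

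First, for the down-edges: $a_2(\mu,\lambda)$ is the coefficient of $\tilde G_\lambda$ in $\tilde G_\mu \tilde G_1$, which by the structure-constant description is $c_{\mu,(1)}^\lambda$, the number of increasing tableaux $R$ of skew shape $\lambda/\mu$ with $P_H(\r(R)) = S_{(1)}$. Since $S_{(1)}$ is the single box filled with $1$, and $P_H$ of a reading word equals the single box exactly when the reading word Hecke-inserts to a one-box tableau, I would check that this forces $R$ to be a filling of $\lambda/\mu$ by $1$'s that is strictly increasing along rows and columns, i.e. $\lambda/\mu$ must be a rook strip (a disjoint union of boxes), and then there is exactly one such $R$. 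This matches the Pieri-type rule on Young's lattice: in the M\"obius construction the down-edges are $|\mu(\mu,\lambda)|$ which is $1$ precisely when $\lambda/\mu$ is a rook strip. So the down-edges agree. The only subtlety is pinning down exactly which skew shapes $\lambda/\mu$ admit an increasing filling whose reading word Hecke-inserts to a single box; this is where I would be most careful, but it is essentially the observation that Hecke insertion of a word of all-equal letters (after taking the reading word of a rook strip) collapses to one box, and any repeated letter in a row or column would already violate strict increase.

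Next, for the up-edges: $a_1(\mu,\lambda)$ is the coefficient of $\tilde G_\mu$ in $D(\tilde G_\lambda) = \xi(\Delta(\tilde G_\lambda))$. By Lemma \ref{lem:derivation} (with $g = \tilde G_1$), $\xi$ picks out exactly the terms $\tilde G_\mu \otimes \tilde G_\nu$ of $\Delta(\tilde G_\lambda)$ with $\nu = (1)$, so $a_1(\mu,\lambda) = d_{\mu,(1)}^{\lambda}$, the number of increasing tableaux $R$ of skew shape $\mu \oplus (1)$ with $P_H(\r(R)) = S_\lambda$. Here I would unwind that $\mu \oplus (1)$ is just $\mu$ with an extra disconnected box attached at the lower-left, whose reading word is (reading word of $\mu$-part) followed by the single entry in the extra box, and that the $\mu$-part of $R$ must already be such that $P_H$ of its reading word is $S_\lambda$ restricted appropriately. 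The claim to extract is that $d_{\mu,(1)}^\lambda$ counts exactly the inner corners of $\lambda$: equivalently, that this counts the ways the M\"obius construction places a loop at $\lambda$ (recall the remark in Section \ref{sec:MobiusYoung} that loops at $\lambda$ are indexed by inner corners of $\lambda$) together with genuine covering up-edges. I would reconcile this by noting that in the M\"obius deformation the up-operator is $U_{\text{Young}} + (\text{loops})$, while $D(\tilde G_\lambda)$ expands with a term $\tilde G_\lambda$ itself (coefficient $=$ number of inner corners, the $K$-theoretic correction $D(g) = g+1$ built in) plus terms $\tilde G_\mu$ for $\mu$ covered by $\lambda$ in Young's lattice with coefficient $1$.

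The main obstacle I anticipate is the bookkeeping in the up-edge computation: correctly identifying $d_{\mu,(1)}^\lambda$ with ``$1$ if $\mu \lessdot \lambda$ in Young's lattice, else the number of inner corners of $\lambda$ if $\mu = \lambda$, else $0$,'' which requires understanding how Hecke insertion of a reading word with one extra appended letter behaves — in particular when appending that letter forces a merge (the $\alpha = 0$ case of Hecke insertion) versus genuinely adding a box, and matching the merge case precisely to inner corners. Once both the $a_1$ and $a_2$ identifications are in hand, the proposition follows immediately by comparing with the explicit edge sets described in Section \ref{sec:MobiusYoung}. I would also remark that this, combined with Theorem \ref{thm:pieri}, gives an independent proof that the M\"obius deformation of Young's lattice is a dual filtered graph, which is the content of the M\"obius via Pieri phenomenon advertised in the introduction.
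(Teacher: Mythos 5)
Your treatment of the down-edges is complete and is exactly the paper's argument: $c_{\mu,(1)}^{\lambda}$ counts increasing fillings of $\lambda/\mu$ whose reading word Hecke-inserts to the single box $S_{(1)}$, only all-$1$ words do so, and an increasing filling of $\lambda/\mu$ by $1$'s exists (and is then unique) precisely when $\lambda/\mu$ is a rook strip. The up-edge half, however, is where the actual content of the proposition lies, and there you stop short: you correctly reduce the problem to showing that $d_{\mu,(1)}^{\lambda}$ equals the number of inner corners of $\lambda$ when $\mu=\lambda$, equals $1$ when $\lambda/\mu$ is a single box, and vanishes otherwise, but you then declare this identification ``the main obstacle I anticipate'' and only gesture at analyzing merges versus added boxes in forward insertion. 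That step is not optional bookkeeping; it is the proof, and as written your proposal does not supply it.

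The missing ingredient is Theorem \ref{thm:insbijection} (the Hecke/reverse Hecke insertion bijection), which is how the paper closes the gap in two lines. In the paper's convention the single box of $\mu\oplus(1)$ sits at the \emph{upper right} (not the lower left, as you write --- though since you still read it last, nothing downstream breaks), so $\r(R)=\r(T)\,x$ where $T$ is the $\mu$-part and $x$ the extra entry, and hence $P_H(\r(R)) = T \overset{H}{\longleftarrow} x$, using the fact that $P_H(\r(T))=T$ for an increasing tableau $T$ (a fact you should state explicitly, as it is also implicit in your ``restricted appropriately'' phrase). Thus tableaux $R$ of shape $\mu\oplus(1)$ with $P_H(\r(R))=S_\lambda$ are exactly pairs $(T,x)$ with $T$ increasing of shape $\mu$ and $T \overset{H}{\longleftarrow} x = S_\lambda$; by the bijection with triples $(S_\lambda,c,\alpha)$, the $\alpha=0$ triples contribute one pair with $T$ of shape $\lambda$ for each inner corner $c$ of $\lambda$ (the loops), and the $\alpha=1$ triples contribute exactly one pair with $T$ of shape $\lambda$ minus an inner corner (the genuine covering edges), with no other shapes occurring. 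With that bijection invoked, your outline becomes the paper's proof; without it, the ``merge versus add a box'' analysis you propose would have to reprove this bijectivity by hand.
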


\begin{minipage}{.5\textwidth}
 \begin{center}
\begin{tikzpicture}[scale=1]
\node (empty) at (0,0) {$\varnothing$};
\node (1) at (0,1) {\ytableausetup{boxsize=.15cm}\begin{ytableau}$ $ \end{ytableau}};
\node (2) at (1,2) {\begin{ytableau}$ $ & $ $  \end{ytableau}};
\node (11) at (-1,2) {\begin{ytableau}$ $ \\ $ $  \end{ytableau}};
\node (111) at (-2,3) {\begin{ytableau}$ $ \\ $ $ \\ $ $ \end{ytableau}};
\node (21) at (0,3) {\begin{ytableau}$ $ & $ $ \\ $ $ \end{ytableau}};
\node (3) at (2,3) {\begin{ytableau}$ $ & $ $ & $ $ \end{ytableau}};
\node (1111) at (-3,4) {\begin{ytableau}$ $ \\ $ $ \\ $ $ \\ $ $\end{ytableau}};
\node (211) at (-1,4) {\begin{ytableau}$ $ & $ $ \\ $ $ \\ $ $\end{ytableau}};
\node (22) at (0,4) {\begin{ytableau}$ $ & $ $ \\ $ $ & $ $ \end{ytableau}};
\node (31) at (1,4) {\begin{ytableau}$ $ & $ $ & $ $ \\ $ $\end{ytableau}};
\node (4) at (3,4) {\begin{ytableau}$ $ & $ $ & $ $ & $ $\end{ytableau}};
\draw (empty) -- (1)
(1) -- (2)--(3)--(4)
(1)--(11)--(111)--(1111)
(11)--(21)--(31)
(111)--(211)
(3)--(31)
(21)--(22)
(2)--(21)--(211)
(1) to [out=60,in=100,distance=.5cm] (1)
(2) to [out=60,in=100,distance=.5cm] (2) 
(11) to [out=60,in=100,distance=.5cm] (11)
(3) to [out=60,in=100,distance=.5cm] (3) 
(4) to [out=60,in=100,distance=.5cm] (4) 
(111) to [out=60,in=100,distance=.5cm] (111) 
(1111) to [out=60,in=100,distance=.5cm] (1111)
(21) to [out=-60,in=-100,distance=.5cm] (21) 
(21) to [out=-60,in=-100,distance=.7cm] (21) 
(211) to [out=60,in=100,distance=.5cm] (211) 
(211) to [out=60,in=100,distance=.7cm] (211) 
(31) to [out=60,in=100,distance=.5cm] (31) 
(31) to [out=60,in=100,distance=.7cm] (31) 
(22) to [out=60,in=100,distance=.5cm] (22) ;
\end{tikzpicture}
\end{center}
\end{minipage}
\begin{minipage}{.5\textwidth}
 \begin{center}
\begin{tikzpicture}[scale=1]
\node (empty) at (0,0) {$\varnothing$};
\node (1) at (0,1) {\ytableausetup{boxsize=.15cm}\begin{ytableau}$ $ \end{ytableau}};
\node (2) at (1,2) {\begin{ytableau}$ $ & $ $  \end{ytableau}};
\node (11) at (-1,2) {\begin{ytableau}$ $ \\ $ $  \end{ytableau}};
\node (111) at (-2,3) {\begin{ytableau}$ $ \\ $ $ \\ $ $ \end{ytableau}};
\node (21) at (0,3) {\begin{ytableau}$ $ & $ $ \\ $ $ \end{ytableau}};
\node (3) at (2,3) {\begin{ytableau}$ $ & $ $ & $ $ \end{ytableau}};
\node (1111) at (-3,4) {\begin{ytableau}$ $ \\ $ $ \\ $ $ \\ $ $\end{ytableau}};
\node (211) at (-1,4) {\begin{ytableau}$ $ & $ $ \\ $ $ \\ $ $\end{ytableau}};
\node (22) at (0,4) {\begin{ytableau}$ $ & $ $ \\ $ $ & $ $ \end{ytableau}};
\node (31) at (1,4) {\begin{ytableau}$ $ & $ $ & $ $ \\ $ $\end{ytableau}};
\node (4) at (3,4) {\begin{ytableau}$ $ & $ $ & $ $ & $ $\end{ytableau}};
\draw (empty) -- (1)
(1) -- (2)--(3)--(4)
(1)--(11)--(111)--(1111)
(11)--(21)--(31)
(111)--(211)
(3)--(31)
(21)--(22)
(2)--(21)--(211)
(21)--(1)
(31)--(2)
(211)--(11);
\end{tikzpicture}
\end{center}
\end{minipage}
\ytableausetup{boxsize=.5cm}

\begin{proof}
To verify this claim, we first show why the coefficient of $\tilde G_\nu$ in $\tilde G_\lambda \tilde G_1$ is 1 if $\nu/\lambda$ is a rook 
strip and is 0 otherwise. This follows from that fact that the only words that Hecke insert into $S_1$ are words consisting only of the letter 
$1$. Thus, $\r(R)=11\cdots 1$ for any $R$ contributing to $c_{\lambda,1}^\nu$. Increasing tableaux of shape $\nu/\lambda$ with row word $11\cdots 1$ 
are precisely rook strips. 

Secondly, it must be true that $d_{\lambda,1}^\lambda$ is the number of inner corners of $\lambda$ and if $\nu\neq\lambda$, $d_{\lambda,1}^\nu$ is 1 if 
$\nu$ is obtained from $\lambda$ by adding one box and 0 otherwise. Suppose $\nu\neq\lambda$ and consider all 
skew tableaux $R$ of shape $\lambda \oplus 1$ such that $P_H(\r(R))=S_\nu$. To obtain such $\lambda$, simply reverse insert an entry of 
$S_\nu$ with $\alpha=1$ and use the output integer to fill the single box in $\lambda\oplus 1$. The resulting shapes are the shapes 
obtained from $\nu$ by deleting one inner corner of $\mu$. Similarly, if $\nu=\lambda$, we reverse insert each inner corner of $S_\lambda$ using $\alpha=0$
and then let $R$ be $S_\lambda\oplus x$, where $x$ is the result of the reverse insertion.
\end{proof}

\section{Major examples: shifted Young's lattice} \label{sec:sY}

\subsection{Pieri deformation of shifted Young's lattice}

Let $A = \Lambda'$ be the subring of the ring of symmetric functions generated by the odd power sums $p_1, p_3, \ldots$. 
Let $Q_{\lambda}$ and $P_{\lambda}$ be the bases of Schur $Q$ and Schur $P$ functions for this ring. Here $\lambda$ varies over the set 
$P$ of partitions with distinct parts. We refer the reader to \cite{Mac} for background. 
Let $q_i$ be defined by $$q_i = \sum_{0 \leq j \leq i} e_j h_{i-j},$$ and let $f = q_1 + q_2 + \ldots.$

Define up and down edges of a filtered graph $G = (P, \rho, E_1, E_2)$ by letting 
$a_2(\mu, \nu)$ be the coefficient of $Q_\mu$ in $fQ_\nu$ and $a_1(\mu,\nu)$ be the coefficient of $P_\mu$ in $p_1P_\nu$.

We will fill a shifted partition $\lambda$ with ordered alphabet $1'<1<2'<2<3'<3\ldots$ according to the usual rules: 
\begin{itemize}
 \item The filling must be weakly increasing in rows and columns. 
 \item There can be at most one instance of $k'$ in any row. 
 \item There can be at most one instance of $k$ in any column.
 \item There can be no primed entries on the main diagonal. 
\end{itemize}

For example, the shifted partition below is filled according to the rules stated above.
\begin{center}
 \begin{ytableau}
  1 & 2' & 2 & 2 & 4' \\
  \none & 2 & 3' & 5' & 5 \\
  \none & \none & 4 & 5' \\
  \none & \none & \none & 6
 \end{ytableau}

\end{center}

Given two shifted shapes, $\mu$ and $\nu$, we say that $\mu/\nu$ forms a \textit{border strip} if it contains no 2-by-2 square. 

\begin{lemma}
The Pieri deformation of the shifted Young's lattice is formed by adding downward-oriented edges from $\mu$ to $\nu$ whenever $\mu/\nu$ forms a border strip. 
The number of such edges added is the same as the number of ways to fill the boxes of the border strip with $k$ and $k'$ according to the usual rules. 
\end{lemma}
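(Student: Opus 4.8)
The plan is to recognize the down edges as the multiplicities in a Pieri-type product for Schur $Q$-functions, and then to translate that product into the stated count of fillings. First I would identify $q_r$ with the one-row Schur $Q$-function: from $q_i = \sum_{0\le j\le i} e_j h_{i-j}$ we get the generating identity
$$\sum_{r\ge 0} q_r t^r = \Big(\sum_{j\ge 0} e_j t^j\Big)\Big(\sum_{k\ge 0} h_k t^k\Big) = \prod_i \frac{1+x_i t}{1-x_i t},$$
which is exactly the generating function of the single-row Schur $Q$-functions, so $q_r = Q_{(r)}$ (see \cite{Mac}). Hence $f = \sum_{r\ge 1} Q_{(r)}$, and by commutativity of $\Lambda'$ the down-edge multiplicity is
$$a_2(\mu,\nu) = [Q_\mu]\, f Q_\nu = \sum_{r\ge 1} [Q_\mu]\, Q_{(r)} Q_\nu.$$
Because $Q_{(r)} Q_\nu$ is homogeneous of degree $|\nu|+r$, only the term $r = |\mu| - |\nu|$ contributes, so everything reduces to understanding $[Q_\mu]\, Q_{(r)} Q_\nu$ with $r = |\mu/\nu|$.

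Next I would invoke the Pieri rule for Schur $Q$-functions (\cite{Mac}; equivalently, the single-part specialization of the shifted Littlewood--Richardson rule), which states that $Q_{(r)} Q_\nu = \sum_\mu c^\mu_{\nu,(r)} Q_\mu$, where $c^\mu_{\nu,(r)}$ is the number of fillings of the skew shifted shape $\mu/\nu$ with the marked alphabet $\{1' < 1\}$ obeying the tableau rules recalled in this section (weakly increasing along rows and columns, at most one $1'$ per row, at most one $1$ per column, no primed entry on the main diagonal); for a content with a single part the ballot/lattice condition of the LR rule is vacuous, so this is literally a count of such fillings. Relabelling $1'\mapsto k'$ and $1\mapsto k$, $c^\mu_{\nu,(r)}$ is precisely ``the number of ways to fill the boxes of $\mu/\nu$ with $k$ and $k'$ according to the usual rules.'' Summing over $r$ as above (for fixed $\mu,\nu$ only $r = |\mu/\nu|$ occurs), this gives $a_2(\mu,\nu)$ equal to the number of such fillings of $\mu/\nu$, which is the multiplicity statement.

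Finally I would pin down exactly which pairs $\mu \supsetneq \nu$ admit an edge, via a short $2\times 2$ obstruction. Suppose $\mu/\nu$ contained a $2\times 2$ block of cells $(i,j),(i,j{+}1),(i{+}1,j),(i{+}1,j{+}1)$. In any legal filling, $(i,j)$ cannot equal $k$: otherwise $(i{+}1,j)\ge k$ forces $(i{+}1,j)=k$, putting two $k$'s in column $j$. So $(i,j)=k'$. Then $(i,j{+}1)$ cannot equal $k'$, since row $i$ would then contain two $k'$'s, so $(i,j{+}1)=k$; but then $(i{+}1,j{+}1)\ge k$ forces $(i{+}1,j{+}1)=k$, putting two $k$'s in column $j{+}1$ --- a contradiction. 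Hence a legal filling can exist only when $\mu/\nu$ has no $2\times 2$ square, i.e. is a border strip; when none exists, the count (and the number of edges) is $0$. Combining with the previous paragraph proves the lemma: the down edges of the Pieri deformation go from $\mu$ to $\nu$ only when $\mu/\nu$ is a border strip, with multiplicity the number of $\{k,k'\}$-fillings of $\mu/\nu$.

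The main obstacle I anticipate is bookkeeping rather than conceptual: one must make sure the marking conventions in the quoted Schur-$Q$ Pieri rule (whether primes are limited per row or per column, and the treatment of the diagonal) match the ``usual rules'' fixed at the start of this subsection, and that the single-part specialization genuinely kills the lattice condition in the LR rule. Once the conventions are aligned, the proof is just the degree-counting reduction above together with the elementary $2\times 2$ argument.
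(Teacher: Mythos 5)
Your argument is correct and follows essentially the same route as the paper, which simply cites Macdonald's Pieri rule for Schur $P$/$Q$-functions (formula (8.15) in \cite{Mac}); you additionally spell out the identification $q_r=Q_{(r)}$, the degree reduction, and the $2\times 2$ obstruction that the paper leaves implicit. The one point to keep straight is exactly the bookkeeping you flag yourself: the coefficient of $Q_\mu$ in $q_rQ_\nu$ counts fillings with \emph{no primed entries on the main diagonal} (the skew Schur $P$-function convention), which indeed matches the ``usual rules'' fixed in this section, so the proof goes through.
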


\begin{proof}
 Follows from formula (8.15) in \cite{Mac}, which is an analogue of the Pieri rule for Schur $P$ functions, and the fact that $q_1 = 2 p_1$. 
\end{proof}

The first six ranks of the Pieri deformation are shown below.

\begin{minipage}{.5\textwidth}
\begin{center}
\begin{tikzpicture}
       \node (empty) at (0,0) {$\varnothing$};
  \node (1) at (0,1) {\ytableausetup{boxsize=.15cm}\begin{ytableau}$ $ \end{ytableau}};
  \node (2) at (0,2) {\begin{ytableau}$ $ & $ $  \end{ytableau}};
  \node (3) at (1,3) {\begin{ytableau}$ $ & $ $ & $ $ \end{ytableau}};
  \node (21) at (-1,3) {\begin{ytableau}$ $ & $ $ \\ \none & $ $ \end{ytableau}};
  \node (31) at (0,4) {\begin{ytableau}$ $ & $ $ & $ $\\ \none & $ $ \end{ytableau}};
  \node (4) at (2,4) {\begin{ytableau}$ $ & $ $ & $ $ & $ $ \end{ytableau}};
  \node (32) at (-1,5) {\begin{ytableau}$ $ & $ $ & $ $\\ \none & $ $ & $ $\end{ytableau}};
  \node (41) at (1,5) {\begin{ytableau}$ $ & $ $ & $ $ & $ $  \\ \none & $ $ \end{ytableau}};
  \node (5) at (3,5) {\begin{ytableau}$ $ & $ $ & $ $ & $ $ & $ $ \end{ytableau}};
  \draw (empty)--(1)--(2)--(3)--(4)--(5)
(2)--(21)--(31)--(32)
(3)--(31)--(41)
(4)--(41);
\end{tikzpicture}
\end{center}
\end{minipage}
\begin{minipage}{.5\textwidth}
\begin{center}
\begin{tikzpicture}
       \node (empty) at (0,0) {$\varnothing$};
  \node (1) at (0,1) {\ytableausetup{boxsize=.15cm}\begin{ytableau}$ $ \end{ytableau}};
  \node (2) at (0,2) {\begin{ytableau}$ $ & $ $  \end{ytableau}};
  \node (3) at (1,3) {\begin{ytableau}$ $ & $ $ & $ $ \end{ytableau}};
  \node (21) at (-1,3) {\begin{ytableau}$ $ & $ $ \\ \none & $ $ \end{ytableau}};
  \node (31) at (0,4) {\begin{ytableau}$ $ & $ $ & $ $\\ \none & $ $ \end{ytableau}};
  \node (4) at (2,4) {\begin{ytableau}$ $ & $ $ & $ $ & $ $ \end{ytableau}};
  \node (32) at (-1,5) {\begin{ytableau}$ $ & $ $ & $ $\\ \none & $ $ & $ $\end{ytableau}};
  \node (41) at (1,5) {\begin{ytableau}$ $ & $ $ & $ $ & $ $  \\ \none & $ $ \end{ytableau}};
  \node (5) at (3,5) {\begin{ytableau}$ $ & $ $ & $ $ & $ $ & $ $ \end{ytableau}};
  \draw (empty) -- (1) to [bend right =5] (2)
(2) to [bend left=5] (3)
(3) to [bend left =5] (2)
(2) -- (21) to [bend left =5] (31)
(21) to [bend right =5] (31)
(1) to [bend left=5] (2)
(3) to [bend left=5] (4)
 (3) to [bend right=5] (4)
(3) -- (31) to [bend left=5] (41)
(31) to [bend right =5] (41)
(31) to [bend left =5] (32)
(31) to [bend right=5] (32)
(4) to [bend right=5] (5)
(4) to [bend left=5] (5)
(2) to [bend left=15] (empty)
(3) to [bend left =15] (1)
(3) to [bend left =25] (1)
(3) to [bend left =15] (empty)
(4) to [bend left =15] (2)
(4) to [bend left =25] (2)
(4) to [bend left =15] (1)
(4) to [bend left =25] (1)
(4) to [bend left = 15] (empty)
(5) to [bend left =15] (3)
(5) to [bend left =25] (3)
(5) to [bend left =15] (2)
(5) to [bend left =25] (2)
(5) to [bend left =15] (1)
(5) to [bend left =25] (1)
(5) to [bend left=15] (empty)
(31) to [bend left=5] (2)
(31) to [bend right =5] (2)
(31) to [bend right = 15] (1)
(21) to [bend right = 15] (1)
(32) to [bend right =20] (3)
(41) to [bend left = 5] (3)
(41) to [bend right =5] (3)
(41) to [bend right = 15] (21)
(41) to [bend right =20] (21)
(41) to [bend left = 5] (2)
(41) to [bend right = 5] (2)
(32) -- (2)
(4) -- (41);
\end{tikzpicture}
\end{center}
\end{minipage}
\ytableausetup{boxsize=.5cm}

Notice that there are two edges from $41$ to $2$ corresponding to the following fillings of the border strip $(4,1)/(2)$.
\begin{center}
 \begin{ytableau}
  \none & \none & $k$' & $k$ \\
  \none & $k$
 \end{ytableau}\hspace{1in}
 \begin{ytableau}
    \none & \none & $k$ & $k$ \\
  \none & $k$
 \end{ytableau}
\end{center}

\begin{theorem} \label{thm:syoung}
 The Pieri deformation of the dual graded graph of shifted shapes satisfies $$DU-UD=D+I.$$
\end{theorem}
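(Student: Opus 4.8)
The plan is to deduce Theorem~\ref{thm:syoung} as a direct application of the Pieri construction, Theorem~\ref{thm:pieri}, in exact parallel with the proof of Theorem~\ref{thm:young}. Concretely, I would take $A=\Lambda'$ with linear basis $a_\lambda=Q_\lambda$ (the Schur $Q$-functions), take $f=q_1+q_2+\cdots\in\widehat{\Lambda'}$ as in the statement, and take the derivation to be $D=\tfrac12\,\partial/\partial p_1$. Since $\Lambda'$ is a polynomial ring in the odd power sums $p_1,p_3,p_5,\dots$ (see~\cite{Mac}), the operator $\partial/\partial p_1$ --- and hence $D$ --- is a genuine derivation of $A$, so the only things left to check in order to invoke Theorem~\ref{thm:pieri} are that the two edge sets it produces are the ones defining the Pieri deformation of $\mathbb{SY}$, and that $D(f)=f+1$. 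I would package these two facts as a shifted analogue of Lemma~\ref{lem:young}.

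For the edges: the down edges produced by Theorem~\ref{thm:pieri} have multiplicity $[Q_\mu]\bigl(Q_\nu f\bigr)=[Q_\mu]\bigl(fQ_\nu\bigr)$, which is by definition $a_2(\mu,\nu)$, so the down graph agrees on the nose. For the up edges, the construction produces multiplicity $[Q_\lambda]D(Q_\mu)$ on the edge between $\lambda$ and $\mu$ (with $|\mu|=|\lambda|+1$), and one must check this equals $[P_\mu]\bigl(p_1P_\lambda\bigr)$, the multiplicity defining $\mathbb{SY}$. This is the shifted counterpart of Lemma~\ref{lem:young}(a): on $\Lambda'$ equipped with the bilinear form for which $\langle P_\alpha,Q_\beta\rangle=\delta_{\alpha\beta}$ --- equivalently $\langle p_\alpha,p_\beta\rangle=2^{-\ell(\alpha)}z_\alpha\,\delta_{\alpha\beta}$; see~\cite{Mac} --- multiplication by $p_1$ is adjoint to $\tfrac12\,\partial/\partial p_1$, which is a one-line computation on the orthogonal power-sum basis (and which is exactly what pins down the constant $\tfrac12$, since $q_1=2p_1$). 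Pairing with the dual basis vector $P_\lambda$ then gives $[Q_\lambda]D(Q_\mu)=\langle P_\lambda,\tfrac12\partial_{p_1}Q_\mu\rangle=\langle p_1P_\lambda,Q_\mu\rangle=[P_\mu]\bigl(p_1P_\lambda\bigr)$, so the construction's up graph is indeed $\mathbb{SY}$.

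For $D(f)=f+1$, the shifted analogue of Lemma~\ref{lem:young}(b), I would use the generating-function identity $\sum_{i\ge 0}q_it^i=\prod_j\frac{1+x_jt}{1-x_jt}=\exp\!\bigl(2p_1t+\tfrac23p_3t^3+\tfrac25p_5t^5+\cdots\bigr)$ from~\cite{Mac}; evaluating at $t=1$ gives $1+f=\exp\!\bigl(2p_1+\tfrac23p_3+\cdots\bigr)$, hence $\partial_{p_1}(1+f)=2(1+f)$ and therefore $D(f)=\tfrac12\,\partial_{p_1}f=f+1$. Theorem~\ref{thm:pieri} then immediately yields a dual filtered graph, and because $D(f)=f+1$ rather than $D(f)=1$ the operators satisfy precisely $DU-UD=D+I$, as claimed. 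The argument is essentially routine once this setup is assembled; the one point that needs care is the bookkeeping of normalizations --- getting the factor $2$ in the exponential right so that the derivation is forced to be $\tfrac12\,\partial/\partial p_1$, and keeping the Schur-$P$/Schur-$Q$ duality straight in the up-edge computation. (A purely combinatorial proof, pairing up--down with down--up paths using the border-strip description of the down edges together with a ``black out the occupied rows and columns'' argument in the style of the omitted combinatorial proof of Theorem~\ref{thm:young}, is also possible but more laborious, so I would present the algebraic argument.)
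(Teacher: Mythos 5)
Your proposal is correct and follows essentially the same route as the paper: both invoke the Pieri construction (Theorem~\ref{thm:pieri}) with $A=\Lambda'$, $a_\lambda=Q_\lambda$, $D=\tfrac12\,\partial/\partial p_1$, and $f=q_1+q_2+\cdots$, verifying $D(f)=f+1$ from the exponential formula $1+f=e^{2p_1+\frac{2p_3}{3}+\cdots}$ (the paper's Lemma~\ref{lem:syoung}). The only difference is that you spell out the adjointness of $p_1$-multiplication and $\tfrac12\,\partial/\partial p_1$ under the Schur-$P$/Schur-$Q$ pairing to match the up edges with those of $\mathbb{SY}$, a step the paper leaves implicit by citing its Lemma~\ref{lem:young}.
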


We shall need the following two properties of $\Lambda'$.

\begin{theorem} \cite{Mac}
\begin{itemize}
 \item $\Lambda'$ is a free polynomial ring in odd power sums $p_1, p_3, \ldots$.
 \item $\Lambda'$ inherits the standard bilinear inner product from $\Lambda$, which satisfies $\langle Q_{\lambda}, P_{\mu} \rangle = 2^{l(\mu)} \delta_{\lambda, \mu}$.
\end{itemize}
\end{theorem}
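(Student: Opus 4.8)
The plan is to treat both assertions as standard structural facts about the ring $\Lambda'$ and its Schur $P$/$Q$ bases, deriving the first from the algebraic independence of the power sums and the second from the $P$/$Q$ Cauchy identity together with the power-sum expansion of its kernel. Both ingredients are available in \cite[III.8]{Mac}; the task is to assemble them.

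For the first bullet, recall from the quoted structure theorem \cite{EC2, Mac} that $\Lambda = \mathbb{F}[p_1, p_2, p_3, \ldots]$ is a free polynomial ring on the power sums, i.e.\ the $p_i$ are algebraically independent over $\mathbb{F}$. Since $\Lambda'$ is by definition the subalgebra generated by the subfamily $\{p_1, p_3, p_5, \ldots\}$, and any subfamily of an algebraically independent family is again algebraically independent, the evaluation homomorphism from the abstract polynomial ring on countably many variables to $\Lambda'$ sending the $i$-th variable to $p_i$ (for odd $i$) is injective, hence an isomorphism onto its image $\Lambda'$. Therefore $\Lambda'$ is the free polynomial ring on the odd power sums, which is the first claim.

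For the second bullet, I would begin from the defining Cauchy identity for Schur $P$ and $Q$ functions together with the normalization $Q_\lambda = 2^{\ell(\lambda)} P_\lambda$, where $\ell$ denotes the number of parts. The heart of the matter is to expand the Cauchy kernel in power sums. Taking logarithms, the even power sums cancel and only odd ones survive, so that
$$\prod_{i,j} \frac{1 + x_i y_j}{1 - x_i y_j} = \exp\!\left( 2 \sum_{k \text{ odd}} \frac{p_k(x)\, p_k(y)}{k} \right) = \sum_{\nu \text{ odd}} \frac{2^{\ell(\nu)}}{z_\nu}\, p_\nu(x)\, p_\nu(y).$$
This exhibits the reproducing kernel of the inner product $\Lambda'$ inherits from $\Lambda$, expressed on the orthogonal power-sum basis $\{p_\nu : \nu \text{ odd}\}$. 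Comparing this expansion with the $P$/$Q$ Cauchy identity
$$\sum_\lambda Q_\lambda(x)\, P_\lambda(y) = \prod_{i,j} \frac{1 + x_i y_j}{1 - x_i y_j}$$
identifies $\{Q_\lambda\}$ and $\{P_\mu\}$ as a dual system for the inherited pairing, and it is precisely the factor $2^{\ell(\nu)}$ above — forced by the two-fold symmetry of numerator and denominator in the product — that is the source of the power-of-two normalization in $\langle Q_\lambda, P_\mu \rangle = 2^{\ell(\mu)} \delta_{\lambda, \mu}$.

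The routine step is the first bullet, immediate once algebraic independence is invoked. The main obstacle lies in the second bullet: one must carefully reconcile the $2^{\ell(\nu)}$ emerging from the power-sum expansion of the Cauchy kernel with the $2^{\ell(\lambda)}$ relating $Q_\lambda$ and $P_\lambda$, and track these against the chosen normalization of the inherited form, in order to land exactly on the stated constant $2^{\ell(\mu)}$ rather than on a neighboring power of two. This bookkeeping is the delicate point, and all of the underlying identities — the Cauchy identity, the relation $Q_\lambda = 2^{\ell(\lambda)} P_\lambda$, and the power-sum orthogonality — are assembled in \cite[III.8]{Mac}.
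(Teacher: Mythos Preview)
The paper does not prove this theorem at all: it is stated as a background fact and attributed to Macdonald \cite{Mac} with no argument given. There is therefore no ``paper's proof'' to compare against; your sketch goes well beyond what the paper does.

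That said, your argument for the second bullet contains a genuine gap. You assert that
\[
\prod_{i,j}\frac{1+x_iy_j}{1-x_iy_j}=\sum_{\nu\text{ odd}}\frac{2^{\ell(\nu)}}{z_\nu}\,p_\nu(x)\,p_\nu(y)
\]
``exhibits the reproducing kernel of the inner product $\Lambda'$ inherits from $\Lambda$.'' It does not. Under the Hall inner product one has $\langle p_\nu,p_\nu\rangle=z_\nu$, so the reproducing kernel for its restriction to $\Lambda'$ is $\sum_{\nu\text{ odd}} z_\nu^{-1}\,p_\nu(x)p_\nu(y)$, without the factor $2^{\ell(\nu)}$. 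The $P/Q$ Cauchy kernel is instead the reproducing kernel for the \emph{different} inner product on $\Lambda'$ given by $\langle p_\nu,p_\nu\rangle=2^{-\ell(\nu)}z_\nu$, under which $\langle Q_\lambda,P_\mu\rangle=\delta_{\lambda\mu}$ exactly (this is the pairing actually used in \cite[III.8]{Mac}). A quick check shows the discrepancy: $Q_{(2)}=q_2=2p_1^2$ and $P_{(2)}=p_1^2$, so under the Hall pairing $\langle Q_{(2)},P_{(2)}\rangle=2z_{(1,1)}=4$, not $2^{\ell((2))}=2$. So the ``delicate bookkeeping'' you flag is not just delicate --- the identification you start from is incorrect, and the claimed constant cannot be recovered from it. Your first-bullet argument, by contrast, is fine.
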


Because of the first property, we can again differentiate elements of $\Lambda'$ with respect to $p_1$, by expressing them first as a polynomial in the $p_i$'s. 
We shall need the following property of $f$.

\begin{lemma} \label{lem:syoung}
 We have $$\frac{d}{d p_1} f = 2f+2.$$
\end{lemma}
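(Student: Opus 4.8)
The plan is to obtain a closed product formula for $1+f$, convert it to an expression in the power-sum generators of $\Lambda'$, and then differentiate. First I would recall the generating-function identity for the $q_i$. Since $q_i=\sum_{0\le j\le i}e_jh_{i-j}$, the series $\sum_{i\ge 0}q_it^i$ factors as $\bigl(\sum_j e_jt^j\bigr)\bigl(\sum_k h_kt^k\bigr)=\prod_i(1+x_it)\cdot\prod_i(1-x_it)^{-1}=\prod_i\frac{1+x_it}{1-x_it}$; this is formula (8.1) in Chapter III of \cite{Mac}. Setting $t=1$, which is legitimate inside the completion $\hat\Lambda$ because $q_i\in A_i$, gives
$$1+f=\sum_{i\ge 0}q_i=\prod_i\frac{1+x_i}{1-x_i}.$$

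Next I would take logarithms inside $\hat\Lambda$ (again legitimate, since $1+f$ has constant term $1$): $\log\prod_i\frac{1+x_i}{1-x_i}=\sum_i\bigl(\log(1+x_i)-\log(1-x_i)\bigr)=\sum_i 2\bigl(x_i+\tfrac{x_i^3}{3}+\tfrac{x_i^5}{5}+\cdots\bigr)=2\sum_{k\ \mathrm{odd}}\frac{p_k}{k}$. Hence
$$1+f=\exp\!\Bigl(2p_1+\tfrac{2p_3}{3}+\tfrac{2p_5}{5}+\cdots\Bigr),$$
which incidentally re-confirms that $f\in\Lambda'$.

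Finally, because $\Lambda'$ is a free polynomial ring in the odd power sums $p_1,p_3,p_5,\dots$, the operator $\frac{d}{dp_1}$ is well defined, and differentiating the exponential expression term by term yields $\frac{d}{dp_1}(1+f)=2\exp\!\bigl(2p_1+\tfrac{2p_3}{3}+\cdots\bigr)=2(1+f)$. Since $\frac{d}{dp_1}1=0$, this gives $\frac{d}{dp_1}f=2(1+f)=2f+2$, which is the claim. This is the Schur $Q$-function analogue of Lemma \ref{lem:young}(b), where the factor $2$ reflects that $q_1=2p_1$, so in the end the argument is a direct computation and there is no serious obstacle; the only point requiring a little care is justifying the manipulations in $\hat\Lambda$ (the substitution $t=1$ and the term-by-term $\log$/$\exp$), which is routine since everything is graded with finite-dimensional homogeneous components. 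If one prefers to avoid logarithms, the same factor $2$ can be extracted by noting that $\frac{d}{dp_1}$ is the derivation sending $p_1\mapsto 1$ and $p_k\mapsto 0$ for $k\ge 2$, and that $\sum_i\log(1\pm x_i)$ has $p_1$-coefficient $\pm 1$, so $\frac{d}{dp_1}\log(1+f)=1-(-1)=2$ and $\frac{d}{dp_1}(1+f)=(1+f)\cdot 2$.
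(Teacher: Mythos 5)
Your proof is correct and follows essentially the same route as the paper: both rest on the identity $1+f=\exp\bigl(2p_1+\tfrac{2p_3}{3}+\cdots\bigr)$ and then differentiate with respect to $p_1$. The only difference is that you derive this identity from the generating function $\sum_i q_i t^i=\prod_i\frac{1+x_it}{1-x_it}$ by taking logarithms, whereas the paper simply cites it from Macdonald (Ex.\ 6(a), III.8).
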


\begin{proof}
From  \cite[Ex. 6 (a), III, 8]{Mac} we have 
$$1 + f = e^{2p_1 + \frac{2p_3}{3} + \ldots},$$ which of course implies $\frac{d}{d p_1} (1+f) = 2+2f$.
\end{proof}

Now we are ready for the  proof of Theorem \ref{thm:syoung}.

\begin{proof}
 Applying Lemma \ref{lem:young} we see that $A = \Lambda'$, $a_{\lambda} = Q_{\lambda}$, $D = \frac{1}{2} \frac{d}{d p_1}$ and $f = q_1 + q_2 + \ldots$ satisfy the conditions of 
Theorem \ref{thm:pieri}. The claim follows.
\end{proof}

\begin{remark}
 If we instead take $a_1(\mu, \nu)$ to be 2 times the coefficient of $Q_{\nu} \text{ in } p_1 Q_{\mu}$, and $a_2(\mu, \nu)$ to be 
 $\frac{1}{2}$ times the coefficient of $P_{\mu} \text{ in } f P_{\nu},$ we get a dual filtered graph with $$DU-UD=2D+I.$$ This choice corresponds to first swapping
 $E_1$ and $E_2$ in the original dual graded graph of $\mathbb{SY}$ and then adding downward edges to $E_2$ as described above.
 
\begin{minipage}{.5\textwidth}
\begin{center}
\begin{tikzpicture}
       \node (empty) at (0,0) {$\varnothing$};
  \node (1) at (0,1) {\ytableausetup{boxsize=.15cm}\begin{ytableau}$ $ \end{ytableau}};
  \node (2) at (0,2) {\begin{ytableau}$ $ & $ $  \end{ytableau}};
  \node (3) at (1,3) {\begin{ytableau}$ $ & $ $ & $ $ \end{ytableau}};
  \node (21) at (-1,3) {\begin{ytableau}$ $ & $ $ \\ \none & $ $ \end{ytableau}};
  \node (31) at (0,4) {\begin{ytableau}$ $ & $ $ & $ $\\ \none & $ $ \end{ytableau}};
  \node (4) at (2,4) {\begin{ytableau}$ $ & $ $ & $ $ & $ $ \end{ytableau}};
  \node (32) at (-1,5) {\begin{ytableau}$ $ & $ $ & $ $\\ \none & $ $ & $ $\end{ytableau}};
  \node (41) at (1,5) {\begin{ytableau}$ $ & $ $ & $ $ & $ $  \\ \none & $ $ \end{ytableau}};
  \node (5) at (3,5) {\begin{ytableau}$ $ & $ $ & $ $ & $ $ & $ $ \end{ytableau}};
  \draw (empty)--(1) to [bend right =5] (2)
(2) to [bend left=5] (3)
(3) to [bend left =5] (2)
(2) -- (21) to [bend left =5] (31)
(21) to [bend right =5] (31)
(1) to [bend left=5] (2)
(3) to [bend left=5] (4)
 (3) to [bend right=5] (4)
(3) -- (31) to [bend left=5] (41)
(31) to [bend right =5] (41)
(31) to [bend left =5] (32)
(31) to [bend right=5] (32)
(4) to [bend right=5] (5)
(4) -- (41)
(4) to [bend left=5] (5);
\end{tikzpicture}
\end{center}
\end{minipage}
\begin{minipage}{.5\textwidth}
\begin{center}
\begin{tikzpicture}
       \node (empty) at (0,0) {$\varnothing$};
  \node (1) at (0,1) {\ytableausetup{boxsize=.15cm}\begin{ytableau}$ $ \end{ytableau}};
  \node (2) at (0,2) {\begin{ytableau}$ $ & $ $  \end{ytableau}};
  \node (3) at (1,3) {\begin{ytableau}$ $ & $ $ & $ $ \end{ytableau}};
  \node (21) at (-1,3) {\begin{ytableau}$ $ & $ $ \\ \none & $ $ \end{ytableau}};
  \node (31) at (0,4) {\begin{ytableau}$ $ & $ $ & $ $\\ \none & $ $ \end{ytableau}};
  \node (4) at (2,4) {\begin{ytableau}$ $ & $ $ & $ $ & $ $ \end{ytableau}};
  \node (32) at (-1,5) {\begin{ytableau}$ $ & $ $ & $ $\\ \none & $ $ & $ $\end{ytableau}};
  \node (41) at (1,5) {\begin{ytableau}$ $ & $ $ & $ $ & $ $  \\ \none & $ $ \end{ytableau}};
  \node (5) at (3,5) {\begin{ytableau}$ $ & $ $ & $ $ & $ $ & $ $ \end{ytableau}};
  \draw (empty)--(1)--(2)--(3)--(4)--(5)
(2)--(21)--(31)--(32)
(3)--(31)--(41)
(4)--(41)
(2) to [bend left=15] (empty)
(3) to [bend left =15] (1)
(3) to [bend left =25] (1)
(3) to [bend left =15] (empty)
(4) to [bend left =15] (2)
(4) to [bend left =25] (2)
(4) to [bend left =15] (1)
(4) to [bend left =25] (1)
(4) to [bend left = 15] (empty)
(5) to [bend left =15] (3)
(5) to [bend left =25] (3)
(5) to [bend left =15] (2)
(5) to [bend left =25] (2)
(5) to [bend left =15] (1)
(5) to [bend left =25] (1)
(5) to [bend left=15] (empty)
(31) to [bend left=5] (2)
(31) to [bend right =5] (2)
(31) to [bend right = 15] (1)
(21) to [bend right = 15] (1)
(32) to [bend right =20] (3)
(41) to [bend left = 5] (3)
(41) to [bend right =5] (3)
(41) to [bend right = 15] (21)
(41) to [bend right =20] (21)
(41) to [bend left = 5] (2)
(41) to [bend right = 5] (2)
(32) -- (2)
(4) -- (41);
\end{tikzpicture}
\end{center}
\end{minipage}
\end{remark}

\subsection{M\"obius deformation of shifted Young's lattice}
The Mobius deformation of the shifted Young's lattice is shown below with upward-oriented edges shown on the left and downward-oriented edges on the right. 
Note that we swapped the $E_1$ and $E_2$ of Example \ref{ex:shifted}.

\begin{minipage}{.5\textwidth}
\begin{center}
\begin{tikzpicture}
       \node (empty) at (0,0) {$\varnothing$};
  \node (1) at (0,1) {\ytableausetup{boxsize=.15cm}\begin{ytableau}$ $ \end{ytableau}};
  \node (2) at (0,2) {\begin{ytableau}$ $ & $ $  \end{ytableau}};
  \node (3) at (1,3) {\begin{ytableau}$ $ & $ $ & $ $ \end{ytableau}};
  \node (21) at (-1,3) {\begin{ytableau}$ $ & $ $ \\ \none & $ $ \end{ytableau}};
  \node (31) at (0,4) {\begin{ytableau}$ $ & $ $ & $ $\\ \none & $ $ \end{ytableau}};
  \node (4) at (2,4) {\begin{ytableau}$ $ & $ $ & $ $ & $ $ \end{ytableau}};
  \node (32) at (-1,5) {\begin{ytableau}$ $ & $ $ & $ $\\ \none & $ $ & $ $\end{ytableau}};
  \node (41) at (1,5) {\begin{ytableau}$ $ & $ $ & $ $ & $ $  \\ \none & $ $ \end{ytableau}};
  \node (5) at (3,5) {\begin{ytableau}$ $ & $ $ & $ $ & $ $ & $ $ \end{ytableau}};
  \draw (empty) -- (1) to [bend right =5] (2)
(2) to [bend left=5] (3)
(3) to [bend left =5] (2)
(2) -- (21) to [bend left =5] (31)
(21) to [bend right =5] (31)
(1) to [bend left=5] (2)
(3) to [bend left=5] (4)
 (3) to [bend right=5] (4)
(3) -- (31) to [bend left=5] (41)
(31) to [bend right =5] (41)
(31) to [bend left =5] (32)
(31) to [bend right=5] (32)
(4) to [bend right=5] (5)
(4) to [bend left=5] (5)
(4) -- (41)
(1) to [out=160,in=100,distance=.5cm] (1)
(2) to [out=60, in=100, distance=.5cm] (2)
(2) to [out=60, in=100, distance=.7cm] (2)
(21) to [out=60, in=100, distance=.5cm] (21)
(3) to [out=60, in=100, distance=.5cm] (3)
(3) to [out=60, in=100, distance=.7cm] (3)
(4) to [out=60, in=100, distance=.5cm] (4)
(4) to [out=60, in=100, distance=.7cm] (4)
(5) to [out=60, in=100, distance=.5cm] (5)
(5) to [out=60, in=100, distance=.7cm] (5)
(31) to [out=60, in=100, distance=.5cm] (31)
(31) to [out=60, in=100, distance=.7cm] (31)
(31) to [out=60, in=100, distance=.9cm] (31)
(41) to [out=60, in=100, distance=.5cm] (41)
(41) to [out=60, in=100, distance=.7cm] (41)
(41) to [out=60, in=100, distance=.9cm] (41)
(32) to [out=60, in=100, distance=.5cm] (32)
(32) to [out=60, in=100, distance=.7cm] (32);
\end{tikzpicture}
\end{center}
\end{minipage}
\begin{minipage}{.5\textwidth}
\begin{center}
\begin{tikzpicture}
       \node (empty) at (0,0) {$\varnothing$};
  \node (1) at (0,1) {\ytableausetup{boxsize=.15cm}\begin{ytableau}$ $ \end{ytableau}};
  \node (2) at (0,2) {\begin{ytableau}$ $ & $ $  \end{ytableau}};
  \node (3) at (1,3) {\begin{ytableau}$ $ & $ $ & $ $ \end{ytableau}};
  \node (21) at (-1,3) {\begin{ytableau}$ $ & $ $ \\ \none & $ $ \end{ytableau}};
  \node (31) at (0,4) {\begin{ytableau}$ $ & $ $ & $ $\\ \none & $ $ \end{ytableau}};
  \node (4) at (2,4) {\begin{ytableau}$ $ & $ $ & $ $ & $ $ \end{ytableau}};
  \node (32) at (-1,5) {\begin{ytableau}$ $ & $ $ & $ $\\ \none & $ $ & $ $\end{ytableau}};
  \node (41) at (1,5) {\begin{ytableau}$ $ & $ $ & $ $ & $ $  \\ \none & $ $ \end{ytableau}};
  \node (5) at (3,5) {\begin{ytableau}$ $ & $ $ & $ $ & $ $ & $ $ \end{ytableau}};
  \draw (empty)--(1)--(2)--(3)--(4)--(5)
(31)--(2)
(41)--(3)
(2)--(21)--(31)--(32)
(3)--(31)--(41)
(4)--(41);
\end{tikzpicture}
\end{center}
\end{minipage}
\ytableausetup{boxsize=.5cm}

\begin{lemma}
 For the lattice of shifted shapes, the Mobius function is given by the following formula:
$$\mu(p,q)=\left\{ \begin{array}{ll}
                    (-1)^{|p|-|q|} &  \text{if }q/p\text{ is a disjoint union of boxes} \\
		    0 &  \text{otherwise}
                   \end{array}
\right. 
$$
\end{lemma}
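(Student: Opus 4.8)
The plan is to argue exactly as in the proof of the corresponding statement for Young's lattice given above, reducing everything to the theory of distributive lattices. First I would verify that the poset of shifted shapes ordered by containment is a distributive lattice, by identifying it with the lattice $J(Q)$ of order ideals of the poset $Q$ on the cell set $\{(i,j):1\le i\le j\}$ of the infinite shifted staircase, in which $(i,j)$ covers $(i,j-1)$ when $j>i$ and covers $(i-1,j)$ when $i>1$. One checks that a finite down-set of $Q$ is precisely the cell set of a shifted shape, that containment of shifted shapes corresponds to containment of down-sets, and that a diagonal cell $(i,i)$ has the single lower cover $(i-1,i)$. Note that covering relations in $Q$, by construction, join edge-adjacent cells.

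Granting this identification, the interval $[p,q]$ in $J(Q)$ is isomorphic to $J(R)$, where $R$ is the subposet of $Q$ induced on the cells of the skew shifted shape $q/p$; this is a standard feature of distributive lattices. By \cite[Example 3.9.6]{EC1}, which is the same tool used in the unshifted case, $\mu_{J(R)}(\hat 0,\hat 1)$ equals $(-1)^{|R|}$ if $R$ is an antichain and $0$ otherwise (equivalently, $[p,q]$ is a Boolean algebra exactly when $R$ is an antichain). Since $|R|=|q|-|p|$ and $(-1)^{|p|-|q|}=(-1)^{|q|-|p|}$, it then only remains to show that $R=q/p$ is an antichain in $Q$ precisely when $q/p$ is a disjoint union of single boxes.

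For that last point I would argue as follows. Edge-adjacent cells of $Q$ are comparable, so if $q/p$ is a disjoint union of boxes then $R$ is an antichain. Conversely, suppose $q/p$ contains a comparable pair $c_1<c_2$, and pick a saturated chain $c_1=d_0\lessdot\cdots\lessdot d_k=c_2$ in $Q$. Since $q$ is a down-set and each $d_i\le c_2\in q$, all the $d_i$ lie in $q$; since $p$ is a down-set and each $d_i\ge c_1\notin p$, none of the $d_i$ lies in $p$; hence every $d_i$ lies in $q/p$. But $d_0$ and $d_1$ are edge-adjacent, so $q/p$ is not a disjoint union of boxes. This completes the reduction, and hence the proof.

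The step I expect to cost the most care is the first one: pinning down the claim that the containment order on shifted shapes is literally $J(Q)$ for the staircase poset $Q$ described, with the correct behaviour of addable and removable cells along the main diagonal. Once that structural identification is in hand, the remainder is a formal application of distributive-lattice theory and runs in complete parallel with the Young's lattice case.
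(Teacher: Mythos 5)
Your proposal is correct and follows essentially the same route as the paper: the paper likewise invokes distributivity of the shifted shapes lattice (citing Fomin) together with the standard fact that the M\"obius function of a distributive lattice vanishes off Boolean intervals, and identifies Boolean intervals with skew shapes that are disjoint unions of boxes. You simply fill in the details the paper cites — the explicit realization as $J(Q)$ for the staircase poset and the antichain/edge-adjacency equivalence — which is fine.
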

\begin{proof}
 It is known that $\mathbb{SY}$ is a distributive lattice \cite[Example 2.2.8]{F}, and so it 
follows that $\mu(p,q)$ is nonzero if and only if the interval $[p,q]$ is a Boolean lattice, 
in which case $\mu(p,q)$ is as described above, \cite[Example 3.9.6]{EC2}. And $[p,q]$ is a Boolean 
lattice exactly when $q/p$ is a disjoint union of boxes, i.e. no two boxes of $q/p$ share an edge.
\end{proof}

\begin{theorem}
 The M\"obius deformation of the shifted Young's lattice forms a dual filtered graph with $$DU-UD=D+I.$$ 
\end{theorem}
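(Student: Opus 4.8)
The plan is to follow the proof of the analogous statement for the M\"obius deformation of Young's lattice in Section~\ref{sec:MobiusYoung}, but to carry a weight through every count. Assign to each cell $c$ of a shifted shape the weight $w(c)=1$ if $c$ lies on the main diagonal and $w(c)=2$ otherwise, and for a shifted shape $\sigma$ write $W_{\mathrm{ic}}(\sigma)$ and $W_{\mathrm{oc}}(\sigma)$ for the sum of $w(c)$ over its inner, resp.\ outer, corners. Three facts about the deformation (read off from the previous section) make the counting go: the up-edge from $\nu$ to $\nu+c$ has multiplicity $w(c)$; the number of loops at $\lambda$ equals $W_{\mathrm{ic}}(\lambda)$; and $[\mu]D\lambda=1$ exactly when $\lambda/\mu$ is a disjoint union of boxes, so in particular $D$ carries no loops (it is the strict half of the pair). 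Because the deformation is built from a dual \emph{graded} graph, the entries $[\lambda](DU-UD)\mu$ with $\rho(\lambda)=\rho(\mu)$ already coincide with those of $D+I$ (the rank-preserving parts of $DU$ and $UD$ are the up and down operators of the underlying dual graded graph), and those with $\rho(\lambda)>\rho(\mu)$ vanish on both sides since $D$ strictly lowers rank; hence it suffices to verify $[\lambda](DU-UD)\mu=[\lambda]D\mu$ when $\mu$ covers $\lambda$ in $\mathbb{SY}$, say $\mu=\lambda+b$ for a single box $b$ (the case $\rho(\lambda)<\rho(\mu)-1$ is handled by an identical computation with $b$ replaced by the disjoint box set $\mu/\lambda$).

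Here $[\lambda]D\mu=1$, and I would compute $[\lambda]DU\mu$ as follows. Up-down paths through a loop at $\mu$ contribute $W_{\mathrm{ic}}(\mu)$, since there are $W_{\mathrm{ic}}(\mu)$ loops and the step $\mu\to\lambda$ has coefficient $1$. A three-vertex path $\mu\to\mu+b'\to\lambda$ occurs exactly when $(\mu+b')/\lambda=\{b,b'\}$ is a disjoint pair, i.e.\ when $b'$ is an addable box of $\mu$ not edge-adjacent to $b$; such paths contribute $\sum w(b')$ over these $b'$. The two local facts needed are that the addable boxes of a shifted shape are pairwise non-edge-adjacent and that every box which becomes addable upon adjoining $b$ to $\lambda$ is adjacent to $b$; together they identify the $b'$ above with the addable boxes of $\lambda$ other than $b$, so the three-vertex contribution is $W_{\mathrm{oc}}(\lambda)-w(b)$ and $[\lambda]DU\mu=W_{\mathrm{ic}}(\mu)+W_{\mathrm{oc}}(\lambda)-w(b)$.

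For $[\lambda]UD\mu$, a three-vertex path $\mu\to\mu\setminus\{b,b'\}\to\lambda$ occurs exactly when $b'$ is an inner corner of $\mu$ distinct from $b$ (such a $b'$ is automatically non-adjacent to $b$ and is again an inner corner of $\lambda$), contributing $\sum_{b'\in\mathrm{ic}(\mu)\setminus\{b\}}w(b')=W_{\mathrm{ic}}(\mu)-w(b)$; the remaining down-up paths run $\mu\to\lambda$ and then take a loop at $\lambda$, contributing $W_{\mathrm{ic}}(\lambda)$. Hence $[\lambda]UD\mu=W_{\mathrm{ic}}(\mu)-w(b)+W_{\mathrm{ic}}(\lambda)$, and subtracting the two computations gives $[\lambda](DU-UD)\mu=W_{\mathrm{oc}}(\lambda)-W_{\mathrm{ic}}(\lambda)$.

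It remains to prove the weighted corner identity $W_{\mathrm{oc}}(\lambda)-W_{\mathrm{ic}}(\lambda)=1$ for every shifted shape $\lambda$; this is a short direct check, for instance by induction on $|\lambda|$ or by scanning $\lambda$ row by row. I expect the only real difficulty to be precisely this weighted bookkeeping near the diagonal: one must keep track that a diagonal cell can be adjoined only as a new bottom row, and only when the last part of the strict partition is at least $2$, that a row yields a removable box only when its length exceeds the next row's by at least $2$ (and similarly for addable boxes), and that consequently the ``$+1$'' always arises from a single outer corner. Once these subtleties are sorted out, both the counts above and the identity $W_{\mathrm{oc}}-W_{\mathrm{ic}}=1$ are routine, and the proof concludes that $DU-UD=D+I$.
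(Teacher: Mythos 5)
Your argument is correct and is essentially the paper's own proof: the weights $w(c)\in\{1,2\}$ repackage the paper's separate counts $I_{diag},I_{odiag},O_{diag},O_{odiag}$, your loop/three-vertex path counts for $DU$ and $UD$ match the paper's term by term, and the weighted corner identity $W_{\mathrm{oc}}(\lambda)-W_{\mathrm{ic}}(\lambda)=1$ that you defer is exactly Lemma \ref{lem:shiftededges}, which the paper proves by a short direct check (it can also be read off as the diagonal entry of $DU-UD=I$ for the undeformed dual graded graph). Beyond writing out that routine lemma, the differences are only cosmetic bookkeeping.
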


\begin{proof}

Given that the non-deformed pair form a pair of dual graded graphs, 
it suffices to show that $$[\lambda](DU-UD)(\mu)=[\lambda](D)(\mu)$$ when $\mu$ covers $\lambda$.

For a shifted shape $x$, let $I_{diag}(x)$ denote the size of the set of inner corners of $x$ that are on the diagonal, 
$I_{odiag}(x)$ denote the size of the set of inner corners of $x$ that are not on the diagonal, 
$O_{diag}(x)$ denote the size of the set of outer corners of $x$ that are 
on the diagonal, and $O_{odiag}(x)$ denote the size of the set of outer corners of $x$ that are not on the diagonal.

\begin{lemma}\label{lem:shiftededges}
 For any shifted shape $\lambda$, $$O_{diag}(\lambda) + 2O_{odiag}(\lambda)-I_{diag}(\lambda)-2I_{odiag}(\lambda)=1.$$
\end{lemma}
\begin{proof}
 Consider some element $\lambda=(\lambda_1,\lambda_2,\ldots,\lambda_k)$.
Suppose $\lambda_k=1$. We can add a box at some subset of rows $\{i_1=1,i_2,\ldots,i_t\}$ and fill each with either $k$ or $k'$. 
Thus $O_{diag}(\lambda) + 2O_{odiag}(\lambda)=2t$. We can delete a box in rows $i_2-1,i_3-1,\ldots,i_t-1,k\}$, where each can be filled with $k$ or 
$k'$ except the box in row $k$, which is on the diagonal. Thus $I_{diag}(\lambda)-2I_{odiag}(\lambda)=2(t-1)+1$.

Now suppose $\lambda_k\geq 2$. We can add a box at some subset of rows $\{i_1=1,i_2,\ldots,i_t\,k+1\}$ and fill each with either $k$ or $k'$ except for the box in row $k+1$, which is on 
the diagonal. This gives $O_{diag}(\lambda) + 2O_{odiag}(\lambda)=2t+1$. We can delete boxes in rows $\{i_2-1,\ldots,i_t-1\,k\}$, where each can be filled with $k$ or $k'$, 
giving $I_{diag}(\lambda)-2I_{odiag}(\lambda)=2t$.
\end{proof}

First, consider all up-down paths from $\mu$ to $\lambda$ that begin with a loop at $\mu$. There are 
$$2I_{odiag}(\mu)+I_{diag}(\mu)$$ such paths since there are the same number of loops at $\mu$. Up-down 
paths that do not start with a loop can be counted by the number of outer corners of $\lambda$ that are not 
inner corners of $q$ counted with multiplicity two if they are off the main diagonal 
since each of these corners may be added to $q$ for the step up and then removed 
in addition to the boxes of $\mu/\lambda$ for the step down. There are 
$$2(O_{odiag}(\lambda)-I_{odiag}(\mu/\lambda))+(O_{diag}(\lambda)-I_{diag}(\mu/\lambda))$$ such corners. Thus there are 
$$2I_{odiag}(\mu)+I_{diag}(\mu)+2(O_{odiag}(\lambda)-I_{odiag}(\mu/\lambda))+(O_{diag}(\lambda)-I_{diag}(\mu/\lambda))$$ up-down paths from $\mu$ to $\lambda$.

Next, consider all down-up paths from $\mu$ to $\lambda$ that end with a loop at $\lambda$. There are exactly 
$$2I_{odiag}(\lambda)+I_{diag}(\lambda)$$ such paths. Down-up paths that do not end with a loop can be counted by 
the inner corners of $q$ that are also inner corners of $\lambda$ counted twice if they are off the main diagonal 
since removing this type of inner corner in addition
to the boxes of $\lambda/\mu$ will result in a shifted shape covered by $\lambda$. There are 
$$2(I_{odiag}(\mu)-I_{odiag}(\mu/\lambda))+(I_{diag}(\mu)-I_{diag}(\mu/\lambda))$$ such corners. Thus there are 
$$2I_{odiag}(\lambda)+I_{diag}(\lambda)+2(I_{odiag}(\mu)-I_{odiag}(\mu/\lambda))+(I_{diag}(\mu)-I_{diag}(\mu/\lambda))$$ down-up paths from $\mu$ to $\lambda$.

Hence 
\begin{eqnarray}
[\lambda](DU-UD)(\mu) &=& 2I_{odiag}(\mu)+I_{diag}(\mu)+2(O_{odiag}(\lambda)-I_{odiag}(\mu/\lambda))+(O_{diag}(\lambda)-I_{diag}(\mu/\lambda)) \nonumber \\
&-&(2I_{odiag}(\lambda)+ I_{diag}(\lambda)+2(I_{odiag}(\mu)-I_{odiag}(\mu/\lambda))+(I_{diag}(\mu)-I_{diag}(\mu/\lambda)))) \nonumber \\
&=& 2O_{0diag}(\lambda) + O_{diag}(\lambda)-2I_{odiag}(\lambda)-I_{diag}(\lambda) \nonumber \\
&=& 1 \nonumber
\end{eqnarray}
by Lemma \ref{lem:shiftededges}. 
\end{proof}

\subsection{Shifted Hecke insertion}\label{sec:shiftedinsertion}
\begin{definition}\label{def:increasingshifted}
An \textit{increasing shifted tableau} is a filling of a shifted shape with non-negative integers such that entries are strictly increasing across rows and columns.
\end{definition}

The following are examples of increasing shifted tableaux.
\begin{center}\begin{ytableau}1 & 2 & 4 \\ \none & 4\end{ytableau}\hspace{1in}\begin{ytableau} 2 & 3 & 5 & 7 & 8 \\ \none & 5 & 8 \\ \none & \none & 9\end{ytableau}\end{center}

We now describe an insertion procedure for increasing shifted shapes that is similar to Hecke insertion, which we call \textit{shifted Hecke insertion}. This procedure is a natural analogue of that of Sagan and Worley, \cite{Sagan, W}. As before, any insertion tableau will correspond to a walk 
downward ending at $\varnothing$ in the dual filtered graph of shifted shapes from the M\"obius deformation and any recording tableau corresponds to a walk upward starting at $\varnothing$.

We start by describing how to insert $x$ into an increasing shifted tableau $Y$ to obtain increasing shifted tableau $Z$. We begin by 
inserting $x=y_1$ into the first row of $Y$. This insertion may modify the first row of $Y$ and may produce an output integer $y_2$. 
Following the rules below, we then insert $y_2$ into the second row of $Y$ and continue in this manner until one of two things happens. Either at 
some stage the insertion will not create an output integer, in which case the insertion of $x$ into $Y$ terminates, or $y_k$ will replace some 
diagonal element of $Y$. In the latter case, we continue the insertion process along the columns: $y_{k+1}$ is inserted into the column to the right of 
its original position and so on until some step in this process does not create an output integer. 

For each insertion, we designate a specific box of the resulting tableau, $Z$, as being the box where the insertion terminated. We will later use 
this notion to define recording tableaux. 

The rules for inserting any positive integer $x$ 
into a row or column are as follows:

\begin{itemize}
\item[(S1)] If $x$ is weakly larger than all integers in the row (resp. column) and adjoining $x$ to the end of the row (resp. column)
results in an increasing tableau, then $Z$ is the resulting tableau. We say that the insertion terminated at this new box.
\end{itemize}

\begin{example}
 Inserting $4$ into the first row of the tableau on the left gives the tableau on the right. This insertion 
terminated in position $(1,3)$.

\begin{center}
\begin{ytableau}
 1 & 2 \\
\none & 4
\end{ytableau}\hspace{1in}
\begin{ytableau}
 1 & 2 & 4\\
\none & 4
\end{ytableau}
\end{center}

Inserting $5$ into the third column of the resulting tableau gives the tableau shown below. This insertion 
terminated in position $(2,3)$. 

\begin{center}
\begin{ytableau}
  1 & 2 & 4\\
\none & 4 & 5
\end{ytableau}
\end{center}
\end{example}

\begin{itemize}
\item[(S2)] If $x$ is weakly larger than all integers in the row (resp. column) and adjoining $x$ to the end of the row (resp. column)
does not result in an increasing tableau, then $Z=Y$. If $x$ was row inserted into a nonempty row, we say that the insertion terminated at the box at the bottom of the column 
containing the rightmost box of this row. If $x$ was row inserted into an empty row, we say the insertion terminated at the 
rightmost box of the previous row. If $x$ was column inserted, we say the insertion terminated at the rightmost box of the row containing the bottom 
box of the column $x$ could not be added to.
\end{itemize}

\begin{example} Inserting $4$ into the first row of the tableau on the left does not change the row and does not
give an output integer, and so the insertion does not change the tableau. The insertion terminated in position $(2,3)$
 \begin{center}
  \begin{ytableau}
   1 & 2 & 4 \\
\none & 3 & 5
  \end{ytableau}
 \end{center}
Inserting $4$ into the third column of the tableau below does not change the column and does not produce an 
output integer, and so the insertion does not change the tableau. The insertion terminated in position $(1,4)$.
\begin{center}
 \begin{ytableau}
  1 & 2 & 4 & 5 \\
\none & 3
 \end{ytableau}
\end{center}

Inserting 2 into the (empty) second row of the tableau below does not change the row. The insertion terminated in position (1,2)
\begin{center}
 \begin{ytableau}
  1 & 2
 \end{ytableau}

\end{center}

\end{example}

For the last two rules, suppose the row (resp. column) contains a box with label strictly larger than $x$, and let $y$ be the smallest such box.

\begin{itemize}
\item[(S3)] If replacing $y$ with $x$ results in an increasing tableau, then replace $y$ with $x$. 
In this case, $y$ is the output integer. If $x$ was inserted into a column 
or if $y$ was on the main diagonal, proceed to insert all future 
output integers into the next column to the right. If $x$ was inserted into a row and $y$ was not on the main diagonal, then insert $y$ into the row 
below. 
\end{itemize}

\begin{example} Inserting $6$ into the second row of the tableau on the left results in the tableau 
on the right with output integer $7$ to be inserted into the third row.
 \begin{center}
  \begin{ytableau}
   1 & 2 & 3 & 4 \\
\none & 4 & 7 & 8 \\
\none & \none & 8
  \end{ytableau}\hspace{1in}
  \begin{ytableau}
   1 & 2 & 3 & 4 \\
\none & 4 & 6 & 8 \\
\none & \none & 8
  \end{ytableau}
 \end{center}
Inserting $6$ into the third column of the tableau on the left also results in the tableau on the right with 
output integer $7$, but this time, $7$ is to be inserted into the fourth column.

Inserting $3$ into the second row of the tableau on the left results in the tableau shown below with output 
integer $4$ to be inserted into the third column.
\begin{center}
 \begin{ytableau}
   1 & 2 & 3 & 4 \\
\none & 3 & 7 & 8 \\
\none & \none & 8
  \end{ytableau}
 \end{center}
\end{example}

\begin{itemize}
\item[(S4)] If replacing $y$ with $x$ does not result in an increasing tableau, then do not change the 
row (resp. column). 
In this case, $y$ is the output integer. If $x$ was inserted into a column 
or if $y$ was on the main diagonal, proceed to insert all future 
output integers into the next column to the right. If $x$ was inserted into a row, then insert $y$ into the row 
below. 
\end{itemize}

\begin{example} Inserting $2$ into the first row of the tableau on the left does not change the tableau and produces 
output integer $5$ to be inserted into the second row.
 \begin{center}
  \begin{ytableau}
   1 & 2 & 5 & 8\\
\none & 3 & 6
  \end{ytableau}
 \end{center}
Inserting $5$ into the third column does not change the tableau and gives output integer $6$ to be inserted 
into the fourth column.

Inserting $2$ into the second row does not change the tableau. In this case, the output integer is 3 and is to 
be inserted into the third column.
\end{example}

Using this insertion algorithm, we define the \textit{shifted Hecke insertion tableau} of a word $w=w_1w_2\cdots w_n$ to be 
$$P_S(w)=(\ldots((\varnothing\leftarrow w_1)\leftarrow w_2)\ldots) \leftarrow w_n.$$

\begin{example}
 We show the sequence of shifted tableaux obtained while computing $P_S(4211232)$. We start by inserting $4$ into the first row.
\begin{center}
\begin{ytableau}
 4
\end{ytableau}\hspace{.1in}
\begin{ytableau}
 2 & 4
\end{ytableau}\hspace{.1in}
\begin{ytableau}
 1 & 2 & 4
\end{ytableau}\hspace{.1in}
\begin{ytableau}
 1 & 2 & 4
\end{ytableau}\hspace{.1in}
\begin{ytableau}
 1 & 2 & 4 \\
\none & 4
\end{ytableau}\hspace{.1in}
\begin{ytableau}
 1 & 2 & 3 \\
\none & 4 
\end{ytableau}\hspace{.1in}
\begin{ytableau}
 1 & 2 & 3 \\
\none & 3 & 4
\end{ytableau}\hspace{.1in}
\end{center}
\end{example}

\begin{remark}
 The result of such insertion agrees with that of $K$-theoretic jeu de taquin rectification described in \cite{ThYshift}. See \cite{REU2015}.
\end{remark}

In this setting, recording tableaux will be set-valued shifted tableaux.

\begin{definition}\label{def:shiftedset}
A \textit{set-valued shifted tableau} $T$ of shifted shape $\lambda$ is a filling of the boxes with finite, nonempty subsets of primed and unprimed positive integers so that 
\begin{enumerate}
 \item the smallest number in each box is greater than or equal to the largest number in the box directly to the left of it (if that box is present),
  \item the smallest number in each box is greater than or equal to the largest number in the box directly to the above it (if that box is present),
 \item any positive integer appears at most once, either primed or unprimed, but not both, and
 \item there are no primed entries on the main diagonal.
\end{enumerate}
\end{definition}

A set-valued shifted tableau is called \textit{standard} if the set of labels consists of $1,2,\ldots,n$, each appearing either primed or unprimed exactly once, for some $n$.

A recording tableau for a word $w=w_1w_2\ldots w_n$ is a standard shifted set-valued tableau and is obtained as follows. Begin with $Q_S(\varnothing)=\varnothing$. If the insertion of $w_k$ into $P_S(w_1\cdots w_{k-1})$ resulted in adding a new 
box to $P_S(w_1\cdots w_{k-1})$, add this same box with label $k$ if the box was added via row insertion and $k'$ if the box was added via column insertion to $Q_S(w_1\cdots w_{k-1})$ to obtain $Q_S(w_1\cdots w_k)$. 
If the insertion of $k$ into $P_S(w_1\cdots w_{k-1})$ did not change the shape 
of $P_S(w_1\cdots w_{k-1})$, obtain $Q_S(w_1\cdots w_k)$ from $Q_S(w_1\cdots w_{k-1})$ by adding the label $k$ to the box where the insertion terminated if the last move was a row insertion into a nonempty row and $k'$ if 
the last move was a column insertion. If the last move was row insertion into an empty row, label the box where the insertion terminated $k'$.

\begin{example} The top row of tableaux shows the sequence of tableaux obtained from inserting $w=4211232$ as in the previous example, and the bottom row shows the corresponding steps to form $Q_S(w)$.
 \begin{center}
 \ytableausetup{boxsize=.6cm}
 \begin{ytableau}
 4
\end{ytableau}\hspace{.1in}
\begin{ytableau}
 2 & 4
\end{ytableau}\hspace{.1in}
\begin{ytableau}
 1 & 2 & 4
\end{ytableau}\hspace{.1in}
\begin{ytableau}
 1 & 2 & 4
\end{ytableau}\hspace{.1in}
\begin{ytableau}
 1 & 2 & 4 \\
\none & 4
\end{ytableau}\hspace{.1in}
\begin{ytableau}
 1 & 2 & 3 \\
\none & 4 
\end{ytableau}\hspace{.1in}
\begin{ytableau}
 1 & 2 & 3 \\
\none & 3 & 4
\end{ytableau}\hspace{.1in}
$=P_S(w)$
\end{center}

\begin{center}
\begin{ytableau}
 1
\end{ytableau}\hspace{.1in}
\begin{ytableau}
 1 & 2'
\end{ytableau}\hspace{.1in}
\begin{ytableau}
 1 & 2' & 3'
\end{ytableau}\hspace{.1in}
\begin{ytableau}
 1 & 2' & 3'4'
\end{ytableau}\hspace{.1in}
\begin{ytableau}
 1 & 2' & 3'4' \\
\none & 5
\end{ytableau}\hspace{.1in}
\begin{ytableau}
 1 & 2' & 3'4' \\
\none & 56
\end{ytableau}\hspace{.1in}
\begin{ytableau}
 1 & 2' & 3'4' \\
\none & 56 & 7'
\end{ytableau}\hspace{.1in}
$=Q_S(w)$
\end{center}
\end{example}

We next define a reverse insertion procedure so that given a pair $(P_S(w),Q_S(w))$, we can recover $w$.

First locate the box containing the largest label of $Q_S(w)$, call the label $n$ and the position of the box $(i_n,j_n)$, and find the corresponding box in $P_S(w)$. Say 
the integer in positioin $(i_n,j_n)$ of $P_S(w)$ is $y_n$. We then perform reverse insertion on cell $(i_n,j_n)$ of $P_S(w)$ by following the rules below.
\begin{itemize}

\item[(rS1)] If $n$ is the only label in cell $(i_n,j_n)$ of $Q(w)$, remove box $(i_n,j_n)$ from $P_S(w)$ and reverse insert $y_n$ into the row above if $n$ is unprimed and into the column 
to the left if $n$ is primed.

\item[(rS2)] If $n$ is not the only label in cell $(i_n,j_n)$ of $Q_S(w)$, do not remove box $(i_n,j_n)$ from $P_S(w)$, but still reverse insert $y_n$ into the row above if $n$ is unprimed and into the column 
to the left if $n$ is primed.
\end{itemize}

In the row above if $y_n$ is reverse inserted into a row or the column to the left if it is reverse inserted into a column, let $x$ be the largest label with $x<y_n$.

\begin{itemize}
\item[(rS3)] If replacing $x$ with $y_n$ results in an 
increasing shifted tableau, replace $x$ with $y_n$. If $y_n$ was reverse column inserted and $x$ was not on the main diagonal, reverse insert $x$ into the column to the left. 
Otherwise, reverse insert $x$ into the row above.

\item[(rS4)] If replacing $x$ with $y_n$ does not result in an increasing shifted tableau, leave the row 
or column unchanged. If $y_n$ was reverse column inserted and $x$ was not on the main diagonal, reverse insert $x$ into the column to the left. 
Otherwise, reverse insert $x$ into the row above.
\end{itemize}

If we are in the first row of the tableau and the last step was a reverse row insertion or we are in the first column and the last step was a reverse column insertion, 
then $x$ and the modified tableau, which we will call $P_{S,1}(w)$, are the final output value. Define $x_1=x$.

Repeat this process using the pair $(P_{S,1}(w),Q_{S,n-1}(w))$, where $Q_{S,n-1}(w)$ is the result of removing the entry $n$ or $n'$ from $Q_S(w)$. 
Define $x_2$ to be the output value and $P_{S,2}(w)$ to be the modified tableau. 
Continue this process with pairs $(P_{S,2}(w),Q_{S,n-2}(w)),\ldots,(P_{S,n-1}(w),Q_{S,1}(w))$. Then we claim $w=x_1x_2\cdots x_n$.

\begin{theorem}
There is a bijection between pairs consisting of an increasing shifted tableau and a standard set-valued shifted tableau of the same shape, $(P,Q)$, 
and words, where the word $w$ corresponds to the pair $(P_S(w),Q_S(w))$.
\end{theorem}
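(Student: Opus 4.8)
The plan is to follow the standard strategy for RSK-type bijections: realize forward shifted Hecke insertion and the reverse procedure as mutually inverse maps at the level of a single letter, and then bootstrap to words by induction on length, exactly as in the proof of Theorem~\ref{thm:insbijection}. First I would establish well-definedness of the forward map: inserting a letter $x$ into an increasing shifted tableau $Y$ always terminates and produces an increasing shifted tableau $Z$ (Definition~\ref{def:increasingshifted}) whose shape either equals that of $Y$ or is obtained by adding a single outer corner. This is a routine induction along the bumping path, using that each rule (S1)--(S4) preserves strict increase in rows and columns and that once the bumping path enters column mode — after a diagonal entry is replaced, or because it started in a column — it remains in column mode. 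Simultaneously one checks that the recording rule yields a genuine standard set-valued shifted tableau in the sense of Definition~\ref{def:shiftedset}: the new label $k$ is placed at the termination box, which is either a freshly created outer corner or an already-labeled corner cell weakly to the right of and below previously-added labels, and condition (4) is never violated because a prime is recorded only for column insertions and for row insertions into an empty row, neither of which terminates on the main diagonal.

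The core of the argument is a single-step inversion lemma: given $(P_S(w_1\cdots w_k), Q_S(w_1\cdots w_k))$, locate the cell $c$ carrying the largest label $k$ of $Q_S$, record whether $k$ is primed and whether it is the unique label of $c$, and show that applying the reverse rules (rS1)--(rS4) and the terminal rule to the corresponding cell of $P_S$ recovers exactly $P_S(w_1\cdots w_{k-1})$ together with the letter $w_k$, while deleting $k$ from $Q_S$ recovers $Q_S(w_1\cdots w_{k-1})$. The proof is a case analysis matching each forward rule to the reverse rule undoing it: creating a new box via (S1) (row, resp.\ column) is undone by (rS1) with $k$ unprimed (resp.\ primed); the shape-preserving terminations via (S2) and (S4) correspond to $k$ being a \emph{non-unique} label, undone by (rS2), since there the recording label was appended to an existing cell (and the prime on $k$ records whether the last move was a column insertion or a row insertion into an empty row); and the bumping replacements (S3)/(S4) are unwound step by step by (rS3)/(rS4), the row-versus-column bookkeeping being dictated precisely by the prime on $k$ together with, as one descends, whether the bumped entry lay on the main diagonal. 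Two facts make the reverse steps legitimate: the largest-label cell $c$ is always an outer corner of $P_S$ — immediate since no cell of $Q_S$ can lie to the right of or below $c$, by the growth conditions and the injectivity of labels — and the reverse bumping path is forced at each stage because ``the largest $x$ with $x<y$'' is uniquely determined and replacing it again gives an increasing tableau, both consequences of strict increase.

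With the single-step lemma established, the theorem follows by induction on the word length $n$. Injectivity of $w\mapsto(P_S(w),Q_S(w))$: peel off $w_n$ via the lemma and apply the inductive hypothesis. Surjectivity: given any pair $(P,Q)$ of the same shape with $Q$ a standard set-valued shifted tableau on $\{1,\ldots,n\}$, reverse insert the largest label to obtain a letter $x_n$ and a strictly smaller valid pair $(P',Q')$ — valid because deleting the largest label from a standard set-valued shifted tableau again yields one, and reverse insertion on an outer-corner cell preserves increasingness — then apply the inductive hypothesis to get $x_1\cdots x_{n-1}$, and observe that forward insertion of $x_1\cdots x_n$ reproduces $(P,Q)$ since forward and reverse single steps are inverse. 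I expect the only real obstacle to be the bookkeeping in the single-step case analysis: faithfully tracking the primed/unprimed flag and the row/column mode through every rule, in particular the transition from row mode to column mode when a diagonal entry is bumped, and confirming that the shape-preserving insertions — where $Q_S$ genuinely becomes set-valued — are correctly detected on the reverse side by the multiplicity of the largest label. I anticipate no conceptual difficulty beyond this verification.
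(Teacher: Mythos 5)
Your overall plan—well-definedness of the forward map, a single-step inversion lemma, then induction on the length of the word—is the same skeleton as the paper's proof, and your treatment of the box-adding case (termination by (S1), undone by (rS1) with the prime recording row versus column mode) matches the paper's, which likewise disposes of it in one sentence. The genuine gap is precisely at the point you set aside as ``bookkeeping,'' which is where the paper's proof does essentially all of its work: the shape-preserving terminations under (S2). When insertion ends by (S2), the cell that receives the label $k$ in $Q_S$—and hence the cell at which reverse insertion must begin—is \emph{not} the cell where the bumping sequence stopped. It is the box at the bottom of the column containing the rightmost box of the row into which the letter failed to be appended; for a row insertion into an empty row it is the rightmost box of the previous row (recorded with a prime); for a column insertion it is the rightmost box of the row containing the bottom of the offending column. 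So your assertion that the bumping replacements are ``unwound step by step by (rS3)/(rS4)'' does not describe what actually happens in this case: the reverse procedure must first perform a chain of reverse insertions through rows (or columns) that the forward insertion never modified, and one has to prove that each such step leaves the tableau unchanged—because the entry being reverse inserted is strictly larger than every entry of the target row yet cannot replace its last entry without sitting directly above a copy of itself—and that this chain delivers the correct output value back to the row where the forward insertion actually stopped, after which the genuine unwinding begins.

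The empty-row subcase is more delicate still, and your generic ``prime dictates row/column mode'' bookkeeping does not capture it: there the reverse procedure starts with reverse \emph{column} insertions that travel leftward along the previous row, each a no-op for the same reason as above, until the main diagonal is reached, and only then does it switch back to reverse row insertion, recovering the letter whose insertion into the previous row bumped the entry that could not be placed. None of this is forced by the observation that ``the largest $x$ with $x<y$'' is unique; it requires the explicit traversal argument just described, which is exactly the content of the paper's proof of this theorem. Until you supply it, your single-step inversion lemma—and with it both injectivity and your surjectivity argument, which relies on forward and reverse single steps being mutually inverse—is not established. (Your added remarks on well-definedness of the forward map and on why deleting the largest label of $Q$ leaves a valid standard set-valued shifted tableau are fine, and are points the paper leaves implicit.)
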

\begin{proof}
We show that given a pair of tableaux of the same shape, $(P,Q)$, where $P$ is an increasing shifted tableau on some $[n]$ and $Q$ is standard shifted set-valued tableau on
$1<2'<2<3'<\ldots<n'<n$,
we can recover $w$ so that $P=P_S(w)$ and 
$Q=Q_S(w)$. Assume first that $(P,Q)$ has been obtained by inserting some integer $h$ into a pair 
$(Y,Z)$. We must show that the reverse insertion procedure defined above recovers $h$ and $(Y,Z)$ from $(P,Q)$.

It is clear that if the insertion of $h$ ended by adding a new box to $Y$ and $Z$ using (S1), then 
reverse insertion steps (rS1), (rS2), (rS3), and (rS4) undo insertion steps (S1), (S2), (S3), and (S4), respectively.
Therefore, it suffices to show that if the insertion of $h$ does not result in adding a new box to $Y$ and $Z$ 
using (S2), then the reverse insertion procedure can recover $h$ from $(Y,Z)$. 

Suppose the last step of the insertion of $h$ into $Y$ is inserting some $x$ into nonempty row $i$ using (S2)
and the insertion terminates in row ${i+j}$. We first use $(rS2)$ and reverse insert the entry 
at the end of row ${i+j}$ into row ${i+j-1}$, which will not change row ${i+j-1}$ because the entry being 
reverse inserted is strictly larger than all entries in row $i+j-1$ but cannot 
replace the entry at the end of row $i+j-1$ since this would put it directly above the occurence of the 
entry in row $i+j$. Next, we reverse insert the entry at the end of row 
${i+j-1}$ into  ${i+j-2}$, and in the same manner, it will not change row ${i+j-2}$. This process continues 
until we reverse insert the entry at the end of row 
${i+1}$ into $i$ and obtain output integer $x$. From this point onward, the reverse insertion rules
will exactly undo the corresponding insertion rules to recover $h$. We can use the transpose of 
this argument if $x$ is column inserted into column $j$ using (S2) and the insertion terminates in column $j+i$.

For example, inserting 4 into the first row of the tableau below does not change the tableau, and the insertion terminates at $(3,4)$. Starting reverse insertion at $(3,4)$, we reverse insert 6 into the second row. 
It cannot replace the 5, so the third row doesn't change, and 5 is reverse inserted into the first row. The 5 cannot replace the 4, so the first row is unchanged, and we end with the original tableau and the integer 4.

\begin{center}
 \begin{ytableau}
  1 & 2 & 3 & 4 \\
\none & 3 & 4 & 5 \\
\none & \none & 5 & 6 
 \end{ytableau}
\end{center}

If $x$ is inserted into empty row $i$ using (S2). Then our original tableau looked like the one shown below, where the bottom row shown is row $i-1$ and the dot indicates
where the insertion terminated. 
In this case, $x$ must have been bumped by a row insertion of $a$ into row $i-1$. We show reverse insertion starting from the box where the insertion terminated recovers the step of inserting $a$ into row $i-1$.

\begin{center}
\begin{ytableau}
$ $ & $ $ & $ $ & $ $ &$ $ & $ $ & $ $ & $ $  \\
 \none & a & x & $ $ & $ $ & $ $ & \cdot 
\end{ytableau}
\end{center}

Using (rS4), we reverse insert the entry at the end of row $i-1$ into the column to the left. The column will not change since the entry being reverse inserted is strictly larger than all entries in the column 
and cannot replace the entry at the end of the column. We then reverse insert the second rightmost entry of row $i-1$ into the column to its left. Continuing in this manner, we eventually reverse column insert $x$ into the main 
diagonal, which results in reverse row inserting $a$ into row $i-2$. From this point on, the reverse insertion 
rules will exactly undo the corresponding insertion rules to recover $h$.
 
For example, suppose some insertion ends with inserting 2 into the empty second row of the tableau below. The insertion terminates in position $(1,4)$, and we see that the 2 must have been bumped by a row insertion of 1 into the first row.
We start reverse column insertion in position $(1,4)$ and reverse column insert 4 into the third column. The integer 4 cannot replace the 3 in the third column since it would be directly to the left of the 4 in the fourth column, 
so the third column is left unchanged. We reverse column insert 3 into the second column. Again, the column remains unchanged, and we reverse insert 2 into the first column. The 2 cannot replace the 1, so the column is unchanged. Since the 1 
was on the main diagonal, we next reverse row insert 1 into the row above.
\begin{center}
 \begin{ytableau}
  1 & 2 & 3 & 4 
 \end{ytableau}
\end{center}
\end{proof}

\subsection{Shifted Hecke growth and decay}\label{sec:shiftedgrowth}

As before, given any word $w=w_1w_2\cdots w_k$, containing
$n \leq k$ distinct numbers, we can create an $n \times k$ array with an $X$ in the $w_i^{th}$
square from the bottom of column $i$. Note that there can be multiple $X$'s in the same row but is at most
one $X$ per column. 

We will label the corners of each square with a shifted partition and label some of the horizontal edges of the squares with
a specific inner corner where the insertion terminated and/or a `c' to designate column insertion. To denote a specific inner corner, we will give either its row 
or column. For example, the edge label 2 denotes the inner corner at the end of the second row of the shifted diagram, and an edge labeled 2c denotes the 
inner corner at the end of the second column of the shifted diagram. We begin by labeling all corners
along the bottom row and left side of the diagram with the empty shape, $\varnothing$, as before.

To complete the labeling of the corners, suppose the corners $\mu$, $\lambda$, and $\nu$ are labeled, 
where $\mu$, $\lambda$, and $\nu$ are as in the picture below. We label $\gamma$ according to the following rules.

 \begin{center}
\begin{tikzpicture}[scale=1]
\node (A) at (-1,-1) {$\lambda$};
\node (B) at (1,-1) {$\nu$};
\node (C) at (-1,1) {$\mu$};
\node (D) at (1,1) {$\gamma$};
\draw (A) -- (B)
(C) -- (D)
(B) -- (D)
(A) -- (C);
\end{tikzpicture}
\end{center}

\textbf{If the square contains an X:}

\begin{itemize}
 \item[(1)] If $\lambda_1=\mu_1$, then $\gamma/\mu$ consists of one box in the first row. 
\item[(2)] If $\lambda_1+1=\mu_1$, then $\gamma=\mu$ and the edge between them is labeled with a $1$ to signify the inner corner in the first 
row of $\gamma$.
\end{itemize}

 \begin{center}
\begin{tikzpicture}[scale=1]
\node (A) at (-1,-1) {1};
\node (B) at (1,-1) {1};
\node (C) at (-1,1) {1};
\node (D) at (1,1) {2};
\node (E) at (0,0) {$X$};
\draw (A) -- (B)
(C) -- (D)
(B) -- (D)
(A) -- (C);
\end{tikzpicture}\hspace{1in}
\begin{tikzpicture}[scale=1]
\node (A) at (-1,-1) {1};
\node (B) at (1,-1) {1};
\node (C) at (-1,1) {2};
\node (D) at (1,1) {2};
\node (E) at (0,0) {$X$};
\node (F) at (0,1.25) {1};
\draw (A) -- (B)
(C) -- (D)
(B) -- (D)
(A) -- (C);
\end{tikzpicture}
\end{center}

\textbf{If the square does not contain an X and $\mu=\lambda$ or if $\nu=\lambda$ with no edge label on the bottom edge}

\begin{itemize}
 \item[(3)] If $\mu=\lambda$, then set $\gamma = \nu$ and label the top edge 
label as the bottom edge if one exists. If $\nu=\lambda$, then $\gamma = \mu$.  
\end{itemize}

 \begin{center}
\begin{tikzpicture}[scale=1]
\node (A) at (-1,-1) {1};
\node (B) at (1,-1) {2};
\node (C) at (-1,1) {1};
\node (D) at (1,1) {2};
\node (E) at (0,1.25) {c};
\node (F) at (0,-.75){c};
\draw (A) -- (B)
(C) -- (D)
(B) -- (D)
(A) -- (C);
\end{tikzpicture}\hspace{1in}
\begin{tikzpicture}[scale=1]
\node (A) at (-1,-1) {1};
\node (B) at (1,-1) {1};
\node (C) at (-1,1) {2};
\node (D) at (1,1) {2};
\draw (A) -- (B)
(C) -- (D)
(B) -- (D)
(A) -- (C);
\end{tikzpicture}
\end{center}

\textbf{If $\nu \nsubseteq \mu$ and the square does not contain an X:}

\begin{itemize}
 \item[(4)] In the case where $\nu \nsubseteq \mu$, $\gamma = \nu \cup \mu$, and the top edge label is the same as the bottom edge label.
\end{itemize}

\begin{center}
\begin{tikzpicture}[scale=1]
\node (A) at (-1,-1) {2};
\node (B) at (1,-1) {3};
\node (C) at (-1,1) {21};
\node (D) at (1,1) {31};
\draw (A) -- (B)
(C) -- (D)
(B) -- (D)
(A) -- (C);
\end{tikzpicture}
\end{center}

\textbf{If $\nu \subseteq \mu$ and the square does not contain an X:}

Suppose the square of $\nu/\lambda$ is one box in position $(i,j)$. If $(i,j)$ is not on the diagonal and the edge between $\lambda$ and $\nu$ is not labeled $c$:

\begin{itemize}
 \item[(5)] If there is no box of $\mu/\lambda$ in 
row $i+1$ of $\mu$, then $\gamma$ is obtained from $\mu$ by adding a box to the end of row $i+1$.

 \item[(6)] If there is a box of $\mu/\lambda$ in 
row $i+1$ of $\mu$, then $\gamma=\mu$ and the top edge is labeled $i+1$.
\end{itemize}

 \begin{center}
\begin{tikzpicture}[scale=1]
\node (A) at (-1,-1) {1};
\node (B) at (1,-1) {2};
\node (C) at (-1,1) {2};
\node (D) at (1,1) {21};
\draw (A) -- (B)
(C) -- (D)
(B) -- (D)
(A) -- (C);
\end{tikzpicture}\hspace{1in}
\begin{tikzpicture}[scale=1]
\node (A) at (-1,-1) {2};
\node (B) at (1,-1) {3};
\node (C) at (-1,1) {31};
\node (D) at (1,1) {31};
\node(E) at (0,1.25){2};
\draw (A) -- (B)
(C) -- (D)
(B) -- (D)
(A) -- (C);
\end{tikzpicture}
\end{center}

If $(i,j)$ is on the main diagonal or the bottom edge is labeled $c$:

\begin{itemize}
\item[(7)] If there is no box of $\mu/\lambda$ in column $j+1$ of $\mu$, then $\gamma$ is obtained 
from $\mu$ by adding a box to the end of column $j+1$. The top edge is labeled $c$.

\item[(8)] If there is a box of $\mu/\lambda$ in column $j+1$ of $\mu$, then $\gamma=\mu$. The top edge is labeled with $c$ and $j+1$.
\end{itemize}

 \begin{center}
\begin{tikzpicture}[scale=1]
\node (A) at (-1,-1) {1};
\node (B) at (1,-1) {2};
\node (C) at (-1,1) {2};
\node (D) at (1,1) {3};
\node (E) at (0,1.25){c};
\node (F) at (0,-.75){c};
\draw (A) -- (B)
(C) -- (D)
(B) -- (D)
(A) -- (C);
\end{tikzpicture}\hspace{1in}
\begin{tikzpicture}[scale=1]
\node (A) at (-1,-1) {2};
\node (B) at (1,-1) {21};
\node (C) at (-1,1) {31};
\node (D) at (1,1) {31};
\node (E) at (0,1.25){3c};
\node (F) at (0,-.75){c};
\draw (A) -- (B)
(C) -- (D)
(B) -- (D)
(A) -- (C);
\end{tikzpicture}
\end{center}

If $\lambda=\nu$ and the bottom edge is labeled with $i$ but not with $c$:

\begin{itemize}
 \item[(9)] If there is no box of $\mu/\lambda$ immediately to the right of or immediately below the box at the end of row $i$ of $\nu$, 
then $\gamma=\mu$ and the top edge is labeled by $i$.

\item[(10)] If there is a box of $\mu/\lambda$ directly below the box at the end of row $i$ of $\nu$, then $\gamma=\mu$ and the edge between
them is labeled by $i+1$.

\item[(11)] If there is a box of $\mu/\lambda$ immediately to the right of the box at the end of row $i$ of $\nu$, say in position $(i,j+1)$, no box of $\mu/\lambda$ in 
row $i+1$, and the outer corner of row $i+1$ is not in column $j$, then $\gamma/\mu$ is one box in row $i+1$.

\item[(12)] If there is a box of $\mu/\lambda$ immediately to the right of the box at the end of row $i$ of $\nu$, say in position $(i,j+1)$, no box of $\mu/\lambda$ in 
row $i+1$, and the outer corner of row $i+1$ is in column $j+1$, 
then the outer corner of row $i+1$ is on the main diagonal. In this case, $\gamma=\mu$, and the top edge is labeled with $j+1$ and $c$. 
\end{itemize}

 \begin{center}
\begin{tikzpicture}[scale=1]
\node (A) at (-1,-1) {3};
\node (B) at (1,-1) {3};
\node (C) at (-1,1) {31};
\node (D) at (1,1) {31};
\node (E) at (0,1.25){1};
\node (F) at (0,-.75){1};
\draw (A) -- (B)
(C) -- (D)
(B) -- (D)
(A) -- (C);
\end{tikzpicture}\hspace{.15in}
\begin{tikzpicture}[scale=1]
\node (A) at (-1,-1) {2};
\node (B) at (1,-1) {2};
\node (C) at (-1,1) {21};
\node (D) at (1,1) {21};
\node (E) at (0,1.25){2};
\node (F) at (0,-.75){1};
\draw (A) -- (B)
(C) -- (D)
(B) -- (D)
(A) -- (C);
\end{tikzpicture}\hspace{.15in}
\begin{tikzpicture}[scale=1]
\node (A) at (-1,-1) {2};
\node (B) at (1,-1) {2};
\node (C) at (-1,1) {3};
\node (D) at (1,1) {31};
\node (F) at (0,-.75){1};
\draw (A) -- (B)
(C) -- (D)
(B) -- (D)
(A) -- (C);
\end{tikzpicture}\hspace{.15in}
\begin{tikzpicture}[scale=1]
\node (A) at (-1,-1) {1};
\node (B) at (1,-1) {1};
\node (C) at (-1,1) {2};
\node (D) at (1,1) {2};
\node (E) at (0,1.25){2c};
\node (F) at (0,-.75){1};
\draw (A) -- (B)
(C) -- (D)
(B) -- (D)
(A) -- (C);
\end{tikzpicture}
\end{center}

If the $\lambda=\nu$ and the bottom edge is labeled with $j$ and with $c$:

\begin{itemize}
 \item[(13)] If there is no box of $\mu/\lambda$ immediately to the right of or immediately below the box at the end of column $j$ of $\nu$, 
then $\gamma=\mu$ and the edge between them is labeled by $j$ and $c$.

\item[(14)] If there is a box of $\mu/\lambda$ immediately to the right of the box at the end of column $j$ of $\nu$, then $\gamma=\mu$ and the top edge is labeled by $j+1$ and $c$.

\item[(15)] If there is a box of $\mu/\lambda$ directly below the box at the end of column $j$ of $\nu$ but no box of $\mu/\lambda$ in 
column $j+1$, then $\gamma/\mu$ is one box in column $j+1$. The top edge is labeled $c$.

\end{itemize}

 \begin{center}
\begin{tikzpicture}[scale=1]
\node (A) at (-1,-1) {41};
\node (B) at (1,-1) {41};
\node (C) at (-1,1) {42};
\node (D) at (1,1) {42};
\node (E) at (0,1.25){4c};
\node (F) at (0,-.75){4c};
\draw (A) -- (B)
(C) -- (D)
(B) -- (D)
(A) -- (C);
\end{tikzpicture}\hspace{.15in}
\begin{tikzpicture}[scale=1]
\node (A) at (-1,-1) {31};
\node (B) at (1,-1) {31};
\node (C) at (-1,1) {41};
\node (D) at (1,1) {41};
\node (E) at (0,1.25){4c};
\node (F) at (0,-.75){3c};
\draw (A) -- (B)
(C) -- (D)
(B) -- (D)
(A) -- (C);
\end{tikzpicture}\hspace{.15in}
\begin{tikzpicture}[scale=1]
\node (A) at (-1,-1) {31};
\node (B) at (1,-1) {31};
\node (C) at (-1,1) {32};
\node (D) at (1,1) {42};
\node (F) at (0,-.75){3c};
\draw (A) -- (B)
(C) -- (D)
(B) -- (D)
(A) -- (C);
\end{tikzpicture}
\end{center}

We call the resulting array the \textit{shifted Hecke growth diagram} of $w$. In our previous example with
$w=4211232$, we would have the diagram below.

\begin{center}
\begin{tikzpicture}[scale=1]
\node (00) at (0,0) {$\varnothing$};
\node (10) at (1,0) {$\varnothing$};
\node (20) at (2,0) {$\varnothing$};
\node (30) at (3,0) {$\varnothing$};
\node (40) at (4,0) {$\varnothing$};
\node (50) at (5,0) {$\varnothing$};
\node (01) at (0,1) {$\varnothing$};
\node (.53.5) at (.5,3.5) {$X$};
\node (11) at (1,1) {$\varnothing$};
\node (21) at (2,1) {$\varnothing$};
\node (31) at (3,1) {$1$};
\node (41) at (4,1) {$1$};
\node (51) at (5,1) {$1$};
\node (02) at (0,2) {$\varnothing$};
\node (12) at (1,2) {$\varnothing$};
\node (22) at (2,2) {$1$};
\node (32) at (3,2) {$2$};
\node (1.51.5) at (1.5,1.5) {$X$};
\node (42) at (4,2) {$2$};
\node (52) at (5,2) {$2$};
\node (03) at (0,3) {$\varnothing$};
\node (13) at (1,3) {$\varnothing$};
\node (23) at (2,3) {$1$};
\node (33) at (3,3) {$2$};
\node (43) at (4,3) {$2$};
\node (53) at (5,3) {$2$};
\node (2.5.5) at (2.5,.5) {$X$};
\node (04) at (0,4) {$\varnothing$};
\node (14) at (1,4) {$1$};
\node (24) at (2,4) {$2$};
\node (34) at (3,4) {$3$};
\node (44) at (4,4) {$3$};
\node (54) at (5,4) {$31$};
\node (3.5.5) at (3.5,.5) {$X$};
\node (4.51.5) at (4.5,1.5) {$X$};
\node (5,52.5) at (5.5,2.5) {$X$};
\node (60) at (6,0) {$\varnothing$};
\node (61) at (6,1) {$1$};
\node (62) at (6,2) {$2$};
\node (63) at (6,3) {$3$};
\node (64) at (6,4) {$31$};
\node (70) at (7,0) {$\varnothing$};
\node (71) at (7,1) {$1$};
\node (72) at (7,2) {$2$};
\node (73) at (7,3) {$31$};
\node (74) at (7,4) {$32$};
\node (6.51.5) at (6.5,1.5) {$X$};
\node (3.51.15) at (3.5,1.15) {\tiny 1};
\node (3.52.15) at (3.5,2.15) {\tiny 2c};
\node (3.53.15) at (3.5,3.15) {\tiny 2c};
\node (3.54.15) at (3.5,4.15) {\tiny 3c};
\node (2.52.15) at (2.5,2.15) {\tiny c};
\node (2.53.15) at (2.5,3.15) {\tiny c};
\node (2.54.15) at (2.5,4.15) {\tiny c};
\node (1.54.15) at (1.5,4.15) {\tiny c};
\node (4.52.15) at (4.5,2.15) {\tiny 1};
\node (4.53.15) at (4.5,3.15) {\tiny 1};
\node (5.54.15) at (5.5,4.15) {\tiny 2};
\node (6.52.15) at (6.5,2.15) {\tiny 1};
\node (6.54.15) at (6.5,4.15) {\tiny c};

\draw (00)--(10)--(20)--(30)--(40)--(50)--(60)--(70)
(01)--(11)--(21)--(31)--(41)--(51)--(61)--(71)
(02)--(12)--(22)--(32)--(42)--(52)--(62)--(72)
(03)--(13)--(23)--(33)--(43)--(53)--(63)--(73)
(04)--(14)--(24)--(34)--(44)--(54)--(64)--(74)
(00)--(01)--(02)--(03)--(04)
(10)--(11)--(12)--(13)--(14)
(20)--(21)--(22)--(23)--(24)
(30)--(31)--(32)--(33)--(34)
(40)--(41)--(42)--(43)--(44)
(50)--(51)--(52)--(53)--(54)
(60)--(61)--(62)--(63)--(64)
(70)--(71)--(72)--(73)--(74);
\end{tikzpicture}
\end{center}

Let $\mu_0 = \varnothing \subseteq \mu_1 \subseteq \ldots \mu_k$ be the seqence of shifted partitions across the top of the 
growth diagram, and let $\nu_0=\varnothing \subseteq \nu_1 \subseteq \ldots \nu_n$ be the sequence of shifted partitions 
on the right side of the growth diagram. These sequences correspond to increasing shifted tableaux
$Q(w)$ and $P(w)$, respectively. 
If the edge between $\mu_i$ and $\mu_{i+1}$ is labeled $j$, then $\mu_i=\mu_{i+1}$, and the label $i+1$ of $Q(w_1\cdots w_{i+1})$ 
is placed in the box at the end of row $j$ of $Q(w_1\cdots w_i)$. If the edge between $\mu_i$ and $\mu_{i+1}$ is labeled $jc$, then $\mu_i=\mu_{i+1}$, and the label $i+1'$ of $Q(w_1\cdots w_{i+1})$ 
is placed in the box at the end of column $j$ of $Q(w_1\cdots w_i)$. For the shifted Hecke growth diagram above, we have:

\begin{center}
$P(w)=$
\begin{ytableau}
 1 & 2 & 3 \\
\none & 3 & 4
\end{ytableau}\hspace{.1in}
\hspace{1in}
 $Q(w)=$
 \begin{ytableau}
 1 & 2' & 3'4' \\
\none & 56 & 7'
 \end{ytableau}\hspace{.1in}.
\end{center}
We see that this $P$ and $Q$ agree with $P_S(w)$ and $Q_S(w)$, which is not a coincidence.

\begin{theorem}
 For any word $w$, the increasing shifted tableaux $P$ and $Q$ obtained from the sequence of shifted partitions across the right side of the shifted Hecke growth diagram for $w$
and across the top of the shifted Hecke growth diagram for $w$, respectively, are $P_S(w)$ and $Q_S(w)$, the shifted Hecke insertion and recording tableau for $w$.
\end{theorem}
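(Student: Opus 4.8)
The plan is to argue by induction on the squares of the shifted Hecke growth diagram, filling them in the usual Fomin order (left to right within a row, bottom to top), in direct parallel with the proof given above for the non-shifted Hecke growth diagram. Throughout I would maintain the following dictionary. Place the square so that its corners occupy grid positions $(s-1,t-1)=\lambda$, $(s,t-1)=\nu$, $(s-1,t)=\mu$, $(s,t)=\gamma$, with possible labels on the horizontal edges $\lambda$--$\nu$ and $\mu$--$\gamma$. The inductive claim is: the shape at $(s,t)$ is the shape of $P_S(u)$, where $u$ is the subword of $w_1\cdots w_s$ consisting of the letters $\le t$; the skew shape $\mu/\lambda$ is the set of cells occupied by the value $t$ at the relevant stage of that insertion, hence a disjoint union of cells (no two sharing an edge) --- which is exactly the condition defining the down-edges of the M\"obius deformation of $\mathbb{SY}$ computed above, and is what makes the down-steps of the diagram legal; and the label (if any) on $\lambda$--$\nu$ records the cell at which the insertion of $w_s$ into the threshold-$t$ tableau terminated without changing its shape, decorated with a ``$c$'' exactly when that termination occurred in column-insertion mode. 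Granting this dictionary, the chain up the right side of the diagram is the chain of shapes traversed by $P_S(w)$, the chain along the top is the chain traversed by $Q_S(w)$, and an edge label $j$ (resp. $jc$) along the top at step $i{+}1$ places the label $i{+}1$ (resp. $(i{+}1)'$) at the end of row $j$ (resp. column $j$), which is precisely the recipe defining $Q_S$.

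The first step is to establish the two structural lemmas, just as in the non-shifted case: oriented as above one always has $|\nu/\lambda|\le 1$ (only the single letter $w_s$ is inserted when passing from column $s-1$ to column $s$), and $\mu/\lambda$ is always a disjoint union of cells, being the set of positions filled with the value $t$ at a stage of an increasing-shifted insertion. I would also record the boundary facts --- squares along the bottom row and the left column carry $\varnothing$, and each column contains at most one $X$, so an $X$-square automatically has $\lambda=\nu$. These observations reduce everything to checking the fifteen rules (and the two $X$-rules) against one bump of shifted Hecke insertion.

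The case analysis then organizes by whether the current insertion step is a row step or a column step, and whether the relevant cell lies on the main diagonal --- exactly the dichotomy built into rules (S1)--(S4). The $X$-rules (1)--(2) correspond to inserting the (weakly largest so far) value $t$ into the first row: a box is appended if no $t$ is present there, and otherwise the new $t$ merges and the insertion terminates at the bottom of the last column, recorded by the edge label $1$. Rules (3)--(6) are the off-diagonal, row-mode rules and are essentially the non-shifted rules (3)--(6): (3) merely propagates information, (4) handles a new box created away from the $t$'s, and (5)--(6) handle a $t$ bumped from row $i$ into row $i+1$, landing in a fresh box or merging. Rules (7)--(8) are the transition into column mode --- a bump reaches a diagonal cell (or a ``$c$'' was already present), so every subsequent output is inserted into the next column and the ``$c$'' is propagated --- and are the column-transposes of (5)--(6). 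Rules (9)--(12) treat a previously terminated row insertion (a bottom edge labeled $i$) interacting with a $t$ immediately to the right or below, with rule (12) the delicate one where the new outer corner of row $i+1$ lands exactly on the diagonal and the label must acquire a ``$c$''; rules (13)--(15) are the transposed statements for a terminated column insertion. In each case I would trace the single relevant bump of shifted Hecke insertion and verify it produces exactly the prescribed $\gamma$ and edge label, invoking the induction hypothesis for the state of the tableau.

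The main obstacle is the bookkeeping near the diagonal. Unlike the straight case, bumping a diagonal entry switches the entire insertion from row mode to column mode, and the growth diagram must encode this switch through the ``$c$'' markers and through the special rules (7)--(8), (12), and (14)--(15) that detect when an inner or outer corner is forced onto the main diagonal. Verifying these transitional rules, and checking that a ``$c$'' marker propagates consistently all the way across a row of the diagram so that the top chain genuinely reproduces the primed/unprimed placement rule of $Q_S$, is where the real work lies; the non-transitional rules are routine adaptations of the non-shifted argument and can be cited as such. Finally, that the rules give a well-defined (uniquely completable) labeling follows from the fact that shifted Hecke insertion together with its reverse insertion is a bijection, established above, so no separate consistency argument is needed.
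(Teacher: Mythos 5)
Your proposal follows essentially the same route as the paper's proof: induction over the squares of the growth diagram with the dictionary that the corner shapes are insertion-tableau shapes of threshold subwords, the two structural lemmas ($|\nu/\lambda|\le 1$ and $\mu/\lambda$ a disjoint union of cells), and a case-by-case check of the fifteen rules against a single bump of shifted Hecke insertion, with the diagonal/column-mode transitions (rules (7)--(8), (12), (13)--(15)) singled out as the only genuinely new verifications beyond the straight-shape argument. This matches the paper's treatment, which carries out exactly those case checks (citing the transposition symmetry for (13)--(15)), so the plan is correct; it only remains to execute the case verifications you outline.
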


\begin{proof}
Suppose the square we are examining is in row $s$ and column $t$. We argue by induction. We will use the same two lemmas from the Hecke growth diagram proof, which have
analogous proofs.

\begin{lemma} Oriented as in the square above, $|\nu/\lambda|\leq 1$.\end{lemma}

\begin{lemma} Oriented as in the square above, $\mu/\lambda$ is a rook strip, that is, no two boxes 
in $\mu/\lambda$ are in the same row or column.\end{lemma}

(1-2) Note that if the square contains an $X$, then $\lambda=\nu$ because there are no other $X$'s in column $t$. If $\lambda_1=\mu_1$, then there are no $s$'s in the first row 
of insertion tableau $\mu$. Thus the inserted $s$ will be added to the end of the first row, creating $\gamma$. If $\lambda_1=\mu_1$, then there is already an $s$ at the end ofthe first row 
of insertion tableau $\mu$. The inserted $s$ cannot be added to the first row, so the $\mu=\gamma$. In this case, the insertion terminated at the end of the first row. We know this is an inner corner 
because $s$ is the largest number inserted so far, so there can not be anything directly below the box labeled $s$ in the first row.

(3) If $\mu=\lambda$, there is no $X$ to the left of square $(s,t)$ in row 
$s$. Thus nothing changes between $\nu$ and $\gamma$ as we consider 
occurences of entry $s$. If $\lambda=\nu$, there is no $X$ below square 
$(s,t)$ in column $t$ and so no new insertion.Thus nothing changes between $\mu$ and $\gamma$.

(4) Suppose $\nu/\lambda$ is one box in row $n$ and $\mu/\lambda$ contains boxes 
in rows exactly $\{j_1,j_2,\ldots,j_k\}$. Suppose the box in $\nu/\lambda$ corresponds to 
inserting $r<s$. Then $\nu \nsubseteq \mu$ implies 
that $n\notin \{j_1,j_2,\ldots,j_k\}$. Since $\nu/\lambda$ is one box,
the last action in the insertion sequence of $r$ into the insertion tableau of 
shape $\lambda$ is a box being inserted in row $n$. Since there is no $s$ in row $n$,
the bumping sequence when inserting $r$ into the insertion tableau of shape $\mu$ 
will not disturb the $s$'s. The edge between $\lambda$ and $\nu$ is either labeled $c$ or not, corresponding 
to whether or not inserting the number in column $t$ involves column insertions. This remains unchanged.

(5-6) Since $\nu/\lambda$ is one box in position $(i,j)$, there is some $X$ in 
position $(r,t)$ for $r<s$. It follows that inserting $r$ into the insertion 
tableau $\mu$ will result in bumping the $s$ in position $(i,j)$
of $\mu$. If there is no box in row $i+1$ of $\mu/\lambda$, there is not an $s$ in row $i+1$ of insertion 
tableau $\mu$. Thus the $s$ in position $(i,j)$ is bumped from row $i$ by $r$ and added to the end of row 
$i+1$. If there is a box in row $i+1$ of $\mu/\lambda$, there is already an $s$ in row $i+1$ of insertion tableau 
$\mu$. It follows that the $s$ bumped from row $i$ cannot be added to row $i+1$, and so $\gamma=\mu$. The insertion 
terminates at the end of row $i+1$ since there cannot be any boxes directly below the $s$ in row $i+1$.

(7-8) If $(i,j)$ is on the main diagonal or the edge between $\lambda$ and $\nu$ is labeled $c$, any bumped entries 
will be column inserted into the column $j$. The rules are the transpose of rules 5 and 6, and the new edge is labeled $c$ since 
all subsequent bumping will be column inserted after the first column insertion.

(9) Since there is no box directly to the right of $(i,j)$, nothing is bumped when inserting $r<s$ into 
insertion tableau $\mu$. Since there is no box directly below $(i,j)$, the insertion terminates at inner 
corner $(i,j)$.

(10) In the situation described above, if there is a box directly below $(i,j)$, the insertion now terminates 
at the inner corner in row $i+1$.

(11) When inserting $r$ into $\mu$, the $s$ in position $(i,j+1)$ is bumped but not replaced and inserted 
into row $i+1$. Since there is no $s$ in row $i+1$, this $s$ can be added to the row as long as it is 
not directly below the $s$ in position $(i,j)$. 

(12)  When inserting $r$ into $\mu$, the $s$ in position $(i,j+1)$ is bumped but not replaced and inserted 
into row $i+1$. Since there is no $s$ in row $i+1$, we attempt to add this $s$ to the end of row $i+1$. 
However, this $s$ would end directly below the $s$ in row $i$, and so cannot be added. If the outer 
corner of row $i+1$ is not on the main diagonal, the insertion terminates at the end of row $i+1$. If it is
on the diagonal, the insertion terminates at the end of row $i$, and we know the last box in row 
$i$ must be in column $j$ since there are no entries larger than $s$. This edge is labeled with $c$ 
since this action may bump an entry from the main diagonal in column $j$ in future steps.

(13-15) Rules 13-15 are the tranpose of rules 10-12, and may be explained in an analogous way.
\end{proof}

We can also formulate the rules for the reverse insertion, as follows. The proof is omitted for brevity. 

{\bf {Reverse Rules}}
\begin{itemize}
\item[(R1)] If $\gamma/\mu$ is one box in the first row, then the square has a $X$ and $\lambda=\nu$.

\item[(R2)] If $\gamma=\mu$ and the edge between them is labeled 1, then the square has an $X$ and $\lambda=\nu$.

\item[(R3)] If $\gamma=\mu$ with no edge label, then $\lambda=\mu$. If $\gamma=\nu$, then $\lambda=\mu$ and the edge label between $\lambda$ and $\nu$ is the same as the label 
between $\gamma$ and $\mu$.

\item[(R4)] If $\nu\not\subseteq\mu$, then $\lambda=\mu \cap \nu$.

\item[(R5)] If $\gamma/\mu$ is one box in row $i+1$ for some $i>0$, the edge between $\gamma$ and $\mu$ has no label, 
and $\mu/\nu$ has no box in row $i$, then $\nu/\lambda$ is one box in row $i$.

\item[(R11)] If $\gamma/\mu$ is one box in row $i+1$ for some $i>0$, the edge between $\gamma$ and $\mu$ has no label, 
and $\mu/\nu$ has a box in row $i$, then $\lambda=\nu$ and the edge between them is labeled $i$.

\item[(R7)] If $\gamma/\mu$ is one box in column $j+1$ for some $j>0$, the edge between $\gamma$ and $\mu$ is labeled $c$, and $\mu/\nu$ has no box in column $j$, 
then $\nu/\lambda$ is one box in column $j$ and the edge between $\nu$ and $\lambda$ is labeled $c$.

\item[(R15)] If $\gamma/\mu$ is one box in column $j+1$ for some $j>0$, the edge between $\gamma$ and $\mu$ is labeled $c$, and $\mu/\nu$ has a box in column $j$, 
then $\nu=\lambda$ and the edge between them is labeled $j$ and $c$.

\item[(R6)] If $\gamma=\mu$, the edge between them is labeled $i+1$ for some $i>0$, $\mu/\nu$ has a box in row $i+1$, and $\mu_i\neq \mu_{i+1}+1$, then 
$\nu/\lambda$ is one box in row $i$.

\item[(R9)] If $\gamma=\mu$, the edge between them is labeled $i+1$ for some $i>0$, $\mu/\nu$ has no box in row $i+1$, then $\lambda=\nu$ and the edge 
between them is labeled $i+1$.

\item[(R10)] If $\gamma=\mu$, the edge between them is labeled $i+1$ for some $i>0$, $\mu/\nu$ has a box in row $i+1$, and $\mu_i=\mu_{i+1}+1$, then 
$\lambda=\nu$ and the edge between them is labeled $i$.

\item[(R8)] If $\gamma=\mu$, the edge between them is labeled $j+1$ and $c$ for some $j>0$, $\mu/\nu$ has a box in column $j+1$, 
and the $j^{th}$ and $j+1^{th}$ columns of $\mu$ are not the same size, then $\nu/\lambda$ is one box in column $j$.

\item[(R12)] If $\gamma=\mu$, the edge between them is labeled $j+1$ and $c$ for some $j>0$, and the bottom box of column $j+1$ is in the last row of $\mu$,
then $\lambda=\nu$ and the edge between them is labeled with the bottom row of $\nu$.

\item[(R13)] If $\gamma=\mu$, the edge between them is labeled $j+1$ and $c$ for some $j>0$, $\mu/\mu$ has no box in column $j+1$, then $\lambda=\nu$ and the 
edge between them is labeled $j+1$ and $c$.

\item[(R14)] If $\gamma=\mu$, the edge between them is labeled $j+1$ and $c$ for some $j>0$, $\mu/\mu$ has a box in column $j+1$, 
and and the $j^{th}$ and $j+1^{th}$ columns of $\mu$ are the same size, then $\lambda=\nu$ and the 
edge between them is labeled $j$ and $c$.
\end{itemize}

\section{Major examples: Young-Fibonacci lattice} \label{sec:YF}

\subsection{M\"obius deformation of the Young-Fibonacci lattice}
In order to describe the M\"obius deformation of the Young-Fibonacci lattice, we first need to determine its M\"obius function.

\begin{theorem} Let $X>Y$ be elements in the Young-Fibonacci lattice and suppose $X$ has $n$ edges below it. Then 
 $$\mu(Y,X)=\left\{ \begin{array}{ll}
                    n-1 &  \text{if }2Y=X  \\
		    -1 & \text{if $X$ covers $Y$} \\
		    0 &  \text{otherwise}
                   \end{array}
\right. 
$$
\end{theorem}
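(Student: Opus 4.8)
The plan is to prove the formula by induction on $d=\rho(X)-\rho(Y)$, using the standard M\"obius recursion $\mu(Y,X)=-\sum_{Y\le Z<X}\mu(Y,Z)$ together with a single structural observation about $\mathbb{YF}$. The key observation is that, upon unwinding the covering rule, a word $W$ covers $Y$ if and only if $2Y$ covers $W$: the possibility $2Y=1W$ never occurs, while $2Y=2v$ for some $v$ covered by $W$ forces $v=Y$ and $Y$ covered by $W$, i.e.\ $W$ covers $Y$. Thus the set $P(Y)$ of words covering $Y$ is exactly the set of words covered by $2Y$; in particular $|P(Y)|$ equals the number of edges below $2Y$, and every element of $P(Y)$ lies below $2Y$. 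It follows that the interval $[Y,2Y]$ is the ``fan'' consisting of $Y$, the $|P(Y)|$ elements of $P(Y)$ (all of rank $\rho(Y)+1$), and $2Y$, so $\mu(Y,2Y)=|P(Y)|-1$ directly; and, since $\mathbb{YF}$ is a lattice, any two distinct $P_1,P_2\in P(Y)$ satisfy $P_1\vee P_2=2Y$, because $2Y$ is a common upper bound (so $P_1\vee P_2\le 2Y$) that strictly dominates $P_1$ (so $\rho(P_1\vee P_2)\ge\rho(Y)+2=\rho(2Y)$).

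Next I would record the Key Lemma: for $Y<X$, if $2Y\le X$ then every element of $P(Y)$ is $\le X$ (immediate, as those elements are covered by $2Y\le X$), while if $2Y\not\le X$ then exactly one element of $P(Y)$ is $\le X$. ``At least one'' holds because any saturated chain from $Y$ to $X$ begins with a step into $P(Y)$; ``at most one'' holds because two distinct elements of $P(Y)$ lying below $X$ would force $2Y=P_1\vee P_2\le X$, a contradiction.

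Now the induction. For $d=1$ we have $\mu(Y,X)=-1$ because $X$ covers $Y$, as in any poset. For $d\ge 2$, write $\mu(Y,X)=-\bigl(1+\sum_{Y<Z<X}\mu(Y,Z)\bigr)$; by the inductive hypothesis the only $Z\in(Y,X)$ with $\mu(Y,Z)\ne 0$ are the $a$ elements of $P(Y)$ lying below $X$ (each contributing $-1$) and, when $2Y<X$, the element $2Y$ (contributing $|P(Y)|-1$, which is legitimate to invoke since then $\rho(2Y)-\rho(Y)=2<d$, and which in any case we already know directly). By the Key Lemma $a=1$ if $2Y\not\le X$ and $a=|P(Y)|$ if $2Y\le X$, and note that $2Y\le X$ forces $2Y<X$ unless $d=2$ and $X=2Y$. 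Substituting: if $X=2Y$ then $\mu(Y,X)=-(1-|P(Y)|)=|P(Y)|-1=n-1$; if $2Y<X$ then $\mu(Y,X)=-(1-|P(Y)|+(|P(Y)|-1))=0$; and if $2Y\not\le X$, which covers all remaining cases (in which $X$ is neither $2Y$ nor a cover of $Y$), then $\mu(Y,X)=-(1-1)=0$. This is exactly the asserted formula, since in the case $X=2Y$ the number $n$ of edges below $X$ is $|P(Y)|$.

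I expect the structural step --- the equivalence ``$W$ covers $Y$ iff $2Y$ covers $W$'' and the resulting computation $P_1\vee P_2=2Y$ --- to be the only real obstacle; granting it, together with the standard fact that $\mathbb{YF}$ is a lattice, the Key Lemma and the induction are bookkeeping, the one subtlety being to keep careful track of whether $2Y$ lies strictly below $X$ so that it is correctly included in, or excluded from, the summation.
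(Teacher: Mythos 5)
Your proof is correct and takes essentially the same route as the paper's: induction on the rank difference via the M\"obius recursion, using the facts that $2Y$ covers exactly the covers of $Y$ and that the join of two distinct covers of $Y$ is $2Y$, so that exactly one cover of $Y$ lies below $X$ when $2Y\not\le X$, while the interval $[Y,2Y]$ contributes zero when $2Y<X$. The only difference is presentational: you supply proofs of the structural facts (the equivalence ``$W$ covers $Y$ iff $2Y$ covers $W$'' and the join computation) that the paper merely asserts.
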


\begin{proof}
Suppose $X>Y$. If $\rho(X)=\rho(Y)+1$, then clearly, $\mu(Y,X)=-1$. 

If $\rho(X)=\rho(Y)+2$ and $X=2Y$, then $\mu(Y,X)=n-1$ since $2Y$ 
covers exactly the elements that cover $Y$. If $X\neq 2Y$, then it can be checked that $X$ covers exactly one element that covers $Y$. Therefore $\mu(Y,X)=0$. 

Assume now $\rho(X) \geq \rho(Y)+2$. We will argue that $\mu(Y,X)=0$ by induction on $\rho(X)$. The base case $\rho(X)=\rho(Y)+2$ was just proved. 
Let us argue the step of induction. There are two cases.
\begin{itemize}
 \item We have $2Y < X$. In this case, consider $\displaystyle\sum_{Y\leq W < X} \mu(Y,W)$, we argue it is $0$. Indeed, $\displaystyle\sum_{Y\leq W \leq 2Y} \mu(Y,W) =0$.
For the $2Y \not \leq W < X$ in the interval, we use induction assumption to conclude $\mu(Y,W)=0$.
 \item We have $2Y \not < X$. We claim there is only one $Z$ which covers $Y$ such that $Z < X$. Indeed, since Young-Fibonacci lattice is a lattice, and since 
join of any two distinct such $Z$'s is $2Y$, we would have a contradiction. Then $\mu(Y,Y)+\mu(Y,Z)=1 + (-1)=0$. For the rest of $Y < W < X$ we have $\mu(Y,W)=0$
by induction assumption.
\end{itemize}
\end{proof}

By the previous result, for each element $X$ in the Young-Fibonacci lattice, the M\"obius deformation is formed by adding an upward-oriented loop for each of the $n$
elements $X$ covers and adding $n-1$ downward-oriented edges from $X$ to $Y$ if $X=2Y$. For example, there are 2 edges from 221 to 21. The first few ranks are shown below.

\begin{minipage}{.5\textwidth}
\begin{center}
\begin{tikzpicture}[scale=1]
\ytableausetup{boxsize=.15cm}
\node (empty) at (0,0) {$\varnothing$};
\node (1) at (0,1) {\begin{ytableau} $ $ \end{ytableau}};
\node (11) at (1,2) {\begin{ytableau} $ $ & $ $ \end{ytableau}};
\node (2) at (-1,2) {\begin{ytableau} $ $ \\ $ $ \end{ytableau}};
\node (12) at (-2,3) {\begin{ytableau} \none & $ $ \\ $ $ & \end{ytableau}};
\node (21) at (0,3) {\begin{ytableau} $ $ & \none \\ $ $ & $ $ \end{ytableau}};
\node (111) at (2,3) {\begin{ytableau} $ $ & & \end{ytableau}};
\node (112) at (-3,4) {\begin{ytableau} \none & \none & $ $ \\ $ $ & & \end{ytableau}};
\node (22) at (-1,4) {\begin{ytableau} $ $ & \\ $ $ & \end{ytableau}};
\node (121) at (0,4) {\begin{ytableau} \none & $ $ & \none \\ $ $ & & \end{ytableau}};
\node (211) at (1,4) {\begin{ytableau} $ $ & \none & \none \\ $ $ & & \end{ytableau}};
\node (1111) at (3,4) {\begin{ytableau} $ $ & & & \end{ytableau}};
\draw (empty) -- (1) --(11)--(111)--(1111)
(1)--(2)--(12)--(112)
(11)--(21)--(22)
(1) to [out=60, in=100, distance=.5cm] (1)
(2) to [out=60, in=100, distance=.5cm] (2)
(11) to [out=60, in=100, distance=.5cm] (11)
(12) to [out=60, in=100, distance=.5cm] (12)
(21) to [out=-60, in=-100, distance=.5cm] (21)
(21) to [out=-60, in=-100, distance=.7cm] (21)
(111) to [out=60, in=100, distance=.5cm] (111)
(112) to [out=60, in=100, distance=.5cm] (112)
(22) to [out=60, in=100, distance=.5cm] (22)
(22) to [out=60, in=100, distance=.7cm] (22)
(121) to [out=60, in=100, distance=.5cm] (121)
(211) to [out=60, in=100, distance=.5cm] (211)
(211) to [out=60, in=100, distance=.7cm] (211)
(1111) to [out=60, in=100, distance=.5cm] (1111)
(2)--(21)--(211)
(111)--(211)
(21)--(121)
(12)--(22);
\end{tikzpicture}
\end{center}
\end{minipage}
\begin{minipage}{.5\textwidth}
\begin{center}
\begin{tikzpicture}[scale=1]
\ytableausetup{boxsize=.15cm}
\node (empty) at (0,0) {$\varnothing$};
\node (1) at (0,1) {\begin{ytableau} $ $ \end{ytableau}};
\node (11) at (1,2) {\begin{ytableau} $ $ & $ $ \end{ytableau}};
\node (2) at (-1,2) {\begin{ytableau} $ $ \\ $ $ \end{ytableau}};
\node (12) at (-2,3) {\begin{ytableau} \none & $ $ \\ $ $ & \end{ytableau}};
\node (21) at (0,3) {\begin{ytableau} $ $ & \none \\ $ $ & $ $ \end{ytableau}};
\node (111) at (2,3) {\begin{ytableau} $ $ & & \end{ytableau}};
\node (112) at (-3,4) {\begin{ytableau} \none & \none & $ $ \\ $ $ & & \end{ytableau}};
\node (22) at (-1,4) {\begin{ytableau} $ $ & \\ $ $ & \end{ytableau}};
\node (121) at (0,4) {\begin{ytableau} \none & $ $ & \none \\ $ $ & & \end{ytableau}};
\node (211) at (1,4) {\begin{ytableau} $ $ & \none & \none \\ $ $ & & \end{ytableau}};
\node (1111) at (3,4) {\begin{ytableau} $ $ & & & \end{ytableau}};
\draw (empty) -- (1) --(11)--(111)--(1111)
(1)--(2)--(12)--(112)
(11)--(21)--(22)
(2)--(21)--(211)
(111)--(211)
(21)--(121)
(12)--(22)
(21)--(1)
(22)--(2)
(211)--(11);
\end{tikzpicture}
\end{center}
\end{minipage}
\ytableausetup{boxsize=.5cm}

\begin{theorem} 
The M\"obius deformation of the Young-Fibonacci lattice forms a dual graded graph with $$(DU-UD)=D+I.$$ 
\end{theorem}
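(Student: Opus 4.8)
The plan is to work entirely at the level of operators, reducing the claim to a single structural feature of the Young-Fibonacci lattice. Write $U$ and $D$ for the up and down operators of the undeformed lattice $\mathbb{YF}$; since $\mathbb{YF}$ is self-dual we have $DU-UD=I$. For a word $w$ let $d(w)$ be the number of elements of $\mathbb{YF}$ covered by $w$, so that $d(1v)=1$ and $d(2v)=1+d(v)$. By construction the M\"obius deformation has up operator $U'=U+L$, where $L$ is the diagonal operator $L(w)=d(w)\,w$, and, by the formula for the M\"obius function of $\mathbb{YF}$ proved above, down operator $D'=D+M$, where $M(w)=d(v)\,v$ when $w=2v$ (since $d(2v)-1=d(v)$) and $M(w)=0$ when $w$ begins with $1$. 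These $U',D'$ are the operators written $U,D$ in the statement. The one fact we will lean on is the \emph{reflection identity} $D(2v)=U(v)$: by the definition of the covering relation, the elements covered by $2v$ are exactly the elements covering $v$.

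First I would expand, using $DU-UD=I$:
$$D'U'-U'D'=I+[D,L]+[M,U]+[M,L],$$
with $[A,B]=AB-BA$. Since $L$ preserves rank, $M$ lowers rank by $2$, and $D,U$ change rank by $\pm 1$, the commutators $[D,L]$ and $[M,U]$ lower rank by $1$ while $[M,L]$ lowers rank by $2$. As $D'+I=I+D+M$ with $D$ of rank-drop $1$ and $M$ of rank-drop $2$, it suffices to prove the two identities $[M,L]=M$ and $[D,L]+[M,U]=D$.

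The rank-$2$ identity is immediate: $M$ and $[M,L]$ both vanish on words beginning with $1$, and for $w=2v$ one has $ML(w)=d(w)M(w)=(1+d(v))d(v)\,v$ and $LM(w)=d(v)^2\,v$, hence $[M,L](w)=d(v)\,v=M(w)$. For the rank-$1$ identity, a direct computation gives $[D,L](w)=\sum_{z\lessdot w}\bigl(d(w)-d(z)\bigr)z$. Next, $M$ is supported on words beginning with $2$, and the elements covering $w$ that begin with $2$ are exactly the $2z$ with $z\lessdot w$; so $MU(w)=\sum_{z\lessdot w}\bigl(d(2z)-1\bigr)z=\sum_{z\lessdot w}d(z)\,z$, while $UM(w)=d(v)\,U(v)$ if $w=2v$ and $UM(w)=0$ otherwise. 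Adding, the $d(z)$ terms cancel and
$$[D,L](w)+[M,U](w)=d(w)\sum_{z\lessdot w}z-\varepsilon\,d(v)\,U(v)=d(w)D(w)-\varepsilon\,d(v)\,U(v),$$
where $\varepsilon=1$ if $w=2v$ and $\varepsilon=0$ if $w$ begins with $1$. If $w$ begins with $1$ then $d(w)=1$ and both sides equal $D(w)$. If $w=2v$ then $d(w)=1+d(v)$, so the right-hand side is $D(w)+d(v)\bigl(D(2v)-U(v)\bigr)$, which equals $D(w)$ by the reflection identity. This proves $[D,L]+[M,U]=D$, and combining with the rank-$2$ identity yields $D'U'-U'D'=I+D+M=I+D'$, as desired.

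The main obstacle is the bookkeeping in the rank-$1$ step: one must correctly enumerate, for a fixed $w$, the covers of $w$ that begin with $2$ (these feed $M$ after $U$) and keep the values of $d$ along covering edges straight, so that the combination of the two commutators telescopes to $d(w)D(w)$ minus one correction term. Once that is in place, the entire computation closes up precisely because of the reflection identity $D(2v)=U(v)$, which is the structural feature of the Young-Fibonacci lattice that makes its M\"obius deformation dual filtered; one should double-check this identity against the covering rule and against small cases, such as the two edges from $221$ to $21$ corresponding to $d(21)=2$.
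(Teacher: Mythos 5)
Your argument is correct, and it takes a genuinely different route from the paper's. The paper fixes a pair with $X$ covering $Y$ (and, separately, the pair $X=2Y$ two ranks apart) and directly enumerates up--down versus down--up paths, matching counts such as ``loops at $X$'' against ``edges from $2Y$ down to $Y$.'' You instead decompose the deformed operators as $U'=U+L$ and $D'=D+M$, invoke the self-duality $DU-UD=I$ of the undeformed Young--Fibonacci lattice, and reduce the claim by rank to the two commutator identities $[M,L]=M$ and $[D,L]+[M,U]=D$, with all lattice-specific combinatorics concentrated in $d(1v)=1$, $d(2v)=1+d(v)$, and the reflection identity $D(2v)=U(v)$ -- the same structural fact that underlies the paper's observation that the unique element two ranks above $Y$ with a M\"obius edge down to $Y$ is $2Y$, and that it covers every cover of $Y$. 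Your computations check out: $U(w)=1w+\sum_{z\lessdot w}2z$ gives $MU(w)=\sum_{z\lessdot w}d(z)\,z$, $[D,L](w)=\sum_{z\lessdot w}\bigl(d(w)-d(z)\bigr)z$, the correction term cancels exactly by $D(2v)=U(v)$, and the edge cases behave ($M(2)=0$, matching $\mu(\varnothing,2)=0$). What your route buys is that the diagonal coefficient and the vanishing of all other coefficients come for free from the rank decomposition together with $DU-UD=I$, whereas the paper's case analysis treats only the coefficients at ranks $\rho(X)-1$ and $\rho(X)-2$ and leaves the diagonal implicit; what the paper's route buys is a direct, bijective-flavored count of paths that requires no operator bookkeeping.
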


\begin{proof}
 Suppose that shape $X$ covers shape $Y$, and first assume that $\rho(Y)=\rho(X)-1$. The up-down paths from $X$ to $Y$ that consist of a loop at $X$ 
followed by following the edge from $X$ to $Y$ can be counted by the number of loops at $X$, i.e. the number of edges below $X$ in the Young-Fibonacci 
lattice. In the second type of up-down path, we start at $X$, move up to a distinct shape $W$, and then move down to $Y$. The only $W$ with 
$\rho(W)=\rho(Y)+2$ that cover $Y$ is the $W$ defined by $W=2Y$. This $W$ covers $X$ by definition of the Young-Fibonacci lattice. To count the number 
of paths through $W$, we need to count the number of edges from $W$ to $Y$. This number is the same as one fewer than the number edges below $W$ in $\mathbb{YF}$, 
which is the same as one fewer than the number of edges above $Y$ in $\mathbb{YF}$.

To count down-up paths, we first count the paths that consist of the edge from $X$ to $Y$ followed by a loop at $Y$. These can be counted by the number of loops at 
$Y$ or, in other words, the one fewer than the number of edges above $Y$ in $\mathbb{YF}$. The second type of down-up paths involve traveling from $X$ 
down to some $Z$ with $\rho(Z)+2=\rho(X)$ and then going up to $Y$. There is a unique such $Z$ defined by $2Z=X$, and $Y$ covers this $Z$. We thus 
need to count the number of ways to get from $X$ to $Z$, which is the same as one fewer than the number of edges below $X$ in $\mathbb{YF}$.

Putting the previous two arguments together, $[Y](DU-UD)(X)=1$ in this case, as desired.

Now suppose $\rho(Y)=\rho(X)-2$. The only up-down paths from $X$ to $Y$ consist of a loop at $X$ followed by an edge from $X$ to $Y$. There are 
$$(\#\{\text{edges below $X$ in $\mathbb{YF}$}\})(\#\{\text{edges below $X$ in $\mathbb{YF}$}\}-1)$$ such paths. 

To count down-up paths, we must count the number of ways to choose an edge from $X$ to $Y$ and then choose a loop of $Y$. There are 
$$(\#\{\text{edges below $X$ in $\mathbb{YF}$}\}-1)(\#\{\text{edges above $Y$ in $\mathbb{YF}$}\}-1)$$ such paths. Since 
$$\#\{\text{edges down from $X$ in $\mathbb{YF}$}\}=\#\{\text{edges above $Y$ in $\mathbb{YF}$}\},$$ 
we have that $$[Y](DU-UD)(X)=\#\{\text{edges below $X$ in $\mathbb{YF}$}\}-1=[Y]D(X).$$
\end{proof}

\subsection{K-Young-Fibonacci insertion} \label{sec:KYFinsertion}
As with the previous examples of M\"obius construction, we describe an insertion procedure that corresponds to the M\"obius deformation of the Young-Fibonacci lattice, which is an analogue of the Young-Fibonacci insertion of Fomin. We recommend \cite{F} for details about Young-Fibonacci insertion. In this analogue, any walk upward from $\varnothing$ will correspond to a recording tableau and any walk downward to $\varnothing$ will correspond to an insertion tableau. We begin by describing the
insertion tableaux in this setting.

\begin{definition}\label{def:KYFtableau}
A \textit{K-Young-Fibonacci (KYF) tableau} is a filling of a snakeshape with positive integers such that

\begin{itemize}
 \item[(i)] for any pair \begin{ytableau} B \\ A \end{ytableau} the inequality $A\leq B$ holds;
 \item[(ii)] to the right of any \begin{ytableau} B \\ A \end{ytableau}, there are no numbers from the interval $[A,B]$ 
 in either the upper or lower rows;
 \item[(iii)] if the position above \begin{ytableau} A \end{ytableau} is not occupied yet, then to the right of 
 \begin{ytableau} A \end{ytableau} there are no numbers greater than or equal to $A$ in either the lower or upper rows;
\item[(iv)] any pair \begin{ytableau} A \\ A \end{ytableau} must come before any single box \begin{ytableau} B  \end{ytableau}
and must not be in rightmost column of the snakeshape. In addition, $A$ may not be the smallest entry in the snakeshape.
\end{itemize}
\end{definition}

\begin{example}The three snakeshapes below are valid KYF tableaux.
\begin{center}
\begin{ytableau}
 \none & 3 & 4 & \none & 7 \\
 9 & 2 & 1 & 8 & 6 & 5
\end{ytableau}\hspace{.5in}
\begin{ytableau}
2 & 3 & \none & \none & 5 \\
 2 & 1 & 7 & 6 & 4 
\end{ytableau}\hspace{.5in}
 \begin{ytableau}
4 & 5 & \none & \none & 2 \\
 4 & 5 & 6 & 3 & 1 
\end{ytableau}
\end{center}

The snakeshapes below are not valid KYF tableaux. The first violates condition (iii), the second violates condition (ii),
and the third. fourth, and fifth violate condition (iv).
\begin{center}
\begin{ytableau}
 \none & 3 & 4 \\
 4 & 2 & 1
\end{ytableau}\hspace{.5in}
\begin{ytableau}
 3 & 4 & \none \\
2 & 1 & 4
\end{ytableau}\hspace{.5in}
 \begin{ytableau}
4 & 6 & \none & \none & 2 \\
 4 & 5 & 7 & 3 & 2 
\end{ytableau}\hspace{.5in}
 \begin{ytableau}
3 & 2 \\
1 & 2
\end{ytableau}\hspace{.5in}
\begin{ytableau}
 1 & 3 \\
 1 & 2
\end{ytableau}
\end{center}
\end{example}

Let $\tau$ be a valid KYF tableau. We show how to insert positive integer $x$ into $\tau$ to obtain a new 
valid KYF tableau that may or may not be different than $\tau$. Notice that this insertion procedure is different than Hecke insertion and shifted Hecke insertion 
in that the algorithm does not proceed row-by-row or column-by-column. As before, we designate a specific box of the resulting tableau, $\tau'$, as being the box where the insertion terminated. 
We will later use this notion to define recording tableaux. 

The rules for inserting any positive integer $x$ into $\tau$ are as follows:

\begin{enumerate}
 \item[(YF1)] If $x$ is equal to the smallest entry in $\tau$, then $\tau=\tau'$. In this case, we say the insertion terminated in the 
 top cell of the first column of $\tau=\tau'$. If this is not the case, continue to step 2.  
 
 \begin{example}
  Inserting $1$ into the tableau on the left or $2$ into the tableau on the right will not change the tableaux.
  \begin{center}
  \begin{ytableau}
   \none & 2 & \none \\
   4 & 1 & 3
  \end{ytableau}\hspace{1in}
\begin{ytableau}
3 & \none & \none \\
2 & 5 & 4
  \end{ytableau} 
 \end{center} 
 \end{example}

 \item[(YF2)] Attach a new box \begin{ytableau} x \end{ytableau} just to the left of $\tau$ in the lower row. 
 
 \item[(YF3)] Find all the entries of $\tau$ that are greater than or equal to $x$ and sort them:
 $$x\leq a_1 \leq a_2 \leq \ldots \leq a_k.$$ If in $\tau$ we have \begin{ytableau} A \\ A \end{ytableau}, then we 
 consider the $A$ in the upper row to be the larger of the two.
 
 \item[(YF4)] If $a_i=a_{i+1}$, then replace $a_{i+1}$ with $*$.
 
 \item[(YF5)] Now put a box \begin{ytableau} \bullet \end{ytableau} just above \begin{ytableau} x \end{ytableau} and move the 
 $a_i$ chainwise according to the following rule: $a_1$ moves into the new box, $a_2$ moves to $a_1$'s original position, 
 $a_3$ moves to $a_2$'s original position, etc. The box that was occupied by $a_k$ disappears. If it was located in the 
 lower row, then the left and right parts of the snake are concatenated. 
 
 \item[(YF6)] If there is a box with $*$ and no box directly above it, delete this box and concatenate the left and right parts of the snake. If this happens, we say the insertion terminated at the box in the 
 upper row of the column directly to the right of this concatenation. Otherwise, a box was added in the insertion process, and we say the insertion terminated at this new box.
 
 \item[(YF7)] Replace any pair \begin{ytableau} A \\ $*$ \end{ytableau} with \begin{ytableau} A \\ A \end{ytableau}.
 
\end{enumerate}

\begin{example}
 Let's insert 3 into $\tau=$\begin{ytableau} 3 & 4 & \none \\ 2 & 4 & 1 \end{ytableau}.
 We first attach \begin{ytableau} 3 \end{ytableau} as shown below.
 \begin{center} \begin{ytableau} \none & 3 & 4 & \none \\ 3 & 2 & 4 & 1 \end{ytableau} \end{center}
 Next, we locate and order $$3\leq (a_1=3) \leq (a_2=4) \leq (a_3 =4), $$ and we replace $a_3$ with $*$. After shifting the boxes 
 as in YF5, we have 
 \begin{center}
  \begin{ytableau}
   3 & 4 & \none & \none \\
   3 & 2 & $*$ & 1
  \end{ytableau}\hspace{.1in}.
  \end{center}
  We then delete the box with $*$ to obtain the final KYF tableau below.
   \begin{center}
  \begin{ytableau}
   3 & 4 & \none \\
   3 & 2 & 1
  \end{ytableau}\hspace{.1in}.
  \end{center}
  This insertion terminates at the box at the top of the third column.
  
  Notice also that inserting 1 into $\tau$ does not change the tableau, and this insertion terminates at the box at the top of the first column.
\end{example}

\begin{example}
 We insert 2 into KYF tableau \begin{ytableau} \none & \none & 2 & \none \\ 4 & 3 & 2 & 1 \end{ytableau}. We first 
 attach a box containing 2 to the left of the original tableau. We then locate and order 
 $$ 2 \leq (a_1=2) \leq (a_2=2)\leq (a_3=3) \leq (a_4=4).$$ We replace $a_2$ with $*$ and shift the boxes to obtain 
 \begin{center}
  \begin{ytableau}
   2 & \none & 3 & \none \\
   2 & 4 & $*$ & 1
  \end{ytableau}\hspace{.1in}.
 \end{center}
After performing the last step of the insertion procedure, we end with the tableau shown below.
\begin{center}
 \begin{ytableau}
  2 & \none & 3 & \none \\
  2 & 4 & 3 & 1
 \end{ytableau}
\end{center}
This insertion terminated at the box at the top of the first column of the resulting tableau.
\end{example}

As usual, for a word $w=w_1w_2\ldots w_n$, we define $P_{YF}(w)$ by setting $$P_{YF}(w_1\ldots w_k)=P_{YF}(w_1\ldots w_{k-1}) \leftarrow w_k.$$

\begin{example}\label{ex:1334241}
The sequence of KYF tableaux below shows the intermediate tableaux obtained in computing $P_{YF}(1334241)$, which is shown on the right.

\begin{center}
\begin{ytableau} \none \\ 1 \end{ytableau}\hspace{.3in}\begin{ytableau} \none & \none \\ 3 & 1 \end{ytableau}\hspace{.3in}\begin{ytableau} 3 & \none \\ 3 & 1 \end{ytableau}\hspace{.3in}
\begin{ytableau} \none & 3 & \none \\ 4 & 3 & 1 \end{ytableau}\hspace{.3in}
 \begin{ytableau} 3 & 4 & \none \\ 2 & 4 & 1 \end{ytableau}\hspace{.3in}
 \begin{ytableau} 4 & 3 & \none \\ 4 & 2 & 1 \end{ytableau}\hspace{.3in}
 \begin{ytableau} 4 & 3 & \none \\ 4 & 2 & 1 \end{ytableau}\hspace{.3in}
 \end{center}
\end{example}

We need the next definition to define the recording tableaux for KYF insertion.

\begin{definition}\label{def:KYFsetvalued}
A \textit{standard set-valued KYF tableau} is a snakeshape whose boxes are filled with finite, nonempty subsets of positive 
integers that satisfy the following conditions, where $\bar{A},\bar{B},\bar{C}$ are subsets, $\bar{A}< \bar{B}$
when max$(\bar{A})<$min$(\bar{B})$, and the letters $[n]$ are used exactly once for some $n$:

\begin{itemize}
 \item[(i)] for any pair \begin{ytableau} \bar{B} \\ \bar{A} \end{ytableau} the inequality $\bar{A}< \bar{B}$ holds;
 \item[(ii)] to the right of any \begin{ytableau} \bar{B} \\ \bar{A} \end{ytableau}, there are no numbers from the interval $[\text{max}(\bar{A}),\text{min}(\bar{B})]$ 
 in either the upper or lower rows;
 \item[(iii)] if the position above \begin{ytableau} \bar{A} \end{ytableau} is not occupied yet, then to the right of 
 \begin{ytableau} \bar{A} \end{ytableau} there are no entries greater than min($\bar{A}$) in either the lower or upper rows;
\end{itemize}
\end{definition}

A recording tableau for a word $w=w_1w_2\ldots w_n$ is a standard set-valued KYF tableau and is obtained as follows. Begin with $Q_{YF}(\varnothing)=\varnothing$. If the insertion of $w_k$ into $P_{YF}(w_1\cdots w_{k-1})$ 
resulted in adding a new 
box to $P_{YF}(w_1\cdots w_{k-1})$, add this same box with label $k$ to $Q_{YF}(w_1\ldots w_{k-1})$ to obtain $Q_{YF}(w_1\ldots w_k)$. 

If the insertion of $w_k$ into $P_{YF}(w_1\cdots w_{k-1})$ did not change the shape 
of $P_{YF}(w_1\cdots w_{k-1})$, obtain $Q_{YF}(w_1\ldots w_k)$ from $Q_{YF}(w_1\ldots w_{k-1})$ by adding the label $k$ to the box where the insertion terminated.

\begin{example} 
In Example \ref{ex:1334241}, we computed $P_{YF}(1334241)$. We repeat this computation on the top row and show the corresponding steps of building $Q_{YF}(1334241)$ on the bottom row.

\begin{center}
 \begin{ytableau} \none \\ 1 \end{ytableau}\hspace{.3in}\begin{ytableau} \none & \none \\ 3 & 1 \end{ytableau}\hspace{.3in}\begin{ytableau} 3 & \none \\ 3 & 1 \end{ytableau}\hspace{.3in}
\begin{ytableau} \none & 3 & \none \\ 4 & 3 & 1 \end{ytableau}\hspace{.3in}
 \begin{ytableau} 3 & 4 & \none \\ 2 & 4 & 1 \end{ytableau}\hspace{.3in}
 \begin{ytableau} 4 & 3 & \none \\ 4 & 2 & 1 \end{ytableau}\hspace{.3in}
  \begin{ytableau} 4 & 3 & \none \\ 4 & 2 & 1 \end{ytableau}
  \end{center}
  
  \begin{center}
  \begin{ytableau} \none \\ 1 \end{ytableau}\hspace{.3in}\begin{ytableau} \none & \none \\ 2 & 1 \end{ytableau}\hspace{.3in}\begin{ytableau} 3 & \none \\ 2 & 1 \end{ytableau}\hspace{.3in}
\begin{ytableau} \none & 3 & \none \\ 4 & 2 & 1 \end{ytableau}\hspace{.3in}
 \begin{ytableau} 5 & 3 & \none \\ 4 & 2 & 1 \end{ytableau}\hspace{.3in}
 \begin{ytableau} 5 & 3 & \none \\ 4 & 2 & 16 \end{ytableau}\hspace{.3in}
  \begin{ytableau} 57 & 3 & \none \\ 4 & 2 & 16 \end{ytableau}\hspace{.3in}
 \end{center}
\end{example}

We next define a reverse insertion procedure so that given a pair $(P_{YF}(w),Q_{YF}(w))$, we can recover $w=w_1\ldots w_n$.

First locate the box containing the largest label of $Q_{YF}(w)$, call the label $n$ and its column $c$,
and find the corresponding box in $P_{YF}(w)$. Let $x$ denote the label in the leftmost box of the bottom row of $P_{YF}(w)$.

\begin{itemize}
 \item[(rYF1)] If the label $n$ of $Q$ was not the only label in its box and was located in the upper row of the first column, 
 then $P_{YF}(w)=P_{YF}(w_1\ldots w_{n-1}) \leftarrow s$, where $s$ is the smallest entry in $P_{YF}(w_1\ldots w_{n-1})$.
Finally, $Q_{YF}(w_1\ldots w_{n-1})$ is obtained from $Q_{YF}(w)$ by removing the 
 label $n$. 
 \end{itemize}
 
 In all remaining cases, $P_{YF}(w)=P_{YF}(w_1\ldots w_{n-1}) \leftarrow x$, and we describe how to constuct $P_{YF}(w_1\ldots w_{n-1})$. In each case,
 $Q_{YF}(w_1\ldots w_{n-1})$ is obtained from $Q_{YF}(w)$ by removing the 
 label $n$. If $n$ is the only label in its box, the box is removed. 

 \begin{enumerate}
 \item[(rYF2)] If the label $n$ of $Q$ was the only label in its box:
 \begin{enumerate}
 \item Delete the leftmost square in the bottom row.
 \item Let $k$ denote the largest entry in $P_{YF}(w)$, and sort the entries of $P_{YF}(w)$ as shown below:
 $$x \leq b_1 \leq b_2\leq\ldots\leq b_t=k.$$
 If we have \begin{ytableau} A \\ A \end{ytableau}, then we 
 consider the $A$ in the upper row to be the larger of the two.
 \item If $b_i=b_{i+1}$, replace $b_{i}$ with $*$.
 \item Move the $b_i$ chainwise according the following rule: $b_1$ moves into $b_2$'s position, $b_2$ moves into $b_3$'s position, etc until $b_{t-1}$ moves into $b_t$'s position.
 The box that was occupied by $b_1$ disappears. 
 \item Place $b_t$ in the position determined by the shape of $Q_{S,n-1}$. 
 \item Replace any pair \begin{ytableau} $*$ \\ A \end{ytableau} with \begin{ytableau} A \\ A \end{ytableau}.
 \end{enumerate}
 
 \item[(rYF3)] If the label $n$ of $Q$ was not the only label in its box and the largest entry in $P_{YF}(w)$ is $k$: 
 \begin{enumerate}
 \item Add the column \begin{ytableau} \bullet \\ k \end{ytableau} 
 directly to the left of column $c$ to $P_{YF}(w)$.
 \item Sort the entries of $P_{YF}(w)$ as shown below:
 $$k=b_1\geq b_2 \geq b_2 \ldots \geq x=b_t.$$
 If we have \begin{ytableau} A \\ A \end{ytableau}, then we 
 consider the $A$ in the upper row to be the larger of the two.
 \item If $b_i=b_{i+1}$, replace $b_{i+1}$ with $*$.
 \item Move the 
 $b_i$ chainwise according to the following rule: $b_1$ moves into the new box \begin{ytableau} \bullet \end{ytableau}, $b_2$ moves to $b_1$'s original position, 
 $b_3$ moves to $b_2$'s original position, etc. The box that was occupied by $b_t=x$ disappears. 
 \item Replace any pair \begin{ytableau} $*$ \\ A \end{ytableau} with \begin{ytableau} A \\ A \end{ytableau}.
\end{enumerate}
\end{enumerate}

The steps above clearly reverse the KYF insertion steps, giving us the result below.

\begin{theorem}
 KYF insertion and reverse KYF insertion define mutually inverse bijections between the set of words on $\mathbb{N}$
 and the set of pairs $(P_{YF},Q_{YF})$ consisting of a KYF tableau and a set-valued KYF tableau of the same shape.
\end{theorem}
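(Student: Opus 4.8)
The plan is to reduce the theorem to a purely local statement about a single insertion step and then induct on the length of the word. First I would establish well-definedness in both directions. I would verify that if $\tau$ is a valid KYF tableau then $\tau\leftarrow x$ is again a valid KYF tableau, by checking that each of conditions (i)--(iv) of Definition~\ref{def:KYFtableau} is preserved under rules (YF1)--(YF7). The delicate points are the bookkeeping with the starred boxes introduced in (YF4) and removed in (YF6)--(YF7), and condition (iv), which constrains where a pair $\begin{ytableau} A \\ A \end{ytableau}$ may appear and forces such an $A$ not to be the minimal entry of the snakeshape; here one follows the chainwise move of (YF5) and checks that the position and column of any surviving pair box still satisfy (iv). Symmetrically, starting from a valid pair $(P,Q)$ with $Q$ a standard set-valued KYF tableau (Definition~\ref{def:KYFsetvalued}) on $[n]$, I would check that one reverse step (rYF1)--(rYF3) outputs a positive integer together with a valid KYF tableau and a valid standard set-valued KYF tableau on $[n-1]$, the latter being obtained from $Q$ by deleting the label $n$.

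Next I would prove the key local lemma: for a valid KYF tableau $\tau$ and $x\in\mathbb{N}$, if $\tau'=(\tau\leftarrow x)$ and $c$ denotes the termination data recorded by (YF1)--(YF7) --- which column, whether the marked box is new, and, in the ``do nothing'' case, that it is the top box of the first column --- then reverse insertion applied to $(\tau',c)$ returns exactly $(\tau,x)$; and conversely, reverse insertion applied to an arbitrary valid $(\tau',c)$ followed by forward insertion of the extracted letter returns $(\tau',c)$. This is a case check organized by which forward rule fired last. The ``do nothing'' case (YF1) is inverted by (rYF1). The case where a genuinely new box was appended to the snakeshape is inverted by (rYF2): the chain $x\le b_1\le\cdots\le b_t$ of step (b) is the reverse of the chain $x\le a_1\le\cdots\le a_k$ of (YF3), the star substitutions in steps (c) and (f) of (rYF2) mirror (YF4) and (YF7), and step (e) of (rYF2) uses the shape of $Q_{YF}(w_1\cdots w_{n-1})$ to restore $b_t$ to the position of the original leftmost bottom box. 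The case where a box disappeared through the $*$ in (YF6) is inverted by (rYF3), whose reversed chain $k=b_1\ge\cdots\ge x=b_t$ undoes the forward chainwise shift and whose new column $\begin{ytableau} \bullet \\ k \end{ytableau}$ reintroduces the box removed by (YF6). The main obstacle I expect is exactly this bookkeeping: verifying that the chainwise shifts of forward and reverse insertion are literal inverses once the $*$-substitutions are unwound, and that the termination box really encodes enough information to distinguish the three mutually exclusive possibilities (nothing changed, a box was added, a box was deleted) from $(\tau',c)$ alone, so that the correct reverse rule is always selected.

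Finally I would assemble the global statement by induction on $n=|w|$, the base case $n=0$ being trivial. For the forward-then-reverse direction, writing $P_{YF}(w)=P_{YF}(w_1\cdots w_{n-1})\leftarrow w_n$ and noting that the restriction of $Q_{YF}(w)$ to $[n-1]$ equals $Q_{YF}(w_1\cdots w_{n-1})$ while the box labelled $n$ records precisely the termination data $c$ of the last insertion, the local lemma strips off $w_n$ and recovers $(P_{YF}(w_1\cdots w_{n-1}),Q_{YF}(w_1\cdots w_{n-1}))$; the inductive hypothesis then recovers $w_1\cdots w_{n-1}$, hence $w$. For the reverse-then-forward direction, given an arbitrary valid pair $(P,Q)$ with $Q$ on $[n]$, one reverse step produces a valid pair on $[n-1]$ together with a letter; by the inductive hypothesis this pair equals $(P_{YF}(u),Q_{YF}(u))$ for a unique word $u$ of length $n-1$, and the local lemma shows that forward inserting the extracted letter into $(P_{YF}(u),Q_{YF}(u))$ returns $(P,Q)$. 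Since both composites are the identity, KYF insertion and reverse KYF insertion are mutually inverse bijections between words on $\mathbb{N}$ and pairs of a KYF tableau and a set-valued KYF tableau of the same shape.
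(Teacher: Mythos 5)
Your plan is correct and follows exactly the route the paper itself gestures at: the paper's proof of this theorem is literally omitted (``Omitted for brevity''), justified only by the preceding remark that the reverse rules (rYF1)--(rYF3) clearly undo the insertion rules (YF1)--(YF7). Your well-definedness checks, single-step inversion lemma organized by which forward rule fired last, and induction on the length of the word supply precisely the details the paper leaves out, so there is nothing to correct.
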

\begin{proof}
 Omitted for brevity.
\end{proof}

\subsection{KYF growth and decay}\label{sec:KYFgrowth}
As before, given any word $w=w_1w_2\cdots w_k$, containing
$n \leq k$ distinct numbers, we can create an $n \times k$ array with an $X$ in the $w_i^{th}$ square from the bottom of column $i$. Note that there can be multiple $X$'s in the same row but is at most
one $X$ per column. 

We will label the corners of each square with a snakeshape, label some of the horizontal edges of the squares with
a specific inner corner by writing the column of the inner corner, and label some vertical edges corresponding to where a 2 was added to the bottom shape to obtain the top shape. For example, 
if the corner at the bottom of the veritcal edge is labeled with 221, the corner at the top is labeled 2221, and the vertical edge has label 3, this means that the third column of 2's in 
2221 is the column that was added when going from 221 to 2221. We begin by labeling all corners
in the bottom row and left side of the diagram with the empty shape, $\varnothing$.

To complete the labeling of the corners, suppose the corners $\mu$, $\lambda$, and $\nu$ are labeled, 
where $\mu$, $\lambda$, and $\nu$ are as in the picture below. We label $\gamma$ according to the following rules.

 \begin{center}
\begin{tikzpicture}[scale=1]
\node (A) at (-1,-1) {$\lambda$};
\node (B) at (1,-1) {$\nu$};
\node (C) at (-1,1) {$\mu$};
\node (D) at (1,1) {$\gamma$};
\draw (A) -- (B)
(C) -- (D)
(B) -- (D)
(A) -- (C);
\end{tikzpicture}
\end{center}

\textbf{If the square contains an X:}

\begin{itemize}
 \item[(1)] If $\lambda=\nu=\varnothing$, then $\gamma=\mu=1$ and the top edge is labeled 1.
 
\item[(2)] If $\lambda=\mu=\nu$, then $\gamma=1\lambda$.

\item[(3)] If $\mu=2\lambda$ and the left edge is labeled $i$, then $\gamma=2\lambda$, the top edge is labeled $i+1$, and the right edge is labeled 1.

\item[(4)] If $\mu\neq\lambda$, $\mu\neq 2\lambda$, and $\mu\neq1$, then $\gamma=2\lambda$ and the right edge is labeled 1. 
\end{itemize}

 \begin{center}
\begin{tikzpicture}[scale=1]
\node (A) at (-1,-1) {$\varnothing$};
\node (B) at (1,-1) {$\varnothing$};
\node (C) at (-1,1) {1};
\node (D) at (1,1) {1};
\node (E) at (0,0) {$X$};
\node (F) at (0,1.25){1};
\draw (A) -- (B)
(C) -- (D)
(B) -- (D)
(A) -- (C);
\end{tikzpicture}\hspace{.75in}
\begin{tikzpicture}[scale=1]
\node (A) at (-1,-1) {$\lambda$};
\node (B) at (1,-1) {$\lambda$};
\node (C) at (-1,1) {$\lambda$};
\node (D) at (1,1) {1$\lambda$};
\node (E) at (0,0) {$X$};
\draw (A) -- (B)
(C) -- (D)
(B) -- (D)
(A) -- (C);
\end{tikzpicture}\hspace{.75in}
\begin{tikzpicture}[scale=1]
\node (A) at (-1,-1) {$\lambda$};
\node (B) at (1,-1) {$\lambda$};
\node (C) at (-1,1) {$2\lambda$};
\node (D) at (1,1) {2$\lambda$};
\node (E) at (0,0) {$X$};
\node (F) at (0,1.25) {$i+1$};
\node (E) at (-1.25,0) {$i$};
\node (G) at (1.25,0) {$1$};
\draw (A) -- (B)
(C) -- (D)
(B) -- (D)
(A) -- (C);
\end{tikzpicture}\hspace{.75in}
\begin{tikzpicture}[scale=1]
\node (A) at (-1,-1) {$\lambda$};
\node (B) at (1,-1) {$\lambda$};
\node (C) at (-1,1) {$\mu$};
\node (D) at (1,1) {2$\lambda$};
\node (E) at (0,0) {$X$};
\node (G) at (1.25,0) {$1$};
\draw (A) -- (B)
(C) -- (D)
(B) -- (D)
(A) -- (C);
\end{tikzpicture}
\end{center}

\textbf{If the square does not contain an X and $\mu=\lambda$ or $\nu=\lambda$ with no edge label between $\lambda$ and $\nu$:}

\begin{itemize}
 \item[(5)] If $\mu=\lambda$, then set $\gamma = \nu$ and label the top edge with the same label as the bottom edge if one exists. If $\nu=\lambda$, then $\gamma = \mu$ with the right edge
 labeled with the same label as the left edge if such a label exists. 
\end{itemize}

 \begin{center}
\begin{tikzpicture}[scale=1]
\node (A) at (-1,-1) {$\lambda$};
\node (B) at (1,-1) {$\nu$};
\node (C) at (-1,1) {$\lambda$};
\node (D) at (1,1) {$\nu$};
\node (E) at (0,1.25) {i};
\node (F) at (0,-.75){i};
\draw (A) -- (B)
(C) -- (D)
(B) -- (D)
(A) -- (C);
\end{tikzpicture}\hspace{1in}
\begin{tikzpicture}[scale=1]
\node (A) at (-1,-1) {$\lambda$};
\node (B) at (1,-1) {$\lambda$};
\node (C) at (-1,1) {$\mu$};
\node (D) at (1,1) {$\mu$};
\node (E) at (-1.25,0) {$i$};
\node (G) at (1.25,0) {$i$};
\draw (A) -- (B)
(C) -- (D)
(B) -- (D)
(A) -- (C);
\end{tikzpicture}
\end{center}

\textbf{If $\nu=\lambda$ and the bottom edge is labeled 1:}

\begin{itemize}
 \item[(6)] If $\nu=\lambda$ and the bottom edge is labeled 1, then $\gamma=\mu$, the top edge is labeled 1, and the 
 label on the left edge (if one exists) is the same as the label on the right edge.
\end{itemize}

\begin{center}
\begin{tikzpicture}[scale=1]
\node (A) at (-1,-1) {$\lambda$};
\node (B) at (1,-1) {$\lambda$};
\node (C) at (-1,1) {$\mu$};
\node (D) at (1,1) {$\mu$};
\node (E) at (0,-.75) {$1$};
\node (G) at (0,1.25) {$1$};
\node (H) at (-1.25,0) {$j$};
\node (I) at (1.25,0) {$j$};
\draw (A) -- (B)
(C) -- (D)
(B) -- (D)
(A) -- (C);
\end{tikzpicture}
\end{center}

\textbf{If no previous cases apply, then set $\gamma=2\lambda$ and follow the rule below:}

\begin{itemize}
 \item[(7)] If the bottom edge is labled $i$, then label the right edge $i$. If the left edge is labled $j$, then label the top edge $j+1$.
\end{itemize}

\begin{center}
\begin{tikzpicture}[scale=1]
\node (A) at (-1,-1) {$\lambda$};
\node (B) at (1,-1) {$\nu$};
\node (C) at (-1,1) {$\mu$};
\node (D) at (1,1) {$2\lambda$};
\node (E) at (0,-.75) {$i$};
\node (G) at (1.25,0) {$i$};
\draw (A) -- (B)
(C) -- (D)
(B) -- (D)
(A) -- (C);
\end{tikzpicture}\hspace{1in}
\begin{tikzpicture}[scale=1]
\node (A) at (-1,-1) {$\lambda$};
\node (B) at (1,-1) {$\nu$};
\node (C) at (-1,1) {$\mu$};
\node (D) at (1,1) {$2\lambda$};
\node (E) at (-1.25,0) {$j$};
\node (G) at (0,1.25) {$j+1$};
\draw (A) -- (B)
(C) -- (D)
(B) -- (D)
(A) -- (C);
\end{tikzpicture}\hspace{1in}
\begin{tikzpicture}[scale=1]
\node (A) at (-1,-1) {$\lambda$};
\node (B) at (1,-1) {$\nu$};
\node (C) at (-1,1) {$\mu$};
\node (D) at (1,1) {$2\lambda$};
\node (F) at (0,-.75) {$i$};
\node (H) at (1.25,0) {$i$};
\node (E) at (-1.25,0) {$j$};
\node (G) at (0,1.25) {$j+1$};
\draw (A) -- (B)
(C) -- (D)
(B) -- (D)
(A) -- (C);
\end{tikzpicture}
\end{center}

We call the resulting array the KYF growth diagram of $w$. For example, continuing with the word from Example \ref{ex:1334241},
$w=1334241$, we would have the diagram below.

\begin{example} Below is the KYF growth diagram for the word 1334241.
\begin{center}
\begin{tikzpicture}[scale=1]
\node (00) at (0,0) {$\varnothing$};
\node (10) at (1,0) {$\varnothing$};
\node (20) at (2,0) {$\varnothing$};
\node (30) at (3,0) {$\varnothing$};
\node (40) at (4,0) {$\varnothing$};
\node (50) at (5,0) {$\varnothing$};
\node (01) at (0,1) {$\varnothing$};
\node (.5.5) at (.5,.5) {$X$};
\node (11) at (1,1) {$1$};
\node (21) at (2,1) {$1$};
\node (31) at (3,1) {$1$};
\node (41) at (4,1) {$1$};
\node (51) at (5,1) {$1$};
\node (02) at (0,2) {$\varnothing$};
\node (12) at (1,2) {$1$};
\node (22) at (2,2) {$1$};
\node (32) at (3,2) {$1$};
\node (1.52.5) at (1.5,2.5) {$X$};
\node (42) at (4,2) {$1$};
\node (52) at (5,2) {$11$};
\node (03) at (0,3) {$\varnothing$};
\node (13) at (1,3) {$1$};
\node (23) at (2,3) {$11$};
\node (33) at (3,3) {$21$};
\node (43) at (4,3) {$21$};
\node (53) at (5,3) {$21$};
\node (2.52.5) at (2.5,2.5) {$X$};
\node (04) at (0,4) {$\varnothing$};
\node (14) at (1,4) {$1$};
\node (24) at (2,4) {$11$};
\node (34) at (3,4) {$21$};
\node (44) at (4,4) {$121$};
\node (54) at (5,4) {$221$};
\node (3.53.5) at (3.5,3.5) {$X$};
\node (4.51.5) at (4.5,1.5) {$X$};
\node (5,53.5) at (5.5,3.5) {$X$};
\node (60) at (6,0) {$\varnothing$};
\node (61) at (6,1) {$1$};
\node (62) at (6,2) {$11$};
\node (63) at (6,3) {$21$};
\node (64) at (6,4) {$221$};
\node (70) at (7,0) {$\varnothing$};
\node (71) at (7,1) {$1$};
\node (72) at (7,2) {$11$};
\node (73) at (7,3) {$21$};
\node (74) at (7,4) {$221$};
\node (6.5.5) at (6.5,.5) {$X$};
\node (6.51.15) at (6.5,1.15) {\tiny 1};
\node (6.52.15) at (6.5,2.15) {\tiny 1};
\node (6.53.15) at (6.5,3.15) {\tiny 1};
\node (6.54.15) at (6.5,4.15) {\tiny 1};
\node (6.93.5) at (6.9,3.5) {\tiny 1};
\node (2.92.5) at (2.9,2.5) {\tiny 1};
\node (3.92.5) at (3.9,2.5) {\tiny 1};
\node (5.93.5) at (5.9,3.5) {\tiny 1};
\node (4.93.5) at (4.9,3.5) {\tiny 2};
\node (4.53.1) at (4.5,3.15) {\tiny 2};
\node (5.54.1) at (5.5,4.15) {\tiny 3};

\draw (00)--(10)--(20)--(30)--(40)--(50)--(60)--(70)
(01)--(11)--(21)--(31)--(41)--(51)--(61)--(71)
(02)--(12)--(22)--(32)--(42)--(52)--(62)--(72)
(03)--(13)--(23)--(33)--(43)--(53)--(63)--(73)
(04)--(14)--(24)--(34)--(44)--(54)--(64)--(74)
(00)--(01)--(02)--(03)--(04)
(10)--(11)--(12)--(13)--(14)
(20)--(21)--(22)--(23)--(24)
(30)--(31)--(32)--(33)--(34)
(40)--(41)--(42)--(43)--(44)
(50)--(51)--(52)--(53)--(54)
(60)--(61)--(62)--(63)--(64)
(70)--(71)--(72)--(73)--(74);
\end{tikzpicture}
\end{center}
\end{example}

Let $\mu_0 = \varnothing \subseteq \mu_1 \subseteq \ldots \mu_k$ be the seqence of snakeshapes across the top of the 
growth diagram, and let $\nu_0=\varnothing \subseteq \nu_1 \subseteq \ldots \nu_n$ be the sequence of snakeshapes 
on the right side of the KYF growth diagram. These sequences correspond to KYF tableaux
$Q(w)$ and $P(w)$, respectively. 

If the edge between $\nu_i$ and $\nu_{i+1}$ is labeled $j$, then $2\nu_i=\nu_{i+1}$, and a column of $i$'s is added in the $j^{th}$ column of $\nu_i$ to obtain $\nu_{i+1}$. 
If the edge between $\mu_i$ and $\mu_{i+1}$ is labeled $j$, then $\mu_i=\mu_{i+1}$ and the label $i+1$ of $Q(w_1\cdots w_{i+1})$ is placed in the box at the top of the 
the $j^{th}$ column of $Q(w_1\cdots w_i)$.

In the example above, we have 

\begin{center}
 $P=$
 \begin{ytableau}
 4 & 3 & \none \\
 4 & 2 & 1
 \end{ytableau}\hspace{1in}
 $Q=$
 \begin{ytableau}
  57 & 3 & \none \\
  4 & 2 & 16
 \end{ytableau}\hspace{.1in},
\end{center}
which agrees with the insertion and recording tableau from Example \ref{ex:1334241}. As for Hecke insertion and shifted Hecke insertion, this is not a coincidence. 

\begin{theorem}
For any word $w$, the KYF tableau and set-valued KYF tableau $P$ and $Q$ obtained from the sequence of snakeshapes across the right side of the KYF growth diagram for $w$
and across the top of the KYF growth diagram for $w$, respectively, are $P_{YF}(w)$ and $Q_{YF}(w)$, the KYF insertion and recording tableau for $w$.
\end{theorem}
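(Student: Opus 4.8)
The plan is to argue by induction on the squares of the KYF growth diagram, processed in the usual order and in the same style as the proofs for Hecke growth (Section~\ref{sec:HeckeGrowth}) and shifted Hecke growth (Section~\ref{sec:shiftedgrowth}). Fix a square in row $s$ and column $t$, with corners labelled $\lambda$ (bottom-left), $\nu$ (bottom-right), $\mu$ (top-left), $\gamma$ (top-right) as in the figure accompanying rules (1)--(7), and recall that the $X$ in column $t$ sits at height $w_t$, so that column $t$ corresponds to the letter $w_t$ and row $s$ to the value $s$. The induction hypothesis is that, after all squares weakly below and weakly to the left have been filled, $\lambda$ is the KYF insertion tableau of $w_1\cdots w_{t-1}$ with the boxes originating from letters $\ge s$ discarded, $\nu$ is the analogous partial insertion tableau of $w_1\cdots w_t$, $\mu$ records which boxes have carried the value $s$ during the insertion of $w_1\cdots w_{t-1}$, and the horizontal (resp. vertical) edge labels encode where an insertion has terminated (resp. which column of a $2$-block was most recently created). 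Granting this, showing that rules (1)--(7) produce exactly the $\gamma$ and the edge labels prescribed by one more step of KYF insertion immediately gives that reading the top edge of the completed diagram yields $Q_{YF}(w)$ and reading the right edge yields $P_{YF}(w)$.

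First I would record the two preliminary lemmas used verbatim in both earlier proofs, reinterpreted for snakeshapes: (i) oriented as above, $\nu$ is either equal to $\lambda$ or covers it in the Young--Fibonacci lattice, since $\nu$ arises from $\lambda$ by inserting a single letter and KYF insertion changes the rank by at most one (in particular $\lambda=\nu$ whenever the square contains an $X$, because inserting $w_t=s$ does not affect the part of the tableau built from letters $<s$); and (ii) $\mu$ is reached from $\lambda$ by a sequence of up-steps of the M\"obius deformation, so the difference between $\mu$ and $\lambda$ is of the restricted kind that rules (2)--(4) and (7) are designed to handle, i.e. $\mu$ is $\lambda$, or $1\lambda$, or $2\lambda$, or disagrees with $\lambda$ only through a single top box. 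These lemmas cut down the possible local pictures $(\lambda,\mu,\nu)$ and reduce everything to the seven enumerated cases.

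Then I would check the rules one at a time, each time tracing the chainwise move of KYF insertion (steps (YF1)--(YF7)) on the boxes carrying the value $s$. Rules (1)--(4) govern $X$-squares, which correspond to inserting the letter $s$ into $\lambda$: rule (1) is the base case creating the first box; rule (2) is the case where $s$ is strictly smaller than everything yet seen in column $t$, so a new box containing $s$ is adjoined at the left of the bottom row and $\gamma=1\lambda$; rules (3)--(4) handle the interaction with a $2$-block of $s$'s — roughly, in rule (4) a fresh $2$-column is formed from an $s$ that had been pushed to the right (so $\gamma=2\lambda$ with the right edge labelled $1$), while in rule (3) a column of $s$'s is already present and the new $s$ extends the chainwise shift (so $\gamma=2\lambda$ with the top edge relabelled $i+1$ and the right edge labelled $1$). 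Rule (5) is the pure transfer rule, the analogue of (L1): if $\mu=\lambda$ or $\nu=\lambda$ the square does nothing and the labels are copied across. Rules (6)--(7) treat $X$-free squares in which a letter $r<s$ inserted lower in column $t$ propagates upward: in rule (6) the $r$-insertion terminates exactly where an $s$ already lives, so the shape does not change and $\gamma=\mu$ with the top edge still labelled $1$; in rule (7) the $r$-insertion bumps an $s$ out of its box, which triggers a new $2$-column, so $\gamma=2\lambda$ and the termination and column labels are updated as stated. In each case the assertion is precisely a statement about the effect of steps (YF3)--(YF7) — in particular about whether the auxiliary symbol $*$ introduced in (YF4) survives to (YF6) — on the configuration of $s$-boxes, and follows from the induction hypothesis by inspection.

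The main obstacle will be that KYF insertion is genuinely non-local: a single insertion performs a chainwise shuffle of every entry $\ge x$ and may create or destroy a column anywhere in the snakeshape, so the local growth rules have to simulate a non-local operation. The delicate point is the consistency, under composition of adjacent squares, of the compound edge labels in rules (3), (4) and (7), which simultaneously track the most recently inserted block of $2$'s and the column where an insertion terminated. I would handle this by first isolating the invariant that, along any horizontal edge, the edge label records exactly the data KYF insertion needs in order to continue the chainwise move into the next square, and then verifying that each of the seven rules preserves this invariant; once that is in place the case analysis becomes routine, and the corresponding reverse rules (the KYF analogue of the Reverse Rules of Section~\ref{sec:shiftedgrowth}) follow formally and give a bijective restatement of the theorem.
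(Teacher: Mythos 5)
Your plan is essentially the paper's proof: induct over the squares of the growth diagram, establish the two preliminary lemmas (that $|\nu/\lambda|\le 1$, and that $\mu$ differs from $\lambda$ by at most one box or one column of two boxes, these being the boxes carrying the current value), and then verify rules (1)--(7) one at a time by tracing steps (YF1)--(YF7), with the reverse rules recorded separately; the paper does exactly this case check. However, two of your case glosses are misstated and would fail if carried out as written. Rule (1) is not ``the base case creating the first box'' --- that situation is rule (2) with $\lambda=\varnothing$; rule (1) is the case $\mu=1\neq\lambda$, where the value $s$ is inserted into a tableau whose unique entry is already $s$, so (YF1) applies, the shape is unchanged, and the termination at the top of the first column produces the edge label $1$. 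And rule (2) applies when the inserted letter is strictly \emph{larger} than every entry of $\mu$ (this is what $\lambda=\mu=\nu$ forces, since $\mu$ contains only entries $\le s$ and no $s$), so the new box is simply adjoined at the left with no chainwise shift and $\gamma=1\lambda$; if the letter were strictly smaller than the existing entries, as you wrote, the insertion would set off the full chainwise shuffle and typically create a $2$-column rather than $1\lambda$. With those two conditions corrected, your rule-by-rule verification, including the bookkeeping of the termination and $2$-column edge labels in rules (3), (4), (6) and (7), coincides with the argument given in the paper.
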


\begin{proof}
Suppose the square under consideration is in row $t$ and column $s$. Assume also the $X$ in column $s$ is in row $r$. We may or may not have $r=t$. 
We will argue the result by induction on $t+s$. The base case is trivial. As in the previous growth diagram proofs, we have the following lemmas.

\begin{lemma} Oriented as in the square above, $|\nu/\lambda|\leq 1$.\end{lemma}

\begin{lemma} Oriented as in the square above, to obtain $\mu$ from $\lambda$, we add either one box of a column of 2 boxes.\end{lemma}

Rule 1 follows from the fact that inserting $t$ into the tableau \begin{ytableau} t \end{ytableau} does not change the tableau. Rules 2 applies when inserting positive integer $t$ that is strictly 
larger than all other entries of the KYF tableau. Thus a box with $t$ will be added directly to the left of the tableau. In Rule 3, we are adding integer $t$ to a tableau that already has 2 
boxes labeled $t$ in the $i^{th}$ column, and $t$ is the largest integer in the tableau. Following the insertion rules, we end with a column of $t$'s in the first column, hence the right edge is labeled 1. 
The special corner of this insertion is in the $(i+1)^{th}$ column, the column where the $t$ on the top row of the original KYF tableau was deleted in the insertion procedure. 

In Rule 4, there is exactly one $t$ in insertion tableau $\mu$. Thus, inserting a $t$ will result in a tableau where the first column has 
two boxes filled with a $t$.

If $\lambda=\nu$ as in Rule 5, then $r>t$. Thus no integer is being inserted as we move from $\mu$ to $\gamma$, so $\mu=\gamma$ and the edge labeles are unchanged. 
Similarly, if if $\lambda=\mu$, then there is no $X$ to the left of the square we are considering, and so no boxes labeled $t$ in the KYF tableau $\gamma$. Thus there is no change in the insertion and recording tableaux.

Rule 6 follows directly from insertion rule YF1.

In considering Rule 7, note that $r<t$ because if not, 
we would be in the situation described 
in Rule 5. In Rule 7, $\gamma=2\lambda$ because no matter what $r$ is inserted, since $\mu\neq\lambda$, there is at least one entry of $\mu$ that is larger than $r$. This means that when we insert $r$ into insertion
tableau $\mu$, a box with $r$ will be added to the first column, and after the shifting of YF5, there will be a box directly above this new box with $r$. An entry 
$t$ will shift into the position of any box that was ``lost'' when shifting during the insertion of $r$ into $\lambda$, and if there were two boxes labeled $t$ in $\mu$, the second will be deleted 
in YF6. It follows that $\gamma=2\lambda$.

If the bottom edge is labeled $i$, then insertion of $r$ into insertion tableau $\lambda$ included the situation described by YF6 in column $i$ of $\lambda$. When we insert 
$r$ into insertion tableau $\mu$, column $i$ now becomes \begin{ytableau} t \\ $*$ \end{ytableau} and thus \begin{ytableau} t \\ t \end{ytableau} in $\gamma$. This means that the edge between 
$\nu$ and $\gamma$ should be labeled $i$, indicating where the $t$'s are in insertion tableau $\gamma$.

If the left edge is labeled $j$, then the difference between insertion tableaux $\lambda$ and $\mu$ is a column \begin{ytableau} t \\ t \end{ytableau} in column $j$ of $\mu$. 
The process of inserting $r$ into insertion tableau $\mu$ will match that of inserting $r$ into insertion tableau $\lambda$ until we get to the point where we are shifting the entries $t$ and $*$. Since $t$ is 
the largest entry in $\mu$, the insertion of $r$ into $\mu$ will end with deleting the unmatched $*$ from column $j+1$ of the insertion tableau $\gamma$, and therefore the top edge is labeled $j+1$.
\end{proof}

We can also formulate the rules for the reverse insertion as follows. The proof is omitted for brevity.

\textbf{Reverse Rules}

\begin{enumerate}
 \item[(R1)] If $\mu=\gamma=1$, $\nu=\varnothing$, at the top edge is labeled 1, then the square contains an $X$ and $\lambda=\varnothing$.
 \item[(R2)] If $\mu=\nu$ and $\gamma=1\mu=1\nu$, then the square contains an $X$ and $\lambda=\mu=\nu$.
 \item[(R3)] $\mu=\gamma=2\nu$, the top edge is labeled $i$ and the right edge is labeled 1, then there is an $X$ in the square, $\lambda=\nu$ and the left edge 
 is labeled $i-1$.
 \item[(R4)] If $\gamma=2\nu$ and the right edge is labeled 1, then the square contains an $X$ and $\lambda=\nu$.
 \item[(R5)] If $\gamma=\nu$, then set $\lambda=\mu$ and label the bottom edge with the same label as the top edge if one exists. If $\gamma=\mu$, 
 then $\lambda=\nu$ and the left edge has the same label as the right edge if such a label exists.
 \item[(R6)] If $\gamma=\mu$ and the top edge is labeled 1, then $\lambda=\nu$, the bottom edge is labeled 1, and the left edge has the same label 
 as the right edge.
 \item[(R7)] If no previous cases apply, then $\lambda$ is obtained from $\gamma$ by removing the first 2. If the right edge is labeled $i$, then the 
 bottom edge is labeled $i$. If the top edge is labeled $j$, then the left edge is labeled $j-1$.
\end{enumerate}

\section{Other examples} \label{sec:other}

\subsection{Binary tree deformations} \label{sec:bin}

\begin{example}\label{ex:BinWord}
 The lifted binary tree is shown on the left, where vertices can be naturally labeled by bit strings: 0, 1, 10, 11, 100, 101,$\ldots$. The graph \textit{BinWord} with 
 the same set of vertices and rank function is shown on the right. In \textit{BinWord}, an element $x$ covers $y$ if $y$ can be obtained from $x$ by deleting a single
 digit from $x$, and in addition, 1 covers 0. The lifted binary tree and \textit{BinWord} form a dual graded graph, see \cite[Example 2.4.1]{F}.
 
 \begin{minipage}{.5\textwidth}
\begin{center}
\begin{tikzpicture}[scale=1]
\node (0) at (0,0) {\small{0}};
\node (1) at (0,1) {\small{1}};
\node (11) at (2,2) {\small{11}};
\node (10) at (-2,2) {\small{10}};
\node (100) at (-3,3) {\small{100}};
\node (101) at (-1,3) {\small{101}};
\node (110) at (1,3) {\small{110}};
\node (111) at (3,3) {\small{111}};
\node (1000) at (-3.5,4) {\small{1000}};
\node (1001) at (-2.5,4) {\small{1001}};
\node (1010) at (-1.5,4) {\small{1010}};
\node (1011) at (-.5,4) {\small{1011}};
\node (1100) at (.5,4) {\small{1100}};
\node (1101) at (1.5,4) {\small{1101}};
\node (1110) at (2.5,4) {\small{1110}};
\node (1111) at (3.5,4) {\small{1111}};
\draw (0)--(1)--(11)--(111)--(1111) (1)--(10)--(100)--(1000) (100)--(1001) (10)--(101)--(1010) (101)--(1011)
(11)--(110)--(1100) (110)--(1101) (111)--(1110);
\end{tikzpicture}
\end{center}
\end{minipage}
  \begin{minipage}{.5\textwidth}
\begin{center}
\begin{tikzpicture}[scale=1]
\node (0) at (0,0) {\small{0}};
\node (1) at (0,1) {\small{1}};
\node (11) at (2,2) {\small{11}};
\node (10) at (-2,2) {\small{10}};
\node (100) at (-3,3) {\small{100}};
\node (101) at (-1,3) {\small{101}};
\node (110) at (1,3) {\small{110}};
\node (111) at (3,3) {\small{111}};
\node (1000) at (-3.5,4) {\small{1000}};
\node (1001) at (-2.5,4) {\small{1001}};
\node (1010) at (-1.5,4) {\small{1010}};
\node (1011) at (-.5,4) {\small{1011}};
\node (1100) at (.5,4) {\small{1100}};
\node (1101) at (1.5,4) {\small{1101}};
\node (1110) at (2.5,4) {\small{1110}};
\node (1111) at (3.5,4) {\small{1111}};
\draw (0)--(1)--(11)--(111)--(1111) (1)--(10)--(100)--(1000) (100)--(1001) (10)--(101)--(1010) (101)--(1011)
(11)--(110)--(1100) (110)--(1101) (111)--(1110) (101)--(11) (110)--(10) (100)--(1010) (100)--(1100) (101)--(1001) (101)--(1101) (110)--(1110) (110)--(1010) (111)--(1101) (111)--(1011);
\end{tikzpicture}
\end{center}
\end{minipage}
 
We form a Pieri deformation by interpreting the graph in the context of the ring of quasisymmetric functions, $\QSym$ (see \cite{EC2}). If we interpret ``1'' as $L_1$, 
``10'' as $L_2$, ``100'' as $L_3$, 11001 as $L_{131}$, and so on, we see that $x$ covers $y$ in the graph on the right exactly when $x$ appears in the product
$L_y\cdot L_1$, where $L_\alpha$ is the fundamental quasisymmetric function with the usual product. Let $D$ be the operator that subtracts 1 from the rightmost number in the 
subscript or deletes the rightmost number if it is 1. For example, $D(L_{14})=L_{13}$ and $D(L_{1421})=L_{142}$. Then $x$ covers $y$ in the 
graph on the left exactly when $y=D(x)$. To form the Pieri deformation, we can let $f=L_1+L_{11}+L_{111}+\ldots$. In other words, the multiplicity of the 
edge from $x$ down to $y$ is the coefficient of $x$ in $y\cdot (L_1+L_{11}+\ldots)$. 

 \begin{minipage}{.5\textwidth}
\begin{center}
\begin{tikzpicture}[scale=1]
\node (0) at (0,0) {\small{0}};
\node (1) at (0,1) {\small{1}};
\node (11) at (2,2) {\small{11}};
\node (10) at (-2,2) {\small{10}};
\node (100) at (-3,3) {\small{100}};
\node (101) at (-1,3) {\small{101}};
\node (110) at (1,3) {\small{110}};
\node (111) at (3,3) {\small{111}};

\draw (0)--(1)--(11)--(111) (1)--(10)--(100)  (10)--(101) (101)
(11)--(110);
\end{tikzpicture}
\end{center}
\end{minipage}
  \begin{minipage}{.5\textwidth}
\begin{center}
\begin{tikzpicture}[scale=1]
\node (0) at (0,0) {\small{0}};
\node (1) at (0,1) {\small{1}};
\node (11) at (2,2) {\small{11}};
\node (10) at (-2,2) {\small{10}};
\node (100) at (-3,3) {\small{100}};
\node (101) at (-1,3) {\small{101}};
\node (110) at (1,3) {\small{110}};
\node (111) at (3,3) {\small{111}};

\draw (0)--(1)--(11)--(111) (1)--(10)--(100)  (10)--(101)
(11)--(110)  (101)--(11) (110)--(10) 
(101)--(1) (110)--(1) (111) to [bend right=10] (1) (0)--(11) (0) to [bend right=10] (111);
\end{tikzpicture}
\end{center}
\end{minipage}

 Using $A=\QSym$ and $D$ and $f$ as defined above, it is easy to see that $D(f)=L_\varnothing+f=id+f$. Using the Hopf algebra structure of $\QSym$, it is clear that
 $D$ is a derivation by Lemma \ref{lem:derivation}. Thus the resulting 
 graph is a dual filtered graph by Theorem \ref{thm:pieri}.

\end{example}

\begin{example}\label{ex:L}
 Consider the Hopf algebra of multi-quasisymmetric functions, $\mQSym$, defined in \cite{LP}. This Hopf algebra has a 
 basis of multi-fundamental quasisymmetric functions indexed by compositions, $\{\tilde L_\alpha\}$. 
 Notice that a graph with vertices indexed by $\{\tilde L_\alpha\}$
 has the same vertices as in Example \ref{ex:BinWord}. To define the product, we first define the concept of a multiword. 
 Let $u=u_1u_2\cdots u_k$ be a word. We call $w=w_1w_2\cdots w_m$ a \textit{multiword} of $u$ if
 there is a surjective and non-decreasing map $t:[m]\to[k]$ so that $w_j=u_{t(j)}$. For example, 11335662 and 133335562 are 
 multiwords of 13562. Let $u=u_1u_2\cdots u_k$ and $v=v_1v_2\cdots v_l$ be two words,
 and assume that all letters $v_i$ and $u_j$ are distinct. A word $w=w_1w_2\cdots w_m$ is a \textit{multishuffle} of $u$ and $v$ if 
 \begin{itemize}
  \item[(1)] for any $i\in [1,m-1]$, $w_i\neq w_{i+1}$, and
  \item[(2)] when restricted to alphabets $v_i$ and $u_j$, $w$ becomes a multiword of $v$ and $u$, respectively.
 \end{itemize}
For example, 1818373567627 is a multishuffle of 13562 and 87. If $u$ or $v$ contain repeated letters, obtain the 
multishuffle by first replacing $u$ and $v$ with words with distinct letters, taking the multishuffle
product, and then replacing the letters in the result with the original letters. For example, if multishuffling 
121 with 1, we can instead multishuffle 132 with 4 to get $1324+13242+41342+\ldots$, and then 
substitute the original letters to get $1214+12111+11211+\ldots$. Note that multiplicities may occur in this case.
 
To multiply $\tilde L_\alpha$ and $\tilde L_\beta$, first choose words $w_\alpha$ and $w_\beta$ such that the composition of the 
descent set of $u$ is $\alpha$ and of $v$ is $\beta$. For example, if $\alpha=(2,1)$ and $\beta$ is $(1)$, we can choose $w_\alpha=231$ and $w_\beta=1$. Say that $w_\alpha$ is on $[n]$ for some $n\in\mathbb{N}$, and let $w_\beta[n]$ be the word obtained 
by adding $n$ to each letter of $w_\beta$. In the previous example, $w_\beta[3]=4$. Now 
$$\tilde L_\alpha \tilde L_\beta=\displaystyle\sum _w \tilde L_{\mathcal{C}(w)},$$ where we sum over multishuffles of $w_\alpha$ and 
$w_\beta[n]$ and $\mathcal{C}(w)$ is the composition associated to the descent set of word $w$. 
For example, 
$\tilde L_{(2,1)}\tilde L_{(1)}=\tilde L_{(2,2)}+\tilde L_{(2,1,1)}+\tilde L_{(3,1)}+\tilde L_{(12121)}+\ldots$, 
where the terms shown correspond to multishuffles 2314, 2431, 2341, 4243141.

Define the coproduct using the \textit{cuut coproduct} of a word $w$: $\blacktriangle(w_1w_2\cdots w_k)=\varnothing\otimes w_1\cdots w_k+
w_1\otimes w_1\cdots w_k+w_1\otimes w_2\cdots w_k+w_1w_2\otimes w_2\cdots w_k+\ldots+w_1\cdots w_k\otimes w_k+w_1\cdots w_k\otimes \varnothing.$ Then let 
$$\Delta(\tilde L_\alpha)=\displaystyle\sum_{u\otimes u' \text{ in } cuut(w_\alpha)}\tilde L_{\mathcal{C}(u)}\otimes \tilde L_{\mathcal{C}(u')}.$$ For example, $$\Delta(\tilde L_{(2,1)})=\varnothing\otimes \tilde L_{(2,1)}+\tilde L_{(1)}\otimes \tilde L_{(2,1)}+\tilde L_{(1)}\otimes \tilde L_{(1,1)}+\tilde L_{(2)}\otimes \tilde L_{(1,1)}+\tilde L_{(2)}\otimes\tilde L_{(1)}+\tilde L_{(2,1)}\otimes \tilde L_{(1)}+\tilde L_{(2,1)}\otimes \varnothing$$ since $$\blacktriangle(231)=
\varnothing\otimes 231+2\otimes 231+2\otimes 31+23\otimes 31 +23\otimes 1+231\otimes 1+231\otimes\varnothing.$$

 Taking $A$ to be $\mQSym$ with the basis $\{\tilde L_\alpha \}$ indexed by
 compositions, we create a dual filtered graph by taking $f=\tilde L_{(1)}$ and $D=\xi\circ\Delta$ with $g=\tilde L_{(1)}$.
 The first five ranks are shown below.
 
 \begin{minipage}{.5\textwidth}
\begin{center}
\begin{tikzpicture}[scale=.9]
\node (0) at (0,0) {\small{0}};
\node (1) at (0,1) {\small{1}};
\node (11) at (2,2) {\small{11}};
\node (10) at (-2,2) {\small{2}};
\node (100) at (-3,3) {\small{3}};
\node (101) at (-1,3) {\small{21}};
\node (110) at (1,3) {\small{12}};
\node (111) at (3,3) {\small{111}};
\node (1000) at (-3.5,4) {\small{4}};
\node (1001) at (-2.5,4) {\small{31}};
\node (1010) at (-1.5,4) {\small{22}};
\node (1011) at (-.5,4) {\small{211}};
\node (1100) at (.5,4) {\small{13}};
\node (1101) at (1.5,4) {\small{121}};
\node (1110) at (2.5,4) {\small{112}};
\node (1111) at (3.5,4) {\small{1111}};
\draw (1) to [out=60,in=100,distance=.5cm] (1)
(1) to [out=60,in=100,distance=.5cm] (1)
(11) to [out=60,in=100,distance=.5cm] (11)
(10) to [out=60,in=100,distance=.5cm] (10)
(100) to [out=-60,in=-100,distance=.5cm] (100)
(101) to [out=-60,in=-100,distance=.5cm] (101)
(110) to [out=-60,in=-100,distance=.5cm] (110)
(111) to [out=-60,in=-100,distance=.5cm] (111)
(1000) to [out=60,in=100,distance=.5cm] (1000)
(1001) to [out=60,in=100,distance=.5cm] (1001)
(1010) to [out=60,in=100,distance=.5cm] (1010)
(1011) to [out=60,in=100,distance=.5cm] (1011)
(1100) to [out=60,in=100,distance=.5cm] (1100)
(1101) to [out=60,in=100,distance=.5cm] (1101)
(1110) to [out=60,in=100,distance=.5cm] (1110)
(1111) to [out=60,in=100,distance=.5cm] (1111)
(0)--(1)--(11)--(111)--(1111) (1)--(10)--(100)--(1000) (100)--(1001) (10)--(101)--(1010) (101)--(1011)
(11)--(110)--(1100) (110)--(1101) (111)--(1110);
\end{tikzpicture}
\end{center}
\end{minipage}
  \begin{minipage}{.5\textwidth}
\begin{center}
\begin{tikzpicture}[scale=.9]
\node (0) at (0,0) {\small{0}};
\node (1) at (0,1) {\small{1}};
\node (11) at (2,2) {\small{11}};
\node (10) at (-2,2) {\small{2}};
\node (100) at (-3,3) {\small{3}};
\node (101) at (-1,3) {\small{21}};
\node (110) at (1,3) {\small{12}};
\node (111) at (3,3) {\small{111}};
\node (1000) at (-3.5,4) {\small{4}};
\node (1001) at (-2.5,4) {\small{31}};
\node (1010) at (-1.5,4) {\small{22}};
\node (1011) at (-.5,4) {\small{211}};
\node (1100) at (.5,4) {\small{13}};
\node (1101) at (1.5,4) {\small{121}};
\node (1110) at (2.5,4) {\small{112}};
\node (1111) at (3.5,4) {\small{1111}};
\draw (0)--(1)--(11)--(111)--(1111) (1)--(10)--(100)--(1000) (100)--(1001) (10)--(101)--(1010) (101)--(1011)
(11)--(110)--(1100) (110)--(1101) (111)--(1110) (101)--(11) (110)--(10) (100)--(1010) (100)--(1100) (101)--(1001) (101)--(1101) (110)--(1110) (110)--(1010) (111)--(1101) (111)--(1011)
(101)--(1) (110)--(1) (1001)--(10) (1010) to [bend right=5] (10) (1010) to [bend left=5] (10) (1010) to [bend left =15] (1) (1010)--(11) (1011) to [bend right =5] (11) (1100) to [bend left = 5](10)
(1101)--(10) (1101) to [bend right =15] (1) (1101) to [bend left=5] (11) (1101) to [bend right =5] (11) (1110)--(11);
\end{tikzpicture}
\end{center}
\end{minipage}

This is a dual filtered graph by Theorem \ref{thm:pieri}.

The following proposition shows that we have another instance of M\"obius via Pieri phenomenon, just like in the case of Young's lattice.

\begin{proposition}
 The Pieri construction above is a M\"obius deformation of the dual graded graph in Example \ref{ex:BinWord}.
\end{proposition}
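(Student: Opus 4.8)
The plan is to show that the Pieri deformation built from $\mQSym$ with $f=\tilde L_{(1)}$ and $D=\xi\circ\Delta$ produces, vertex for vertex and edge for edge, the M\"obius deformation of the dual graded graph in Example \ref{ex:BinWord}. The underlying vertex sets already agree (both are indexed by compositions/bit strings), so it remains to match up and down edges separately. For the up edges of the M\"obius deformation: $G_1'$ keeps all edges of the lifted binary tree and adds $\#\{x \mid y \text{ covers } x \text{ in } G_1\}$ loops at each $y$; since the lifted binary tree is a tree, each non-root vertex covers exactly one vertex, so the M\"obius deformation adds exactly one loop at every vertex except the root. On the Pieri side, the up operator is $D=\xi\circ\Delta$ applied to $\tilde L_\alpha$: I need to compute $\xi(\Delta(\tilde L_\alpha))$ and check it equals $\tilde L_{D(\alpha)} + \tilde L_\alpha$, where $D(\alpha)$ is the composition obtained by subtracting $1$ from the last part (or deleting it if it is $1$). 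The term $\tilde L_\alpha$ comes from the diagonal of the cuut coproduct (the $w_1\cdots w_k\otimes w_k$ term, which under $\xi$ picks out the left factor when the right factor represents $\tilde L_{(1)}$), reproducing the loop; the term $\tilde L_{D(\alpha)}$ comes from the near-diagonal cut $w_1\cdots w_{k-1}\otimes w_{k-1}w_k$ or the analogous cut, reproducing the binary-tree edge. The key computation here is to expand $\blacktriangle(w_\alpha)$ for a word $w_\alpha$ with descent composition $\alpha$, apply $\xi$ (which kills any tensor whose right factor is not of rank $1$, i.e.\ not $\tilde L_{(1)}$), and confirm exactly these two surviving terms.

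For the down edges: in the M\"obius deformation, $a'_2(x,y)=|\mu(x,y)|$ in $G_2 = BinWord$. So I need the M\"obius function of the interval $[y,x]$ in $BinWord$. The plan is to recall (or verify directly from the covering relations of $BinWord$: $x$ covers $y$ iff $y$ is obtained by deleting one digit, plus $1$ covers $0$) that intervals $[y,x]$ are Boolean algebras exactly when $x/y$ is, in an appropriate sense, a ``rook strip of deletions'' — concretely, when the digits one can delete from $x$ to reach $y$ can be deleted in any order and independently. In that case $|\mu(y,x)| = 1$, and it is $0$ otherwise. On the Pieri side, the down operator is multiplication by $f = \tilde L_{(1)}$: the edge multiplicity $a_2(x,y)$ is the coefficient of $\tilde L_x$ in $\tilde L_y \cdot \tilde L_{(1)}$, which by the multishuffle product rule counts multishuffles of $w_y$ and the single extra letter $n+1$ whose descent composition is $x$. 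The task is to show this coefficient is $1$ precisely for the same pairs $(x,y)$ for which $|\mu(y,x)|=1$ and $0$ otherwise. Inserting a single new letter into $w_y$ via multishuffling, with the ``no two consecutive equal letters'' condition, corresponds exactly to choosing where the new letter lands and which positions it may ``absorb,'' and matching these choices against the Boolean intervals of $BinWord$ is the crux.

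The main obstacle I expect is the down-edge comparison: both sides have genuine content (the M\"obius-function computation in $BinWord$ and the multishuffle expansion of $\tilde L_y \tilde L_{(1)}$), and getting the combinatorics to line up — in particular checking that multiplicities never exceed $1$ on the Pieri side and that the support matches the Boolean-interval condition — is more delicate than the up-edge check. I would organize the proof as: (1) state and justify the formula for $\mu(y,x)$ in $BinWord$ via the distributive/Boolean-interval argument (citing \cite[Example 3.9.6]{EC1} as in the Young's lattice case); (2) compute $D(\tilde L_\alpha) = \xi(\Delta(\tilde L_\alpha)) = \tilde L_\alpha + \tilde L_{D(\alpha)}$ from the cuut coproduct, matching the up edges and loops; (3) compute the coefficient of $\tilde L_x$ in $\tilde L_y \tilde L_{(1)}$ from the multishuffle rule and match it to $|\mu(y,x)|$; (4) conclude that the two oriented filtered graphs coincide. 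Since the resulting graph is a dual filtered graph by Theorem \ref{thm:pieri}, this also reproves (via M\"obius construction) what is otherwise only conjectural in general, giving the claimed M\"obius-via-Pieri instance.
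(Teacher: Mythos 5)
Your plan for the down edges rests on a false premise, and that is where the real content of this proposition lies. You propose to show that intervals $[y,x]$ in \textit{BinWord} are Boolean exactly when the deleted digits form a ``rook strip of deletions,'' so that $|\mu(y,x)|\in\{0,1\}$, and correspondingly that the multishuffle coefficient of $\tilde L_x$ in $\tilde L_y\tilde L_{(1)}$ never exceeds $1$. Neither claim is true. \textit{BinWord} is (after dropping the leading $1$ from every vertex) the subword order on $\{0,1\}^*$, which is not a distributive lattice, and its M\"obius function is given by Bj\"orner's theorem \cite{Bj}: $\mu(x,y)=(-1)^{|y|-|x|}\binom{y}{x}_n$, the signed count of \emph{normal embeddings} of $x$ in $y$ (embeddings whose index set contains the repetition set of $y$). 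These counts can be arbitrarily large; for instance $\binom{1010}{10}_n=3$, i.e.\ $|\mu|=3$ between the compositions $(2)$ and $(2,2)$, and correspondingly the coefficient of $\tilde L_{(2,2)}$ in $\tilde L_{(2)}\tilde L_{(1)}$ is $3$ (multishuffling the extra letter $n+1$ into $w_{(2)}$ can produce the same descent composition in three ways). So the Young's-lattice-style argument via \cite[Example 3.9.6]{EC1} cannot be transplanted here; the actual proof must (i) identify \textit{BinWord} with subword order, (ii) invoke Bj\"orner's normal-embedding formula, and (iii) show that the multishuffle coefficient of $\tilde L_\beta$ in $\tilde L_\alpha\tilde L_{(1)}$ equals $\binom{\beta}{\alpha}_n$, by analyzing how the single new largest letter, interleaved with repeats of existing letters, inserts an alternating $01$-string into $\alpha$ at a given location in a unique way. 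That case analysis is the crux, and it is absent from your outline.

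Your up-edge discussion is essentially fine in outcome (one loop at every nonzero vertex plus the tree edge), but note a small slip: the only terms of the cuut coproduct surviving $\xi$ are $w_1\cdots w_{k-1}\otimes w_k$ and $w_1\cdots w_k\otimes w_k$, since $\xi$ kills any term whose right factor does not lie in the rank-one component; the cut $w_1\cdots w_{k-1}\otimes w_{k-1}w_k$ you mention is killed. Also, the final sentence of your proposal has the logic backwards: duality of the Pieri graph follows from Theorem \ref{thm:pieri} regardless, and the point of the proposition is only that this Pieri graph coincides with the M\"obius deformation, exhibiting the M\"obius-via-Pieri phenomenon; it does not ``reprove'' anything about the M\"obius construction in general.
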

\begin{proof}
We can transform the poset {\it{BinWord}} from Example \ref{ex:BinWord} into the poset of subwords shown below by ignoring the first 1 in every word. In doing this, the element $\varnothing$ in \textit{BinWord} disappears, $1$ becomes $\varnothing$ in the subword poset, $10$ becomes $0$, $11001$ becomes $1001$ and so on. With this change, $x<y$ in \textit{BinWord} exactly when $x$ can be found as a subword of $y$, ignoring the inital 1's in each.

\begin{center}
\begin{tikzpicture}[scale=1]
\node (1) at (0,1) {\small{$\varnothing$}};
\node (11) at (2,2) {\small{1}};
\node (10) at (-2,2) {\small{0}};
\node (100) at (-3,3) {\small{00}};
\node (101) at (-1,3) {\small{01}};
\node (110) at (1,3) {\small{10}};
\node (111) at (3,3) {\small{11}};
\node (1000) at (-3.5,4) {\small{000}};
\node (1001) at (-2.5,4) {\small{001}};
\node (1010) at (-1.5,4) {\small{010}};
\node (1011) at (-.5,4) {\small{011}};
\node (1100) at (.5,4) {\small{100}};
\node (1101) at (1.5,4) {\small{101}};
\node (1110) at (2.5,4) {\small{110}};
\node (1111) at (3.5,4) {\small{111}};
\draw (1)--(11)--(111)--(1111) (1)--(10)--(100)--(1000) (100)--(1001) (10)--(101)--(1010) (101)--(1011)
(11)--(110)--(1100) (110)--(1101) (111)--(1110) (101)--(11) (110)--(10) (100)--(1010) (100)--(1100) (101)--(1001) (101)--(1101) (110)--(1110) (110)--(1010) (111)--(1101) (111)--(1011);
\end{tikzpicture}
\end{center}

Given a word $y=y_1y_2\cdots y_n$, define its \textit{repetition set} $\mathcal{R}(y)=\{i:y_{i-1}=y_i\}.$ An embedding of $x$ in $y$ is a sequence $1\leq i_1<i_2<\ldots <i_k\leq n$ such that $x=y_{i_1}y_{i_2}\cdots y_{i_k}$. It is a \textit{normal embedding} if $\mathcal{R}(y)\subseteq \{i_1,i_2,\ldots, i_k\}$. For two words, $x$ and $y$, let $\binom{y}{x}_n$ denote the number of normal embeddings of $x$ in $y$. For example, $\binom{10110}{10}_n=1$ and $\binom{1010}{10}_n=3$. Now, from Theorem 1 in \cite{Bj}, $$\mu(x,y)=(-1)^{|y|-|x|}\binom{y}{x}_n.$$

We show that the coefficient of $\tilde L_\beta$ in $\tilde L_\alpha \tilde L_{(1)}$ is the number of normal embeddings of $\alpha$ into $\beta$ after viewing $\alpha$ and $\beta$ as sequences of 0's and 1's and deleting the first 1 in each by showing the unique way to insert strings of alternating 1's and 0's into $\alpha$ at a specific location to obtain $\beta$ in the multishuffle. So, for example, the coefficient of $\tilde L_{(1111)}$ in $\tilde L_{(11)}\tilde L_{(1)}$ is zero since $\binom{111}{1}_n=0$ and the coefficient of $\tilde L_{(122)}$ in $\tilde L_{(12)} \tilde L_{(1)}$ is $3$ since $\binom{1010}{10}_n=3$.

Choose word $w_\alpha$ on $[n]$ and multishuffle it with $n+1$.
We note that adding two 0's to $\alpha$ that are not separated by a 1 in $\alpha$ in the multishuffle means increasing the length of an increasing segment of $w_\alpha$ by two, which is impossible as adding $n+1$ to the end of an increasing segment is the only way to increase its length, and the letters of $w_\alpha$ must stay in the same relative order in the multishuffle. Similarly for adding two 1's to $w_\alpha$ that are not separated by a 0 in $\alpha$. This agrees with the M\"obius function since there are no normal embeddings in this case, e.g. $\binom{1010001}{101}_n=0$.

Suppose we want to insert a segment of alternating 1's and 0's in $\alpha$ in a place with a 0 (if anything) on the left and a 1 (if anything) on the right. Let $w_\alpha$ be $w_1w_2\cdots w_{k-1}w_k w_{k+1}\cdots w_n$, where the $w_k$ and $w_{k+1}$ correspond to the 1 and 0 of $\alpha$ mentioned above. The only way to add an alternating sequence beginning with 0 is to insert the string $(n+1)w_k (n+1) w_k (n+1)\ldots$ between $w_k$ and $w_{k+1}$. The resulting word, $$w_1\cdots w_{k-1}w_k(n+1)w_k(n+1)w_k\cdots w_{k+1}
\cdots w_n$$ is clearly a multishuffle of $w_\alpha$ and $n+1$. Similarly, to add an alternating sequence beginning with 1, ``delete" the $w_k$ from $w$ and replace it with $(n+1)w_k(n+1)w_k(n+1)\cdots$. The resulting word, $$w_1\cdots w_{k-1}(n+1)w_k(n+1)w_k\cdots w_{k+1}\cdots w_n$$ is a multishuffle of $w_\alpha$ and $n+1$.

If instead we insert the alternating string in a place of $\alpha$ with a 1 on the left and a 0 on the right, we only need to consider strings beginning with 0 as the 1 at the beginning of the string can be added as in the previous case. To insert an alternating string beginning with 0, we must insert the segment $(n+1)w_k(n+1)w_k\cdots$ between $w_k$ and $w_{k+1}$ of $w_\alpha$. 

If we insert the alternating string in place of $\alpha$ with 0's to the left and right, we only need to consider strings that begin with 1. To do this, delete the $w_k$ from $w$ and insert the segment $(n+1)w_k(n+1)w_k\cdots$ between $w_{k-1}$ and $w_{k+1}$

If we insert the alternating string in place of $\alpha$ with 1's to the left and right, we only need to consider strings that begin with 0. To do this, insert the segment $(n+1)w_k(n+1)w_k\cdots$ between $w_k$ and $w_{k+1}$.
\end{proof}
\end{example}

\subsection{Poirer-Reutenauer deformations} \label{sec:PR}

\begin{example}\label{ex:SYTtree}
The \textit{SYT-tree} has as vertices standard Young tableau, and standard Young tableau $T_1$ covers standard Young tableau $T_2$ if $T_2$ is obtained from 
$T_1$ by deleting the box with largest entry. It is shown below on the left. It is dual to the \textit{Schensted graph}, which is shown on the right. 

The \textit{Schensted graph} is constructed using RSK insertion. A standard Young tableau $T_1$ covers $T_2$ if $T_1$ appears in the 
product of $T_2$ with the single-box standard tableau, where multiplication of standard Young tableaux is as follows. Suppose $T_1$ and $T_2$ are standard Young tableaux
with row words $\r(T_1)$ on $[n]$ and $\r(T_2)$ on $[m]$, respectively. Then let $T_1\cdot T_2$ be the sum of tableaux obtained by 
first lifting $\r(T_1)$ to $[n+m]$ in all ways that preserve relative order and then RSK inserting the corresponding lifting of $\r(T_2)$.
For example, in multiplying \begin{ytableau} 1 & 2 \end{ytableau} by itself, we would get the sum of $(P(12)\leftarrow 3) \leftarrow 4$, 
$(P(13)\leftarrow 2 )\leftarrow 4$, $P((14)\leftarrow 2)\leftarrow 3$, $(P(23)\leftarrow 1)\leftarrow 4$, $(P(24)\leftarrow 1)\leftarrow 3$, 
and $(P(34)\leftarrow 1)\leftarrow 2$.

 \begin{minipage}{.5\textwidth}
\begin{center}
\begin{tikzpicture}[scale=1]
\node (empty) at (0,0) {$\varnothing$};
\node (1) at (0,2) {\small{\begin{ytableau}1\end{ytableau}}};
\node (21) at (2,4) {\small{\begin{ytableau} 1 \\ 2\end{ytableau}}};
\node (12) at (-2,4) {\small{\begin{ytableau}1 & 2\end{ytableau}}};
\node (123) at (-3,6) {\small{\begin{ytableau}1 & 2 & 3\end{ytableau}}};
\node (312) at (-1,6) {\small{\begin{ytableau}1 & 2 \\ 3\end{ytableau}}};
\node (213) at (1,6) {\small{\begin{ytableau}1 & 3 \\2\end{ytableau}}};
\node (321) at (3,6) {\small{\begin{ytableau}1 \\ 2 \\3\end{ytableau}}};
\draw (empty)--(1)--(12)--(123) (12)--(312) (1)--(21)--(321) (21)--(213);
\end{tikzpicture}
\end{center}
\end{minipage}
 \begin{minipage}{.5\textwidth}
\begin{center}
\begin{tikzpicture}[scale=1]
\node (empty) at (0,0) {$\varnothing$};
\node (1) at (0,2) {\small{\begin{ytableau}1\end{ytableau}}};
\node (21) at (2,4) {\small{\begin{ytableau} 1 \\ 2\end{ytableau}}};
\node (12) at (-2,4) {\small{\begin{ytableau}1 & 2\end{ytableau}}};
\node (123) at (-3,6) {\small{\begin{ytableau}1 & 2 & 3\end{ytableau}}};
\node (312) at (-1,6) {\small{\begin{ytableau}1 & 2 \\ 3\end{ytableau}}};
\node (213) at (1,6) {\small{\begin{ytableau}1 & 3 \\2\end{ytableau}}};
\node (321) at (3,6) {\small{\begin{ytableau}1 \\ 2 \\3\end{ytableau}}};
\draw (empty)--(1)--(12)--(123) (12)--(312) (1)--(21)--(321) (21)--(213) (312)--(21) (213)--(12);
\end{tikzpicture}
\end{center}
\end{minipage}

Let $f=T_1+T_2+T_3+\ldots$, where $T_i$ is the one-row tableau 
with row reading word $123\ldots i$, and create a Pieri deformation by adding edges to the \textit{Schensted graph} so that $a_2(T_i,T_j)$ 
is the coefficient of $T_j$ in $T_i\cdot f$. 

 \begin{minipage}{.5\textwidth}
\begin{center}
\begin{tikzpicture}[scale=1]
\node (empty) at (0,0) {$\varnothing$};
\node (1) at (0,2) {\small{\begin{ytableau}1\end{ytableau}}};
\node (21) at (2,4) {\small{\begin{ytableau} 1 \\ 2\end{ytableau}}};
\node (12) at (-2,4) {\small{\begin{ytableau}1 & 2\end{ytableau}}};
\node (123) at (-3,6) {\small{\begin{ytableau}1 & 2 & 3\end{ytableau}}};
\node (312) at (-1,6) {\small{\begin{ytableau}1 & 2 \\ 3\end{ytableau}}};
\node (213) at (1,6) {\small{\begin{ytableau}1 & 3 \\2\end{ytableau}}};
\node (321) at (3,6) {\small{\begin{ytableau}1 \\ 2 \\3\end{ytableau}}};
\draw (empty)--(1)--(12)--(123) (12)--(312) (1)--(21)--(321) (21)--(213);
\end{tikzpicture}
\end{center}
\end{minipage}
 \begin{minipage}{.5\textwidth}
\begin{center}
\begin{tikzpicture}[scale=1]
\node (empty) at (0,0) {$\varnothing$};
\node (1) at (0,2) {\small{\begin{ytableau}1\end{ytableau}}};
\node (21) at (2,4) {\small{\begin{ytableau} 1 \\ 2\end{ytableau}}};
\node (12) at (-2,4) {\small{\begin{ytableau}1 & 2\end{ytableau}}};
\node (123) at (-3,6) {\small{\begin{ytableau}1 & 2 & 3\end{ytableau}}};
\node (312) at (-1,6) {\small{\begin{ytableau}1 & 2 \\ 3\end{ytableau}}};
\node (213) at (1,6) {\small{\begin{ytableau}1 & 3 \\2\end{ytableau}}};
\node (321) at (3,6) {\small{\begin{ytableau}1 \\ 2 \\3\end{ytableau}}};
\draw (empty)--(1)--(12)--(123) (12)--(312) (1)--(21)--(321) (21)--(213) (312)--(21) (213)--(12) (12)--(empty) (123) to [bend right =25] (empty)
(123) to [bend left=10] (1) (312)--(1) (213)--(1);
\end{tikzpicture}
\end{center}
\end{minipage}

To see that this gives a dual filtered graph, take $A$ to be the algebra of standard Young tableaux with multiplication as defined above. Note that 
$A$ is dual to the Poirier-Reutenauer Hopf algebra \cite{PR}. Take $D$ to be the operator that deletes the box with the largest entry, and notice that $T_1$ covers $T_2$ in \textit{SYT-tree} exactly when $D(T_1)=T_2$. 
The operator $D$ is a derivation by Lemma \ref{lem:derivation} using the fact that $\Delta(T)=\sum (u\otimes v)$, where we sum over words $u$ and $v$ 
such that $\r(T)$ is a shuffle of $u$ and $v$. Lastly, it is easy to see that $D(f)=\varnothing + f$. Thus by Theorem \ref{thm:pieri}, we have 
a dual filtered graph.
 
\end{example}

\begin{example}\label{ex:KPR}
Take $A$ to be the $K$-theoretic Poierier-Reuntenauer bialgebra (KPR) defined in \cite{PP} with a basis of $K$-Knuth classes of initial words (i.e. words containing only $1,2,\ldots,n$ for some $n$) on $\mathbb{N}$. The $K$-Knuth relations are as follows:

\begin{center}
$pp\equiv p$ \hspace{.9in} for all $p$\\
$pqp\equiv qpq$ \hspace{.8in} for all $q,p$\\
$pqs\equiv qps$ and $spq\equiv sqp$ \hspace{.1in} for $p<s<q$
\end{center}
See \cite{BuchSam} for details on this relation.

Multiplication of two classes $[w_1]$ and $[w_2]$ is done by shuffling each pair of elements, one from $[w_1]$ and one from $[w_2[n]]$, where $w_1$ is a word on $n$ and $w_2[n]$ is obtained by adding $n$ to each letter in $w_2$, and writing the result
as a sum of classes. For example, $$[[12]]\cdot[[312]]=[[53124]]+[[51234]]+[[35124]]+[[351234]]+[[53412]]+[[5351234]].$$ The coproduct is similarly defined by taking the sum of the coproduct, $\Delta(w_1\cdots w_k)=\varnothing\otimes w_1\cdots w_k+std(w_1)\otimes std(w_2\cdots w_k)+\ldots
+std(w_1\cdots w_{k-1}w_k)+ std(w_1\cdots w_k)\otimes \varnothing$, of each element in the class and writing the result as a sum of classes. Here, $std(w)$ sends $w$ to the unique word with the same relative order using all letters $\{1,2,\ldots,n\}$ for some $n$. For example, $std(13375)=12243$. Using this, we compute, $$\Delta([[12]])=[[\varnothing]]\otimes [[12]]+[[1]]\otimes[[1]]+[[1]]\otimes[[12]]+[[12]]\otimes[[1]]+[[12]]\otimes[[\varnothing]].$$ Letting $g=[1]$ and 
$f=[1]$, we create a Pieri construction using Theorem \ref{thm:pieri}. The partial dual filtered graph is shown below. The first five ranks are shown completely, and 
there are a few additional elements shown in the graph on the right to illustrate all elements that cover $[212]$. To see that there is an upward edge from 
$[3412]$ to $[3124]$, we first notice that $34124 \in [3124]$ and that $3412\otimes 1$ appears in the coproduct of 34124.

In the figure below, $K$-Knuth equivalence classes of words are represented by an increasing tableau. Note that there may be more than one increasing tableau in any given class.

\begin{center}
\begin{tikzpicture}[scale=.9]
 \node (empty) at (0,1) {$\varnothing$};
 \node (1) at (0,2) {$\tiny\young(1)$};
 \node (21) at (-2,4) {$\tiny\young(1,2)$};
 \node (12) at (2,4) {$\tiny\young(12)$};
 \node (321) at (-4,6) {$\tiny\young(1,2,3)$};
 \node (213) at (-2,6) {$\tiny\young(13,2)$};
 \node (212) at (0,6) {$\tiny\young(12,2)$};
 \node (312) at (2,6) {$\tiny\young(12,3)$};
 \node (123) at (4,6) {$\tiny\young(123)$};
\node (4321) at (-9,9) {$\tiny\young(1,2,3,4)$};
\node (3214) at (-8,9) {$\tiny\young(14,2,3)$};
 \node (4213) at (-7,9){$\tiny\young(13,2,4)$};
\node (4312) at (-6,9) {$\tiny\young(12,3,4)$};
\node (3213) at (-5,9) {$\tiny\young(13,2,3)$};
\node (3212) at (-4,9) {$\tiny\young(12,2,3)$};
 \node (3412) at (-3,9){$\tiny\young(12,34)$};
 \node (2413) at (-2,9) {$\tiny\young(13,24)$};
 \node (2312) at (-.5,9) {$\tiny\young(12,23)$};
 \node (3123) at (1,9) {$\tiny\young(123,3)$};
 \node (2134) at (2.5,9) {$\tiny\young(134,2)$};
 \node (3124) at (4,9) {$\tiny\young(124,3)$};
 \node (4123) at (5.5,9) {$\tiny\young(123,4)$};
 \node (2123) at (7,9) {$\tiny\young(123,2)$};
 \node (1234) at (8.5,9) {$\tiny\young(1234)$};
 \draw (empty)--(1)--(21)--(321)--(4321) (21)--(213) (21)--(212) (21) to [bend right =10] (312) (12)--(212) (12)--(312) (12)--(123) (12) to [bend left =10] (213) (1)--(12)--(123)--(1234)
 (321)--(3214) (321)--(4213) (321)--(4312) (321)--(3213) (321)--(3212) (213)--(3214) (213)--(3412) (213)--(2413) (213)--(2312) (213)--(2134) (312)--(4213) (312)--(4312) (312)--(3212) (312)--(3123)
 (312)--(3124) (312)--(4123) (123)--(3123) (123)--(2134) (123)--(3124) (123)--(4123) (123)--(2123) (123)--(1234) (212)--(3213) (212)--(2123)
 (1) to [out=60,in=100,distance=.5cm] (1)
(12) to [out=-60,in=-100,distance=.5cm] (12)
(21) to [out=-60,in=-100,distance=.5cm] (21)
(123) to [out=-60,in=-100,distance=.5cm] (123)
(321) to [out=-60,in=-100,distance=.5cm] (321)
 (213) to [out=60,in=100,distance=.5cm] (213)
  (213) to [out=60,in=100,distance=.6cm] (213)
  (212) to [out=-60,in=-100,distance=.5cm] (212)
  (212) to [out=-60,in=-100,distance=.6cm] (212)
  (312) to [out=60,in=100,distance=.5cm] (312)
   (3214) to [out=60,in=100,distance=.5cm] (3214)
  (3214) to [out=60,in=100,distance=.6cm] (3214)
   (4213) to [out=60,in=100,distance=.5cm] (4213)
  (4213) to [out=60,in=100,distance=.6cm] (4213)
  (3213) to [out=60,in=100,distance=.5cm] (3213)
  (3213) to [out=60,in=100,distance=.6cm] (3213)
  (3212) to [out=60,in=100,distance=.5cm] (3212)
  (3212) to [out=60,in=100,distance=.6cm] (3212)
  (3123) to [out=60,in=100,distance=.5cm] (3123)
  (3123) to [out=60,in=100,distance=.6cm] (3123)
  (2134) to [out=60,in=100,distance=.5cm] (2134)
  (2134) to [out=60,in=100,distance=.6cm] (2134)
  (3124) to [out=60,in=100,distance=.5cm] (3124)
  (3124) to [out=60,in=100,distance=.6cm] (3124)
  (2123) to [out=60,in=100,distance=.5cm] (2123)
  (2123) to [out=60,in=100,distance=.6cm] (2123)
  (4321) to [out=60,in=100,distance=.5cm] (4321)
  (4312) to [out=60,in=100,distance=.5cm] (4312)
   (3412) to [out=60,in=100,distance=.5cm] (3412)
  (2413) to [out=60,in=100,distance=.5cm] (2413)
   (2312) to [out=60,in=100,distance=.5cm] (2312)
  (4123) to [out=60,in=100,distance=.5cm] (4123)
   (1234) to [out=60,in=100,distance=.5cm] (1234);
 \path[->](212) edge node[right]{$ $} (312);
\path[->] (3213) edge [bend right=70] (4213);
\path[->] (3412) edge [bend left =40] (3124);
\end{tikzpicture}
\end{center}
\begin{minipage}{.5\textwidth}
\begin{center}
\begin{tikzpicture}[scale=.9]
 \node (empty) at (0,1) {$\varnothing$};
 \node (1) at (0,2) {$\tiny\young(1)$};
 \node (21) at (-2,3) {$\tiny\young(1,2)$};
 \node (12) at (2,3) {$\tiny\young(12)$};
 \node (321) at (-4,5) {$\tiny\young(1,2,3)$};
 \node (213) at (-2,5) {$\tiny\young(13,2)$};
 \node (212) at (0,5) {$\tiny\young(12,2)$};
 \node (312) at (2,5) {$\tiny\young(12,3)$};
 \node (123) at (4,5) {$\tiny\young(123)$};
\node (4321) at (-9,7) {$\tiny\young(1,2,3,4)$};
\node (3214) at (-8,7) {$\tiny\young(14,2,3)$};
 \node (4213) at (-7,7){$\tiny\young(13,2,4)$};
\node (4312) at (-6,7) {$\tiny\young(12,3,4)$};
\node (3213) at (-5,7) {$\tiny\young(13,2,3)$};
\node (3212) at (-4,7) {$\tiny\young(12,2,3)$};
 \node (3412) at (-3,7){$\tiny\young(12,34)$};
 \node (2413) at (-2,7) {$\tiny\young(13,24)$};
 \node (2312) at (-.5,7) {$\tiny\young(12,23)$};
 \node (3123) at (1,7) {$\tiny\young(123,3)$};
 \node (2134) at (2.5,7) {$\tiny\young(134,2)$};
 \node (3124) at (4,7) {$\tiny\young(124,3)$};
 \node (4123) at (5.5,7) {$\tiny\young(123,4)$};
 \node (2123) at (7,7) {$\tiny\young(123,2)$};
 \node (1234) at (8.5,7) {$\tiny\young(1234)$};
 \node (32312) at (-2,8) {$\tiny\young(12,23,3)$};
 \node (23123) at (-.5,8) {$\tiny\young(123,23)$};
 \node (32123) at (2,8) {$\tiny\young(123,2,3)$};
 \node (323123) at (0,9) {$\tiny\young(123,23,3)$};
 \draw (empty)--(1)--(21)--(321)--(4321) (321)--(3214) (21) to [bend left=40] (3213) (21)--(213) (1)--(212)--(3212) (212)--(2123) (212)--(2312) (12)--(312) 
 (12) to [bend right=40] (3123) 
 (12)--(123) (1)--(12)
 (213)--(2134) (213)--(2413) (213)--(4213) (312)--(3124) (312)--(3412) (312)--(4312) (123)--(1234) (123)--(4123)
 (212)--(32312) (212) to [bend left=40] (23123) (212) to [bend right =10] (32123) (212) to [bend right =15] (323123);
\end{tikzpicture}
\end{center}
\end{minipage}
\end{example}

\subsection{Malvenuto-Reutenauer deformations} \label{sec:MR}

\begin{example}\label{ex:PermTrees}
 Below we have dual graphs where vertices are permutations. On the left, a permutation $\sigma$ covers $\pi$ if $\pi$ is obtained from $\sigma$ by deleting
 the the rightmost number (in terms of position in the permutation). On the right, a permutation $\sigma$ covers $\pi$ if $\pi$ is obtained from $\sigma$ 
 by deleting the largest number in the permutation. There are other similar constructions, which can be found in \cite[Example 2.6.8]{F}.
 
\begin{minipage}{.5\textwidth}
\begin{center}
\begin{tikzpicture}[scale=1]
\node (empty) at (0,0) {$\varnothing$};
\node (1) at (0,1) {1};
\node (21) at (-2,3) {21};
\node (12) at (2,3) {12};
\node (321) at (-3.5,5) {321};
\node (213) at (-2,5) {213};
\node (312) at (-.5,5) {312};
\node (123) at (3.5,5) {123};
\node (132) at (2,5) {132};
\node (231) at (.5,5) {231};
\draw (empty)--(1)--(21)--(321) (312)--(21)--(213) (1)--(12)--(123) (231)--(12)--(132);
\end{tikzpicture}
\end{center}
\end{minipage}
\begin{minipage}{.5\textwidth}
\begin{center}
\begin{tikzpicture}[scale=1]
\node (empty) at (0,0) {$\varnothing$};
\node (1) at (0,1) {1};
\node (21) at (-2,3) {21};
\node (12) at (2,3) {12};
\node (321) at (-3.5,5) {321};
\node (213) at (-2,5) {213};
\node (312) at (-.5,5) {312};
\node (123) at (3.5,5) {123};
\node (132) at (2,5) {132};
\node (231) at (.5,5) {231};
\draw (empty)--(1)--(21)--(321) (231)--(21)--(213) (1)--(12)--(123) (312)--(12)--(132);
\end{tikzpicture}
\end{center}
\end{minipage}

Consider the Hopf algebra of permutations with the shuffle product and coproduct defined by $\Delta(w)=\sum std(u)\otimes std(v)$, where the concatenation of $u$ and $v$ is $w$ and $std(w)$ sends $w$ to the unique permutation with the same relative order. For example, $std(1375)=1243$.
Take $D$ to be the operator that deletes the rightmost letter of the permutation.
Then we see that $a_1(v,w)$ is the coefficient of $v$ in $D(w)$. We create a Pieri deformation by letting $f=1+12+123+\ldots$ and 
$a_2(v,w)$ be the coefficient of $w$ in $f\cdot v$. Clearly $D(f)=\varnothing+f$, and we use Lemma \ref{lem:derivation} to see tha $D$ is a derivation.
It then follows from Theorem \ref{thm:pieri} that the resulting graph is a dual filtered graph.

\begin{minipage}{.5\textwidth}
\begin{center}
\begin{tikzpicture}[scale=1]
\node (empty) at (0,0) {$\varnothing$};
\node (1) at (0,1) {1};
\node (21) at (-2,3) {21};
\node (12) at (2,3) {12};
\node (321) at (-3.5,5) {321};
\node (213) at (-2,5) {213};
\node (312) at (-.5,5) {312};
\node (123) at (3.5,5) {123};
\node (132) at (2,5) {132};
\node (231) at (.5,5) {231};
\draw (empty)--(1)--(21)--(321) (312)--(21)--(213) (1)--(12)--(123) (231)--(12)--(132);
\end{tikzpicture}
\end{center}
\end{minipage}
\begin{minipage}{.5\textwidth}
\begin{center}
\begin{tikzpicture}[scale=1]
\node (empty) at (0,0) {$\varnothing$};
\node (1) at (0,1) {1};
\node (21) at (-2,3) {21};
\node (12) at (2,3) {12};
\node (321) at (-3.5,5) {321};
\node (213) at (-2,5) {213};
\node (312) at (-.5,5) {312};
\node (123) at (3.5,5) {123};
\node (132) at (2,5) {132};
\node (231) at (.5,5) {231};
\draw (empty)--(1)--(21)--(321) (231)--(21)--(213) (1)--(12)--(123) (312)--(12)--(132) (12)--(empty) (213)--(1) (231)--(1) (123) to [bend right=5] (1)
(123) to [bend left=5] (empty);
\end{tikzpicture}
\end{center}
\end{minipage}
\end{example}

\begin{example}\label{ex:mMR}
We next describe a $K$-theoretic analogue of the Malvenuto-Reutenauer Hopf algebra. For details, see \cite{LP}.
A small multi-permutation or $\mathfrak{m}$-permutation of $[n]$ is a word $w$ in the alphabet $1,2,\ldots,n$ such that no two consecutive letters in $w$ are equal.
Now let the small multi-Malvenuto-Reutenauer Hopf algebra, $\mathfrak{m}$MR be the free $\mathbb{Z}$-module of arbitrary $\mathbb{Z}$-linear combinations of multi-permutations. Recall the definition of the multishuffle product from Example \ref{ex:L}. 
Given two $\mathfrak{m}$-permutations $w=w_1\cdots w_k$ and $u=u_1\cdots u_l$, define their product to be the multishuffle product of $w$ with $u[n]$, where $w$ contains exactly the numbers $1,2,\ldots,n$ and 
$u[n]=(u_1+n)(u_2+n)\ldots(u_l+n)$. 

To define the coproduct, we must define the \textit{cuut coproduct}, $\blacktriangle(w)$, for any word $w=w_1\cdots w_k$. We have $\blacktriangle(w)=\varnothing\otimes w_1\cdots w_k+
w_1\otimes w_1\cdots w_k+w_1\otimes w_2\cdots w_k+w_1w_2\otimes w_2\cdots w_k+\ldots+w_1\cdots w_k\otimes w_k+w_1\cdots w_k\otimes \varnothing$. Let $st(w)$ send a word $w$ to the unique
$\mathfrak{m}$-permutation $u$ of the same length such that $w_i\leq w_j$ if and only if $u_i\leq u_j$ for each $1\leq i,j \leq l(w)$. Finally, define the coproduct in $\mathfrak{m}$MR by
$\Delta(w)=st(\blacktriangle(w))$.

Using $g=1$, $D=\xi\circ\Delta$, and $f=1$, we form the dual filtered graph partially shown below. Notice that $D(w_1\ldots w_k)=w_1\cdots w_k + w_1\cdots w_{k-1}$, so $D(f)=\varnothing+f$.

\begin{minipage}{.5\textwidth}
\begin{center}
\begin{tikzpicture}[scale=1]
\node (empty) at (0,0) {$\varnothing$};
\node (1) at (0,1) {1};
\node (12) at (-2,2) {12};
\node (21) at (2,2) {21};
\node (121) at (-3.5,4) {121};
\node (123) at (-2.5,4) {123};
\node (132) at (-1.5,4) {132};
\node (312) at (-.5,4) {312};
\node (213) at (.5,4) {213};
\node (231) at (1.5,4) {231};
\node (321) at (2.5,4) {321};
\node (212) at (3.5,4) {212};
\draw (empty)--(1)--(12)--(123) (12)--(132) (12)--(231) (12)--(121) (21)--(212) (21)--(312) (21)--(213) (1)--(21)--(321) (12)--(121) (21)--(212)
(1) to [out=60,in=100,distance=.5cm] (1)
(21) to [out=-60,in=-100,distance=.5cm] (21)
(12) to [out=-60,in=-100,distance=.5cm] (12)
(121) to [out=60,in=100,distance=.5cm] (121)
(212) to [out=60,in=100,distance=.5cm] (212)
(321) to [out=60,in=100,distance=.5cm] (321)
(132) to [out=60,in=100,distance=.5cm] (132)
(123) to [out=60,in=100,distance=.5cm] (123)
(312) to [out=60,in=100,distance=.5cm] (312)
(213) to [out=60,in=100,distance=.5cm] (213)
(231) to [out=60,in=100,distance=.5cm] (231);
\end{tikzpicture}
\end{center}
\end{minipage}
\begin{minipage}{.5\textwidth}
\begin{center}
\begin{tikzpicture}[scale=1]
\node (empty) at (0,0) {$\varnothing$};
\node (1) at (0,1) {1};
\node (12) at (-2,2) {12};
\node (21) at (2,2) {21};
\node (121) at (-3.5,4) {121};
\node (123) at (-2.5,4) {123};
\node (132) at (-1.5,4) {132};
\node (312) at (-.5,4) {312};
\node (213) at (.5,4) {213};
\node (231) at (1.5,4) {231};
\node (321) at (2.5,4) {321};
\node (212) at (3.5,4) {212};
\draw (empty)--(1)--(12)--(123) (12)--(132) (12)--(312) (1)--(21)--(321) (21)--(231) (21)--(213)
(121) to [bend right =40] (1)
(212) to [bend left=40] (1);
\end{tikzpicture}
\end{center}
\end{minipage}
\end{example}

\subsection{Stand-alone examples}

\begin{example}\label{ex:FibonacciGraph}
 There is another graph with the same set of vertices as the Young-Fibonacci lattice called the Fibonacci graph. The vertices of the Fibonacci graph are words on the alphabet
 $\{1,2\}$, or snakeshapes, and a word $w$ is covered by $w'$ if $w'$ is obtained from $w$ by adding a 1 at the end or by changing any 1 into a 2. In the graph that is dual to the Fibonacci graph, $w$ is covered 
 by $w'$ if $w$ is obtained from $w'$ by deleting a 1, and the multiplicity of the edge between $w$ and $w'$ is the number of ways to delete a 1 from $w'$ to get $w$. 
 The pair form a dual graded graph shown below, see \cite[Example 2.3.7]{F}, where the numbers next to the edges denote multiplicity.
 
 \begin{minipage}{.5\textwidth}
\begin{center}
\begin{tikzpicture}[scale=1]
\node (empty) at (0,-1) {$\varnothing$};
\node (1) at (0,1) {1};
\node (11) at (0,3) {11};
\node (2) at (1.5,3) {2};
\node (111) at (0,5) {111};
\node (21) at (1.5,5) {21};
\node (12) at (3,5) {12};
\node (1111) at (0,7) {1111};
\node (211) at (1.5,7) {211};
\node (121) at (3,7) {121};
\node (112) at (4.5,7) {112};
\node (22) at (6,7) {22};
\draw (empty)--(1)--(11)--(111)--(1111)
(1)--(2) (11)--(12)--(22) (11)--(21)--(22) (111)--(211) (111)--(121) (111)--(112)
(2)--(21)--(211) (12)--(121);
\end{tikzpicture}
\end{center}
\end{minipage}
\begin{minipage}{.5\textwidth}
\begin{center}
\begin{tikzpicture}[scale=1]
\node (empty) at (0,-1) {$\varnothing$};
\node (1) at (0,1) {1};
\node (11) at (0,3) {11};
\node (2) at (1.5,3) {2};
\node (111) at (0,5) {111};
\node (21) at (1.5,5) {21};
\node (12) at (3,5) {12};
\node (1111) at (0,7) {1111};
\node (211) at (1.5,7) {211};
\node (121) at (3,7) {121};
\node (112) at (4.5,7) {112};
\node (22) at (6,7) {22};
\node (label2) at (-.2,2) {\tiny{2}};
\node (label3) at (-.2,4) {\tiny{3}};
\node at (-.2,6) {\tiny{4}};
\node at (1.3,6) {\tiny{2}};
\node at (3.95,6) {\tiny{2}};
\draw (empty)--(1)--(11)--(111)--(1111) (2)--(21)--(211) (2)--(12)--(112) (21)--(121)--(12);
\end{tikzpicture}
\end{center}
\end{minipage}
 
 We construct a Pieri deformation of this dual graded graph by adding downward-oriented edges so that in the resulting set of downward-oriented edges,
 there is an edge from $w'$ to $w$ for every way $w$ can be obtained from $w'$ by deleting at least one 1. For example, there are six edges from 1111 to 11 since there are six ways 
 to delete two 1's from 1111 to obtain 11.
 
  \begin{minipage}{.5\textwidth}
\begin{center}
\begin{tikzpicture}[scale=1]
\node (empty) at (0,-1) {$\varnothing$};
\node (1) at (0,1) {1};
\node (11) at (0,3) {11};
\node (2) at (1.5,3) {2};
\node (111) at (0,5) {111};
\node (21) at (1.5,5) {21};
\node (12) at (3,5) {12};
\node (1111) at (0,7) {1111};
\node (211) at (1.5,7) {211};
\node (121) at (3,7) {121};
\node (112) at (4.5,7) {112};
\node (22) at (6,7) {22};
\draw (empty)--(1)--(11)--(111)--(1111)
(1)--(2) (11)--(12)--(22) (11)--(21)--(22) (111)--(211) (111)--(121) (111)--(112)
(2)--(21)--(211) (12)--(121);
\end{tikzpicture}
\end{center}
\end{minipage}
\begin{minipage}{.5\textwidth}
\begin{center}
\begin{tikzpicture}[scale=1]
\node (empty) at (0,-1) {$\varnothing$};
\node (1) at (0,1) {1};
\node (11) at (0,3) {11};
\node (2) at (1.5,3) {2};
\node (111) at (0,5) {111};
\node (21) at (1.5,5) {21};
\node (12) at (3,5) {12};
\node (1111) at (0,7) {1111};
\node (211) at (1.5,7) {211};
\node (121) at (3,7) {121};
\node (112) at (4.5,7) {112};
\node (22) at (6,7) {22};
\node (label2) at (-.2,2) {\tiny{2}};
\node (label3) at (-.2,4) {\tiny{3}};
\node at (-.2,6) {\tiny{4}};
\node at (1.3,6) {\tiny{2}};
\node at (3.5,6) {\tiny{2}};
\node at (-.75,5) {\tiny{6}};
\node at (-.75,3) {\tiny{3}};
\node at (.8,4.5) {\tiny{4}};
\draw (empty)--(1)--(11)--(111)--(1111) (2)--(21)--(211) (2)--(12)--(112) (21)--(121)--(12) (121)--(2)
(11) to [bend right=30] (empty)
(111) to [bend left=40] (empty)
(111) to [bend right=30] (1)
(1111) to [bend right=40] (empty)
(1111) to [bend left=30] (1)
(1111) to [bend right=40] (11)
(211) to [bend left=20] (2)
(112) to [bend left=20] (2);
\end{tikzpicture}
\end{center}
\end{minipage}
 
\begin{theorem}
 The resulting graph is a dual filtered graph. 
\end{theorem}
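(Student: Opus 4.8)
The plan is to reduce the statement to verifying the relation $DU-UD=D+I$ (which by the definition of a dual filtered graph is more than enough), and to prove this relation by recognizing the deformed down operator as the exponential of the original one. Write $U$ for the up operator of the Fibonacci graph and $D_0$ for the down operator of its dual; since the pair forms a dual graded graph (in this paper's sense), one has $D_0U-UD_0=I$. For $k\ge 1$ let $D^{(k)}$ denote the operator sending a snakeshape $w$ to the sum, over all $k$-subsets of the set of positions of $1$'s in $w$, of the word obtained by deleting those $1$'s. By construction the deformed down operator is $D=\sum_{k\ge 1}D^{(k)}$, a finite sum on each $w$ because $\rho(w)$ bounds the number of deletable $1$'s.

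The key step is the identity $D^{(k)}=\tfrac{1}{k!}D_0^{\,k}$, so that $D=\sum_{k\ge1}\tfrac{1}{k!}D_0^{\,k}=e^{D_0}-I$, where $e^{D_0}$ is well defined because $D_0^{\,k}(w)=0$ for $k>\rho(w)$. To prove it I would iterate $D_0$: applying $D_0$ a total of $k$ times to $w$ amounts to choosing an ordered $k$-tuple of distinct positions of $1$'s in $w$ and deleting those boxes, and the resulting word depends only on the underlying $k$-subset, so each unordered choice is counted exactly $k!$ times, giving $D_0^{\,k}=k!\,D^{(k)}$. (The small checks here — that deletions at distinct original positions commute, and that one obtains precisely the prescribed edge multiplicities, e.g.\ $\binom{4}{2}=6$ edges from $1111$ to $11$ — are the only thing requiring care, and this is the main, though routine, obstacle; everything after is formal.)

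Finally I would invoke Hadamard's lemma: from $[D_0,U]=I$ and $[D_0,I]=0$ we get $e^{D_0}Ue^{-D_0}=U+I$, i.e.\ $e^{D_0}U-Ue^{D_0}=e^{D_0}$. Since the deformation leaves $U$ unchanged, and $D=e^{D_0}-I$, this yields
\[
DU-UD=(e^{D_0}-I)U-U(e^{D_0}-I)=e^{D_0}U-Ue^{D_0}=e^{D_0}=D+I,
\]
which is exactly the defining relation of a dual filtered graph. If a purely combinatorial argument is preferred, the same computation can instead be phrased as an explicit bijective matching of up-down paths with down-up paths from $w'$ to $w$ (one unmatched up-down path of length $2$ remaining when $w=w'$), but the operator identity above is shorter and cleaner.
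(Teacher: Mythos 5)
Your proof is correct, but it takes a genuinely different route from the paper. The paper proves this via its Pieri construction: it works inside the shuffle algebra on words in $\{1,2\}$, quotes Fomin's lemma that the operator ``change a $2$ into a $1$ or delete a trailing $1$'' is a derivation realizing the up edges, takes $f=1+11+111+\cdots$, checks $D(f)=1+f$ and that multiplication by $f$ gives exactly the new down edges, and then invokes Theorem \ref{thm:pieri}. You instead stay entirely at the level of the up and down operators on $KP$: writing $D_0$ for the down operator of the original dual graded graph, you observe $D_0^{\,k}=k!\,D^{(k)}$ (deletions of $1$'s at distinct original positions commute and depend only on the chosen subset), so the deformed down operator is $D=e^{D_0}-I$, which is well defined by local nilpotence; then $[D_0,U]=I$ gives $e^{D_0}U-Ue^{D_0}=e^{D_0}$ and hence $DU-UD=D+I$. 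Both arguments are sound (your edge-multiplicity check, e.g.\ $\binom{4}{2}=6$ from $1111$ to $11$, matches the paper's description of the deformation, and the claimed $D_0U-UD_0=I$ is exactly the duality the paper cites from Fomin). What the two approaches buy is different: the paper's proof exhibits the underlying algebra and derivation, fitting the example into its general Pieri framework; yours is self-contained, avoids the shuffle-algebra lemma, and isolates a more general principle --- any dual graded graph for which $e^{D_0}-I$ has nonnegative integer matrix entries in the vertex basis yields a dual filtered graph --- which is precisely the graph-level analogue of the paper's own remark in Section \ref{sec:differential} that the difference operator $e^{\partial/\partial x}-1$ turns a Weyl-algebra representation into an Ore-algebra one. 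If you prefer to avoid invoking Hadamard's lemma wholesale, the identity $D_0^{\,k}U=UD_0^{\,k}+kD_0^{\,k-1}$, summed against $1/k!$, gives the same conclusion directly.
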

\begin{proof}
 Consider an algebra structure on words in alphabet $\{1,2\}$ with shuffle product, as in \cite{F}. Consider $D$ to be the operator that changes any $2$ 
into a $1$ or deletes the last digit if it is $1$. According to \cite[Lemma 2.3.9]{F}, $D$ is a derivation such that the coefficient of $v$ in $D(w)$ is the multiplicity 
$a_1(v,w)$ for the Fibonacci graph. Take $f = 1 + 11 + 111 + \ldots$. It is easy to see that $D(f) = id + f$ and that the coefficient of $w$ in $fv$ is the multiplicity 
$a_2(v,w)$. Thus, Theorem \ref{thm:pieri} applies, and the statement follows. 
\end{proof}
\end{example}

\begin{example}\label{ex:polys} The dual filtered graph on the polynomial ring shown below is that mentioned in Section \ref{sec:differential}. We take $U$ to be multiplication by $x$ and $D=e^{\frac{\partial}{\partial x}}-1$. It is easy to check that $DU-UD=D+I$, so the result is a dual filtered graph.

\begin{center}
\begin{minipage}{.05\textwidth}
\begin{center}
\begin{tikzpicture}
\node (1) at (0,0) {1};
\node (x) at (0,2) {$x$};
\node (x2) at (0,4) {$x^2$};
\node (x3) at (0,6) {$x^3$};
\node (x4) at (0,8) {$x^4$};
\node (x5) at (0,10) {$x^5$};
\draw (1)--(x)--(x2)--(x3)--(x4)--(x5);
\end{tikzpicture}
\end{center}
\end{minipage}\hspace{2in}
\begin{minipage}{.05\textwidth}
\begin{center}
\begin{tikzpicture}
\node (1) at (0,0) {1};
\node (x) at (0,2) {$x$};
\node (x2) at (0,4) {$x^2$};
\node (x3) at (0,6) {$x^3$};
\node (x4) at (0,8) {$x^4$};
\node (x5) at (0,10) {$x^5$};
\node at (-.1,9) {\tiny{5}};
\node at (-.1,7) {\tiny{4}};
\node at (-.1,5) {\tiny{3}};
\node at (-.1,3) {\tiny{2}};
\node at (.6,4) {\tiny{3}};
\node at (-.55,6) {\tiny{6}};
\node at (-1.1,5) {\tiny{4}};
\node at (.57,7) {\tiny{10}};
\node at (1.05,6) {\tiny{10}};
\node at (1.7,5) {\tiny{5}};
\draw (1)--(x)--(x2)--(x3)--(x4)--(x5)
(x2) to [bend right=20] (1) (x3) to [bend left=30] (1)
(x3) to [bend left=20] (x)
(x4) to [bend right =20] (x2) (x4) to [bend right =30] (x) (x4) to [bend right =40] (1)
(x5) to [bend left =20] (x3)
(x5) to [bend left =30] (x2)
(x5) to [bend left =40] (x)
(x5) to [bend left =50] (1);
\end{tikzpicture}
\end{center}
\end{minipage}
\end{center}
\end{example}

\section{Enumerative theorems via up-down calculus} \label{sec:enum}

Let $\varnothing$ be the minimal element of a dual filtered graph satisfying $DU-UD=1+D$. Let $T(n,k)$ be 
the number of ways $n$ labeled objects can be distributed into $k$ nonempty parcels. We have 
$$T(n,k) = k! \cdot S(n,k),$$ where $S(n,k)$ is the Stirling number of the second kind. 

\begin{theorem}
For any dual filtered graph, the coefficient of $\varnothing$ in $D^k U^n (\varnothing)$ is $T(n,k)$.  
\end{theorem}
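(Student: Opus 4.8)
The plan is to reduce the statement to a calculation carried out entirely inside the Ore algebra $W$ acting on $KP$, via a double recursion. Write $f(n,k) = [\varnothing]\,D^kU^n(\varnothing)$ for the coefficient in question. Two structural facts about the action of $W$ on $KP$ will be used throughout: first, $D\varnothing = 0$, since $\varnothing$ is the minimal vertex and no edge of $E_2$ leaves it downward; and second, $[\varnothing]\,U(z) = 0$ for every $z \in KP$, since an $E_1$-edge terminating at $\varnothing$ (in particular a loop at $\varnothing$) would contradict minimality. In other words $U(z)$ always has zero $\varnothing$-coefficient, a property preserved by applying $U$ again.

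The first step is a commutation identity in $W$: for every $k \geq 1$,
$$D^kU = UD^k + kD^k + kD^{k-1}.$$
The case $k=1$ is the defining relation $DU = UD + D + I$. For the inductive step I would multiply on the left by $D$, rewrite the $DU$ that appears using the defining relation, and collect terms: $D^{k+1}U = D\big(UD^k + kD^k + kD^{k-1}\big) = (UD+D+I)D^k + kD^{k+1} + kD^k = UD^{k+1} + (k+1)D^{k+1} + (k+1)D^k$. With this identity in hand, for $k \geq 1$ and $n \geq 1$ I would write $D^kU^n(\varnothing) = (D^kU)\big(U^{n-1}\varnothing\big) = \big(UD^k + kD^k + kD^{k-1}\big)\big(U^{n-1}\varnothing\big)$ and extract the coefficient of $\varnothing$. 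The $UD^k$-term vanishes by the second structural fact, and the remaining two terms give the recursion
$$f(n,k) = k\,f(n-1,k) + k\,f(n-1,k-1).$$

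It then remains to match initial conditions and recognize the recursion. From $D\varnothing = 0$ we get $f(0,k) = 0$ for $k \geq 1$ and $f(0,0) = [\varnothing]\varnothing = 1$; from $[\varnothing]\circ U = 0$ we get $f(n,0) = [\varnothing]U^n(\varnothing) = 0$ for $n \geq 1$. These are exactly the boundary values of $T(n,k)$, the number of surjections $[n] \twoheadrightarrow [k]$ (with $T(0,0)=1$). Since $S(n,k) = kS(n-1,k) + S(n-1,k-1)$, one has $T(n,k) = k!\,S(n,k) = k\big(T(n-1,k) + T(n-1,k-1)\big)$, the same recursion satisfied by $f$. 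An induction on $n$ then yields $f(n,k) = T(n,k)$, which is the claim.

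I do not expect a serious obstacle: the argument is short, and the only computation of substance is the commutation identity. The one point that wants a little care is the second structural fact — that the minimal vertex carries no loops and no incoming $E_1$-edges — which is what makes $[\varnothing]\circ U$ vanish and pins down the $n \geq 1$, $k=0$ boundary values; this holds for all the constructions (trivial, Pieri, M\"obius) considered in the paper. As an alternative route one could observe that $f(n,k)$ is the coefficient of $U^0D^0$ in the PBW normal ordering of $D^kU^n$, hence intrinsic to $W$, and evaluate it in the polynomial representation of Section \ref{sec:differential} (with $U$ multiplication by $x$ and $Df = f(x+1) - f(x)$), where it becomes the value of $D^k(x^n)$ at $x = 0$, namely $\sum_{j=0}^{k} (-1)^{k-j}\binom{k}{j} j^n = k!\,S(n,k)$ by the classical finite-difference formula for the Stirling numbers.
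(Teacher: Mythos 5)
Your proof is correct, but it takes a genuinely different route from the paper's. The paper argues in one shot and combinatorially: it rewrites each occurrence of $DU$ as $UD+D+1$ until every term is normally ordered, notes that only the terms in which all $D$'s and $U$'s have cancelled can contribute to the coefficient of $\varnothing$ (since $D\varnothing=0$ and $[\varnothing]U^t\varnothing=0$ for $t\geq 1$), and then counts those terms directly: each $D$ must eliminate a nonempty set of $U$'s and every $U$ must be eliminated by some $D$, so the surviving terms are exactly the distributions of the $n$ labeled $U$'s into $k$ nonempty parcels, i.e.\ $T(n,k)$ of them. You instead isolate the commutation identity $D^kU=UD^k+kD^k+kD^{k-1}$, deduce the recursion $f(n,k)=k\,f(n-1,k)+k\,f(n-1,k-1)$ for $f(n,k)=[\varnothing]D^kU^n(\varnothing)$, and match recursion and boundary values against $T(n,k)=k!\,S(n,k)$. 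What your route buys is a mechanical induction with no need to justify a global matching count; what the paper's buys is a direct explanation of why the answer is a surjection count. Your closing remark via the polynomial representation is a third valid viewpoint, provided one grants the uniqueness of the normal form $\sum c_{a,b}U^aD^b$ in the Ore algebra. One small caveat: your ``second structural fact'' $[\varnothing]\circ U=0$ does not literally follow from minimality, since a weak filtered graph may in principle carry loops at the minimal vertex; but the paper's own proof tacitly assumes the same thing (it is exactly what makes ``only $U^0$ contributes'' true), and it holds in every construction in the paper, so this is a shared standing hypothesis rather than a gap in your argument.
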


\begin{proof}
 We think of replacing the fragments $DU$ inside the word by either $UD$, $D$, or $1$. This way the initial word gets rewritten until there is no $DU$ in any of the terms:
$$D^k U^n = D^{k-1} U D U^{n-1} + D^k U^{n-1} + D^{k-1} U^{n-1} = \ldots.$$ Only the terms of the form $U^t$ that appear at the end can contribute to the coefficient 
we are looking for, since $D (\varnothing) = 0$. Among those, only the terms $U^0=1$ can contribute. Thus, we are looking for all the terms where $D$'s and $U$'s eliminated each other. 

It is easy to see that each $D$ eliminates at least one $U$, and each $U$ must be eliminated by some $D$. The number of ways to match $D$'s with $U$'s in this way is exactly 
$T(n,k)$.
\end{proof}

Let $f^{\lambda}$ be the number of increasing tabeleaux of shape $\lambda$. Let $g^{\lambda}_n$ be the number of set-valued tableaux of shape $\lambda$ and 
content $1, \ldots, n$. Let $F(n)$ denote the {\it {Fubini number}}, or {\it {ordered Bell number}} - the number of ordered set partitions of $[n]$.

\begin{corollary}
 We have 
$$\sum_{|\lambda| \leq n} f^{\lambda} g^{\lambda}_n = F(n).$$
\end{corollary}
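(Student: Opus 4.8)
The plan is to apply the preceding theorem with the dual filtered graph being the M\"obius deformation of Young's lattice, which by the results of Section~\ref{sec:MobiusYoung} satisfies $DU-UD=D+I$ and has $\varnothing$ as its minimal element. By the theorem, the coefficient of $\varnothing$ in $D^n U^n(\varnothing)$ equals $\sum_{k} T(n,k)$ summed appropriately; more precisely, I would first establish the correct combinatorial interpretation of $[\varnothing] D^n U^n (\varnothing)$ in this particular graph by counting the two natural things it enumerates.

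First I would show that $U^n(\varnothing) = \sum_{|\lambda|\le n} (\#\{\text{paths }\varnothing\to\lambda\text{ of length }n\text{ in }G_1'\}) \cdot \lambda$, and that the number of such upward paths is exactly $g^\lambda_n$: an upward path of length $n$ in the M\"obius deformation of Young's lattice (which has ordinary covering edges plus loops indexed by inner corners) is precisely the data recorded by a standard set-valued tableau on $[n]$ of shape $\lambda$, since using a loop at a vertex corresponds to adding another label to an existing box while an ordinary edge corresponds to creating a new box --- this is the content of the Hecke recording tableau $Q_H$. Dually, I would show $[\varnothing] D^n(\lambda)$ counts downward paths of length $n$ from $\lambda$ to $\varnothing$ in $G_2'$; since the down operator here satisfies $[\mu]D(\nu)=1$ iff $\nu/\mu$ is a rook strip, a downward path of length $n$ to $\varnothing$ that strips off $|\lambda|$ boxes in $n$ steps, stripping a (possibly empty, but the graph $G_2'$ only has nonempty rook-strip edges, so) nonempty rook strip at each step, is the same as a surjection-like decomposition --- in fact it corresponds to a standard set-valued tableau once more, but with the roles reversed, so that $[\varnothing]D^n(\lambda) = f^\lambda \cdot (\text{something})$; here I must be careful, and the cleaner route is: $D^n(\lambda)$ counts chains, and combining $U$ and $D$, the coefficient $[\varnothing]D^nU^n(\varnothing) = \sum_{|\lambda|\le n} f^\lambda g^\lambda_n$ because an increasing tableau $P$ of shape $\lambda$ records a maximal downward path $\lambda\to\varnothing$ using only genuine covering edges (no loops on the down side), and the $g^\lambda_n$ upward paths give the recording data.

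Then I would invoke the theorem together with the identity $F(n) = \sum_{k\ge 0} T(n,k) = \sum_{k\ge0} k!\, S(n,k)$, which is the standard expansion of the Fubini number as a sum over the number of blocks in an ordered set partition; since the theorem gives $[\varnothing]D^kU^n(\varnothing)=T(n,k)$ for every $k$, summing over $k$ yields $[\varnothing](\sum_k D^k) U^n(\varnothing) = F(n)$, but I actually want $D^nU^n$, not $\sum_k D^kU^n$. The right statement to use is that $[\varnothing]D^nU^n(\varnothing)$ itself equals $F(n)$: indeed in the rewriting $D^nU^n = \sum (\text{terms})$, a term contributes to $[\varnothing]$ exactly when every $U$ is eliminated and every $D$ is used up, and each elimination step (replacing $DU$ by $UD$, by $D$, or by $1$) corresponds to choosing how the blocks of an ordered set partition interleave; carefully, the count of ways $n$ $D$'s can annihilate $n$ $U$'s in this Ore-algebra normal-ordering is precisely $F(n)$. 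So the cleanest presentation is: apply the theorem's proof technique directly to $D^nU^n$, observe the matchings counted are ordered set partitions of $[n]$, giving $[\varnothing]D^nU^n(\varnothing)=F(n)$; separately compute $[\varnothing]D^nU^n(\varnothing)=\sum_{|\lambda|\le n} f^\lambda g^\lambda_n$ via the M\"obius deformation of Young's lattice and Hecke insertion; equate.

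\textbf{Main obstacle.} The delicate point is the bijective identification $[\varnothing]D^nU^n(\varnothing) = \sum_{|\lambda|\le n} f^\lambda g^\lambda_n$: I need to argue that downward paths of length $n$ from $\lambda$ to $\varnothing$ in $G_2'$ (which uses rook-strip edges, not loops) are counted by $f^\lambda$ --- but a priori such a path strips off a nonempty rook strip $n$ times to remove only $|\lambda|\le n$ boxes, which forces most strips to be single boxes and is \emph{not} simply $f^\lambda$ unless $|\lambda|=n$. The resolution is that one should instead use the recording/insertion correspondence directly: by the bijection in Theorem~\cite{PP}, words $w$ of length $n$ biject with pairs $(P_H(w),Q_H(w))$ of the same shape $\lambda$, $P_H$ increasing and $Q_H$ set-valued on $[n]$; and the Hecke insertion of $w$ traces out a downward path of length $n$ to $\varnothing$ while the recording traces an upward path, so $[\varnothing]D^nU^n(\varnothing)$ counts exactly these words, i.e. $\sum_{|\lambda|\le n} f^\lambda g^\lambda_n$. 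Simultaneously, $D^nU^n(\varnothing)$'s $\varnothing$-coefficient is $F(n)$ by the normal-ordering argument, and the number of words of length $n$ on an alphabet whose distinct-letter structure is unconstrained, when counted the way Hecke insertion does (i.e. as $(P,Q)$ pairs), must be reconciled with $F(n)$ --- this is exactly the enumerative identity claimed, so the obstacle is really just to state the word/pair bijection cleanly and note both sides count the same $\varnothing$-coefficient.
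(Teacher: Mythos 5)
Your proposal goes wrong at the central operator identity. For the relation $DU-UD=D+I$, normal-ordering $D^nU^n$ and extracting the $\varnothing$-coefficient counts the ways $n$ $D$'s eliminate $n$ $U$'s with each $D$ eliminating a nonempty set of $U$'s; these matchings are surjections $[n]\to[n]$, i.e.\ permutations, so $[\varnothing]D^nU^n(\varnothing)=T(n,n)=n!$, not $F(n)$. Ordered set partitions with an arbitrary number of blocks arise only when the number of $D$'s is allowed to vary. The companion claim $[\varnothing]D^nU^n(\varnothing)=\sum_{|\lambda|\le n}f^\lambda g^\lambda_n$ fails for the same reason (you flagged it yourself but did not resolve it): every downward edge of the M\"obius deformation removes a nonempty rook strip, so a down-path of length $n$ ending at $\varnothing$ exists only when $|\lambda|=n$, and then it removes one box per step and is recorded by a standard Young tableau, not by an arbitrary increasing tableau. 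Concretely, for $n=2$ one has $U^2(\varnothing)=(2)+(11)+(1)$ and $[\varnothing]D^2U^2(\varnothing)=2$, while $F(2)=\sum f^\lambda g^\lambda_2=3$. Since an increasing tableau of shape $\lambda$ with largest entry $k$ encodes a down-path of length $k$, the operator you actually need is $\sum_{k=1}^{n}D^k$ applied to $U^n(\varnothing)$; this is exactly the paper's proof, which identifies the left-hand side with $[\varnothing](D+D^2+\cdots+D^n)U^n(\varnothing)$ and then applies the theorem term by term together with $\sum_{k=1}^{n}T(n,k)=F(n)$.

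Your fallback route via bijectivity of Hecke insertion is viable (the paper remarks as much), but as written it is circular: you never produce $F(n)$ on the word side, you only assert that the two counts "must be reconciled." To complete it you need the initial versions throughout: $f^\lambda$ counts increasing tableaux whose entry set is $[k]$ for some $k$ (otherwise both sides are infinite), initial words of length $n$ biject with pairs $(P_H,Q_H)$ of the same shape with $P_H$ initial increasing and $Q_H$ standard set-valued on $[n]$, and an initial word of length $n$ with alphabet exactly $[k]$ is a surjection $[n]\to[k]$, so there are $\sum_k T(n,k)=F(n)$ such words. That last enumeration is the missing step that yields $F(n)$; without it, the appeal to "the enumerative identity claimed" is precisely the statement to be proved.
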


\begin{proof}
 The left side is clearly the coefficient of $\varnothing$ in $(D + D^2 + \dotsc + D^n) U^n (\varnothing)$. It remains to note that $\sum_{k=1}^n T(n,k) = F(n)$.
\end{proof}

This is the analogue of the famous Frobenius-Young identity. Of course, this also follows from bijectivity of Hecke insertion. The advantage of our proof is that a similar 
result exists for any dual filtered graph. 

The following result is analogous to counting oscillating tableaux.

\begin{theorem}
For any dual filtered graph the coefficient of $\varnothing$ in $(D+U)^n (\varnothing)$ is equal to the number of set partitions of $[n]$ with parts of size at least $2$.  
\end{theorem}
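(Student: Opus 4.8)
The plan is to expand $(D+U)^n(\varnothing)$ by choosing, in each of the $n$ factors, either a $D$ or a $U$, and to track which sequences of choices contribute to the coefficient of $\varnothing$. First I would set up the bookkeeping: a word in $\{D,U\}$ of length $n$ corresponds to a lattice-path-like object, where reading left to right (innermost factor first, acting on $\varnothing$) each $U$ is an up-step and each $D$ is a down-step. Since $D(\varnothing)=0$, any prefix that would take us ``below'' $\varnothing$ contributes nothing; and since we want the coefficient of $\varnothing$ at the end, the total must return to rank $0$ in the filtered sense. The key point, exactly as in the proof of the Stirling-number theorem above, is that the relation $DU-UD=D+I$ means each time a $D$ meets a $U$ immediately to its right we may rewrite $DU$ as $UD$, as $D$, or as $I$; iterating, every contributing term reduces to a product in which all surviving $U$'s have been annihilated by $D$'s. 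So I would reduce the problem to counting the ways to match the $n$ steps into a valid ``annihilation pattern.''

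Next I would make the combinatorial identification precise. As in the Stirling-number argument, a term contributing to $[\varnothing](D+U)^n(\varnothing)$ is encoded by a way of pairing up the positions: reading right to left, each $D$ ``closes off'' a block of one or more $U$'s that lie to its left and have not yet been closed (the three options $UD,D,I$ correspond to a $D$ absorbing a $U$ via the $D$ term, continuing to absorb further $U$'s, or finally annihilating via the $I$ term). The upshot is that the $n$ letters get partitioned into blocks, each block consisting of some $U$'s together with exactly one $D$ that consumes them, and the block must contain at least one $U$ and one $D$ — hence size at least $2$. Moreover the positions within a block and across blocks are forced once the set partition of $[n]$ is chosen, because the $D$ of a block is always its rightmost element and there is no further freedom (unlike in the $D^kU^n$ case, where the $D$'s were segregated from the $U$'s). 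I would verify this non-overcounting claim carefully: distinct set partitions of $[n]$ into blocks of size $\geq 2$ give distinct annihilation patterns, and every contributing term arises this way exactly once.

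The main obstacle, and the step I would spend the most care on, is exactly that bijection between contributing terms and set partitions with all parts of size $\geq 2$: one must check that the multiplicity is genuinely $1$ (no term is counted with a coefficient bigger than one, and no valid set partition is missed), in contrast with the $D^kU^n$ theorem where the answer $T(n,k)=k!\,S(n,k)$ already carries a multiplicity. Concretely I would argue by induction on $n$, peeling off the last letter: if it is $U$, that $U$ must eventually be annihilated, so it opens (or joins) a block; if it is $D$, it closes a block. Tracking the ``open blocks'' as a forward-reading process makes the recursion transparent and matches the standard recursion for the number of set partitions of $[n]$ with no singleton blocks. Once the bijection is established the theorem is immediate. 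I would also remark that, just as with the Frobenius–Young corollary, in the case of the M\"obius deformation of Young's lattice this recovers the count of certain oscillating set-valued tableaux, but the point of the up-down calculus proof is its uniformity across all dual filtered graphs.

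\begin{proof}
As in the proof of the Stirling-number theorem, expand $(D+U)^n(\varnothing)$ by selecting $D$ or $U$ in each factor. Using $DU = UD + D + I$ repeatedly, rewrite each resulting word so that no $D$ immediately precedes a $U$; since $D(\varnothing)=0$ and we seek the coefficient of $\varnothing$, only terms in which every $U$ has been annihilated by some $D$ survive. Reading the chosen word from right to left, each $D$ opens the absorption of a nonempty set of as-yet-unclosed $U$'s lying to its left and, via the $I$-term, finally closes that set; this groups the $n$ letters into blocks, each containing at least one $U$ and exactly one $D$ (its rightmost element), hence of size at least $2$. Conversely, any partition of $[n]$ into blocks of size $\geq 2$ arises from exactly one such term, the $D$ of each block being its largest element and all intermediate choices being forced. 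Therefore $[\varnothing](D+U)^n(\varnothing)$ equals the number of set partitions of $[n]$ with all parts of size at least $2$.
\end{proof}
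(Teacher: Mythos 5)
Your proposal is correct and follows essentially the same route as the paper: expand $(D+U)^n$, rewrite via $DU=UD+D+I$, observe that only terms in which every $U$ is eliminated by a $D$ applied later can contribute, and identify such elimination patterns with set partitions of $[n]$ in which each block consists of exactly one $D$ together with the (at least one) $U$'s it absorbs, hence has size at least $2$. Apart from your opposite (application-order) indexing convention, which only relabels the blocks, the bijection and the level of detail match the paper's argument.
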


\begin{proof}
 As before, the desired coefficient is equal to the number of ways for all $D$'s to eliminate all $U$'s via the commutation relation. Each factor in the product 
$$(D+U)(D+U)\dotsc(D+U)$$ thus either eliminates one of the factors to the right of it, or is eliminated by a factor to the left. Grouping together such factors, we get a set 
partition with parts of size at least $2$. On the other hand, any such set partition corresponds to a choice of $D$ in the first factor of each part and of $U$'s in the rest.
Thus, it corresponds to term $1$ after such $D$'s eliminate such $U$'s. The statement follows.
\end{proof}

\end{document}